\documentclass[a4paper,12pt]{article}
\usepackage{amsmath,amssymb,amsthm}
\numberwithin{equation}{section}
\usepackage[margin=20mm]{geometry}
\newtheorem{thm}{Theorem}[section]
\newtheorem{prop}[thm]{Proposition}
\newtheorem{cor}[thm]{Corollary}
\newtheorem{exam}[thm]{Example}
\newtheorem{rem}[thm]{Remark}
\newtheorem{lem}[thm]{Lemma}
\newtheorem{defn}[thm]{Definition}
\newtheorem{assum}[thm]{Assumption}

\def\Capa{\mathop{\rm Cap}}
\def\supp{{\rm supp}}
\def\B{{\cal B}}

\def\E{{\cal E}}
\def\F{{\cal F}}
\def\N{{\cal N}}

\def\d{{\rm d}}

\allowdisplaybreaks

\title{Hausdorff dimensions of
inverse images and collision
time sets for symmetric Markov processes}
\author{Yuichi Shiozawa\thanks{Department of Mathematics,
Graduate School of Science,
Osaka University, Toyonaka, Osaka, 560-0043,
Japan; \texttt{shiozawa@math.sci.osaka-u.ac.jp}}\qquad
Jian Wang\thanks{School of Mathematics and Statistics  \&
Fujian Key Laboratory of Mathematical Analysis and Applications (FJKLMAA)
\& Center for Applied Mathematics of Fujian Province (FJNU),
Fujian Normal University, Fuzhou, 350007, P.R. China; \texttt{jianwang@fjnu.edu.cn}}}

\begin{document}
\maketitle
\begin{abstract}
In this paper, we establish the Hausdorff dimensions
of inverse images and collision time sets for
a large class of symmetric Markov processes on metric measure spaces.
We apply the approach in the works by Hawkes and
Jain--Pruitt,
and make full use of heat kernel estimates.
In particular, the results efficiently apply to symmetric diffusion processes, symmetric stable-like processes, and symmetric
diffusion processes
with jumps in $d$-sets.
\end{abstract}

\medskip
\noindent
{\bf AMS 2010 Mathematics subject classification}:  60J35, 60J75

\medskip\noindent
{\bf Keywords and phrases}: symmetric Markov process, Dirichlet form, Hausdorff dimension,
level set, inverse image, collision time

\section{Introduction}
Sample path properties of
Markov processes have been extensively studied in the literature, in particular, for L\'evy processes.
The readers are referred to the survey paper \cite{Xiao14} and the references therein for more details.
Nowadays,
there are a few developments in the extensions of part
of results from L\'evy processes to L\'evy-type processes.
For example, based on two-sided heat kernel estimates for a class of symmetric jump processes on metric measure spaces,
the laws of the iterated logarithm (LILs) for sample paths, local times and ranges are established in \cite{KKW17}.
We also obtained  in \cite{SW17} the integral tests on the escape rates, which are quantitative expressions of
recurrence, transience and conservativeness.
By applying the behavior
of the symbol of the corresponding generator,
Schilling established in \cite{Rene98} the results on the Hausdorff dimensions
of the image sets for L\'evy-type processes,
see also the monograph \cite[Chapter 5.2]{BSW14}.
Recently, a general method is provided in \cite{SXXZ} to establish uniform Hausdorff and packing dimension results
for the images of more general Markov processes, including stable jump
diffusion processes
and non-symmetric stable-type processes.

The main purpose of this paper is devoted to the Hausdorff dimensions of
level sets, inverse images and collision time sets
for a large class of symmetric Markov processes on metric measure spaces.
Here,
if we let
$X:=(X_t)_{t\ge0}$ be a symmetric Markov process on the metric measure space $(M,d,\mu)$,
then
the inverse image is referred to be
$$\{t>0: X_t\in D\}\quad \hbox{ for any Borel set } D\subset M.$$
In particular,
when $D=\{x\}$ with some $x\in M$,
this is reduced into the level set; on the other hand,
the collision time set
is defined by
$$\{t>0 : X_t^1=X_t^2\},$$
where $X^i:=(X^i_t)_{t\ge0}$, $i=1,2$, are two independent copies of $X$.
Concrete examples of the Markov processes included in the framework of the present
paper are symmetric diffusion processes,
symmetric stable-like processes,
symmetric diffusion processes
with jumps in $d$-sets, and so on.
Note that, as seen from the survey paper
\cite[Sections 6 and 7]{Xiao14},
dimension results and their proofs for level sets, inverse images and collision
time sets are more complex than those for images.

This work is inspired by the Hausdorff dimension results
of the inverse images and collision time sets for stable processes on Euclidean space.
Jain and Pruitt \cite{Jain} established the Hausdorff dimensions on
the collision sets of two independent stable processes $X^1:=(X_t^1)_{t\geq 0}$ and $X^2:=(X_t^2)_{t\geq 0}$
on ${\mathbb R}$
possibly with different indices.
Their idea is to  regard the collision of $X^1$ and $X^2$
as their  direct product process $X^1\otimes X^2$ hitting  the diagonal set  in ${\mathbb R}^2$,
and to compare the polarity of $X^1\otimes X^2$ with that of some stable process in ${\mathbb R}^2$.
Jain and Pruitt \cite[Introduction]{Jain} also pointed out that,
if $X^1$ and $X^2$ have the common
index  $\alpha\in (1,2)$,
then the collision time set of $X^1$ and $X^2$ has the same Hausdorff dimension
as that of the level set of the one-dimensional $\alpha$-stable process.
This property follows from the fact that
the difference process
$(X_t^1-X_t^2)_{t\geq 0}$ is also a $\alpha$-stable process.
However, if the indices of $X^1$ and $X^2$ are different, then it is unclear
how to establish the Hausdorff dimension of the collision time set.

Motivated by \cite{Jain}, Hawkes \cite{H71,H74} established the Hausdorff dimension
of the inverse image for one-dimensional $\alpha$-stable processes with $\alpha\in (1,2)$.
The idea of these works is to parametrize the stable indices by using the stable subordinators,
and to utilize the regularity and polarity properties of the stable processes.
Liu \cite{L95} applied this idea to the inverse images of compact sets
for L\'evy processes on Euclidean space.
Recently, Knopova and Schilling \cite{KS15} further
applied this idea to the inverse image
of Feller processes on Euclidean space, with application to the collision time sets of the two independent copies.

Our approach is based on heat kernel estimates for the associated Markov processes,
together with the development of the ideas of \cite{Jain,H71,H74, KS15} as mentioned before.
More precisely, we will make full use of the subordinate processes and the associated potential theory.
However, since the present paper is concerned with
symmetric Markov processes on
metric measure spaces,
there are a few difficulties and differences compared with
the papers cited above.
For instance,
\begin{enumerate}
\item Concerning the inverse image,
we follow the idea of Hawkes \cite{H71,H74} to make use of the stable subordinator.
However, since the subordinate process of a Markov process is not a stable process in general,
we can not utilize the polarity property of stable processes as \cite{H71,H74}.

\item
Inspired by \cite{Jain} and \cite{H71,H74},
we determine the Hausdorff dimension of the collision time set
by studying the regularity and polarity
of the stable subordinate process of the direct product process.
However, we need further consideration on the regularity property;
it should be noted that,
even for the direct product process of the two independent stable processes on ${\mathbb R}$,
its stable subordinate process
is not a stable process on ${\mathbb R}^2$ in general.
Moreover, since the state space is a metric measure space,
the approach with aid of the difference process is not applicable to the collision time set.
\end{enumerate}

Due to these difficulties and differences above, we need some new ideas and some efforts in the present paper.
To state the contribution of our paper, let us restrict on the following special setting.

\begin{thm}\label{Thm:main}
Let
$(M,d,\mu)$ be a connected $d$-set
such that
any closed ball in $M$ is compact.
For a subset $F$ of $M$, let ${\rm dim}_{\cal H}(F)$ denote its Hausdorff dimension.
Let $X:=(X_t)_{t\ge0}$ be
the $\mu$-symmetric diffusion process
with walk dimension $\alpha$ or the  symmetric $\alpha$-stable-like process
$($that is, the associated scaling function
of each process
 is  $\phi(r)=r^\alpha$$)$ on $M$.
Then the following statements hold.
\begin{itemize}
\item[{\rm(1)}] Suppose that $d\le \alpha$. Then, for any $a\in M$,
$${\rm dim}_{\cal H}\{t>0 : X_t=a\}
=1-\frac{d}{\alpha}, \quad \text{$P_x$-a.s.\ for any $x\in M$.}$$
More generally,
if $F\subset M$ is a Borel set such that
${\rm dim}_{\cal H}(F)>0$,
then
$$\dim_{{\cal H}}\{t>0 : X_t\in F\}
=1-\frac{d-{\rm dim}_{\cal H}(F)}{\alpha}, \quad \text{$P_x$-a.s.\ for any $x\in M$.}$$

\item[{\rm(2)}] Suppose that $d<\alpha$.
Let $F\subset M$ be an $s$-set with some $s>0$ $($in particular, ${\rm dim}_{\cal H}(F)=s>0)$. If
${\rm dim}_{\cal H}(F)>\max\{2d-\alpha,0\}$,
then
$$
{\rm dim}_{\cal H}\{t>0 : X_t^1=X_t^2\in F\}
=1-\frac{2d-{\rm dim}_{\cal H}(F)}{\alpha},
\quad \text{$P_x$-a.s.\ for any $x\in M\times M$,}
$$ where $X^i:=(X^i_t)_{t\ge0}$, $i=1,2$, are two independent copies of $X$. In particular,
$$
{\rm dim}_{\cal H}\{t>0 : X_t^1=X_t^2\}
=1-\frac{ d}{\alpha},
\quad \text{$P_x$-a.s.\ for any $x\in M\times M$.}
$$
\end{itemize}
\end{thm}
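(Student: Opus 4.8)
\medskip
\noindent\textbf{A proof proposal.}
The plan is to convert both statements into polarity questions for \emph{subordinated} processes, following Hawkes \cite{H71,H74}, Jain--Pruitt \cite{Jain} and Knopova--Schilling \cite{KS15}, but using two-sided heat kernel estimates in place of the explicit structure of stable processes. Fix $\beta\in(0,1)$, let $S^{(\beta)}=(S^{(\beta)}_t)_{t\ge0}$ be a $\beta$-stable subordinator independent of everything else, and write $R^{(\beta)}:=\overline{\{S^{(\beta)}_t:t\ge0\}}$ for its closed range. For Part~(1), set $Y^{(\beta)}:=X\circ S^{(\beta)}$, a $\mu$-symmetric Markov process on $M$. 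Since the potential measure of $S^{(\beta)}$ is comparable to $s^{\beta-1}\,\d s$ near the origin, the walk-dimension-$\alpha$ heat kernel bounds for $X$ on the $d$-set $M$ yield, for $d(x,y)$ bounded, the two-sided estimate for the (say, $1$-)resolvent density of $Y^{(\beta)}$
$$
G^{(\beta)}(x,y)\;\asymp\;\int_0^1 p(s,x,y)\,s^{\beta-1}\,\d s\;\asymp\;d(x,y)^{\alpha\beta-d},
$$
the two time regimes $s>d(x,y)^\alpha$ and $s<d(x,y)^\alpha$ contributing the same order (in the diffusion case the first is exponentially negligible). The elementary but crucial identity is that, with $Z_F:=\{t>0:X_t\in F\}$,
$$
\{\,Y^{(\beta)}\text{ ever hits }F\,\}\;=\;\{\,R^{(\beta)}\cap Z_F\neq\emptyset\,\},
$$
so $R^{(\beta)}$ meets $Z_F$ precisely when $F$ is non-polar for $Y^{(\beta)}$.

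The next step is the polarity dichotomy for $Y^{(\beta)}$. By the energy characterization of polar sets for symmetric Markov processes together with the estimate above, $F$ is non-polar for $Y^{(\beta)}$ if and only if some finite measure $\nu$ carried by $F$ satisfies $\iint d(x,y)^{\alpha\beta-d}\,\nu(\d x)\,\nu(\d y)<\infty$; by Frostman's lemma and the comparison of Riesz energies with Hausdorff dimension this holds if and only if $\alpha\beta-d+\dim_{\cal H}(F)>0$, i.e.\ $\beta>\beta_c:=(d-\dim_{\cal H}(F))/\alpha$ (for $F=\{a\}$ the condition reduces to $G^{(\beta)}(a,a)<\infty$, the case $\dim_{\cal H}(F)=0$). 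I would combine this with two classical facts about the $\beta$-stable subordinator: a Borel set is non-polar for $S^{(\beta)}$ if and only if it has positive Riesz capacity of order $\beta$; and positive Riesz $\beta$-capacity forces Hausdorff dimension $\ge1-\beta$, while Hausdorff dimension $>1-\beta$ forces positive Riesz $\beta$-capacity. If $\beta>\beta_c$, then $F$ is non-polar for $Y^{(\beta)}$, so (by irreducibility and the strict positivity and continuity of the heat kernel on the connected $d$-set, which permit passing from quasi-every to every starting point) $Y^{(\beta)}$ hits $F$ with positive $P_x$-probability; conditioning on $X$ gives $P\big(R^{(\beta)}\cap Z_F\neq\emptyset\mid X\big)>0$ on an event of positive probability, so there $Z_F$ has positive Riesz $\beta$-capacity and hence $\dim_{\cal H}Z_F\ge1-\beta$; a Blumenthal-type zero--one argument at the regeneration time $\sigma_F$ of $Z_F$, together with the exact scaling $\phi(r)=r^\alpha$, upgrades this to $\dim_{\cal H}Z_F\ge1-\beta$, $P_x$-a.s.\ (and when $d\le\alpha$ the set is moreover $P_x$-a.s.\ nonempty). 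If instead $\beta<\beta_c$, then $F$ is polar for $Y^{(\beta)}$, so $R^{(\beta)}\cap Z_F=\emptyset$ $P_x$-a.s.; were $\dim_{\cal H}Z_F>1-\beta$ on an event of positive probability, then conditionally on $X$ the set $Z_F$ would have positive Riesz $\beta$-capacity, hence be met by $R^{(\beta)}$ with positive probability --- a contradiction. Thus $\dim_{\cal H}Z_F\le1-\beta$, $P_x$-a.s. Letting $\beta\downarrow\beta_c$ and $\beta\uparrow\beta_c$ along countable sequences gives $\dim_{\cal H}Z_F=1-(d-\dim_{\cal H}(F))/\alpha$; the hypothesis $d\le\alpha$ ensures $\beta_c\in[0,1)$ and covers the boundary case (a point with $d=\alpha$, where $F$ is polar for every $Y^{(\beta)}$ and the dimension is $0$).

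For Part~(2) the scheme is identical once $X$ is replaced by the product process $\mathbf X:=(X^1,X^2)$, which is $(\mu\times\mu)$-symmetric on $M\times M$ --- a $2d$-set in the max-metric, with transition density $p\otimes p$ --- and $F$ is replaced by the diagonal copy $\Delta_F:=\{(a,a):a\in F\}$, which is isometric to $F$ and hence an $s$-set with $\dim_{\cal H}(\Delta_F)=s$. With $\mathbf Y^{(\beta)}:=\mathbf X\circ S^{(\beta)}$ and $Z:=\{t>0:X^1_t=X^2_t\in F\}$ one has $\{\mathbf Y^{(\beta)}\text{ ever hits }\Delta_F\}=\{R^{(\beta)}\cap Z\neq\emptyset\}$, while the on-diagonal resolvent estimate becomes, for $\beta$ close to the critical value,
$$
\mathbf G^{(\beta)}\big((x,x),(y,y)\big)\;\asymp\;\int_0^1 p(s,x,y)^2\,s^{\beta-1}\,\d s\;\asymp\;d(x,y)^{\alpha\beta-2d},
$$
again with the two time regimes of matching order. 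Testing the capacity dichotomy against the $s$-dimensional Hausdorff measure on $\Delta_F$ shows $\Delta_F$ is non-polar for $\mathbf Y^{(\beta)}$ if and only if $\alpha\beta-2d+s>0$, i.e.\ $\beta>\beta_c:=(2d-s)/\alpha$; the hypotheses $d<\alpha$ and $s=\dim_{\cal H}(F)>\max\{2d-\alpha,0\}$ ensure $\beta_c\in(0,1)$. Running the two-sided argument above verbatim --- when $\mathbf X$ is transient, i.e.\ $2d>\alpha$, first conditioning on $\{Z\neq\emptyset\}$, which has positive $P_x$-probability by non-polarity of $\Delta_F$ for $\mathbf X$ --- gives $\dim_{\cal H}Z=1-\beta_c=1-(2d-\dim_{\cal H}(F))/\alpha$; the final assertion is the case $F=M$, an $s$-set with $s=d>\max\{2d-\alpha,0\}$ (as $d<\alpha$), whence $\dim_{\cal H}\{t>0:X^1_t=X^2_t\}=1-d/\alpha$.

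The step I expect to dominate the work is the one singled out in the introduction: the subordinate product $\mathbf Y^{(\beta)}$ is genuinely not of stable type even when $X$ is $\alpha$-stable, so neither the two-sided on-diagonal estimate $\mathbf G^{(\beta)}\asymp d(x,y)^{\alpha\beta-2d}$ --- whose lower bound needs the full two-sided heat kernel bound and careful control of the admissible range of $\beta$ (one needs $\beta<2d/\alpha$ for the time integral to behave polynomially, which holds near $\beta_c$ precisely because $s>0$) --- nor the \emph{regularity} of $\Delta_F$ for $\mathbf Y^{(\beta)}$, which ensures that $Z$ is $P_x$-a.s.\ a perfect set so that $\dim_{\cal H}Z$ is well behaved and the zero--one argument applies, can be quoted from the literature; both must be extracted from the heat kernel estimates. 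A secondary but routine point --- passing from non-polarity in the Dirichlet-form (quasi-everywhere) sense to hitting with positive $P_x$-probability for \emph{every} $x$ --- is handled via the strict positivity and joint continuity of the heat kernel on the connected $d$-set.
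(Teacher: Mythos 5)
Your overall strategy is the paper's: subordinate by a $\beta$-stable subordinator, translate the dimension question into polarity of $F$ (resp.\ ${\rm diag}(F)$) for the subordinate (product) process via the identity $\{Y^{(\beta)}\text{ hits }F\}=\{R^{(\beta)}\cap Z_F\neq\emptyset\}$, decide polarity by resolvent/Green estimates derived from the heat kernel together with a Frostman lemma on the metric space, and transfer back through the known capacity--dimension facts for the range of the stable subordinator. For Part~(1) your outline is essentially the paper's proof (Theorems \ref{thm:level} and \ref{thm:upper}): the upper bound by polarity for $\beta<\beta_c$, and the lower bound by non-polarity plus recurrence of $X$ (which holds since $d\le\alpha$), regularity of points, and the strong Markov property at $\sigma_F$.

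There is, however, a genuine gap in your Part~(2), precisely at the step the introduction flags as the main difficulty. The theorem asserts $\dim_{\cal H}Z=1-(2d-s)/\alpha$ \emph{unconditionally}, $P_x$-a.s.\ for every $x\in M\times M$. When $2d>\alpha$ the product process $\mathbf X$ is transient, and your plan to ``first condition on $\{Z\neq\emptyset\}$, which has positive $P_x$-probability by non-polarity'' only yields the lower bound on an event of positive probability; it does not show $P_x(Z\neq\emptyset)=1$, without which the a.s.\ equality fails (on $\{Z=\emptyset\}$ the dimension is $0$). What is actually needed --- and what the paper supplies --- is that ${\rm diag}(F)$ is hit with probability one from every starting point and, more than that, that its last exit time is a.s.\ infinite (Proposition \ref{prop:rec-set}), so that the regeneration argument at a regular point of ${\rm diag}(F)$ can be run and iterated as $\beta\downarrow\beta_c$; one also needs that \emph{every} point of ${\rm diag}(F)$ is regular for ${\rm diag}(F)$ relative to the subordinate product process (Proposition \ref{prop:reg-set}). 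Both facts are obtained through Wiener-type tests for the transient subordinate product process, which in turn rest on a zero--one law for its tail $\sigma$-field (Proposition \ref{prop:tail}) proved from {\rm (WUHK)} and {\rm (HR)}; none of this can be quoted from the literature for non-stable processes, and your proposal does not indicate how to replace it. This is also where the hypothesis that $F$ is an $s$-set (not merely of Hausdorff dimension $s$) enters, since the Wiener series are estimated against the $s$-measure of $F$; your sketch uses only $\dim_{\cal H}(F)=s$. A minor additional point: your zero--one upgrade for Part~(1) should be made explicit via recurrence of $X$ and $P_x(\sigma_K<\infty)=1$ for compact $K\subset F$ of positive capacity, rather than a Blumenthal argument alone, since Blumenthal's law at time $0$ does not by itself control what happens from an arbitrary starting point $x\notin F$.
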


As mentioned above, the proof of Theorem \ref{Thm:main} is
partly based on heat kernel estimates for symmetric Markov processes,
which are now
developed greatly in recent years (see, e.g., \cite{AB,CKKW2022,CK,CKW20,CKW,CKW2022,GT12}).
Indeed, according to general results of our paper, we also can get by Remark \ref{rem2:heat-kernel}
below
and
\cite[Remark 1.12(iii) and Example 7.2]{CKW20}
that ---

\smallskip

{\it  Let $(M,d,\mu)$ be a connected $d$-set
such that
any closed ball in $M$ is compact.
Let $X:=(X_t)_{t\ge0}$ be
the $\mu$-symmetric diffusion process
with jumps on $M$, where the scaling functions of diffusion part and jump part are given
respectively by
$\phi_c(r)=r^\alpha$ and $\phi_j(r)=r^\beta$ for some $0<\beta<\alpha$.
Then, the conclusion {\rm(1)} of Theorem $\ref{Thm:main}$ holds
when $d\le \beta$, and the conclusion {\rm(2)} of Theorem $\ref{Thm:main}$ still holds when $d<\beta$.}

\smallskip

We make some
comments on how to overcome the difficulties mentioned in (i) and (ii).
For the inverse images,
we derive the polarity of the subordinate processes
by employing the Frostman lemma
on the complete separable metric space
in Subsection \ref{subsect:frostman}.
For the collision time sets,
we first prove the zero-one law for the tail events (Proposition \ref{prop:tail}),
and then  establish the Wiener tests for the recurrence and regularity of $X^1\otimes X^2$
(Propositions \ref{prop:rec-set} and \ref{prop:reg-set}).
Under our general setting, we can see from Examples \ref{exam:leve2} and \ref{exam3} that,
the local properties of the volume growth and walk dimensions
determine the Hausdorff dimensions of the inverse images and collision time sets.
With regard to  the collision time sets, our general results in Subsection \ref{subsect:collision}
allow two independent symmetric Markov processes to be different.
We also note that, as far as the authors know,
the Wiener tests are unavailable for general symmetric Markov processes,
even though those are well known for Brownian motion or other L\'evy processes on Euclidean spaces
(see, e.g., \cite{PS78}).

We mention that Shieh \cite{Shieh89,Shieh95} studied the possibility of collisions of two independent Hunt processes
in terms of the heat kernels,
with applications to  L\'evy processes on Euclidean space and Brownian motions on fractals.
Our results in the present paper provide quantitative information on the collision times,
and are applicable to symmetric jump processes of variable order
on $d$-sets, fractals and ultra-metric spaces.
We also characterize the Hausdorff dimension of the set of collision times on a given set by its Hausdorff dimension.

\bigskip

The rest of the paper is arranged as follows.
In the next section, we present preliminaries and assumptions used in the paper.
In Section \ref{section3}, we obtain Hausdorff dimensions of level sets and inverse images,
where we will first consider heat kernel and resolvent for the stable-subordinate process.
In Section \ref{section4}, we study Hausdorff dimensions of
the collision time sets.
For this, we establish estimates for the resolvent of stable-subordinate direct-product process.
In the appendix, we collect some statements
used in the proofs of our results, which include
the Wiener tests
for the recurrence and
regularity of symmetric Markov processes on
metric measure spaces.

We close this introduction with some words on notations.
For nonnegative functions $f$ and $g$ on a set $T$, we write $f(t)\preceq g(t)$ (resp.\ $f(t)\succeq g(t)$) for any $t\in T$
if there
exists a constant $c>0$ such that $f(t)\leq cg(t)$ (resp.\ $f(t)\geq cg(t)$) for any $t\in T$.
We write $f(t)\simeq g(t)$ for any  $t\in T$ if $f(t)\preceq g(t)$ and $f(t)\succeq g(t)$ for any $t\in T$.

\section{Preliminaries and assumptions}
\subsection{Dirichlet form, transience, recurrence and capacity}
\label{subsect:transience}
We first recall from  \cite{FOT11} the notions of Dirichlet forms and global properties of the associated
Markovian semigroups.
Let $(M,d)$ be a locally compact separable metric space,
and $\mu$ a positive Radon measure on $M$ with full support.
For $u,v\in L^2(M;\mu)$, let $(u,v)=\int_M uv\,\d\mu$ be the $L^2$-inner product.
Let $\{T_t\}_{t>0}$ be a strongly continuous Markovian semigroup on $L^2(M;\mu)$,
and $({\cal E},{\cal F})$ the associated Dirichlet form.
More precisely, $({\cal E},{\cal F})$ is a closed Markovian symmetric form
on $L^2(M;\mu)$ defined by
\begin{equation}\label{eq:form}
\E(u,u)=\lim_{t\rightarrow 0}\frac{1}{t}(u-T_tu,u), \quad
u\in \F:=\left\{u\in L^2(M;\mu): \lim_{t\rightarrow 0}\frac{1}{t}(u-T_tu,u)<\infty\right\}
\end{equation}
(see \cite[Lemma 1.3.4]{FOT11}).
For $\alpha>0$, let
$$\E_\alpha(u,v)=\E(u,v)+\alpha (u,v), \quad u,v\in \F.$$

For $t>0$ and $f\in L^2(M;\mu)$, we can define the integral
$$S_tf=\int_0^t T_sf \,\d s$$
as the $L^2$-strong convergence limit of the Riemann sum.
Then, by \cite[p.37]{FOT11}, $T_t$ and $S_t$ are bounded symmetric operators on $L^2(M;\mu)$.
They are further extended uniquely from $L^1(M;\mu)\cap L^2(M;\mu)$ to $L^1(M;\mu)$.
We can also extend $T_t$ from $L^2(M;\mu)\cap L^{\infty}(M;\mu)$ to $L^{\infty}(M;\mu)$
(see \cite[p.56]{FOT11}).
We will use the same notation $T_t$ for the corresponding operators on $L^1(M;\mu)$ and on $L^{\infty}(M;\mu)$.

Let $L_+^1(M;\mu)=\{u\in L^1(M;\mu): \text{$u\geq 1$, $\mu$-a.e.\ on $M$}\}$
and $Gf=\lim_{N\rightarrow\infty}S_Nf$ for $f\in L_+^1(M;\mu)$.
\begin{defn}
Let $\{T_t\}_{t>0}$ be a strongly continuous Markovian semigroup on $L^2(M;\mu)$,
and $({\cal E},{\cal F})$ the associated Dirichlet form.
\begin{enumerate}
\item $(\E,\F)$ is conservative if $T_t1=1$, $\mu$-a.e.\ for any $t>0$.
\item $(\E,\F)$ is transient if $Gf<\infty$, $\mu$-a.e.\ for any $f\in L_+^1(M;\mu)$,
and recurrent if $Gf=0$ or $\infty$, $\mu$-a.e.\ for any $f\in L_+^1(M;\mu)$.
\item A $\mu$-measurable set $A\subset M$ is invariant, if for any $f\in L^2(M;\mu)$ and $t>0$,
$T_t({\bf 1}_Af)={\bf 1}_AT_tf$, $\mu$-a.e.
If any invariant set $A\subset M$ satisfies $\mu(A)=0$ or $\mu(M\setminus A)=0$,
then  $(\E,\F)$ is called irreducible.
\end{enumerate}
\end{defn}

We know by \cite[Lemma 1.6.4 (iii)]{FOT11} that
any irreducible Dirichlet form is either transient or recurrent.

Let ${\cal F}_e$ denote the totality of $\mu$-measurable functions $u$ on $M$ such that
$|u|<\infty$ $\mu$-a.e.\ on $M$ and there exists a sequence $\{u_n\}\subset {\cal F}$ such that
$\lim_{n\rightarrow \infty}u_n=u$ $\mu$-a.e.\ on $M$ and
$\lim_{m,n\rightarrow\infty}{\cal E}(u_n-u_m,u_n-u_m)=0$.
The sequence $\{u_n\}$ is called an approximating sequence of $u$.
For any $u\in \F_e$ and its approximating sequence $\{u_n\}$,
the limit $\E(u,u)=\lim_{n\rightarrow\infty}{\cal E}(u_n,u_n)$ exists,
and does not depend on the choice of the approximating sequence for $u$
(\cite[Theorem 1.5.2]{FOT11}).
We call $(\F_e,\E)$ the extended Dirichlet space of $(\E,\F)$ (\cite[p.41]{FOT11}).
We also know by \cite[Lemma 1.5.5]{FOT11} that,
if $(\E,\F)$ is transient, then $\F_e$ is complete with respect to $\sqrt{\E}$.

We next recall from \cite{FOT11} the notion of the capacity relative to  $({\cal E}, {\cal F})$.
Let $C_0(M)$ denote the totality of continuous functions on $M$ with compact support.
In what follows, we suppose that $(\E,\F)$ is regular;
that is, ${\cal F}\cap C_0(M)$ is dense both in ${\cal F}$ with respect to $\sqrt{\E_1}$,
and in $C_0(M)$ with respect  to the uniform norm, where $\E_1(f,f)=\E(f,f)+\|f\|_2^2.$
Let ${\cal O}$ be the totality of open subsets of $M$.
For $O\in {\cal O}$, set
\begin{equation}\label{eq:cap-1}
{\cal L}_O=\left\{u\in {\cal F} : \text{$u\geq 1$, $\mu$-a.e.\ on $O$}\right\}.
\end{equation}
Define the ($1$-)capacity of $O\in {\cal O}$ by
\begin{equation}\label{eq:cap-2}
{\rm Cap}(O)=\begin{cases}
\inf_{u\in {\cal L}_O}{\cal E}_1(u,u), & {\cal L}_O\ne \emptyset,\\
\infty, & {\cal L}_O=\emptyset.
\end{cases}
\end{equation}
We then define the ($1$-)capacity of any subset $A$ of $M$ by
$${\rm Cap}(A)=\inf_{O\in {\cal O}, A\subset O}{\rm Cap}(O).$$

We say that a statement $S(x)$ depending on $x\in M$ holds quasi everywhere (q.e.\ in short)
if there exists a set $\N\subset M$ with $\Capa(\N)=0$ such that $S(x)$ holds for any $x\in M\setminus \N$.
For $f\in {\cal F}$, let $\tilde{f}$ be its quasi-continuous $\mu$-version;
that is, $f=\tilde{f}$, $\mu$-a.e.\ on $M$, and for any $\varepsilon>0$,
there exists a closed subset $F$ of $M$ such that
${\Capa}(M\setminus F)<\varepsilon$ and $\tilde{f}$ is finite continuous on $F$
(\cite[Section 2.1]{FOT11}).

Let $\nu$ be a positive Radon measure on $M$.
According to \cite[p.77, (2.2.1)]{FOT11}, we say that $\nu$ is of finite energy integral, if
there exists $C>0$ such that
$$
\int_M |v|\,{\rm d}\nu\leq C\sqrt{{\cal E}_1(v,v)}, \quad v\in {\cal F}\cap C_0(M).
$$
Let $S_0$ denote the totality of measures of finite energy integral on $M$.
Then, there exists a unique function $U_1\nu\in {\cal F}$ such that
$${\cal E}_1(U_1\nu,v)=\int_M \tilde{v}\,{\rm d}\nu, \quad v\in {\cal F}.$$
The function $U_1\nu$ is called the 1-potential of $\nu$.
We note that any measure in $S_0$ charges no set of zero capacity (\cite[Theorem 2.2.3]{FOT11}).
Moreover,
if $K$ is a compact subset of $M$,
then there exist a unique element $e_K\in {\cal F}$ and a unique measure $\nu_K\in S_0$
such that $e_K=U_1\nu_K$ and
${\rm Cap}(K)={\cal E}_1(e_K,e_K)=\nu_K(K)$
(see \cite[(2.2.13)]{FOT11}).
The element $e_K$ and the measure $\nu_K$ are called
the $1$-equilibrium potential and the $1$-equilibrium measure of $K$,
respectively.

Let
$$S_{00}=\left\{\nu\in S_0: \nu(M)<\infty, \ \|U_1\nu\|_{\infty}\leq 1\right\}.$$
We then see by \cite[p.82, Exercise 2.2.2]{FOT11} that,
if $K$ is a compact subset of $M$, then
\begin{equation}\label{eq:cap-sup}
{\Capa}(K)=\sup\left\{\nu(K): \nu\in S_{00}, \, \supp[\nu]\subset K, \,
\widetilde{U_1\nu}\leq 1, \text{q.e.}\right\}.
\end{equation}

If $(\E,\F)$ is transient,
then we can define the 0-order capacity ${\rm Cap}_{(0)}(A)$ of $A\subset M$
by replacing $\F$ and $\E_1$ with $\F_e$ and $\E$, respectively,
in \eqref{eq:cap-1} and \eqref{eq:cap-2} (\cite[p.74]{FOT11}).
As we see from \cite[p.85]{FOT11}, we can also introduce
the notions of a class of measures of finite ($0$-order) finite energy integral ($S_0^{(0)}$ in notation),
and of ($0$-order) potential of the measure $\nu\in S_0^{(0)}$ ($U\nu$ in notation).
In particular, if $K$ is a compact subset of $M$,
then we have the corresponding $0$-order equilibrium potential $e_K^{(0)}\in \F_e$
and the $0$-order equilibrium measure $\nu_K\in S_0^{(0)}$
such that  $e_K=U\nu_K$ and
${\rm Cap}_{(0)}(K)={\cal E}(e_K,e_K)=\nu_K(K)$.

\subsection{Hunt process and measurability}

In this subsection, we first recall from \cite{BG68}
classes of measurable subsets of $M$ associated with Hunt processes.
As in Subsection \ref{subsect:transience},
$(M,d)$ is a locally compact separable metric space,
and $\mu$ is a positive Radon measure on $M$ with full support.
Let $M_{\Delta}:=M\cup\{\Delta\}$ be the one point compactification of $M$.
Let  $X=(\Omega, \F, \{X_t\}_{t \ge 0},\{P_x\}_{x\in M}, \{\theta_t\}_{t\ge0},\zeta)$
be a Hunt process on $M$.
Here $\theta_t:\Omega \to \Omega$ is the shift operator of the paths defined
by $X_s\circ\theta_t=X_{s+t}$ for every $s>0$,
and $\zeta=\inf\{t>0 : X_t=\Delta\}$ is the lifetime.

A subset $A$ of $M$ is called nearly Borel measurable (relative to the process $X$),
if for any probability measure $\nu$ on $M$,
there exist Borel subsets $B_1$ and $B_2$ of $M$ such that
$B_1\subset A\subset B_2$ and
$$P_{\nu}(\text{$X_t\in B_2\setminus B_1$ for some $t\geq 0$})=0
$$
(\cite[Definition 10.2 in Chapter I]{BG68}).
Let ${\cal B}^n(M)$ denote the totality of nearly Borel measurable subsets of $M$.
For $A\in {\cal B}^n(M)$, let $\sigma_A$ be the  hitting time of $X$ to $A$;
that is, $\sigma_A=\inf\{t>0 : X_t\in A\}$.
We say that a point $x\in M$ is regular for $A$, if $P_x(\sigma_A=0)=1$.
Let $A^r$ denote the totality of regular points for $A$, i.e.,
$$A^r=\left\{x\in M : P_x(\sigma_A=0)=1\right\}.$$
Then, $A^r$ is nearly Borel measurable (\cite[Corollary 2.13 in Chapter II]{BG68}).
If $A$ is a subset of $M$, then $A^r$ is defined as the totality of points regular
for all nearly Borel subsets containing $A$.
We call $A^r$ the regular set for $A$ (relative to the process $X$).

If $\nu$ is a Borel measure on $M$,
then ${\cal B}^{\nu}(M)$ denotes the completion of ${\cal B}(M)$ relative to $\nu$.
Define the $\sigma$-field ${\cal B}^*(M)=\bigcap_{\nu}{\cal B}^{\nu}(M)$,
where the intersection is taken over all Borel probability measures on $M$.
We call ${\cal B}^*(M)$ the $\sigma$-algebra
of universally measurable subsets over $(M,{\cal B}(M))$.
Then, by definition, ${\cal B}(M)\subset {\cal B}^n(M)\subset {\cal B}^*(M)$ (\cite[p.60]{BG68}).

Recall that $\mu$ is a positive Radon measure on $M$ with full support.
Since the state space $M$ is locally compact and separable,
there exists a strictly positive Borel measurable function $g$ on $M$ such that
$\mu^g=g\cdot \mu$ is a Borel probability measure on $M$ and thus
${\cal B}^{\mu^g}(M)={\cal B}^{\mu}(M)$.
Using this relation, we can uniquely extend the measure $\mu$ to ${\cal B}^*(M)$.
We use the same notation $\mu$ for such an extension.

We next recall from \cite{FOT11} the relation between symmetric Hunt processes and Dirichlet forms.
Let $\{p_t\}_{t>0}$ be the transition function of a Hunt process $X$ on $M$ defined by
$$\int_Mp_t(x,\d y)f(y)=E_x\left[f(X_t)\right],
\quad t>0, \ x\in M, \ f\in {\cal B}(M), \ f\geq 0,$$
with the convention that $f(\Delta)=0$.
The left hand side above is written as $p_t f$.
We now assume that the process $X$ is  $\mu$-symmetric, i.e.,
$(p_tu,v)=(u,p_t v)$ for any $t>0$ and nonnegative functions $u,v\in {\cal B}(M)$.
According to \cite[p.30 and p.160]{FOT11}, we can extend $\{p_t\}_{t>0}$ uniquely
to a strongly continuous Markovian semigroup $\{T_t\}_{t>0}$ on $L^2(M;\mu)$.
Then, by \eqref{eq:form}, we can associate a Dirichlet form $(\E,\F)$ on $L^2(M;\mu)$.

Conversely, if $(\E,\F)$ is a regular Dirichlet form on $L^2(M;\mu)$
associated with a strongly continuous Markovian semigroup $\{T_t\}_{t>0}$ on $L^2(M;\mu)$,
then there exists a $\mu$-symmetric Hunt process $X$ on $M$ such that
$$T_tf=p_tf, \ \text{$\mu$-a.e.\ for $t>0$ and $f\in L^2(M;\mu)\cap \B_b(M)$}$$
(\cite[Theorem 7.2.1]{FOT11}).

Let $X$ be a $\mu$-symmetric Hunt process on $M$ generated by a regular Dirichlet form $(\E,\F)$.
A set $\N\subset M$ is called exceptional,
if there exists a nearly Borel set $\tilde{\N}\supset \N$ such that
$P_x(\sigma_{\tilde{\N}}<\infty)=0
$ for $\mu$-a.e.\ $x\in M$.
A set $\N\subset M$ is called properly exceptional,
if it is nearly Borel measurable such that
$\mu(\N)=0$ and $M\setminus \N$ is $X$-invariant; that is,
$$P_x(\text{$X_t\in (M\setminus \N)_{\Delta}$ or $X_{t-}\in (M\setminus \N)_{\Delta}$ for any $t>0$})=1,
\ x\in M\setminus \N.$$
Here $(M\setminus \N)_{\Delta}=(M\setminus \N)\cup\{\Delta\}$ and $X_{t-}=\lim_{s\uparrow t}X_s$.
By definition, any properly exceptional set is exceptional.
In particular, if $(\E,\F)$ is regular,
then any compact subset of $M$ is of finite capacity so that a set $\N\subset M$ is exceptional
if and only if ${\rm Cap}(\N)=0$ (\cite[Theorem 4.2.1]{FOT11}).

\subsection{Heat kernel}
Let  $X=(\Omega, \F, \{X_t\}_{t \ge 0},\{P_x\}_{x\in M}, \{\theta_t\}_{t>0},\zeta)$
be a $\mu$-symmetric Hunt process on $M$
associated with the regular Dirichlet form $(\E,\F)$ on $L^2(M;\mu)$.
In what follows, we always impose the following Assumption {\bf (H)} on the process $X$.

\begin{assum}[Assumption {\bf (H)}]
\begin{enumerate}\it
\item[{\rm (i)}] $(\E,\F)$ is conservative and irreducible.
\item[{\rm (ii)}]
There exist a properly exceptional Borel set $\N\subset M$
and a Borel measurable function $p(t,x,y):(0,\infty)\times M\times M \to [0,\infty)$ such that
the next three conditions hold.
\begin{itemize}
\item
For any $t>0$, $x\in M\setminus \N$ and $A\in {\cal B}(M)$,
\begin{equation}\label{eq:a-trans}
P_x(X_t\in A)=\int_A p(t,x,y)\,\mu({\rm d}y).
\end{equation}
\item
For any $t>0$ and $x,y\in M\setminus \N$, $p(t,x,y)=p(t,y,x)$.
\item
For any $s,t>0$ and $x,y\in M\setminus \N$,
\begin{equation}\label{eq:c-k}
p(t+s,x,y)=\int_{M}p(t,x,z)p(s,z,y)\,\mu(\d z).
\end{equation}
\end{itemize}\end{enumerate}
\end{assum}

The function $p(t,x,y)$ in Assumption {\bf(H)} is called the heat kernel in the literature.
While \eqref{eq:a-trans} determines $p(t,x,y)$ for $\mu$-a.e.\ $y\in M$,
we can regularize $p(t,x,y)$ under the so-called ultracontractivity condition
so that the condition (ii) in Assumption {\bf (H)} is fulfilled
(see, e.g., \cite[Theorem 3.1]{BBCK09} and \cite[Subsection 2.2]{GT12} for details).

Under Assumption {\bf (H)}, we write $M_0=M\setminus \N$.

\begin{rem}\label{rem:global}\rm
Let Assumption {\bf (H)} hold.
\begin{enumerate}
\item \eqref{eq:a-trans} is true also for any $A\in {\cal B}^*(M)$.
\item We can characterize the global properties of $(\E,\F)$ in terms of the heat kernel
as follows (see \cite[Remark 2.2]{S16} and \cite[Remark 2.2]{SW17}):
\begin{itemize}
\item $(\E,\F)$ is transient if
\begin{equation}\label{eq:transient}
\int_1^{\infty}\left(\sup_{y\in M_0}p(t,x,y)\right)\,\d t<\infty, \quad x\in M_0,
\end{equation}
and recurrent if
$$
\int_1^{\infty}p(t,x,y)\,\d t=\infty, \quad x,y\in M_0.
$$
\item $(\E,\F)$ is irreducible if  $p(t,x,y)>0$ for any $t>0$ and $x,y\in M_0$.
\end{itemize}
We note that \cite[Remark 2.2]{S16} refers to
the condition \eqref{eq:transient} with $x\in M_0$ and
$\sup_{y\in M_0}p(t,x,y)$ replaced by
$x\in M$ and $\sup_{y\in M}p(t,x,y)$, respectively;
however, the argument there shows that the condition \eqref{eq:transient} suffices for transience.
\item By \eqref{eq:c-k} and the  Cauchy-Schwarz inequality,
we have
$p(t,
x,y)\leq \sqrt{p(t,x,x)p(t,y,y)}$ for any $t>0$ and $x,y\in M_0$.
Therefore, \eqref{eq:transient} holds if
\begin{equation}
\int_1^{\infty}\left(\sup_{y\in M_0}p(t,y,y)\right)\,\d t<\infty.
\end{equation}
\end{enumerate}
\end{rem}

Below, for $\lambda\geq 0$ and $A\in {\cal B}^*(M)$, define
$$U_{\lambda}(x,A):
=U_{\lambda}{\bf 1}_A(x)=\int_0^{\infty}e^{-\lambda t}P_x(X_t\in A)\,{\rm d}t,\quad
\ x\in M.$$
Similarly, for any nonnegative universally measurable function $f$ on $M$,
define
$$U_{\lambda}f(x)=E_x\left[\int_0^{\infty}e^{-\lambda t}f(X_t)\,{\rm d}t\right],\quad  x\in M.$$
Then, under Assumption {\bf(H)},  for any $x\in M_0$ and $A\in {\cal B}^*(M)$,
$$U_{\lambda}(x,A)=\int_Au_{\lambda}(x,y)\,\mu({\rm d}y),$$
where
$$u_{\lambda}(x,y)=\int_0^{\infty}e^{-\lambda t}p(t,x,y)\,{\rm d}t, \quad  x,y\in M_0.$$

To establish our results, we need to introduce various kinds of the heat kernel bounds.
For $x\in M$ and $r>0$, let $B(x,r)=\{y\in M: d(x,y)<r\}$ and $V(x,r)=\mu(B(x,r))$.
We always assume that there exist positive constants $c_1$, $c_2$, $d_1$, $d_2$ with $d_1\leq d_2$ so that
\begin{equation}\label{eq:vol-v}
c_1\left(\frac{R}{r}\right)^{d_1}\leq \frac{V(x,R)}{V(x,r)}\leq c_2\left(\frac{R}{r}\right)^{d_2},\quad x\in M, \ 0<r<R<\infty.
\end{equation}

\begin{defn}\label{def:heat-kernel-2}\it
\begin{itemize}
\item[{\rm(1)}]
The heat kernel $p(t,x,y)$ satisfies
the two-sided on-diagonal estimates {\rm(ODHK)}, if
\begin{equation}\label{e:on-d}
p(t,x,x)\simeq \frac{1}{V(x,\phi^{-1}(t))},\quad t>0, \ x\in M_0.
\end{equation}

\item[{\rm(2)}]
The heat kernel  $p(t,x,y)$ satisfies
the near-diagonal lower bounded estimates  {\rm(NDLHK)}, if there exists a constant $c_0>0$ so that
\begin{equation}\label{e:on-d-1}
p(t,x,y)\succeq \frac{1}{V(x,\phi^{-1}(t))},\quad t>0, \ x,y\in M_0  \hbox{ with } d(x,y)\le c_0\phi^{-1}(t).
\end{equation}

\item[{\rm(3)}]
The heat kernel  $p(t,x,y)$ satisfies the $($weak$)$ upper bounded estimates {\rm(WUHK)}, if
\begin{equation}\label{e:on-d-2}
p(t,x,y)\preceq\frac{1}{V(x,\phi^{-1}(t))}\wedge\frac{t}{V(x,d(x,y))\phi(d(x,y))},\quad t>0, \ x,y\in M_0.
\end{equation}
\end{itemize}
Here, $\phi:[0,\infty)\to[0,\infty)$ is a strictly increasing function  satisfying that $\phi(0)=0$, $\phi(1)=1$, and
that there exist positive constants $c_3$, $c_4$, $\alpha_1$, $\alpha_2$ with $\alpha_1\leq \alpha_2$ so that
\begin{equation}\label{eq:sca-phi}
c_3\left(\frac{R}{r}\right)^{\alpha_1}\leq \frac{\phi(R)}{\phi(r)}\leq c_4\left(\frac{R}{r}\right)^{\alpha_2},\quad 0<r<R<\infty.
\end{equation}
\end{defn}

Note that \eqref{eq:sca-phi} yields
\begin{equation}\label{eq:sca-phi-inv}
\frac{1}{c_4^{1/\alpha_2}}\left(\frac{R}{r}\right)^{1/\alpha_2}
\leq \frac{\phi^{-1}(R)}{\phi^{-1}(r)}
\leq \frac{1}{c_3^{1/\alpha_1}}\left(\frac{R}{r}\right)^{1/\alpha_1},\quad 0<r<R<\infty.
\end{equation}
Combining this with  \eqref{eq:vol-v}, we have
\begin{equation}\label{eq:v-inverse}
\frac{c_1}{c_4^{d_1/\alpha_2}}\left(\frac{T}{t}\right)^{d_1/\alpha_2}
\leq \frac{V(x,\phi^{-1}(T))}{V(x,\phi^{-1}(t))}
\leq \frac{c_2}{c_3^{d_2/\alpha_1}}\left(\frac{T}{t}\right)^{d_2/\alpha_1},\quad x\in M, \ 0<t<T<\infty.
\end{equation}

We also introduce
the H\"{o}lder regularity condition
for the heat kernel $p(t,x,y)$.

\begin{defn}\label{assum:heat-kernel-2}\it
The heat kernel  $p(t,x,y)$ satisfies the H\"{o}lder regularity condition {\rm(HR)}, if there exist constants
$\theta\in (0,1]$ and $C>0$ such that for any $t>0$ and
$x,y,z\in M$,
$$
|p(t,x,y)-p(t,x,z)|\leq \frac{C}{V(x,\phi^{-1}(t))}\left(\frac{d(y,z)}{\phi^{-1}(t)}\right)^{\theta}.
$$
\end{defn}

\begin{rem}\label{rem:heat-kernel} \rm \begin{itemize}
\item[(i)] According to \cite[Proposition 3.1(2)]{CKW},
if the regular Dirichlet form $({\cal E},{\cal F})$ admits no killing term
and the associated heat kernel $p(t,x,y)$ satisfies {\rm(NDLHK)}, then
$(\E,\F)$ is conservative.

\item[(ii)] Suppose that  the heat kernel $p(t,x,y)$ satisfies
{\rm (WUHK)} and
{\rm (HR)}.
If $u$ is a bounded continuous function on $M$, then so is $p_tu$ for any $t>0$.
In particular, there exists a version of the process $X$ such that
all the conditions in Assumption {\bf (H)} (ii) are valid by replacing $M\setminus \N$ with $M$.
If {\rm (WUHK)} and {\rm (HR)} are imposed on the heat kernel,
then we take the process $X$ as the version above.

\item[(iii)] We see by the proof of \cite[Proposition 5.4]{CKW} that,
if the heat kernel $p(t,x,y)$ satisfies {\rm(WUHK)} and {\rm(HR)},
then it satisfies {\rm(NDLHK)} as well.

\end{itemize}
\end{rem}

\begin{rem}\label{rem2:heat-kernel}\rm
The form in the right hand side of \eqref{e:on-d-2} for the definition (WUHK)
comes from two-sided heat kernel estimates for
the
mixture of symmetric stable-like (jump) processes in metric measure spaces; see \cite{CK,CKW}.
We should emphasize that this
kind of heat kernel upper bounds are satisfied for a large class of
symmetric
Markov processes, including
symmetric
diffusion processes generated by strongly local Dirichlet forms (see \cite{AB,GT12}),
symmetric diffusion processes
with jumps in metric measure spaces (see \cite{CKW20}), and symmetric jump processes
that allowed to have light tails of polynomial decay at infinity or
to have (sub- or super-) exponential decay jumps (see \cite{CKW2022, CKKW2022}).

To verify the assertion above, below we take the $\mu$-symmetric
diffusion process $X$ on an Ahlfors $d$-regular set $(M,d,\mu)$ with walk dimension $\alpha\ge 2$ for example.
Similar arguments work for all the processes mentioned above.
In this example, $V(x,r)\simeq r^d$, and the heat kernel $p(t,x,y)$ of the process $X$ enjoys
the following two-sided estimates:
$$p(t,x,y)\asymp t^{-d/\alpha} \exp\left(-\left(\frac{d(x,y)^\alpha}{t}\right)^{1/(\alpha-1)}\right).$$
Here, we write $f(s,x)\asymp g(s, x)$, if there exist constants
$c_k>0$, $k=1,2,3,4$, such that $c_1g(c_2s,x)\le f(s,x)\le c_3g(c_4s,x)$ for the specified range of $(s, x)$.
Then, by some calculations, one can see that
there are constants $c_5>0$ such that for all $x,y\in M$ and $t>0$,
$$\exp\left(-\left(\frac{d(x,y)^\alpha}{t}\right)^{1/(\alpha-1)}\right)
\le c_5\left(1+\frac{d(x,y)^\alpha}{t}\right)^{-(1+d/\alpha)}.$$
This implies that for all $x,y\in M$ and $t>0$,
$$p(t,x,y)\le c_6\left(t^{-d/\alpha}\wedge\frac{t}{d(x,y)^{d+\alpha}}\right).$$
In particular, (WUHK) holds with $\phi(r)=r^\alpha$.

Furthermore, according to results in all the cited papers,
we know that, for all the processes mentioned above, {\rm(ODHK)}, {\rm(NDLHK)}, {\rm(WUHK)} and {\rm(HR)} are satisfied.
\end{rem}

\section{Hausdorff dimensions
of level sets and inverse images}\label{section3}

\subsection{Heat kernel  and resolvent for stable-subordinate processes}

For $\gamma\in (0,1)$, let $S^{\gamma}:=(\{\tau_t\}_{t\geq 0},P^{\gamma})$ be the $\gamma$-stable subordinator
which is independent of the process $X$.
Let $\pi_t(s)$ denote the density function of $\tau_t$.
According to \cite[Theorem 4.4]{CKKW} (or the proof  of  \cite[Theorem 3.1]{BSS03}),
there exist positive constants $c_1$ and $c_2$ such that
\begin{equation}\label{eq:sub-upper}
\pi_t(s)
\leq\frac{c_1t}{s^{1+\gamma}}e^{-t/s^{\gamma}}
, \quad s,t>0
\end{equation}
and
\begin{equation}\label{eq:sub-lower}
\pi_t(s)\geq\frac{c_2t}{s^{1+\gamma}}, \quad s,t>0 \hbox{ with } s \geq t^{1/\gamma}.
\end{equation}

Let $X_t^{\gamma}=X_{\tau_t}$ for any $t\geq 0$,
and let $X^{\gamma}:=(X_t^{\gamma})_{t\geq 0}$ be the $\gamma$-stable subordinate process of $X$.
Then, the process $X^{\gamma}$ is a $\mu$-symmetric Hunt process.
Let $(\E^{\gamma},\F^{\gamma})$ be a Dirichlet form on $L^2(M;\mu)$ associated with $X^{\gamma}$.
Then, by \cite[Theorem 2.1 (ii) and Theorem 3.1 (i)-(ii)]{O02},
$(\E^{\gamma},\F^{\gamma})$ is also regular, irreducible and conservative.
We note that $M_0=M\setminus \N$ is $X^{\gamma}$-invariant by definition,
and $\N$ is of zero capacity relative to $(\E^{\gamma},\F^{\gamma})$ by \cite[Theorem 2.2 (i)]{O02};
hence $\N$ is also properly exceptional with respect to $X^{\gamma}$.
Moreover, the subordinate process $X^\gamma$
 possesses
the density function $q(t,x,y)$ with respect to the measure $\mu$ so that
$$q(t,x,y)=\int_0^{\infty}p(s,x,y)\,\pi_t(s)\,{\rm d}s,\quad t>0, \ x,y\in M_0.$$
Therefore, the process $X^{\gamma}$ satisfies Assumption {\bf (H)} as well.

For any $\lambda\ge0$, define
$$u^\gamma_\lambda (x,y)=\int_0^\infty e ^{-\lambda t}q(t,x,y)\,\d t,\quad x,y\in M_0.$$
Set $\phi^{\gamma}(t)=(\phi(t))^{\gamma}$, so that $(\phi^{\gamma})^{-1}(t)=\phi^{-1}(t^{1/\gamma})$.

\begin{lem}\label{lem:heat-sub}
Suppose that the process $X$ satisfies Assumption {\bf(H)}. Let $\gamma\in (0,1]$.
Then we have the following statements. \begin{itemize}
\item[{\rm(1)}]
Under {\rm (ODHK)},
$$q(t,x,x) \simeq
\frac{1}{V(x,(\phi^{\gamma})^{-1}(t))},\quad t>0, \ x\in M_0.$$
\item[{\rm(2)}]
Under {\rm (NDLHK)},
$$
q(t,x,y)\succeq
\frac{1}{V(x,(\phi^{\gamma})^{-1}(t))}\wedge\frac{t}{V(x,d(x,y))\phi^{\gamma}(d(x,y))},
\quad t>0, \ x,y\in M_0.
$$
Moreover,
\begin{equation}\label{eq:res-lower-1}
u^{\gamma}_1(x,y)
\succeq \int_{\phi^{\gamma}(d(x,y))}^{\infty} \frac{e^{-t}}{V(x,(\phi^{\gamma})^{-1}(t))}\,{\rm d}t,
\quad x,y\in M_0 \hbox{ with } d(x,y)\leq 1
\end{equation}
and
\begin{equation}\label{eq:res-lower-2}
u^{\gamma}_1(x,y)
\succeq \frac{1}{V(x,d(x,y))\phi^\gamma(d(x,y))}, \quad x,y\in M_0 \hbox{ with } d(x,y)\geq 1.
\end{equation}
\item[{\rm(3)}]
Under {\rm (WUHK)},
\begin{equation}\label{eq:wuhk-sub}
q(t,x,y)\preceq
\frac{1}{V(x,(\phi^{\gamma})^{-1}(t))}\wedge\frac{t}{V(x,d(x,y))\phi^{\gamma}(d(x,y))},
\quad t>0, \ x,y\in M_0.
 \end{equation}
Moreover,
$$
u^{\gamma}_1(x,y)
\preceq \int_{\phi^{\gamma}(d(x,y))}^{\infty} \frac{e^{-t}}{V(x,(\phi^{\gamma})^{-1}(t))}\,{\rm d}t
\quad x,y\in M_0 \hbox{ with } d(x,y)\leq 1
$$
and
$$
u^{\gamma}_1(x,y)
\preceq \frac{1}{V(x,d(x,y))\phi^\gamma(d(x,y))} \quad x,y\in M_0,  \hbox{ with } d(x,y)\geq 1.
$$
\end{itemize}
\end{lem}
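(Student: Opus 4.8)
The starting point is the subordination identity $q(t,x,y)=\int_0^\infty p(s,x,y)\,\pi_t(s)\,\d s$ quoted above, which we feed the two-sided subordinator bounds \eqref{eq:sub-upper}--\eqref{eq:sub-lower}, the volume estimates \eqref{eq:vol-v} and \eqref{eq:v-inverse}, and the scaling \eqref{eq:sca-phi}--\eqref{eq:sca-phi-inv}. When $\gamma=1$ the subordinator is degenerate, $\tau_t\equiv t$ and $q=p$, so every assertion is immediate; hence take $\gamma\in(0,1)$ below. For Part (1), the lower bound comes from restricting the $s$-integral for $q(t,x,x)$ to $[t^{1/\gamma},2t^{1/\gamma}]$, where $\pi_t(s)\succeq t\,s^{-1-\gamma}$ by \eqref{eq:sub-lower}, $V(x,\phi^{-1}(s))\simeq V(x,(\phi^\gamma)^{-1}(t))$ by \eqref{eq:vol-v}--\eqref{eq:sca-phi-inv}, and $\int_{t^{1/\gamma}}^{2t^{1/\gamma}}t\,s^{-1-\gamma}\,\d s\simeq1$; with {\rm(ODHK)} this gives $q(t,x,x)\succeq1/V(x,(\phi^\gamma)^{-1}(t))$. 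For the matching upper bound, split at $s=t^{1/\gamma}$: on $\{s>t^{1/\gamma}\}$ use $V(x,\phi^{-1}(s))\ge V(x,(\phi^\gamma)^{-1}(t))$ together with $\int\pi_t\le1$; on $\{s\le t^{1/\gamma}\}$ apply \eqref{eq:sub-upper}, bound $V(x,(\phi^\gamma)^{-1}(t))/V(x,\phi^{-1}(s))\preceq(t^{1/\gamma}/s)^{d_2/\alpha_1}$ via \eqref{eq:v-inverse}, and convert $\int_0^{t^{1/\gamma}}(t^{1/\gamma}/s)^{d_2/\alpha_1}\,t\,s^{-1-\gamma}e^{-t/s^\gamma}\,\d s$ into a convergent Gamma-type integral by the substitution $u=t/s^\gamma$; it evaluates to a finite constant.

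For Part (2), put $\rho:=\phi(d(x,y)/c_0)$, so that $\rho\simeq\phi(d(x,y))$, $\phi^{-1}(\rho)\simeq d(x,y)$, and {\rm(NDLHK)} is available for $p(s,x,y)$ whenever $s\ge\rho$. If $\rho\le t^{1/\gamma}$, then $[t^{1/\gamma},2t^{1/\gamma}]\subset[\rho,\infty)$ and the computation of Part (1) gives $q(t,x,y)\succeq1/V(x,(\phi^\gamma)^{-1}(t))$; in this regime $\phi^\gamma(d(x,y))\preceq t$ and $V(x,d(x,y))\preceq V(x,(\phi^\gamma)^{-1}(t))$, so $t/(V(x,d(x,y))\phi^\gamma(d(x,y)))\succeq1/V(x,(\phi^\gamma)^{-1}(t))$ and the first term of the asserted minimum is, up to constants, the smaller one. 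If $\rho>t^{1/\gamma}$, restrict the integral to $[\rho,2\rho]$, where $\pi_t(s)\succeq t\,s^{-1-\gamma}$ by \eqref{eq:sub-lower} and $V(x,\phi^{-1}(s))\simeq V(x,d(x,y))$, and use $\int_\rho^{2\rho}t\,s^{-1-\gamma}\,\d s\simeq t/\rho^\gamma\simeq t/\phi^\gamma(d(x,y))$ to get $q(t,x,y)\succeq t/(V(x,d(x,y))\phi^\gamma(d(x,y)))$; here $\phi^\gamma(d(x,y))\succeq t$ and $V(x,d(x,y))\succeq V(x,(\phi^\gamma)^{-1}(t))$, so this is the smaller term. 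This proves the pointwise lower bound for $q$, and in particular yields a constant $C_0>1$ with $q(t,x,y)\succeq1/V(x,(\phi^\gamma)^{-1}(t))$ for $t\ge C_0\,\phi^\gamma(d(x,y))$ and $q(t,x,y)\succeq t/(V(x,d(x,y))\phi^\gamma(d(x,y)))$ for $t\le C_0^{-1}\phi^\gamma(d(x,y))$. For \eqref{eq:res-lower-1}, where $d(x,y)\le1$ and hence $\phi^\gamma(d(x,y))\le1$: integrating the first inequality against $e^{-t}$ over $[C_0\phi^\gamma(d(x,y)),\infty)$ gives $u^\gamma_1(x,y)\succeq\int_{C_0\phi^\gamma(d(x,y))}^\infty e^{-t}/V(x,(\phi^\gamma)^{-1}(t))\,\d t$, and by volume doubling (and $e^{-t}\simeq1$ on bounded intervals) the stretch $[\phi^\gamma(d(x,y)),C_0\phi^\gamma(d(x,y))]$ contributes $\simeq\phi^\gamma(d(x,y))/V(x,d(x,y))\preceq\int_{C_0\phi^\gamma(d(x,y))}^{2C_0\phi^\gamma(d(x,y))}e^{-t}/V(x,(\phi^\gamma)^{-1}(t))\,\d t$; hence $\int_{\phi^\gamma(d(x,y))}^\infty\preceq\int_{C_0\phi^\gamma(d(x,y))}^\infty\preceq u^\gamma_1(x,y)$. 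For \eqref{eq:res-lower-2}, where $d(x,y)\ge1$ and hence $\phi^\gamma(d(x,y))\ge1$: integrating the second inequality over $[0,C_0^{-1}\phi^\gamma(d(x,y))]$ and noting $\int_0^{C_0^{-1}\phi^\gamma(d(x,y))}t\,e^{-t}\,\d t\ge\int_0^{C_0^{-1}}t\,e^{-t}\,\d t>0$ gives the claim.

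For Part (3), observe that under {\rm(WUHK)} the kernel $p(s,x,y)$ is bounded by each term of \eqref{e:on-d-2} at once. The first bound in \eqref{eq:wuhk-sub} follows by keeping only $p(s,x,y)\preceq1/V(x,\phi^{-1}(s))$ and repeating the upper estimate of Part (1) verbatim. For the second, write $r=d(x,y)$, split the $s$-integral at $\phi(r)$, bound $p(s,x,y)$ by $s/(V(x,r)\phi(r))$ on $[0,\phi(r)]$ and by $1/V(x,\phi^{-1}(s))\le1/V(x,r)$ on $[\phi(r),\infty)$, and use $\pi_t(s)\le c_1 t\,s^{-1-\gamma}$: the first piece is $\preceq\frac{t}{V(x,r)\phi(r)}\int_0^{\phi(r)}s^{-\gamma}\,\d s\simeq\frac{t}{V(x,r)\phi^\gamma(r)}$ (this is where $\gamma<1$ enters), and the second is $\preceq\frac{1}{V(x,r)}\int_{\phi(r)}^\infty t\,s^{-1-\gamma}\,\d s\simeq\frac{t}{V(x,r)\phi^\gamma(r)}$; together these give \eqref{eq:wuhk-sub}. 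The resolvent upper bounds then follow by integrating $e^{-t}q(t,x,y)$: for $d(x,y)\ge1$ use only $q\preceq t/(V(x,r)\phi^\gamma(r))$ and $\int_0^\infty t\,e^{-t}\,\d t=1$; for $d(x,y)\le1$ split at $t=\phi^\gamma(r)$, apply $q\preceq t/(V(x,r)\phi^\gamma(r))$ on $[0,\phi^\gamma(r)]$ (whose integral is $\preceq\phi^\gamma(r)/V(x,r)\preceq\int_{\phi^\gamma(r)}^{2\phi^\gamma(r)}e^{-t}/V(x,(\phi^\gamma)^{-1}(t))\,\d t$ by doubling) and $q\preceq C/V(x,(\phi^\gamma)^{-1}(t))$ on $[\phi^\gamma(r),\infty)$.

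The only genuinely delicate point is Part (2) together with the passage to \eqref{eq:res-lower-1}: one has to identify which of the two competing terms dominates (this is dictated by whether $d(x,y)\preceq(\phi^\gamma)^{-1}(t)$ or not), confine the $s$-integral to the region $\{s\ge t^{1/\gamma}\}$ on which the lower bound \eqref{eq:sub-lower} for $\pi_t$ is valid, and then use volume doubling to absorb the constant-factor gap between the range $\{t\ge C_0\phi^\gamma(d(x,y))\}$ where the pointwise lower bound for $q$ was obtained and the cut-off $\phi^\gamma(d(x,y))$ that appears in \eqref{eq:res-lower-1}. By contrast the upper bounds in Parts (1) and (3) are routine once the Gamma-type integral estimate has been set up.
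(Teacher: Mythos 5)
Your proposal is correct and follows essentially the same route as the paper: the subordination formula combined with the two-sided bounds \eqref{eq:sub-upper}--\eqref{eq:sub-lower}, splitting the $s$-integral at $t^{1/\gamma}\vee\phi(d(x,y))$, the case analysis $t\gtrless\phi^{\gamma}(d(x,y))$ for the resolvent, and volume doubling/scaling to evaluate the resulting integrals. The only differences are cosmetic: you keep a single block $[A,2A]$ where the paper sums a geometric series (its \eqref{eq:int-sum-1}), and you carry the NDLHK constant $c_0$ explicitly and absorb the resulting threshold mismatch by doubling, where the paper normalizes $c_0=1$.
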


\begin{rem}\rm
According to Lemma \ref{lem:heat-sub} above,
if the original process $X$ fulfills one of the conditions in Definition \ref{def:heat-kernel-2},
then the subordinate process $X^{\gamma}$ also satisfies the corresponding one,
with $\phi$ in \eqref{e:on-d} replaced by  $\phi^{\gamma}$.
\end{rem}

\begin{proof}[Proof of Lemma {\rm \ref{lem:heat-sub}}]
(1) Suppose that \eqref{e:on-d} holds.
Then, by \eqref{eq:sub-upper} and the change of variables formula with $u=t/s^{\gamma}$,
\begin{equation}\label{e:upper-1}
q(t,x,x)
\leq c_1\int_0^{\infty}\frac{1}{V(x,\phi^{-1}(s))}
e^{-t/s^{\gamma}}\frac{t}{s^{1+\gamma}}\,{\rm d}s
=\frac{c_1}{\gamma}
\int_0^{\infty}\frac{e^{-u}}{V(x,\phi^{-1}((t/u)^{1/\gamma}))}\,{\rm d}u.
\end{equation}
By \eqref{eq:v-inverse}, there exist positive constants $c_2$ and $\eta_1$ such that
$$V(x,\phi^{-1}((t/u)^{1/\gamma}))\geq c_2V(x,\phi^{-1}(t^{1/\gamma}))/u^{\eta_1}
=c_2V(x,(\phi^{\gamma})^{-1}(t))/u^{\eta_1}, \quad 0<u\leq 1.$$
Similarly,  there exist positive constants $c_3$ and $\eta_2$ such that
$$V(x,\phi^{-1}((t/u)^{1/\gamma}))\geq c_3 V(x,(\phi^{\gamma})^{-1}(t))/u^{\eta_2}, \quad u\geq 1.$$
Accordingly,
$$\int_0^{\infty}\frac{e^{-u}}{V(x,\phi^{-1}((t/u)^{1/\gamma}))}\,{\rm d}u
\leq \frac{c_4}{V(x,(\phi^{\gamma})^{-1}(t))}
\left(\int_0^1e^{-u}u^{\eta_1}\,{\rm d}u+\int_1^{\infty}e^{-u}u^{\eta_2}\,{\rm d}u\right).$$
Combining this with
\eqref{e:upper-1}, we get the desired upper bound of $q(t,x,x)$.

On the other hand, it follows by \eqref{eq:sub-lower} that
\begin{equation}\label{e:lower-1}
q(t,x,x)
\geq \int_{t^{1/{\gamma}}}^{\infty}p(s,x,x)\pi_t(s)\,{\rm d}s
\geq
c_5\int_{t^{1/{\gamma}}}^{\infty}\frac{1}{V(x,\phi^{-1}(s))}\frac{t}{s^{1+\gamma}}\,{\rm d}s.
\end{equation}
Fix a constant $\theta>1$ and let $\theta_n=t^{1/{\gamma}}\theta^n$.
Then, by \eqref{eq:v-inverse} again,
there exist positive constants $c_6$ and $\eta_3$ such that
\begin{equation*}
\begin{split}
\int_{\theta_n}^{\theta_{n+1}}\frac{1}{V(x,\phi^{-1}(s))s^{1+\gamma}}\,{\rm d}s
&\geq \frac{1}{\gamma V(x,\phi^{-1}(\theta_{n+1}))}\left(\theta_n^{-\gamma}-\theta_{n+1}^{-\gamma}\right)\\
&\geq \frac{c_6}{tV(x,(\phi^{\gamma})^{-1}(t))}\left(1-\theta^{-\gamma}\right)\theta^{-n(\gamma+\eta_3)}.
\end{split}
\end{equation*}
Hence
\begin{equation}\label{eq:int-sum-1}
\begin{split}
 \int_{t^{1/{\gamma}}}^{\infty}\frac{1}{V(x,\phi^{-1}(s))s^{1+\gamma}}\,{\rm d}s
&=\sum_{n=0}^{\infty}\int_{\theta_n}^{\theta_{n+1}}\frac{1}{V(x,\phi^{-1}(s))s^{1+\gamma}}\,{\rm d}s\\
&\geq \frac{c_6}{tV(x,(\phi^{\gamma})^{-1}(t))}(1-\theta^{-\gamma})\sum_{n=0}^{\infty}\theta^{-n(\gamma+\eta_3)}\\
&=\frac{c_6}{t V(x,(\phi^{\gamma})^{-1}(t))}\frac{1-\theta^{-\gamma}}{1-\theta^{-(\gamma+\eta_3)}}.
\end{split}
\end{equation}
Then, by \eqref{e:lower-1}, we obtain
$$
q(t,x,x)
\geq \frac{c_7}{V(x,(\phi^{\gamma})^{-1}(t))}.
$$
We thus arrive at  the desired lower bound of $q(t,x,x)$.

(2) Suppose that \eqref{e:on-d-1} holds,
and let $c_0$ be the constant in \eqref{e:on-d-1}.
Without
loss of generality, we may and do assume that $c_0=1$.
Since the heat kernel $p(t,x,y)$ satisfies {\rm (NLDHK)},
it follows by \eqref{eq:sub-lower} that for any $x,y\in M_0$,
\begin{align*}
q(t,x,y)
=\int_0^{\infty}p(s,x,y)\pi_t(s)\,\d s
&\geq c_1\int_{t^{1/\gamma}\vee \phi(d(x,y))}^{\infty}
\frac{1}{V(x,\phi^{-1}(s))}\frac{t}{s^{1+\gamma}}\,\d s\\
&=c_1t\int_{t^{1/\gamma}\vee \phi(d(x,y))}^{\infty}\frac{1}{V(x,\phi^{-1}(s))s^{1+\gamma}}\,\d s.
\end{align*}
In particular, if $d(x,y)\le (\phi^{\gamma})^{-1}(t)$, then, by \eqref{eq:int-sum-1},
$$
t\int_{t^{1/\gamma}\vee \phi(d(x,y))}^{\infty}\frac{1}{V(x,\phi^{-1}(s))s^{1+\gamma}}\,\d s
=t\int_{t^{1/\gamma}}^{\infty}\frac{1}{V(x,\phi^{-1}(s))s^{1+\gamma}}\,\d s
\geq \frac{c_2}{V(x,(\phi^{\gamma})^{-1}(t))}.
$$
We also see that, if  $d(x,y)\ge (\phi^{\gamma})^{-1}(t)$, then
$$
t\int_{t^{1/\gamma}\vee \phi(d(x,y))}^{\infty}\frac{1}{V(x,\phi^{-1}(s))s^{1+\gamma}}\,\d s
=t\int_{\phi(d(x,y))}^{\infty}\frac{1}{V(x,\phi^{-1}(s))s^{1+\gamma}}\,\d s
\geq \frac{c_3 t}{V(x,d(x,y))\phi^{\gamma}(d(x,y))}.
$$
Therefore, we arrive at the desired lower bound of $q(t,x,y)$.

Using the lower bound of $q(t,x,y)$ above,  we obtain
\begin{align*}
u_1^\lambda(x,y)
=&\int_0^\infty e^{-t} q(t,x,y)\,\d t
=\int_0^{\phi^{\gamma}(d(x,y))} e^{-t} q(t,x,y)\,\d t
+\int_{\phi^{\gamma}(d(x,y))}^{\infty} e^{-t} q(t,x,y)\,\d t\\
\ge &\frac{c_4}{V(x,d(x,y))\phi^{\gamma}(d(x,y))}\left(\int_0^{\phi^\gamma(d(x,y))}e^{-t}t\, \d t\right)
+c_4\int_{\phi^\gamma(d(x,y))}^{\infty}\frac{e^{-t}}{V(x,(\phi^{\gamma})^{-1}(t))}\, \d t.
\end{align*}
Since
\begin{equation}\label{eq:scale-comp}
\int_0^{\phi^\gamma(d(x,y))}e^{-t}t\, \d t\asymp \phi^{2\gamma}(d(x,y))\wedge 1,
\end{equation}
we have \eqref{eq:res-lower-1} and \eqref{eq:res-lower-2}.

(3) Suppose that \eqref{e:on-d-2} holds.
We first show the upper bound of $q(t,x,y)$.
By definition,
$$q(t,x,y)=\int_0^{\phi(d(x,y))}p(s,x,y)\pi_t(s)\,{\rm d}s
+\int_{\phi(d(x,y))}^{\infty}p(s,x,y)\pi_t(s)\,{\rm d}s=I_1+I_2.$$
Then, by {\rm (WUHK)} and  \eqref{eq:sub-upper},
$$I_1\leq c_1\int_0^{\phi(d(x,y))}\frac{s}{V(x,d(x,y))\phi(d(x,y))}\frac{t}{s^{1+\gamma}}\,{\rm d}s
= \frac{c_2t}{V(x,d(x,y))\phi^{\gamma}(d(x,y))}$$
and
$$I_2\leq c_3\int_{\phi(d(x,y))}^{\infty}\frac{1}{V(x,\phi^{-1}(s))}\frac{t}{s^{1+\gamma}}\,{\rm d}s
\leq \frac{c_4 t}{V(x,d(x,y))\phi^{\gamma}(d(x,y))}.$$
The last inequality above follows by the same calculation as \eqref{eq:int-sum-1}.
Hence
$$q(t,x,y)\leq \frac{c_5t}{V(x,d(x,y))\phi^{\gamma}(d(x,y))}.$$
Following the calculation in the proof of (1),
we also have
$$q(t,x,y)\leq \frac{c_6}{V(x,(\phi^{\gamma})^{-1}(t))}$$
so  that \eqref{eq:wuhk-sub} follows.
The upper bounds of $u_1^{\gamma}(x,y)$ follow by the same calculations as in (2).
\end{proof}

Suppose that the process $X$ satisfies one of the conditions in Definition \ref{def:heat-kernel-2}.
For $\gamma\in (0,1]$, let
$$
I^{\gamma}(x)=\int_1^{\infty}\frac{1}{V(x,(\phi^{\gamma})^{-1}(t))}\,\d t, \quad x\in M.
$$
Then, by Remark \ref{rem:global}(ii) and Lemma \ref{lem:heat-sub},
the process $X^{\gamma}$ is recurrent
if the process $X$ satisfies  {\rm (NDLHK)} and $I^{\gamma}(x)=\infty$ for any $x\in M$;
$X^{\gamma}$ is transient if $X$ satisfies {\rm (WUHK)} and $I^{\gamma}(x)<\infty$ for any $x\in M$.
The next lemma provides
the Green function (or $0$-order resolvent) estimates of the process $X^{\gamma}$.

\begin{lem}\label{lem:green}
Suppose that the process $X$ satisfies Assumption {\bf(H)}.
Then for any $\gamma\in (0,1]$,
the following estimates hold.
\begin{itemize}
\item[{\rm(1)}]
Under {\rm (NDLHK)},
\begin{equation}\label{eq:green-lower-1}
u^{\gamma}_0(x,y)
\succeq
\int_{\phi^{\gamma}(d(x,y))}^{\infty} \frac{1}{V(x,(\phi^{\gamma})^{-1}(t))}\,{\rm d}t, \quad x,y\in M_0.
\end{equation}
\item[{\rm(2)}]
Under {\rm (WUHK)},
\begin{equation}\label{eq:green-lower-2}
u^{\gamma}_0(x,y)
\preceq
\int_{\phi^{\gamma}(d(x,y))}^{\infty} \frac{1}{V(x,(\phi^{\gamma})^{-1}(t))}\,{\rm d}t \quad x,y\in M_0.
\end{equation}
\end{itemize}
\end{lem}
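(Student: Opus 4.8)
The plan is to reduce the $0$-order resolvent estimates to the $1$-order ones already obtained in Lemma \ref{lem:heat-sub}, by replacing the weight $e^{-t}$ inside the time integral with the constant $1$ and checking that this only affects constants in the regime that matters. Concretely, both bounds start from
$$
u^{\gamma}_0(x,y)=\int_0^{\infty}q(t,x,y)\,\d t=\int_0^{\phi^{\gamma}(d(x,y))}q(t,x,y)\,\d t+\int_{\phi^{\gamma}(d(x,y))}^{\infty}q(t,x,y)\,\d t,
$$
and the point is that the first integral is comparable to $\phi^{\gamma}(d(x,y))\cdot\big(V(x,d(x,y))\phi^{\gamma}(d(x,y))\big)^{-1}$, which in turn, via \eqref{eq:v-inverse} and \eqref{eq:sca-phi-inv}, is comparable to the tail integral $\int_{\phi^{\gamma}(d(x,y))}^{\infty}V(x,(\phi^{\gamma})^{-1}(t))^{-1}\,\d t$ evaluated near its lower endpoint; so the near-diagonal part is absorbed into the stated right-hand side.

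For part (1), under {\rm(NDLHK)} I would use the pointwise lower bound for $q(t,x,y)$ established in Lemma \ref{lem:heat-sub}(2), namely $q(t,x,y)\succeq V(x,(\phi^{\gamma})^{-1}(t))^{-1}\wedge t\big(V(x,d(x,y))\phi^{\gamma}(d(x,y))\big)^{-1}$. Integrating over $t\in(0,\infty)$ and splitting at $t=\phi^{\gamma}(d(x,y))$ gives, on the range $t\ge\phi^{\gamma}(d(x,y))$, exactly the integral $\int_{\phi^{\gamma}(d(x,y))}^{\infty}V(x,(\phi^{\gamma})^{-1}(t))^{-1}\,\d t$ (using that on this range the minimum is realized by the first term, which follows from \eqref{eq:v-inverse}), and the contribution of $t<\phi^{\gamma}(d(x,y))$ is nonnegative, hence can be discarded; this proves \eqref{eq:green-lower-1}. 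For part (2), under {\rm(WUHK)} I would use the upper bound \eqref{eq:wuhk-sub} for $q(t,x,y)$, again split at $t=\phi^{\gamma}(d(x,y))$, bound the near-diagonal part $\int_0^{\phi^{\gamma}(d(x,y))}t\big(V(x,d(x,y))\phi^{\gamma}(d(x,y))\big)^{-1}\,\d t\asymp\phi^{\gamma}(d(x,y))\big(V(x,d(x,y))\big)^{-1}$ and then show this is $\preceq\int_{\phi^{\gamma}(d(x,y))}^{\infty}V(x,(\phi^{\gamma})^{-1}(t))^{-1}\,\d t$ by comparing with a geometric-type sum exactly as in \eqref{eq:int-sum-1}, while the far part is $\preceq\int_{\phi^{\gamma}(d(x,y))}^{\infty}V(x,(\phi^{\gamma})^{-1}(t))^{-1}\,\d t$ directly; adding the two pieces gives \eqref{eq:green-lower-2}.

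The main obstacle I anticipate is the bookkeeping around the lower endpoint: one must make sure that $\int_{\phi^{\gamma}(d(x,y))}^{\infty}V(x,(\phi^{\gamma})^{-1}(t))^{-1}\,\d t$ is genuinely comparable to $\phi^{\gamma}(d(x,y))\big(V(x,d(x,y))\big)^{-1}$ plus a remainder, and that this comparison is uniform in $x,y\in M_0$. This is where the doubling estimates \eqref{eq:vol-v}, \eqref{eq:sca-phi} and their consequences \eqref{eq:sca-phi-inv}, \eqref{eq:v-inverse} are used in an essential way — they guarantee both convergence/divergence behavior near $t=\phi^{\gamma}(d(x,y))$ is controlled by a power of $t$, so the dyadic summation argument of \eqref{eq:int-sum-1} applies with constants depending only on $c_1,\dots,c_4,d_1,d_2,\alpha_1,\alpha_2,\gamma$. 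Note also that one need not separately treat $d(x,y)\le 1$ and $d(x,y)\ge 1$ here as in Lemma \ref{lem:heat-sub}, since for the $0$-order resolvent there is no exponential cutoff and the single formula $\int_{\phi^{\gamma}(d(x,y))}^{\infty}V(x,(\phi^{\gamma})^{-1}(t))^{-1}\,\d t$ covers all distances; when this integral diverges the estimate is vacuously consistent with recurrence of $X^{\gamma}$ as recorded before the statement of the lemma.
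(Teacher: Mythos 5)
Your proposal is correct and follows essentially the route the paper intends: the paper omits the proof of Lemma \ref{lem:green}, stating it is similar to that of Lemma \ref{lem:heat-sub}, and your argument is exactly that proof with the weight $e^{-t}$ removed — integrate the two-sided bounds on $q(t,x,y)$ from Lemma \ref{lem:heat-sub}, split at $t=\phi^{\gamma}(d(x,y))$, discard the near-diagonal piece for the lower bound, and absorb it into the tail integral via the dyadic comparison of \eqref{eq:int-sum-1} together with \eqref{eq:v-inverse} for the upper bound. All steps check out, including the identification of which term of the minimum is active for $t\ge\phi^{\gamma}(d(x,y))$.
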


We omit the proof of Lemma \ref{lem:green}
because it is similar to that of Lemma \ref{lem:heat-sub}.

\subsection{Hausdorff dimensions
of level sets}
In this subsection, we will determine the Hausdorff dimensions
of the level sets for the process $X$.
First, we recall the definition of the Hausdorff dimension.
Let $\varphi$ be a continuous Hausdorff function of finite order such that $\varphi(0)=0$
(see Definition \ref{def:hausdorff}).
Let ${\cal H}^{\varphi}$ denote the associated Hausdorff measure on the metric measure space $M$.
If $\varphi(t)=t^p$ for some $p>0$, then we write ${\cal H}^p$ for ${\cal H}^{\varphi}$.
For a subset $A$ of $M$, let ${\rm dim}_{\cal H}(A)$ denote its Hausdorff dimension, i.e.,
$${\rm dim}_{\cal H}(A)=\inf\left\{s>0: {\cal H}^s(A)=0\right\}
=\sup\left\{s>0: {\cal H}^s(A)=\infty\right\}.$$

For any fixed $a\in M$, let
\begin{equation}\label{eq:gam}
\gamma_a(s)=\inf\left\{\gamma>0:
\int_0^1\frac{((\phi^{\gamma})^{-1}(t))^s}{V(a,(\phi^{\gamma})^{-1}(t))}\,{\rm d}t<\infty\right\},\quad s\ge0.
\end{equation}
Then, the main result of this part can be stated as follows.

\begin{thm}\label{thm:level}
Suppose that the process $X$ satisfies Assumption {\bf(H)} and {\rm (ODHK)}. We have the following two statements.
\begin{enumerate}
\item[{\rm (1)}]
Let $a\in M$. If $0<\gamma_a(0)\leq 1$, then
\begin{equation}\label{e:upper}
{\rm dim}_{\cal H}\{s>0: X_s=a\}\leq 1-\gamma_a(0), \quad \text{$P_x$-a.s.\ for any $x\in M_0$.}
\end{equation}
On the other hand, if $\gamma_a(0)>1$, then $\{s>0: X_s=a\}=\emptyset$, $P_x$-a.s.\ for any $x\in M_0$.
\item[{\rm (2)}]
Suppose  that $0<\gamma_a(0)<1$ for any $a\in M$.
Then $\N=\emptyset$ and thus $M_0=M$.
Moreover, if the process $X$ also satisfies {\rm (NDLHK)} and $I^{1}(a)=\infty$ for any $a\in M$,
then
\begin{equation}\label{e:lower}
{\rm dim}_{\cal H}\{s>0 : X_s=a\}=1-\gamma_a(0), \quad \text{$P_x$-a.s.\ for any $x\in M$.}
\end{equation}
\end{enumerate}
\end{thm}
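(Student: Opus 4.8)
## Proof Strategy for Theorem \ref{thm:level}

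\textbf{Overview of the approach.} The plan is to reduce the study of the level set $\{s>0: X_s = a\}$ to a statement about the hitting probabilities and potential-theoretic quantities for the stable-subordinate processes $X^\gamma$, following the Hawkes parametrization idea. The key conceptual point is that, for $\gamma \in (0,1)$, the range of $X^\gamma$ equals the range of $X$ evaluated at the (closed) range of the $\gamma$-stable subordinator $\{\tau_t\}$; since the latter is a random closed set of Hausdorff dimension $\gamma$, the polarity or non-polarity of $\{a\}$ for $X^\gamma$ transfers information about the level set $\{s>0: X_s=a\}$ back through the subordinator. Concretely, $a$ is hit by $X$ at a time in $\{\tau_t : t \geq 0\}$ if and only if $a$ is hit by $X^\gamma$, and when this happens the set $\{t : X^\gamma_t = a\} = \tau^{-1}(\{s : X_s = a\})$ relates the dimensions of the two sets via the subordinator's dimension. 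The threshold $\gamma_a(0)$ is precisely the critical exponent at which $I^\gamma(a) = \int_1^\infty V(a,(\phi^\gamma)^{-1}(t))^{-1}\,dt$ switches between finite and infinite, i.e.\ where $X^\gamma$ switches between transient and recurrent behaviour near $a$, equivalently where $\{a\}$ switches between polar and non-polar.

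\textbf{Upper bound (part (1)).} First I would fix $\gamma > \gamma_a(0)$ with $\gamma \leq 1$. By definition of $\gamma_a$ (taking $s=0$) and Lemma \ref{lem:heat-sub}, the Green function $u^\gamma_0(a,a)$ or more precisely the $1$-resolvent $u^\gamma_1(a,a)$ is finite, which by standard Dirichlet form theory (the $0$- or $1$-order potential of a point mass having finite energy fails) forces $\{a\}$ to be polar for $X^\gamma$: $P_x(\sigma^\gamma_{\{a\}} < \infty) = 0$. Hence $X^\gamma$ never hits $a$, which means $X_s \neq a$ for every $s$ in the (a.s.\ dense-in-itself, dimension-$\gamma$) range of $\tau$. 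To upgrade this to a dimension bound on $\{s : X_s = a\}$ itself, I would use a covering argument: the level set $\{s : X_s = a\}$, if nonempty, must be "avoided" by the subordinator range for every $\gamma > \gamma_a(0)$, and a Frostman-type / energy argument (cf.\ the Frostman lemma on separable metric spaces mentioned in the introduction) shows that a set which the range of every $\gamma$-stable subordinator with $\gamma > \gamma_a(0)$ a.s.\ misses must have Hausdorff dimension at most $1 - \gamma_a(0)$. Letting $\gamma \downarrow \gamma_a(0)$ gives \eqref{e:upper}. The case $\gamma_a(0) > 1$ is even simpler: then already $X^1 = X$ (the unsubordinated process, or $X^\gamma$ for $\gamma$ slightly below $1$) has $\{a\}$ polar, so $\{s : X_s = a\} = \emptyset$ a.s.

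\textbf{Lower bound (part (2)) and the main obstacle.} Assuming $\gamma_a(0) < 1$ for all $a$, I would first note that $I^1(a) = \int_1^\infty V(a,\phi^{-1}(t))^{-1}\,dt = \infty$ together with (NDLHK) makes $X$ itself recurrent (Remark \ref{rem:global}(ii)), hence $X$ hits every point, which forces $\N = \emptyset$ since points are non-exceptional; this gives $M_0 = M$. For the dimension lower bound, fix $\gamma < \gamma_a(0)$. Then $I^\gamma(a) = \infty$, so by Lemma \ref{lem:heat-sub} and Remark \ref{rem:global}(ii) the subordinate process $X^\gamma$ is recurrent near $a$, so $\{a\}$ is non-polar for $X^\gamma$; in fact $X^\gamma$ hits $a$ and $\{t : X^\gamma_t = a\}$ is a.s.\ an unbounded perfect set. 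The subordinator then pulls this back: $\{s : X_s = a\} \supseteq \tau(\{t : X^\gamma_t = a\})$, and since $\tau$ is (locally) a $\gamma$-stable subordinator it maps a set of dimension $\beta$ to a set of dimension $\gamma\beta$ — but more usefully, one shows directly via a second-moment / energy estimate on the occupation measure that $\dim_{\cal H}\{s : X_s = a\} \geq 1 - \gamma$. Letting $\gamma \uparrow \gamma_a(0)$ yields $\dim_{\cal H}\{s : X_s = a\} \geq 1 - \gamma_a(0)$, matching the upper bound. \emph{The hard part} is the lower bound energy estimate: one must construct a (random) measure carried by $\{s : X_s = a\}$ and control its $\rho$-energy $\iint |s-s'|^{-\rho}\,\mu(ds)\mu(ds')$ in expectation for every $\rho < 1 - \gamma_a(0)$, which requires sharp two-sided control of $q(t,a,a)$ — i.e.\ both (ODHK) and (NDLHK) for the subordinate process — and a careful interchange of the subordinator randomness with that of $X$; the Markov property must be applied to $X$ at the subordinated times, and the resulting bound must be integrated against $\pi_t(s)$ using the estimates \eqref{eq:sub-upper}–\eqref{eq:sub-lower}. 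This is where the bulk of the technical work lies, and where the hypothesis $I^1(a) = \infty$ (ensuring the unsubordinated process is itself recurrent, so the whole range of $\gamma$ down to $0$ is available) is essential.
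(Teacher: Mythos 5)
Your overall strategy --- Hawkes' subordination trick, the capacity of $\{a\}$ for $X^\gamma$ computed from the resolvent density, and the Frostman-type lemma for the stable subordinator to convert hitting/avoiding into a dimension bound --- is the right one and is what the paper does. But the central potential-theoretic equivalence is stated backwards, and this inverts the whole argument. The correct statement (Lemma \ref{lem:cap-heat}) is that ${\rm Cap}(\{a\})>0$ \emph{if and only if} $u_1(a,a)<\infty$: a finite resolvent density on the diagonal makes the point \emph{non-polar} (the equilibrium potential is $u_1(\cdot,a)/u_1(a,a)$), whereas $u_1(a,a)=\infty$ forces ${\rm Cap}(\{a\})=0$. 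You assert the opposite (``the resolvent is finite, which \dots forces $\{a\}$ to be polar''). Since $\gamma_a(0)$ is the infimum of those $\gamma$ for which $\int_0^1 V(a,(\phi^{\gamma})^{-1}(t))^{-1}\,{\rm d}t<\infty$, the correct consequences are: for $\gamma<\gamma_a(0)$ the integral diverges, $u_1^{\gamma}(a,a)=\infty$, $\{a\}$ is polar for $X^{\gamma}$, and Hawkes' lemma gives the \emph{upper} bound $\dim_{\cal H}\le 1-\gamma$, after which one lets $\gamma\uparrow\gamma_a(0)$; for $\gamma\in(\gamma_a(0),1]$ the integral converges, ${\rm Cap}^{\gamma}(\{a\})>0$, $a$ is regular for $X^{\gamma}$, and (after using recurrence of $X$ and the strong Markov property to reach $a$ from an arbitrary starting point) Hawkes' lemma gives the \emph{lower} bound $\dim_{\cal H}\ge 1-\gamma$, after which one lets $\gamma\downarrow\gamma_a(0)$. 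Your version swaps the roles of $\gamma>\gamma_a(0)$ and $\gamma<\gamma_a(0)$ in both halves and takes the limits in the wrong directions; as written it is internally inconsistent (a bound $\dim_{\cal H}\ge 1-\gamma$ valid for all $\gamma<\gamma_a(0)$ would give $\dim_{\cal H}\ge 1$ on letting $\gamma\downarrow 0$).

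Two further points. First, you conflate the integral over $(0,1)$ defining $\gamma_a(0)$ (local behaviour, polarity of points) with $I^{\gamma}(a)=\int_1^{\infty}V(a,(\phi^{\gamma})^{-1}(t))^{-1}\,{\rm d}t$ (behaviour at infinity, transience versus recurrence); these play entirely different roles, and the hypothesis $I^1(a)=\infty$ enters only to make $X$ itself recurrent, so that $P_x(\sigma_a<\infty)=1$ for every $x$ and the regularity of $a$ for $X^{\gamma}$ started at $a$ can be transferred to an arbitrary starting point. Second, the ``hard part'' you describe --- constructing a random measure on the level set and bounding its energy by a second-moment computation --- is neither needed nor what the paper does: the lemma of Hawkes (\cite{H71}, Section 3, or \cite{H74}, Lemma 2.1) converts the statement ``the range of an independent $\gamma$-stable subordinator a.s.\ meets (resp.\ misses) $\{s>0:X_s=a\}$'' directly into $\dim_{\cal H}\ge 1-\gamma$ (resp.\ $\le 1-\gamma$); the whole point of the subordination method is to outsource the Frostman/energy work to the subordinator. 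Finally, in part (2) the conclusion $\N=\emptyset$ follows not from recurrence but from ${\rm Cap}(\{a\})>0$ for every $a$ (the case $\gamma=1$ of the convergent integral), since then no nonempty set can be exceptional.
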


We will prove Theorem \ref{thm:level} by following the argument of  \cite[Theorem 1]{H71}
(see also the proof of \cite[Theorem 2.1]{KS15}).
To do so, we  need two lemmas.
\begin{lem}\label{lem:gamma}
Let $a\in M$.
Then the function $s\mapsto \gamma_a(s)$ is
nonincreasing  and Lipschitz continuous on $[0,\infty)$.
Moreover, there exists a constant $s_0>0$ such that $\gamma_a(s)=0$ for any $s\geq s_0$
and $\gamma_a(s_1)>\gamma_a(s_2)>0$ if $0\leq s_1<s_2<s_0$.
\end{lem}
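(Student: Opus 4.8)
The plan is to reduce the finiteness condition in \eqref{eq:gam} to that of a single two‑parameter integral whose dependence on $\gamma$ is transparent. Using $(\phi^{\gamma})^{-1}(t)=\phi^{-1}(t^{1/\gamma})$ and the substitution $u=t^{1/\gamma}$, the integral in \eqref{eq:gam} equals $\gamma\int_0^1 u^{\gamma-1}h_s(u)\,\d u$, where
\[
h_s(u):=\frac{(\phi^{-1}(u))^{s}}{V(a,\phi^{-1}(u))};
\]
so, putting $K(\gamma,s):=\int_0^1 u^{\gamma-1}h_s(u)\,\d u$, we have $\gamma_a(s)=\inf\{\gamma>0:K(\gamma,s)<\infty\}$. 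Two monotonicity facts are then immediate: since $0<u<1$ on the range of integration, $\gamma\mapsto u^{\gamma-1}$ is nonincreasing, hence so is $\gamma\mapsto K(\gamma,s)$, and $\gamma_a(s)$ is a genuine convergence threshold; and since $\phi^{-1}(u)\le\phi^{-1}(1)=1$ on $(0,1]$, we have $h_{s'}\le h_s$ whenever $s'\ge s$, so $K(\gamma,\cdot)$ is nonincreasing and therefore $s\mapsto\gamma_a(s)$ is nonincreasing.

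The heart of the argument is to convert a change in $s$ into a definite shift in the threshold $\gamma_a$; this is the step I expect to be the only non‑routine one, since it is not a priori clear that $\gamma_a$ cannot be locally constant. From \eqref{eq:sca-phi-inv} one has $c_3^{1/\alpha_1}u^{1/\alpha_1}\le\phi^{-1}(u)\le c_4^{1/\alpha_2}u^{1/\alpha_2}$ for $0<u<1$; raising this to the power $s'-s>0$ and dividing yields
\[
c_3^{(s'-s)/\alpha_1}u^{(s'-s)/\alpha_1}\,h_s(u)\ \le\ h_{s'}(u)\ \le\ c_4^{(s'-s)/\alpha_2}u^{(s'-s)/\alpha_2}\,h_s(u),\qquad 0<u<1 .
\]
Feeding the left inequality into $K(\gamma,s)$ and relabelling the exponent gives $K(\gamma,s)\le c_3^{-(s'-s)/\alpha_1}K\bigl(\gamma-(s'-s)/\alpha_1,\,s'\bigr)$, whence $\gamma_a(s)\le\gamma_a(s')+(s'-s)/\alpha_1$; combined with the monotonicity above this gives $0\le\gamma_a(s)-\gamma_a(s')\le(s'-s)/\alpha_1$, i.e.\ Lipschitz continuity with constant $1/\alpha_1$. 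Feeding the right inequality into $K(\gamma,s')$ gives $K(\gamma,s')\le c_4^{(s'-s)/\alpha_2}K\bigl(\gamma+(s'-s)/\alpha_2,\,s\bigr)$, whence $\gamma_a(s')\le\bigl(\gamma_a(s)-(s'-s)/\alpha_2\bigr)^{+}$; in particular $\gamma_a(s')<\gamma_a(s)$ whenever $\gamma_a(s)>0$. Applied with $s=s_1$ and $s'=s_2$, this is precisely the asserted strict decrease wherever $\gamma_a$ is positive.

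Finally I would locate $s_0$. Using \eqref{eq:v-inverse} with $T=1$ in the form $V(a,\phi^{-1}(u))\le C\,V(a,1)\,u^{d_1/\alpha_2}$, the integrand of $K(\gamma,0)$ dominates a constant multiple of $u^{\gamma-1-d_1/\alpha_2}$, which is non‑integrable near $0$ once $\gamma\le d_1/\alpha_2$; hence $\gamma_a(0)\ge d_1/\alpha_2>0$, and the Lipschitz bound then forces $\gamma_a(s)>0$ for $s<\alpha_1\gamma_a(0)$. Conversely, combining $V(a,\phi^{-1}(u))\ge c\,V(a,1)\,u^{d_2/\alpha_1}$ with $\phi^{-1}(u)\le c_4^{1/\alpha_2}u^{1/\alpha_2}$ gives $h_s(u)\le C\,u^{s/\alpha_2-d_2/\alpha_1}$, so $K(\gamma,s)<\infty$ for every $\gamma>0$ as soon as $s\ge\alpha_2 d_2/\alpha_1$; hence $\gamma_a\equiv 0$ on $[\alpha_2 d_2/\alpha_1,\infty)$. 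Since $\gamma_a$ is nonincreasing and (by the previous paragraph) continuous, $\{s\ge 0:\gamma_a(s)>0\}$ is an interval $[0,s_0)$ with $0<s_0\le\alpha_2 d_2/\alpha_1$, and then $\gamma_a\equiv 0$ on $[s_0,\infty)$ while the strict monotonicity of the second paragraph applies on $[0,s_0)$. Besides this, the only thing to watch is that the comparison inequalities for $h_s$ and $K$ are applied on all of $(0,1)$ rather than merely asymptotically as $u\to0$, which is harmless since $\phi^{-1}$ maps $(0,1)$ into $(0,1)$, so behaviour away from the origin never affects finiteness.
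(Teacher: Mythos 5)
Your proof is correct and follows essentially the same route as the paper: the change of variables to $K(\gamma,s)=\gamma^{-1}\int_0^1 ((\phi^\gamma)^{-1}(t))^s V(a,(\phi^\gamma)^{-1}(t))^{-1}\,\d t$ is the paper's \eqref{eq:change}, and the power bounds on $\phi^{-1}$ and $V(a,\phi^{-1}(\cdot))$ are exactly \eqref{eq:s-inv} and its companion. The only difference is presentational: where the paper argues strict monotonicity and $\gamma_a(s_0)=0$ by contradiction and defines $s_0$ through an explicit integral condition, you package the same comparisons as the two-sided inequalities $\gamma_a(s)-\gamma_a(s')\le (s'-s)/\alpha_1$ and $\gamma_a(s')\le(\gamma_a(s)-(s'-s)/\alpha_2)^{+}$ and read all the conclusions off directly, which is a tidier but equivalent organization.
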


\begin{proof}
We split the proof into four steps.

(i) We show that the function $s\mapsto \gamma_a(s)$ is nonincreasing.
By the change of variables formula with $u=t^{1/\gamma}$, for any $\gamma>0$,
we have
\begin{equation}\label{eq:change}
\int_0^1\frac{\{(\phi^{\gamma})^{-1}(t)\}^s}{V(a,(\phi^{\gamma})^{-1}(t))}\,{\rm d}t
=\gamma \int_0^1\frac{(\phi^{-1}(t))^s}{V(a,\phi^{-1}(t))}t^{\gamma-1}\,{\rm d}t.
\end{equation}
Hence, if $s_2>s_1\geq 0$, then $\gamma_a(s_2)\leq \gamma_a(s_1)$ because
$$\int_0^1\frac{(\phi^{-1}(t))^{s_2}}{V(a,\phi^{-1}(t))}t^{\gamma-1}\,{\rm d}t
\leq  \int_0^1\frac{(\phi^{-1}(t))^{s_1}}{V(a,\phi^{-1}(t))}t^{\gamma-1}\,{\rm d}t, $$
thanks to the fact that $\phi$ is increasing on $[0,1]$ with $\phi(0)=0$ and $\phi(1)=1$.

(ii) We prove that there exists a constant $s_0\in (0,\infty)$ such that
$\gamma_a(s)>0$ for $s\in [0,s_0)$ and $\gamma_a(s)=0$ for $s\geq s_0$.
By \eqref{eq:sca-phi-inv} and \eqref{eq:v-inverse},
there exist positive constants $c_i$ and  $\eta_i$ $(1 \leq i\leq 4)$
such that
\begin{equation}\label{eq:s-inv}
c_1t^{\eta_1}\leq \phi^{-1}(t)\leq c_2t^{\eta_2}, \quad  0\le t\le 1
\end{equation}
and
$$
c_3t^{\eta_3}\leq V(x,\phi^{-1}(t))\leq c_4t^{\eta_4}, \quad  x\in M, \ 0\le t\le 1.
$$
Here, the constants $c_3,c_4$ may depend on $x\in M$.
Hence, if we define
$$s_0=\inf\left\{s>0 : \int_0^1\frac{(\phi^{-1}(t))^s}{V(a,\phi^{-1}(t))t}\,\d t<\infty\right\},$$
then $s_0\in (0,\infty)$.
For any $s>s_0$, we have
\begin{equation}\label{eq:int-finite-s}
\int_0^1\frac{(\phi^{-1}(t))^s}{V(a,\phi^{-1}(t))t}\,\d t<\infty
\end{equation}
so that $\gamma_a(s)=0$.

We now show that $\gamma_a(s_0)=0$ by contradiction.
Assume that $\gamma_a(s_0)>0$.
Then for any $\gamma\in (0,\gamma_a(s_0)
)$ and $s>s_0$,
we obtain by \eqref{eq:s-inv},
\begin{equation*}
\begin{split}
\infty
=\int_0^1\frac{(\phi^{-1}(t))^{s_0}}{V(a,\phi^{-1}(t))}t^{\gamma-1}\,{\rm d}t
&=\int_0^1\frac{(\phi^{-1}(t))^{s}}{V(a,\phi^{-1}(t))t}t^{\gamma}(\phi^{-1}(t))^{s_0-s}\,{\rm d}t\\
&\leq \frac{1}{c_1^{s-s_0}}\int_0^1\frac{(\phi^{-1}(t))^{s}}{V(a,\phi^{-1}(t))t}t^{\gamma-(s-s_0)\eta_1}\,{\rm d}t.
\end{split}
\end{equation*}
In particular, if we take $s>s_0$ so that $(s-s_0)\eta_1<\gamma<\gamma_a(s_0)
$,
then, by \eqref{eq:int-finite-s},
$$\int_0^1\frac{(\phi^{-1}(t))^{s}}{V(a,\phi^{-1}(t))t}t^{\gamma-(s-s_0)\eta_1}\,{\rm d}t
\leq \int_0^1\frac{(\phi^{-1}(t))^{s}}{V(a,\phi^{-1}(t))t}\,{\rm d}t<\infty.$$
Since the two inequalities above yield a contradiction,
we have $\gamma_a(s_0)=0$ as desired.

We also prove that $\gamma_a(s)>0$ for any $s\in [0,s_0)$ by contradiction.
Assume that $\gamma_a(s)=0$ for some $s\in [0,s_0)$.
Then, for any $s_1\in (s,s_0)$,
we obtain by \eqref{eq:s-inv},
$$\int_0^1\frac{(\phi^{-1}(t))^{s_1}}{V(a,\phi^{-1}(t))t}\,\d t
=\int_0^1\frac{(\phi^{-1}(t))^{s}}{V(a,\phi^{-1}(t))t}(\phi^{-1}(t))^{s_1-s}\,\d t
\leq c_2^{s_1-s}\int_0^1\frac{(\phi^{-1}(t))^{s}}{V(a,\phi^{-1}(t))}t^{(s_1-s)\eta_2-1}\,\d t.$$
Since
$s_1<s_0$,
the left hand side above is divergent;
however, we have $(s_1-s)\eta_2>\gamma_a(s)(=0)$ so that
the right hand side is convergent by  \eqref{eq:change}.
We thus get
a contradiction so that $\gamma_a(s)>0$ for any $s\in [0,s_0)$.

(iii) We show that $\gamma_a(s_1)>\gamma_a(s_2)$ if $0\leq s_1<s_2\leq s_0$.
If  $\gamma_a(s_1)=\gamma_a(s_2)(>0)$
for some nonnegative constants $s_1$ and $s_2$ with $s_1<s_2<s_0$,
then for any $\gamma>0$, we have by \eqref{eq:s-inv},
\begin{equation}\label{eq:int-comp}
\begin{split}
\int_0^1\frac{(\phi^{-1}(t))^{s_1}}{V(a,\phi^{-1}(t))}t^{\gamma-1}\,{\rm d}t
&=\int_0^1\frac{(\phi^{-1}(t))^{s_2}}{V(a,\phi^{-1}(t))}(\phi^{-1}(t))^{-(s_2-s_1)}t^{\gamma-1}\,{\rm d}t\\
&\geq \frac{1}{c_2^{s_2-s_1}}\int_0^1\frac{(\phi^{-1}(t))^{s_2}}{V(a,\phi^{-1}(t))}t^{\gamma-(s_2-s_1)\eta_2-1}\,{\rm d}t.
\end{split}
\end{equation}
Let $\gamma>0$ satisfy
$0<\gamma-\gamma_a(s_2)=\gamma-\gamma_a(s_1)<(s_2-s_1)\eta_2$.
Then the left hand side of \eqref{eq:int-comp} is convergent
but the right hand side is divergent.
We thus get  a contradiction so that
$\gamma_a(s_1)>\gamma_a(s_2)$ if $0\leq s_1<s_2\leq s_0$.

(iv) We  prove that the function $s\mapsto \gamma_a(s)$ is Lipschitz continuous on $[0,\infty)$.
If $0\leq s_1<s_2\leq s_0$, then for any $\gamma>0$, we have by \eqref{eq:s-inv},
$$
\int_0^1\frac{(\phi^{-1}(t))^{s_1}}{V(a,\phi^{-1}(t))}t^{\gamma-1}\,{\rm d}t
\leq \frac{1}{c_1^{s_2-s_1}}\int_0^1\frac{(\phi^{-1}(t))^{s_2}}{V(a,\phi^{-1}(t))}t^{\gamma-(s_2-s_1)\eta_1-1}\,{\rm d}t.
$$
Note also that $\gamma_a(s_1)>0$ by (ii).
Hence if $0<\gamma<\gamma_a(s_1)$, then $\gamma_a(s_2)\geq \gamma-(s_2-s_1)\eta_1$.
In particular, since $\gamma_a(s_1)-(s_2-s_1)\eta_1\leq \gamma_a(s_2)$,
the function $\gamma_a(s)$ is Lipschitz continuous on $[0,s_0]$.
Since we know by (ii) that $\gamma_a(s)=0$ for $s\geq s_0$,
the function $\gamma_a(s)$ is Lipschitz continuous on $[0,\infty)$ as well.
\medskip

Putting
the arguments in (i)--(iv) together, 
 we arrive at the desired assertion.
\end{proof}

\begin{lem}\label{lem:cap-heat}
Let the process $X$ satisfy Assumption {\bf (H)}.
For every $a\in M$, if $u_1(a,a)<\infty$, then ${\rm Cap}(\{a\})=1/u_1(a,a)${\rm ;}
otherwise, ${\rm Cap}(\{a\})=0$. In particular, ${\rm Cap}(\{a\})>0$ if and only if $u_1(a,a)<\infty$.
Furthermore, if $X$ satisfies {\rm (ODHK)} as well, then, for each $a\in M$, ${\rm Cap}(\{a\})>0$ if and only if
$$\int_0^1 \frac{1}{V(a,\phi^{-1}(t))}\,\d t<\infty.$$
\end{lem}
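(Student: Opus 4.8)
The plan is to compute $\Capa(\{a\})$ by combining the characterization of capacity via $1$-equilibrium potentials with the resolvent density representation. First I would note that by the regularity of $(\E,\F)$ any compact set, in particular $\{a\}$, has finite capacity, and that the $1$-equilibrium measure $\nu_{\{a\}}$ is supported on $\{a\}$, hence equals $c\,\delta_a$ for some $c\ge 0$; moreover, by \eqref{eq:cap-sup} one has $\Capa(\{a\})=c$ whenever such a measure exists with $\widetilde{U_1\nu}\le 1$ q.e. Now observe that for the (sub-probability) measure $\delta_a$ one has $U_1\delta_a(x)=u_1(x,a)$, at least $\mu$-a.e., and using the semigroup/resolvent identity together with Assumption {\bf (H)} (ii) the quasi-continuous version satisfies $\widetilde{U_1\delta_a}(a)=u_1(a,a)$. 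One should also observe the elementary but crucial fact that $u_1(x,a)\le u_1(a,a)$ for all $x$, which follows from the Chapman--Kolmogorov equation \eqref{eq:c-k} and the Cauchy--Schwarz inequality, exactly as in Remark \ref{rem:global}(iii): $u_1(x,a)=\int_0^\infty e^{-t}p(t,x,a)\,\d t\le\int_0^\infty e^{-t}\sqrt{p(t,x,x)p(t,a,a)}\,\d t\le\sqrt{u_1(x,x)u_1(a,a)}$, and a symmetric argument (or direct estimate) gives the bound by $u_1(a,a)$.

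Next I would split into two cases. If $u_1(a,a)<\infty$, then $\nu:=u_1(a,a)^{-1}\delta_a$ is a finite measure with $\widetilde{U_1\nu}=u_1(\cdot,a)/u_1(a,a)\le 1$ q.e., so $\nu\in S_{00}$ with $\supp[\nu]\subset\{a\}$; by \eqref{eq:cap-sup} this yields $\Capa(\{a\})\ge u_1(a,a)^{-1}$. For the reverse inequality, the $1$-equilibrium measure $\nu_{\{a\}}=c\,\delta_a$ satisfies $e_{\{a\}}=U_1\nu_{\{a\}}=c\,u_1(\cdot,a)$ and $\widetilde{e_{\{a\}}}\ge 1$ q.e.\ on $\{a\}$ — and since $\{a\}$ either has positive capacity (in which case $a$ is not exceptional and the q.e.\ statement forces $\widetilde{e_{\{a\}}}(a)\ge 1$, i.e.\ $c\,u_1(a,a)\ge 1$) or has zero capacity — we get $\Capa(\{a\})=c=\nu_{\{a\}}(\{a\})\le u_1(a,a)^{-1}$. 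Hence $\Capa(\{a\})=1/u_1(a,a)$. If instead $u_1(a,a)=\infty$, then for every $\varepsilon>0$ the measure $\varepsilon\delta_a$ lies in $S_{00}$ (since $\widetilde{U_1(\varepsilon\delta_a)}(x)=\varepsilon u_1(x,a)\le\varepsilon u_1(a,a)$ is not automatically $\le 1$ — here one instead argues that any candidate $\nu=c\delta_a\in S_{00}$ with $\widetilde{U_1\nu}\le 1$ q.e.\ would need $c\,u_1(a,a)\le\widetilde{U_1\nu}(a)\le 1$ only if $\{a\}$ is non-polar, forcing $c=0$); more cleanly, if $\Capa(\{a\})>0$ then $a$ is non-exceptional and the equilibrium potential gives $c\,u_1(a,a)\ge 1$ with $c<\infty$, contradicting $u_1(a,a)=\infty$. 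Therefore $\Capa(\{a\})=0$.

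For the final assertion, under {\rm (ODHK)} we have $p(t,a,a)\simeq 1/V(a,\phi^{-1}(t))$, so $u_1(a,a)=\int_0^\infty e^{-t}p(t,a,a)\,\d t\simeq\int_0^\infty e^{-t}V(a,\phi^{-1}(t))^{-1}\,\d t$. The integral over $[1,\infty)$ is always finite: by \eqref{eq:v-inverse}, $V(a,\phi^{-1}(t))^{-1}$ grows at most polynomially in $t$, which is dominated by $e^{-t}$. The integral over $(0,1]$ is comparable to $\int_0^1 V(a,\phi^{-1}(t))^{-1}\,\d t$ since $e^{-t}\simeq 1$ there. Hence $u_1(a,a)<\infty$ if and only if $\int_0^1 V(a,\phi^{-1}(t))^{-1}\,\d t<\infty$, and combining with the first part gives the stated equivalence. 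The main obstacle is the care needed in handling quasi-continuous versions and the distinction between $\mu$-a.e.\ and q.e.\ statements: one must ensure that $\widetilde{U_1\delta_a}(a)$ genuinely equals $u_1(a,a)$ (rather than merely being $\le$ some regularized value), which requires using the continuity/fine-continuity properties of the resolvent under Assumption {\bf (H)} and the fact that $\{a\}$, if it has positive capacity, cannot be thin in a way that destroys this identity.
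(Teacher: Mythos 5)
Your overall architecture is the same as the paper's: both proofs run the classical computation of \cite[Example 2.1.2]{FOT11}, identifying $U_1\delta_a$ with $u_1(\cdot,a)$, recognizing $u_1(\cdot,a)/u_1(a,a)$ as the $1$-equilibrium potential of $\{a\}$ when $u_1(a,a)<\infty$ (the paper cites \cite[Exercise 4.2.2 and Lemma 2.2.6]{FOT11} for exactly this), and deriving the case $u_1(a,a)=\infty$ by the same contradiction with the equilibrium measure $c\,\delta_a$. The treatment of the (ODHK) part is also identical: split $u_1(a,a)$ at $t=1$ and observe that the tail is finite by monotonicity of $t\mapsto V(a,\phi^{-1}(t))$.

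There is, however, one step whose justification as written is wrong: the claim that $u_1(x,a)\le u_1(a,a)$ ``follows from Chapman--Kolmogorov and Cauchy--Schwarz.'' Cauchy--Schwarz gives $p(t,x,a)\le\sqrt{p(t,x,x)\,p(t,a,a)}$ and hence $u_1(x,a)\le\sqrt{u_1(x,x)\,u_1(a,a)}$, which is \emph{not} bounded by $u_1(a,a)$ unless $u_1(x,x)\le u_1(a,a)$; there is no ``symmetric argument'' that repairs this, since for on-diagonal values one only controls $u_1(x,a)$ by the geometric mean of the two diagonal values. The domination you need — that the quasi-continuous version of $U_1\delta_a$ is bounded by its value on $\supp[\delta_a]=\{a\}$, i.e.\ $\widetilde{U_1\delta_a}\le u_1(a,a)$ q.e.\ — is an instance of the maximum principle for $1$-potentials, which is precisely what is packaged in \cite[Lemma 2.2.6]{FOT11} (and is how the paper obtains that $e_a=u_1(\cdot,a)/u_1(a,a)$ is the equilibrium potential, so that $\Capa(\{a\})=\E_1(e_a,e_a)=1/u_1(a,a)$). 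Once you replace the Cauchy--Schwarz step by this citation (or by a direct excessive-function/hitting-time argument), your route through $S_{00}$ and \eqref{eq:cap-sup} closes correctly; as it stands, the $S_{00}$ membership of $u_1(a,a)^{-1}\delta_a$ is not justified.
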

\begin{proof} The first assertion is essentially taken from \cite[Example 2.1.2]{FOT11}, and we present the details here for the sake of completeness.
Fix $a\in M$, and let $\delta_a$ be the Dirac measure at $a$.
We first assume that $u_1(a,a)<\infty$.
Then, by \cite[Exercise 4.2.2]{FOT11},
the measure $\delta_a$ is of finite energy integral,
and the function $x\mapsto  u_1(x,a)$ is a quasi-continuous and excessive
 version
of the $1$-potential $U_1\delta_a$
of $\delta_a$.
Furthermore, by \cite[Lemma 2.2.6 and the subsequent comment]{FOT11},
the function $e_a(x)=u_1(x,a)/u_1(a,a)$ is a version of the 1-equilibrium potential
 of $\{a\}$.
Hence
$${\rm Cap}(\{a\})={\cal E}_1(e_a,e_a)=\frac{1}{u_1(a,a)}.$$

We next assume that $u_1(a,a)=\infty$.
Then, by \cite[Exercise 4.2.2]{FOT11},  the measure $\delta_a$ is not of finite energy integral.
Let us suppose that ${\rm Cap}(\{a\})>0$.
Then, according to \cite[Lemma 2.2.6 and the subsequent comment]{FOT11} again,
it follows that for some $c>0$,
the measure $c\delta_a$ would be the equilibrium potential  of $\{a\}$,
so that $\delta_a$ is of finite energy integral.
This is a contradiction, and so ${\rm Cap}(\{a\})=0$.

Let us prove the second assertion.
By {\rm (ODHK)},
$$u_1(a,a)\asymp \int_0^1\frac{1}{V(a,\phi^{-1}(t))}\,{\rm d}t
+\int_1^{\infty}\frac{e^{-t}}{V(a,\phi^{-1}(t))}\,{\rm d}t.$$
Note that the second term of the right hand side above is finite,
because the function $t\mapsto V(a,\phi^{-1}(t))$ is nondecreasing.
Then, the proof is complete by the first assertion.
\end{proof}

\begin{proof}[Proof of Theorem {\rm \ref{thm:level}}]
We first prove (1) under the condition that $0<\gamma_a(0)\leq 1$.
Here and in what follows, let ${\rm Cap}^{\gamma}$ denote
the 1-capacity relative to the subordinate process $X^{\gamma}$.
If $0<\gamma<\gamma_a(0)$, then
$$\int_0^1\frac{1}{V(a,(\phi^{\gamma})^{-1}(t))}\,{\rm d}t=\infty,$$
and so  ${\rm Cap}^{\gamma}(\{a\})=0$ by Lemma \ref{lem:cap-heat} applied to $X^{\gamma}$,
also thanks to Lemma \ref{lem:heat-sub}(1).
Therefore, the process $X^{\gamma}$ can not hit the point $a$ by \cite[Theorems 4.1.2 and 4.2.1 (ii)]{FOT11},
that is,
$$0=P_x\otimes P^{\gamma}(\text{$X_{\tau _t}=a$ for some $t>0$})
=E_x\left[P^{\gamma}(\text{$\tau_t\in\{s>0: X_s=a\}$ for some $t>0$})\right].$$
This implies that
$$P^{\gamma}(\text{$\tau_t\in\{s>0: X_s(\omega)=a\}$ for some $t>0$})=0,
\quad \text{$P_x$-a.s.\ $\omega\in \Omega$ for any $x\in M_0$}.$$
Then, by the Frostman lemma for the $\gamma$-stable subordinator
(see \cite[Section 3]{H71} or \cite[Lemma 2.1]{H74}),
$${\rm dim}_{\cal H}\{s>0: X_s=a\}\leq 1-\gamma, \quad \text{$P_x$-a.s.\ for any $x\in M_0$.}$$
Letting $\gamma\uparrow\gamma_a(0)$ along a sequence, we have \eqref{e:upper}.

If $\gamma_a(0)>1$, then
$$\int_0^1\frac{1}{V(a,\phi^{-1}(t))}\,{\rm d}t=\infty.$$
Hence,
by Lemma \ref{lem:cap-heat} applied to $X$, we have $\Capa(\{a\})=0$ and thus
the process $X$ can not hit the point $a$ by \cite[Theorems 4.1.2 and 4.2.1 (ii)]{FOT11} again.
The proof of (1) is complete.

We next prove (2).
Assume that $\gamma_a(0)<1$ for any $a\in M$.
Then for any  $\gamma\in (\gamma_a(0),1]$,
since
\begin{equation}\label{eq:a-integrable}
\int_0^1\frac{1}{V(a,(\phi^{\gamma})^{-1}(t))}\,{\rm d}t<\infty,
\end{equation}
we have $\Capa^{\gamma}(\{a\})>0$ by Lemma \ref{lem:cap-heat} applied to $X^{\gamma}$,
also due to Lemma \ref{lem:heat-sub}(1) again.
In particular, it follows by \cite[Theorems 4.1.3 and A.2.6 (i)]{FOT11}
that the point $a$ is regular relative to $X^{\gamma}$ for any $\gamma\in (\gamma_a(0),1]$,
i.e.,
\begin{equation}\label{eq:a-start}
\begin{split}
1
&=P_a^{\gamma}(\text{for any $\varepsilon>0$, there exists $t\in (0,\varepsilon)$ such that $X_{\tau_t}=a$})\\
&=P_a^{\gamma}(\text{$X_{\tau_t}=a$ for some $t>0$})
=E_a\left[P^{\gamma}(\text{$\tau_t\in\{s>0: X_s=a\}$ for some $t>0$})\right].
\end{split}
\end{equation}

On the other hand, since \eqref{eq:a-integrable} is valid with $\gamma=1$,
we have $\Capa(\{a\})>0$ for any $a\in M$,
which implies that  $\N=\emptyset$ and $P_x(\sigma_a<\infty)>0$ for any $x\in M$.
Furthermore,
the process $X$ is irreducible and recurrent
by Assumption {\bf (H)}, {\rm (NDLHK)} and $I^1(a)=\infty$ for any $a\in M$,
with the comment just before Lemma \ref{lem:green}.
Hence by \cite[Theorem 4.7.1 (iii) and Exercise 4.7.1]{FOT11},
we obtain  $P_x(\sigma_a<\infty)=1$ for any $x\in M$.
Note that $X_{\sigma_a}=a$ because $\{a\}$ is closed in $M$.
Therefore, by \eqref{eq:a-start} and the strong Markov property of the process $X$,
\begin{equation*}
\begin{split}
1&=P_x(\sigma_a<\infty)
=E_x\left[E_{X_{\sigma_a}}\left[P^{\gamma}(\text{$\tau_t\in\{s>0: X_s=a\}$ for some $t>0$})\right];\sigma_a<\infty\right]\\
&=E_x\left[P^{\gamma}(\text{$\tau_t\in\{s>0: X_s\circ\theta_{\sigma_a}=a\}$ for some $t>0$});\sigma_a<\infty\right]\\
&\leq E_x\left[P^{\gamma}(\text{$\tau_t\in\{s>0: X_s=a\}$ for some $t>0$})\right],
\end{split}
\end{equation*}
which yields
$$P^{\gamma}(\text{$\tau_t\in\{s>0: X_s(\omega)=a\}$ for some $t>0$})=1,
\quad \text{$P_x$-a.s.\ $\omega\in \Omega$ for any $x\in M$}.$$
By using \cite[Section 3]{H71} or \cite[Lemma 2.1]{H74} again,
$${\rm dim}_{\cal H}\{s>0 : X_s=a\}\geq 1-\gamma, \quad \text{$P_x$-a.s.\ for any $x\in M$.}$$
Letting $\gamma\downarrow\gamma_a(0)$ along a sequence, we have
$${\rm dim}_{\cal H}\{s>0 : X_s=a\}\geq 1-\gamma_a(0), \quad \text{$P_x$-a.s.\ for any $x\in M$.}$$
Combining this with \eqref{e:upper}, we get \eqref{e:lower}.
\end{proof}

\begin{exam}\label{exam:level}
Let the process $X$ satisfy Assumption {\bf (H)}, {\rm (ODHK)} and {\rm (NDLHK)}.
We impose the next conditions on the functions $V(x,r)$ and $\phi(r)$ {\rm:}
\begin{itemize}
\item
There exist positive constants $d_1$, $d_2$ and $c_i$, $1\leq i\leq 4,$ such that
$$c_1r^{d_1}\leq V(x,r)\leq c_2r^{d_1}, \quad x\in M, \ r\in (0,1)$$
and
$$c_3r^{d_2}\leq V(x,r)\leq c_4r^{d_2}, \quad x\in M, \ r\in [1,\infty).$$
\item
There exist positive constants $\alpha$, $\beta$, $c_i$,  $ 5\leq i\leq 8,$ such that
$$c_5r^{\alpha}\leq \phi(r)\leq c_6r^{\alpha}, \quad r\in (0,1)$$
and
$$c_7r^{\beta}\leq \phi(r)\leq c_8r^{\beta}, \quad r\in [1,\infty).$$
\end{itemize}
Then for any $a\in M$, $\gamma_a(s)=(d_1-s)/\alpha$ for any $s\in [0,d_1]$, and
$\gamma_a(0)\le 1$ if and only if $0<d_1\le \alpha$.
We also see that  $I^1(a)=\infty$ for any $a\in M$ if and only if $0<d_2\leq \beta$.

By the calculation above and Theorem $\ref{thm:level}$, we have
the following{\rm :} if $0<d_1\le \alpha$ and $0<d_2\leq \beta$,
then
$${\rm dim}_{\cal H}\{s>0 : X_s=a\}=1-\frac{d_1}{\alpha}, \quad \text{$P_x$-a.s.\ for any $x\in M$.}$$
If $d_1>\alpha$, then $\{s>0 : X_s=a\}=\emptyset$, $P_x$-a.s.\ for any $x\in M$.
\end{exam}

\subsection{Hausdorff dimensions
of inverse images}

In this subsection, we determine the Hausdorff dimensions
of the inverse images
for the process $X$.
For this purpose, we make a stronger assumption on the volume
function.
\begin{assum}\label{assum:volume}\it
There exists a strictly increasing function $V(r)$ on $[0,\infty)$ so that $V(0)=0$
and that there are some positive constants $c_1$ and $c_2$ so that for all $x\in M$ and $r\geq 0$,
$$c_1V(r)\leq V(x,r)\leq c_2V(r).$$
\end{assum}

Note that under the assumption above,
the value $\gamma_a(u)$ defined by \eqref{eq:gam} is independent of the choice of $a\in M$.
Hence we write $\gamma(u)$ for $\gamma_a(u)$. In other words,
$$\gamma(s)=\inf\left\{\gamma>0:
\int_0^1\frac{((\phi^{\gamma})^{-1}(t))^s}{V((\phi^{\gamma})^{-1}(t))}\,{\rm d}t<\infty\right\},\quad s\ge0.$$
We also define
$$s_0=\inf\left\{s>0 : \int_0^1\frac{(\phi^{-1}(t))^s}{V(\phi^{-1}(t))t}\,{\rm d}t<\infty\right\}.$$
Then, by the proof of Lemma \ref{lem:gamma},
the function $s\mapsto \gamma(s)$ is
Lipschitz continuous on $[0,\infty)$ and $s_0$ defined above is positive; moreover, $\gamma(s)$ is
strictly decreasing on $[0,s_0]$ such that  $\gamma(s)=0$ for $s\geq s_0$.

We also introduce the next assumption on $M$ in order
for the validity of Proposition \ref{prop:frost-2} below.
\begin{assum}\label{assum:compact}
Any closed ball in $M$ is compact.
\end{assum}

\begin{thm}\label{thm:upper}
Let $F$ be a Borel subset of $M$ such that  $s_F={\rm dim}_{\cal H}(F)>0$.
Suppose that the process
$X$ satisfies Assumption {\bf(H)}, and that 
 Assumption {\rm \ref{assum:volume}} holds.
Suppose also that for any $s\geq 0$ with $\gamma(s)>0$ and for any $\gamma\in (0,\gamma(s))$,
there exists a constant $c_1>0$ such that
for any $T\in (0,1/2)$,
\begin{equation}\label{assum:regularity}
\int_T^1\frac{(\phi^{-1}(u))^s}{V(\phi^{-1}(u))}u^{\gamma-1}\,\d u
\leq \frac{c_1(\phi^{-1}(T))^s}{V(\phi^{-1}(T))}T^{\gamma}
\end{equation}
\begin{itemize}
\item[{\rm (1)}]
Under {\rm(NDLHK)},  if $0\leq \gamma(s_F)\leq 1$, then
$${\rm dim}_{\cal H}\{t>0: X_t\in F\}\leq 1-\gamma(s_F),
\quad \text{$P_x$-a.s.\ for any $x\in M_0$.}$$
On the other hand, if $\gamma(s_F)> 1$, then $\{t>0 :X_t\in F\}=\emptyset$,
$P_x$-a.s.\ for any $x\in M_0$.
\item[{\rm(2)}] Suppose further that
$M$ satisfies Assumption {\rm \ref{assum:compact}}, and
the process $X$ satisfies
Assumption {\bf(H)},  {\rm (NDLHK)} and {\rm (WUHK)} with $M\setminus \N$ replaced by $M$.
If $\gamma(s_F)>0$ and
\begin{equation}\label{eq:int-rec}
\int_1^{\infty}\frac{1}{V(\phi^{-1}(t))}\,{\rm d}t=\infty,
\end{equation}
then
$${\rm dim}_{\cal H}\{t>0: X_t\in F\}\geq 1-\gamma(s_F), \quad
\text{$P_x$-a.s.\ for any  $x\in M$}.$$
\end{itemize}
\end{thm}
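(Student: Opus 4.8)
\medskip
\noindent\textbf{Overall plan.}
The plan is to run the argument of Theorem~\ref{thm:level} with the point $a$ replaced by the set $F$, and with the point--capacity computation of Lemma~\ref{lem:cap-heat} replaced by a Frostman-type comparison, on the metric space $(M,d)$, between the resolvent energy of the subordinate process and the Hausdorff energy. For $\gamma\in(0,1]$ write $X^\gamma_t=X_{\tau_t}$ for the $\gamma$-stable subordinate process, $\Capa^\gamma$ for its $1$-capacity and $u^\gamma_1$ for its resolvent density; by Lemma~\ref{lem:heat-sub} and Assumption~\ref{assum:volume}, $u^\gamma_1(x,y)$ is comparable on $\{d(x,y)\le1\}$ to
$$
g_\gamma(r)=\int_{\phi^\gamma(r)}^{\infty}\frac{e^{-t}}{V((\phi^\gamma)^{-1}(t))}\,\d t ,
$$
from below under {\rm (NDLHK)} (by \eqref{eq:res-lower-1}) and from above under {\rm (WUHK)}. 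The hypothesis \eqref{assum:regularity} will be used to upgrade this to $g_\gamma(r)\simeq \phi^\gamma(r)/V(r)=1/\psi_\gamma(r)$ with $\psi_\gamma(r):=V(r)/\phi^\gamma(r)$; reading the change of variables \eqref{eq:change} through the definition of $\gamma(\cdot)$, $\psi_\gamma$ is, in the relevant range $\gamma<\gamma(0)$, a continuous Hausdorff function whose critical exponent exceeds $s_F$ exactly when $\gamma<\gamma(s_F)$ and is smaller than $s_F$ exactly when $\gamma>\gamma(s_F)$. Once the polarity status of $F$ for $X^\gamma$ is settled, the Frostman lemma for the $\gamma$-stable subordinator (\cite[Section~3]{H71}, \cite[Lemma~2.1]{H74}) will convert it into the bound for $\dim_{\cal H}\{t>0:X_t\in F\}$ exactly as in Theorem~\ref{thm:level}: with $A:=\{s>0:X_s\in F\}$ and $R_\tau$ the closed range of $\tau$, the event $\{X_{\tau_t}\in F\ \text{for some}\ t>0\}$ is $\{R_\tau\cap A\ne\emptyset\}$, and $\dim_{\cal H}A<1-\gamma$ (resp.\ $>1-\gamma$) is incompatible with $R_\tau\cap A\ne\emptyset$ holding with positive probability (resp.\ with $R_\tau\cap A=\emptyset$ a.s.).

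\medskip
\noindent\textbf{Upper bound (part (1)).}
I would fix $\gamma\in(0,\gamma(s_F))$ (one may assume $\gamma(s_F)>0$, else the bound is trivial) and argue by contradiction. If $\Capa^\gamma(F)>0$, then by countable subadditivity of capacity there is a compact $K\subset F$ inside a ball of radius $1/2$ with $\Capa^\gamma(K)>0$, and its $1$-equilibrium measure $\nu_K$ for $X^\gamma$ satisfies $\iint u^\gamma_1\,\d\nu_K\,\d\nu_K=\Capa^\gamma(K)<\infty$. Since $d\le1$ on $K\times K$, the lower bound $u^\gamma_1(x,y)\succeq g_\gamma(d(x,y))\simeq 1/\psi_\gamma(d(x,y))$ shows that $\nu_K$ has finite $\psi_\gamma$-energy, hence $\mathcal H^{\psi_\gamma}(K)>0$ by the energy--Hausdorff comparison part of Proposition~\ref{prop:frost-2}; but $\gamma<\gamma(s_F)$ makes the critical exponent of $\psi_\gamma$ strictly larger than $\dim_{\cal H}(F)=s_F$, so $\mathcal H^{\psi_\gamma}(K)=0$, a contradiction. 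Hence $F$ is $X^\gamma$-polar (\cite[Theorem~4.2.1]{FOT11}), so $P_x\otimes P^\gamma(X_{\tau_t}\in F\ \text{for some}\ t>0)=0$ for $\mu$-a.e.\ $x$; fixing such an $x$, $R_\tau\cap A=\emptyset$ $P^\gamma$-a.s.\ for $P_x$-a.e.\ path, whence $\dim_{\cal H}A\le1-\gamma$, $P_x$-a.s., by the subordinator Frostman lemma. Letting $\gamma\uparrow\gamma(s_F)$, and then passing from $\mu$-a.e.\ $x$ to every $x\in M_0$ by applying the Markov property at a time $\varepsilon>0$ (using $P_x(X_\varepsilon\in\cdot)\ll\mu$ and $\dim_{\cal H}\{t>0:X_t\in F\}=\sup_{\varepsilon>0}\dim_{\cal H}\{t\ge\varepsilon:X_t\in F\}$), gives the claim. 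If $\gamma(s_F)>1$, the same estimate with $\gamma=1$ yields $\Capa(F)=0$, so $X$ never meets $F$ and $\{t>0:X_t\in F\}=\emptyset$.

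\medskip
\noindent\textbf{Lower bound (part (2)).}
Assume $\gamma(s_F)>0$ and \eqref{eq:int-rec}; one may assume $\gamma(s_F)<1$, the assertion being vacuous otherwise. Under {\rm (NDLHK)} and Assumption~\ref{assum:volume}, \eqref{eq:int-rec} is the recurrence criterion $I^1(x)=\infty$ recorded before Lemma~\ref{lem:green}, so $X$ is recurrent and irreducible. Fix $\gamma\in(\gamma(s_F),1)$. First I would check $\Capa^\gamma(F)>0$: if $\gamma<\gamma(0)$ this is the reverse of the computation above---using the upper bound $u^\gamma_1(x,y)\preceq g_\gamma(d(x,y))\simeq 1/\psi_\gamma(d(x,y))$ from {\rm (WUHK)} and \eqref{assum:regularity}, together with the direction of Proposition~\ref{prop:frost-2} that produces, since $\dim_{\cal H}(F)=s_F$ exceeds the critical exponent of $\psi_\gamma$ when $\gamma>\gamma(s_F)$, a finite measure $\nu$ on $F$ with finite $\psi_\gamma$-energy and hence finite $u^\gamma_1$-energy; if $\gamma\ge\gamma(0)$ then points are non-polar for $X^\gamma$ and $\Capa^\gamma(F)>0$ trivially. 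Thus $F$ is non-polar for $X^\gamma$, hence non-polar for $X$ (as $X$-polar sets are $X^\gamma$-polar), so $P_x(\sigma_F<\infty)=1$ for every $x\in M$ by recurrence (\cite[Theorem~4.7.1]{FOT11}). I would then convert non-polarity of $F$ for $X^\gamma$ into $R_\tau\cap A\ne\emptyset$ exactly as in the proof of Theorem~\ref{thm:level}(2): pass to the (nearly Borel) set $F^r$ of points regular for $F$ relative to $X^\gamma$, so that $F\setminus F^r$ is $X^\gamma$-polar and $X$ still reaches $F^r$ with probability one, then combine the strong Markov property of $X$ at $\sigma_{F^r}$ with the regularity of $X^\gamma$ at the landing point $X_{\sigma_{F^r}}\in F^r$ to obtain $P^\gamma(R_\tau\cap(A-\sigma_{F^r})\ne\emptyset)=1$ for $P_x$-a.e.\ path; since Hausdorff dimension is translation invariant, the subordinator Frostman lemma gives $\dim_{\cal H}A\ge1-\gamma$, $P_x$-a.s. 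Letting $\gamma\downarrow\gamma(s_F)$ completes the proof for every $x\in M$ (using the $M$-version of Assumption~{\bf(H)} supplied by {\rm (WUHK)} and {\rm (HR)}).

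\medskip
\noindent\textbf{Expected main obstacle.}
The crux is the Frostman comparison on $(M,d)$: one must verify that the resolvent kernel $g_\gamma(d(\cdot,\cdot))$ really is comparable to $1/\psi_\gamma(d(\cdot,\cdot))$ for a bona fide continuous Hausdorff function $\psi_\gamma=V/\phi^\gamma$, and that the critical exponent of $\psi_\gamma$ crosses $s_F$ precisely at $\gamma=\gamma(s_F)$, so that Proposition~\ref{prop:frost-2} (whose hypotheses force Assumption~\ref{assum:compact}) can be applied in both directions; away from pure power scaling this is exactly what the regularity bound \eqref{assum:regularity} is engineered to deliver. A second, subtler point arises only in the lower bound: for $\gamma$ slightly above $\gamma(s_F)$ the subordinate process $X^\gamma$ is typically transient, so one cannot simply invoke ``a recurrent process meets every non-polar set with probability one''; the argument must route through recurrence of the base process $X$ together with the regular-point structure of $F$ for $X^\gamma$ (or, alternatively, a zero--one law), which in turn demands care with the near-Borel measurability of $F^r$ and with the fact that a point regular for $F$ relative to $X$ need not be regular for $F$ relative to $X^\gamma$.
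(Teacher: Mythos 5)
Your proposal is correct and follows essentially the same route as the paper: stable subordination, the comparison (delivered by \eqref{assum:regularity} together with Lemma \ref{lem:heat-sub}) between the resolvent of $X^{\gamma}$ and the kernel $\phi^{\gamma}/V\simeq d(\cdot,\cdot)^{-s}$, the two Frostman-type statements to decide whether $\Capa^{\gamma}(F)$ vanishes, recurrence of $X$ combined with Liu's regular-point/strong-Markov argument, and Hawkes's subordinator lemma to convert polarity into a dimension bound. The only points to tidy in a write-up are that the ``finite energy implies positive Hausdorff measure'' direction invoked in your upper bound is Proposition \ref{prop:frost-1} rather than \ref{prop:frost-2}, and that in the lower bound you should pass from $F^r$ to a compact subset $K$ of positive capacity (as the paper does with $F_{\gamma(s)}$), since $F^r$ is only nearly Borel and the landing point $X_{\sigma_{F^r}}$ need not lie in $F^r$, whereas $X_{\sigma_K}\in K$ is automatic for closed $K$.
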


To prove Theorem \ref{thm:upper}, we need the following lemma.

\begin{lem}\label{lem:borel}
Suppose that the process $X$ satisfies Assumption {\bf (H)}.
If $A$ is a subset of $M$, then $A^r$ {\rm (}relative to $X${\rm )} is Borel measurable.
\end{lem}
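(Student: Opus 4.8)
The plan is to reduce the measurability of the regular set $A^r$ to known structural facts about Hunt processes, using the fact that $A^r$ depends only on the nearly Borel sets containing $A$. First I would recall the standard reduction: it suffices to find a nearly Borel set $B \supseteq A$ with $B^r = A^r$, and then to upgrade nearly Borel measurability of $B^r$ (which holds by \cite[Corollary 2.13 in Chapter II]{BG68}) to genuine Borel measurability. The point is that the state space $(M,d)$ is a locally compact separable metric space, and the process $X$ satisfies Assumption \textbf{(H)}; in particular it admits a heat kernel, has a quasi-left-continuous regular Dirichlet form, and there is a properly exceptional Borel set $\N$ outside of which the transition function is absolutely continuous with respect to $\mu$. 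I would exploit this absolute continuity: under a duality/absolute continuity hypothesis, hitting probabilities $x \mapsto P_x(\sigma_B < \infty)$ and more refined quantities are Borel (not merely universally or nearly Borel) measurable, because they can be expressed through $\mu$-integrals of the heat kernel against excessive functions, and there is no genuinely non-Borel "thin" exceptional behaviour once $\N$ is removed.

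More concretely, the key steps in order are: (i) write $A^r = \{x \in M : P_x(\sigma_B = 0) = 1\}$ for the nearly Borel hull $B$ of $A$, and note $\{\sigma_B = 0\}$ is a tail-type event so by Blumenthal's zero–one law $P_x(\sigma_B = 0) \in \{0,1\}$ for every $x$; (ii) observe that $x \mapsto P_x(\sigma_B = 0) = \lim_{t \downarrow 0} P_x(\sigma_B \le t)$ and that each $x \mapsto E_x[e^{-\sigma_B}]$ is $1$-excessive, hence (on $M_0 = M \setminus \N$, using \eqref{eq:a-trans} and the absolute continuity of $p_t(x,\cdot)$) finely continuous and Borel measurable; (iii) deduce that $A^r \cap M_0$ is Borel; (iv) handle the exceptional set $\N$ separately — since $\N$ is a Borel set and $\N$ itself carries no analytic complication, $A^r \cap \N$ is Borel (indeed one can argue $\N$ is properly exceptional so $A^r \cap \N$ is governed only by the countably many entries through $M_0$), and conclude $A^r = (A^r \cap M_0) \cup (A^r \cap \N)$ is Borel.

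I expect the main obstacle to be step (iv), i.e.\ controlling the behaviour of $A^r$ on the properly exceptional set $\N$: a priori $\sigma_B$ started from a point of $\N$ could involve the path leaving $\N$ in a way that is only universally measurable, so one must use carefully that $\N$ is \emph{properly} exceptional (Borel, $\mu$-null, with $M \setminus \N$ invariant) rather than merely exceptional, together with the Markov property at the entrance time into $M_0$, to keep everything Borel. The remaining steps (the zero–one law, fine continuity of $1$-excessive functions, Borel measurability of resolvents of Borel sets under absolute continuity) are standard and can be cited from \cite{BG68} and \cite{FOT11}; the only care needed is to verify that Assumption \textbf{(H)} supplies exactly the absolute-continuity hypothesis those citations require.
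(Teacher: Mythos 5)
Your underlying idea --- that Assumption \textbf{(H)} forces the resolvent $U_\lambda(x,\cdot)$ to be absolutely continuous with respect to $\mu$, i.e.\ that $\mu$ is a reference measure, and that this is what upgrades "nearly Borel" to "Borel" --- is exactly the mechanism the paper uses. But the paper stops there: it verifies the reference-measure property and then invokes \cite[Corollary 1.14 in Chapter V]{BG68} (equivalently \cite[p.115, Exercise 3]{CW05}), which states precisely that in the presence of a reference measure $A^r$ is Borel for \emph{every} subset $A$. You instead try to reprove that cited theorem, and two of your steps are genuinely incomplete.

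First, step (i) begs the question. By the paper's definition, $A^r$ for an arbitrary $A\subset M$ is the set of points regular for \emph{all} nearly Borel supersets of $A$, i.e.\ an uncountable intersection $\bigcap_{B\supseteq A,\,B\in{\cal B}^n(M)}B^r$. The existence of a single nearly Borel "hull" $B\supseteq A$ with $B^r=A^r$ is not automatic; producing it is the hard part of \cite[Chapter V]{BG68} and is itself deduced from the reference-measure hypothesis via capacitability/section arguments. Without it, your steps (ii)--(iii) (which are fine for a fixed nearly Borel $B$: $E_x[e^{-\sigma_B}]$ is $1$-excessive, and excessive functions are Borel on $M_0$ because $\lambda U_\lambda f\uparrow f$ with $U_\lambda f(x)=\int u_\lambda(x,y)f(y)\,\mu(\d y)$ Borel in $x$) never get off the ground. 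Second, step (iv) is not a proof: you correctly identify that the heat-kernel representation \eqref{eq:a-trans}, and hence the absolute continuity, is only asserted for $x\in M_0=M\setminus\N$, but the claim that "$A^r\cap\N$ is governed only by the countably many entries through $M_0$" is left as a gesture. This boundary issue is real and is absorbed into the cited general theorem rather than something one can wave away; if you want a self-contained argument you must either extend the reference-measure property to all of $M$ or explicitly run the Markov property at the entrance time of $M_0$. As written, the proposal identifies the right tool but does not close either of the two steps where that tool actually has to do work.
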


\begin{proof}
Under Assumption {\bf(H)}, if $B$ is a universally measurable subset of $M$, then
$\mu(B)=0$ implies $U_{\lambda}(x,B)=0$ for any $x\in M_0$.
Namely, $\mu$ is a reference measure for the process $X$
in the sense of \cite[Definition 1.1 in Chapter V]{BG68} or \cite[p.112]{CW05}.
Hence, the assertion follows from \cite[Corollary 1.14 in Chapter V]{BG68} or
\cite[p.115, Exercise 3]{CW05}.
\end{proof}

We also need the notation for the energy of a Borel measure.
Let $\psi:[0,\infty)\to [0,\infty)$ be a Borel measurable function.
For a Borel measure $\nu$ on $M$, define
$$I^{\psi}(\nu)
 =\iint_{M\times M}\frac{1}{\psi(d(x,y))}\,\nu({\rm d}x)\,\nu({\rm d}y).
$$
Then, $I^{\psi}(\nu)$ is called the $\psi$-energy of $\nu$.
If $\psi(t)=t^s$ for some $s>0$, then we write $I^{\psi}$ as $I^s$.

\begin{proof}[Proof of Theorem {\rm\ref{thm:upper}}]
We first prove (1) under the condition that $\gamma(s_F)\leq 1$.
Let $F$ be a Borel subset of $M$.
Without loss of generality, we assume that $\gamma(s_F)>0$.
Then, by Lemma \ref{lem:gamma} and its proof,
there exists $\delta\in (0,s_F/2)$ such that $\gamma(u)>0$ for any $u\in (s_F,s_F+\delta)$.
If we fix $u\in (s_F,s_F+\delta)$, then
$\gamma(u)<\gamma(s)<\gamma(s_F)$ for any $s\in (s_F,u)$,
thanks to Lemma \ref{lem:gamma} again.
Therefore, for any $C>0$, there exists $T_0\in (0,1/2
)$ such that
$$C \leq \int_{T_0}^1\frac{(\phi^{-1}(t))^s}{V(\phi^{-1}(t))}t^{\gamma(u)-1}\,d t.$$
In particular, for any  $x,y\in M$ with $d(x,y)\le \phi^{-1}(T_0)$,
it follows by \eqref{assum:regularity} that
$$C\leq \int_{\phi(d(x,y))}^1\frac{(\phi^{-1}(t))^s}{V(\phi^{-1}(t))}t^{\gamma(u)-1}\,d t
\leq \frac{c_1d(x,y)^s\phi^{\gamma(u)}(d(x,y))}{V(d(x,y))}.$$
This implies that for any compact subset $K$ of $F$, there exists a constant $C_0:=C_0(K)>0$ such that
\begin{equation}\label{eq:compare}
\frac{C_0}{d(x,y)^s}\leq \frac{c_2\phi^{\gamma(u)}(d(x,y))}{V(d(x,y))}, \quad x,y\in K.
\end{equation}

Let $X^{\gamma(u)}$ be the $\gamma(u)$-stable subordinate process of the process $X$,
and $(\E^{\gamma(u)},\F^{\gamma(u)})$ the associated regular Dirichlet form.
We now assume that there exists a finite and nontrivial Borel measure $\nu$ on $M$ such that
it is compactly supported in $K$
and charges no set of zero capacity relative to $(\E^{\gamma(u)},\F^{\gamma(u)})$.
Then for any $s\in (s_F,u)$, since ${\cal H}^s(K)=0$,
Proposition \ref{prop:frost-1} yields $I^s(\nu)=\infty$.
Combining this with \eqref{eq:compare}, we obtain
\begin{equation}\label{eq:energy-0}
\iint_{K\times K}\frac{\phi^{\gamma(u)}(d(x,y))}{V(d(x,y))}\,\nu({\rm d}x)\,\nu({\rm d}y)=\infty.
\end{equation}
Let $u_1^{\gamma(u)}(x,y)$ be the $1$-resolvent kernel for $X^{\gamma(u)}$.
According to Lemma \ref{lem:heat-sub}(2), under {\rm(NDLHK)},
\begin{equation}\label{eq:energy-1}
\iint_{K\times K}\frac{\phi^{\gamma(u)}(d(x,y))}{V(d(x,y))}\,\nu({\rm d}x)\nu({\rm d}y)
\leq c_3 \iint_{K\times K}u_1^{\gamma(u)}(x,y)\,\nu({\rm d}x)\nu({\rm d}y)
= c_3 \int_K U_1^{\gamma(u)}\nu\,{\rm d}\nu,
\end{equation}
where the constant $c_3>0$ may depend on the set $K$.
In particular, \eqref{eq:energy-0}
and \eqref{eq:energy-1} yield
\begin{equation}\label{eq:potential-1}
\int_K U_1^{\gamma(u)}\nu\,{\rm d}\nu=\infty.
\end{equation}

Let $\nu_K^{\gamma(u)}$ be the equilibrium measure of $K$
relative to $(\E^{\gamma(u)},\F^{\gamma(u)})$.
Since $K$ is compact, we have
$${\Capa}^{\gamma(u)}(K)=\nu_K^{\gamma(u)}(K)
=\int_K U_1^{\gamma(u)}\nu_K^{\gamma(u)}\,{\rm d}\nu_K^{\gamma(u)}<\infty.
$$
On the other hand, if $\nu_K^{\gamma(u)}$ is nontrivial,
then we have $\nu_K^{\gamma(u)}(K)=\infty$
by \eqref{eq:potential-1} with $\nu=\nu_K^{\gamma(u)}$.
This  is a contradiction so that we get  ${\Capa}^{\gamma(u)}(K)=\nu_K^{\gamma(u)}(K)=0$.
By the regularity of the capacity (\cite[(2.1.6)]{FOT11}),
we further obtain ${\Capa}^{\gamma(u)}(F)=0$.
This and \cite[Theorem 4.2.1 (ii)]{FOT11} yield
$$
0
=P_x\otimes P^{\gamma(u)}(\text{$X_t^{\gamma(u)}\in F$ for some $t>0$})\
=E_x\left[P^{\gamma(u)}(\text{$\tau_t \in \{t>0: X_t\in F\}$ for some $t>0$})\right]
$$
and thus
$$P^{\gamma(u)}(\text{$\tau_t \in \{t>0:X_t(\omega)\in F\}$ for some $t>0$})=0,
\quad \text{$P_x$-a.s.\  $\omega\in \Omega$ for any $x\in M_0$.}$$
Then, by \cite[Section 3]{H71} or \cite[Lemma 2.1]{H74} again, we have
$$\dim\left\{t>0:X_t\in F\right\}\leq 1-\gamma(u), \quad \text{$P_x$-a.s.\ for any $x\in M_0$.}$$
Letting $u\downarrow s_F$ along a sequence,
we arrive at the assertion (1) provided that $\gamma(s_F)\leq 1$.

If $\gamma(s_F)>1$, then, by the proof of Lemma \ref{lem:gamma},
we can take $u>s_F$ so that $\gamma(u)=1$.
Hence the same argument as before implies that $\Capa(F)=0$, and thus
$P_x(\{t>0: X_t\in F\}=\emptyset)=1$ for any $x\in M_0$.
The proof of (1) is complete.

We next prove (2).
Without loss of generality, we assume that $s_F>0$ and $0<\gamma(s_F)<1$.
Then, by Lemma \ref{lem:gamma},
there exists a constant $\varepsilon>0$ such that for all $s\in (s_F-\varepsilon, s_F)$,
$\gamma(s_F)<\gamma(s)<1$.
We now fix such $s\in (s_F-\varepsilon,s_F)$.
Then
the regularity of the Hausdorff measure
yields ${\cal H}^s(K)>0$ for some compact subset $K$ of $F$.
Under Assumption \ref{assum:compact}, we can further use Proposition \ref{prop:frost-2} to show that
there exists a finite and nontrivial Borel measure $\nu_K^s$ on $M$ such that
$\supp[\nu_K^s]\subset K$ and $I^s(\nu_K^s)<\infty$.

On the other hand,
Lemma \ref{lem:gamma} implies again that  $\gamma(s)<\gamma(v)<1$
for any $v\in (s_F-\varepsilon, s)$.
Then for any $v\in (s_F-\varepsilon, s)$ and  $T\in (0,1/2)$,
$$\infty>\int_0^1\frac{(\phi^{-1}(t))^s}{V(\phi^{-1}(t))}t^{\gamma(v)-1}\,{\rm d}t
\geq \int_T^{2T}\frac{(\phi^{-1}(t))^s}{V(\phi^{-1}(t))}t^{\gamma(v)-1}\,{\rm d}t
\geq \frac{c_1 (\phi^{-1}(T))^s T^{\gamma(v)}}{V(\phi^{-1}(T))},$$
which implies that for some $c_2>0$,
\begin{equation}\label{eq:upper-0}
\frac{\phi^{\gamma(v)}(d(x,y))}{V(d(x,y))}\leq\frac{c_2}{d(x,y)^s}, \quad x,y\in K.
\end{equation}

Let $X^{\gamma(v)}$ be the $\gamma(v)$-stable subordinate process of the process $X$.
Since $\gamma(v)<\gamma(0)$, it follows by
Lemma \ref{lem:heat-sub}(3) and \eqref{assum:regularity} that
under {\rm(WUHK)},
there exists a constant $c_3>0$ such that for all $x,y\in K$,
$$\frac{\phi^{\gamma(v)}(d(x,y))}{V(d(x,y))}
\geq  c_3u_1^{\gamma(v)}(x,y).$$
Combining this with  \eqref{eq:upper-0}, we have for some $c_4>0$,
$$\frac{1}{d(x,y)^s}\geq c_4 u_1^{\gamma(v)}(x,y),\quad  x,y\in K.$$
Therefore,
$$\infty>I^s(\nu_K^s)
=\iint_{K\times K}\frac{1}{d(x,y)^s}\,\nu_K^s({\rm d}x)\nu_K^s({\rm d}y)
\geq c_4
\iint_{K\times K}u_1^{\gamma(v)}(x,y)
\,\nu_K^s({\rm d}x)\nu_K^s({\rm d}y).$$
Then, by \cite[Exercise 4.2.2]{FOT11}, the measure $\nu_K^s$ is of finite energy integral
relative to $X^{\gamma(v)}$.
Moreover, since $\nu_K^{s}$ is nontrivial,
${\Capa}^{\gamma(v)}(F)\geq \Capa^{\gamma(v)}(K)>0$ thanks to \cite[Theorem 2.2.3]{FOT11}.
In particular, for all $s\in (s_F-\varepsilon, s_F)$, ${\Capa}^{\gamma(s)}(F)>0$.

We now follow the argument of \cite[Theorem 1]{L95}.
Let $\sigma_F$ be the hitting time of $X^{\gamma(s)}$ to $F$, i.e.,
$\sigma_F=\inf\{t>0 :X_t^{\gamma(s)}\in F\}$.
Define
$$F_{\gamma(s)}=\left\{x\in M : P_x\otimes P^{\gamma(s)}(\sigma_F<\infty)=1\right\},
\quad F_{\gamma(s)}^r=\left\{x\in M: P_x\otimes P^{\gamma(s)}(\sigma_F=0)=1\right\}.$$
Namely, $F_{\gamma(s)}^r$ is the totality of regular points of $F$
relative to the process $X^{\gamma(s)}$.
By Lemma \ref{lem:borel} applied to the process $X^{\gamma(s)}$,  $F_{\gamma(s)}^r$ is a Borel subset of $M$.
Since $F\setminus F_{\gamma(s)}^r$ is exceptional
(see \cite[Theorem 4.1.3 and Theorem A.2.6 (i)]{FOT11}),
it follows from \cite[Theorem 4.2.1 (ii)]{FOT11} that ${\Capa}^{\gamma(s)}(F\setminus F_{\gamma(s)}^r)=0$.
This and ${\Capa}^{\gamma(s)}(F)>0$ yield
${\Capa}^{\gamma(s)}(F_{\gamma(s)}^r)>0$  and so $F_{\gamma(s)}^r\ne \emptyset$.
On the other hand,
since it follows from \cite[Proposition 2.8 (p.73) and Proposition 1.4 (p.197)]{BG68}
that the function $g(x):=P_x\otimes P^{\gamma(s)}(\sigma_F<\infty)$ is Borel measurable and excessive,
the set $F_{\gamma(s)}$ is also Borel measurable.
As $F_{\gamma(s)}^r\subset F_{\gamma(s)}$ by definition,
$F\setminus F_{\gamma(s)}$ is also exceptional and
${\Capa}^{\gamma(s)}(F_{\gamma(s)})\geq {\Capa}^{\gamma(s)}(F_{\gamma(s)}^r)>0$.

Since ${\Capa}^{\gamma(s)}(F_{\gamma(s)})>0$,
the regularity of the capacity (\cite[(2.1.6)]{FOT11}) implies that
there exists a compact subset $K$ of $F_{\gamma(s)}$ such that
${\Capa}^{\gamma(s)}(K)>0$.
Then, by \cite[Theorem 2.2 (i)]{O02}, there exists a constant $c_5>0$ such that
${\Capa}(K)\geq c_5{\Capa}^{\gamma(s)}(K)>0$.
We here note that $X$ is irreducible and recurrent
by Assumption ({\bf H}), {\rm (NDLHK)} and \eqref{eq:int-rec} with the comment just before Lemma \ref{lem:green}.
Hence, by \cite[Theorem 4.7.1 (iii) and Exercise 4.7.1]{FOT11},
we have $P_x(\sigma_K<\infty)=1$ for any $x\in M$.
Noting that $X_{\sigma_K}\in K$ and  $K\subset F_{\gamma(s)}$,
we further obtain by the strong Markov property of $X$,
\begin{equation}\label{eq:cpt}
\begin{split}
1=P_x(\sigma_K<\infty)
&=E_x\left[P_{X_{\sigma_K}}\otimes P^{\gamma(s)}(\text{$X_{\tau_t}\in F$ for some $t>0$})\right]\\
&=E_x\left[E_{X_{\sigma_K}}\left[P^{\gamma(s)}\left(\text{$\tau_t\in \left\{u>0 : X_u\in F\right\}$ for some $t>0$}\right)\right]\right]\\
&=E_x\left[P^{\gamma(s)}\left(\text{$\tau_t \in \left\{u>0 : X_u\circ\theta_{\sigma_K}\in F\right\}$ for some $t>0$}\right)\right]\\
&\leq E_x\left[P^{\gamma(s)}\left(\text{$\tau_t \in \left\{u>0 : X_u\in F\right\}$ for some $t>0$}\right)\right].
\end{split}
\end{equation}
Thus
$$P^{\gamma(s)}\left(\text{$\tau_t \in \left\{u>0 : X_u\in F\right\}$ for some $t>0$}\right)=1,
\quad \text{$P_x$-a.s.\ for any $x\in M$.}$$
Then, by \cite[Section 3]{H71} or \cite[Lemma 2.1]{H74} again,
$$\dim\left\{t>0: X_t\in F\right\}\geq 1-\gamma(s),
\quad \text{$P_x$-a.s.\  for any $x\in M$.}$$
Letting $s\uparrow s_F$ along a sequence, we have by Lemma \ref{lem:gamma},
$$\dim\left\{t>0: X_t\in F\right\}\geq 1-\gamma(s_F),
\quad \text{$P_x$-a.s.\  for any $x\in M$.}$$
Combining this with Theorem \ref{thm:upper}, we complete the proof.
\end{proof}

\begin{exam}\label{exam:leve2}
Suppose that the process $X$ satisfies Assumption {\bf (H)}, {\rm (WUHK)} and {\rm(HR)}.
Then, by Remark $\ref{rem:heat-kernel}$ {\rm (iii)},
$X$ satisfies {\rm (NDLHK)} as well,
and all of $M_0$ can be replaced with $M$ under
these conditions.
We now impose the same conditions on the functions $V(x,r)$ and $\phi(r)$ as in Example $\ref{exam:level}$.
Then $\gamma(s)=(d_1-s)/\alpha$ for $s\in [0,d_1]$,
and \eqref{assum:regularity} is fulfilled.
We also see that \eqref{eq:int-rec} is valid if and only if $0<d_2\leq \beta$.

Let $F\subset M$ be a Borel set with $s_F=\dim_{{\cal H}}(F)>0$.
Then $0\leq \gamma(s_F)\leq 1$ if and only if $d_1-\alpha\le s_F\leq d_1$.
Therefore, if $F$ satisfies $s_F>0$, $d_1-\alpha\le s_F\leq d_1$ and $0<d_2\leq \beta$, then
$$\dim_{{\cal H}}\{t>0 : X_t\in F\}=1-\frac{d_1-s_F}{\alpha}, \quad \text{$P_x$-a.s.\ for any $x\in M$.}$$
\end{exam}

With Examples \ref{exam:level} and \ref{exam:leve2},
one can easily get the first assertion (1)
in Theorem \ref{Thm:main},
also thanks to Remark \ref{rem2:heat-kernel}.

\section{Hausdorff dimensions of collision time sets
}\label{section4}

\subsection{Resolvent of stable-subordinate direct-product processes}

For $i=1, 2$, let $X^i:=(\{X_t^i\}_{t\geq 0}, \{P_x\}_{x\in M})$
be a $\mu$-symmetric Hunt process on $M$
generated by a regular Dirichlet form $(\E^i,\F^i)$ on $L^2(M;\mu)$.
We assume that $X^1$ and $X^2$ are independent, and satisfy Assumption {\bf (H)}.
For each $i=1,2$, let $\N^i$ denote the corresponding properly exceptional set and $M_0^i:=M\setminus \N^i$,
and let $p^i(t,x,y)$ be the heat kernel of $X^i$.

For any $t\geq 0$ and $x=(x_1,x_2)\in M\times M$, define
$$X_t=(X_t^1,X_t^2), \quad P_x=P_{x_1}^1\otimes P_{x_2}^2.$$
Then, $X:=(\{X_t\}_{t\geq 0}, \{P_x\}_{x\in M\times M})$ is a $\mu\otimes\mu$-symmetric
Hunt process on $M\times M$.
Let $(\E,\F)$ be the associated  Dirichlet form on $L^2(M\times M;\mu\otimes\mu)$.
Then, by \cite[Theorems 3.1 and 5.1]{O97}, $(\E,\F)$ is regular and irreducible.
We also see by \cite[Theorem 4.3 (3)]{O97}
that
the set $M\times M\setminus(M_0^1\times M_0^2)$ has zero capacity relative to $(\E,\F)$.
By combining this with the relation
$$P_x(X_t\in M\times M)=P_{x_1}(X_t^1\in M)P_{x_2}(X_t^2\in M)=1,
\quad t\geq 0, \ x=(x_1,x_2)\in M_0^1\times M_0^2,$$
$(\E,\F)$ is also conservative by \cite[Exercise 4.5.1]{FOT11}.
The heat kernel of $X$ is given by
$$p(t,x,y)=p^1(t,x_1,y_1)p^2(t,x_2,y_2), \quad t\geq 0, \ x=(x_1,x_2), y=(y_1,y_2)\in M_0^1\times M_0^2.$$

For $\gamma\in (0,1)$,
let $X^{\gamma}:=((X_t^{\gamma})_{t\geq 0}, \{P_x^{\gamma}\}_{x\in M\times M})$
be a subordinate process of $X$
with respect to the $\gamma$-stable subordinator
$S^{\gamma}=(\{\tau_t\}_{t\geq 0},P^{\gamma})$, that is, for any $t\geq 0$ and $x\in M\times M$,
$$X_t^{\gamma}=X_{\tau_t}=(X_{\tau_t}^1,X_{\tau_t}^2), \quad
P_x^{\gamma}=P_x\otimes P^{\gamma}.$$
Let $(\E^{\gamma},\F^{\gamma})$ be the associated  Dirichlet form on $L^2(M\times M;\mu\otimes\mu)$.
Then, by \cite[Theorems 2.1(ii) and 3.1(i)(ii)]{O02},
$(\E^{\gamma},\F^{\gamma})$ is also regular, irreducible and conservative.
The heat kernel of $X^{\gamma}$ is given by
$$q^{\gamma}(t,x,y)=\int_0^{\infty}p(s,x,y)\pi_t(s)\,{\rm d}s,
\quad  t>0, \ x,y\in M_0^1\times M_0^2,$$
where $\pi_t(s)$ is the density function of $S^\gamma_t$.

For $\lambda\geq 0$, let $u_{\lambda}^{\gamma}(x,y)$
be the $\lambda$-resolvent density of $X^{\gamma}$,
i.e., for $x,y\in M_0^1\times M_0^2$,
\begin{equation}\label{eq:res}
u_{\lambda}^{\gamma}(x,y)
=\int_0^{\infty}e^{-\lambda t}q^{\gamma}(t,x,y)\,{\rm d}t
=\int_0^{\infty}e^{-\lambda t} \int_0^{\infty}p(s,x,y)\pi_t(s)\,{\rm d}s \,{\rm d}t.
\end{equation}

In the following, we will assume that
the processes
$X^1$ and $X^2$ satisfy
the common one of the conditions in
Definition \ref{def:heat-kernel-2}.
Under this assumption, we use the notations $\phi_i$, $\alpha_{i1}$ and $\alpha_{i2}$, respectively,
to denote the corresponding scaling function $\phi$ and the associated indices $\alpha_1$, $\alpha_2$.
For $x=(x_1,x_2)\in M\times M$ and $y=(y_1,y_2)\in M\times M$,  let
$$\phi_d(x,y)=\phi_1(d(x_1,y_1))\vee \phi_2(d(x_2,y_2))$$
and $\phi_d^{\gamma}(x,y):=(\phi_d(x,y))^{\gamma}$.
It is clear that
$$\frac{1}{2}\left\{
\phi_1(d(x_1,y_1))+\phi_2(d(x_2,y_2))\right\}
\leq \phi_d(x,y)\leq \phi_1(d(x_1,y_1))+\phi_2(d(x_2,y_2)).$$

We first show the lower bound for the resolvent density of the process $X^{\gamma}$.
\begin{lem}\label{lem:d-res-lower}
Suppose that the independent processes $X^1$ and $X^2$ satisfy Assumption {\bf (H)} and  {\rm (NDLHK)}.
Then, for any $\gamma\in (0,1]$,
there exist positive  constants $c_1$ and $c_2$ such that for any $x,y\in M_0^1\times M_0^2$,
$$
u_1^{\gamma}(x,y)\geq
c_1\int_{\phi_d^{\gamma}(x,y)}^{\infty}
\frac{e^{-t}}{V(x_1,(\phi_1^{\gamma})^{-1}(t))V(x_2,(\phi_2^{\gamma})^{-1}(t))}\,{\rm d}t, \quad \phi_d(x,y)\leq 1
$$
and
$$
u_1^{\gamma}(x,y)\geq
\frac{c_2}{V(x_1,\phi_1^{-1}(\phi_d(x,y)))
V(x_2,\phi_2^{-1}(\phi_d(x,y)))\phi_d^{\gamma}(x,y)}, \quad \phi_d(x,y)\geq 1.
$$
\end{lem}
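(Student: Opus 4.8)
The plan is to mimic the proof of Lemma~\ref{lem:heat-sub}(2) for the one-factor case, but now tracking the product structure $p(s,x,y)=p^1(s,x_1,y_1)\,p^2(s,x_2,y_2)$. The starting point is the subordinated heat kernel
$$
q^{\gamma}(t,x,y)=\int_0^{\infty}p^1(s,x_1,y_1)\,p^2(s,x_2,y_2)\,\pi_t(s)\,{\rm d}s .
$$
First I would apply the lower bound \eqref{eq:sub-lower} for $\pi_t(s)$, valid for $s\ge t^{1/\gamma}$, together with the near-diagonal lower bound {\rm(NDLHK)} for each factor. The key observation is that $p^i(s,x_i,y_i)\succeq 1/V(x_i,\phi_i^{-1}(s))$ holds as soon as $d(x_i,y_i)\le c_0\phi_i^{-1}(s)$, i.e. as soon as $s\ge \phi_i(d(x_i,y_i)/c_0)$, and — after the usual reduction to $c_0=1$ and using the scaling \eqref{eq:sca-phi} — this is comparable to requiring $s\gtrsim \phi_i(d(x_i,y_i))$. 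Both factor lower bounds therefore hold simultaneously once $s\ge \phi_d(x,y)=\phi_1(d(x_1,y_1))\vee\phi_2(d(x_2,y_2))$ (up to a harmless multiplicative constant absorbed by scaling). Hence, restricting the $s$-integral to $s\ge t^{1/\gamma}\vee\phi_d(x,y)$, I get
$$
q^{\gamma}(t,x,y)\succeq t\int_{t^{1/\gamma}\vee\phi_d(x,y)}^{\infty}
\frac{1}{V(x_1,\phi_1^{-1}(s))\,V(x_2,\phi_2^{-1}(s))\,s^{1+\gamma}}\,{\rm d}s .
$$

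Next I would evaluate this integral in the two regimes exactly as in Lemma~\ref{lem:heat-sub}(2). When $\phi_d(x,y)\le (\phi_d^{\gamma})^{-1}(t)=t^{1/\gamma}$, the lower limit is $t^{1/\gamma}$, and a dyadic decomposition of $[t^{1/\gamma},\infty)$ together with the volume-doubling consequence \eqref{eq:v-inverse} applied to each of the two volume factors (the product of two polynomially controlled quantities is still polynomially controlled) — this is the same geometric-series computation as in \eqref{eq:int-sum-1} — yields
$$
q^{\gamma}(t,x,y)\succeq \frac{1}{V(x_1,(\phi_1^{\gamma})^{-1}(t))\,V(x_2,(\phi_2^{\gamma})^{-1}(t))},
$$
using $(\phi_i^{\gamma})^{-1}(t)=\phi_i^{-1}(t^{1/\gamma})$. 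When $\phi_d(x,y)\ge t^{1/\gamma}$, the lower limit is $\phi_d(x,y)$, and the same dyadic estimate starting from $\phi_d(x,y)$ gives
$$
q^{\gamma}(t,x,y)\succeq \frac{t}{V(x_1,\phi_1^{-1}(\phi_d(x,y)))\,V(x_2,\phi_2^{-1}(\phi_d(x,y)))\,\phi_d^{\gamma}(x,y)} .
$$
This is the product-analogue of the two-part lower bound in Lemma~\ref{lem:heat-sub}(2).

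Finally I would integrate against $e^{-t}$ to pass to $u_1^{\gamma}(x,y)=\int_0^{\infty}e^{-t}q^{\gamma}(t,x,y)\,{\rm d}t$, splitting at $t=\phi_d^{\gamma}(x,y)$. On $(\phi_d^{\gamma}(x,y),\infty)$ the first regime applies and contributes the first displayed lower bound in the statement; on $(0,\phi_d^{\gamma}(x,y))$ the second regime applies, contributing a term $\succeq \big(V(x_1,\phi_1^{-1}(\phi_d(x,y)))\,V(x_2,\phi_2^{-1}(\phi_d(x,y)))\,\phi_d^{\gamma}(x,y)\big)^{-1}\int_0^{\phi_d^{\gamma}(x,y)}e^{-t}t\,{\rm d}t$, and the elementary bound $\int_0^{r}e^{-t}t\,{\rm d}t\asymp r^2\wedge 1$ (the analogue of \eqref{eq:scale-comp}) finishes both cases: when $\phi_d(x,y)\le 1$ the integral-form term dominates, and when $\phi_d(x,y)\ge 1$ the second term gives exactly the second displayed bound since $\int_0^{\phi_d^\gamma(x,y)}e^{-t}t\,{\rm d}t\asymp 1$. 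The main obstacle I anticipate is purely bookkeeping: verifying that the simultaneous-positivity threshold for the two {\rm(NDLHK)} factors really is comparable to $\phi_d(x,y)$ (this needs the two-sided scaling \eqref{eq:sca-phi} to reconcile $\phi_i(d(x_i,y_i)/c_0)$ with $\phi_i(d(x_i,y_i))$, and the fact that $\max$ of the two thresholds is comparable to $\phi_d$), and checking that the product of two volume terms still obeys the polynomial bounds needed to sum the dyadic series — neither is hard, but both must be stated carefully since the two factors have possibly different scaling indices $\alpha_{i1},\alpha_{i2}$.
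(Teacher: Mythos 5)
Your proposal is correct and follows essentially the same route as the paper: apply \eqref{eq:sub-lower} and {\rm(NDLHK)} to both factors to reduce to the double integral with inner lower limit $t^{1/\gamma}\vee\phi_d(x,y)$, split the $t$-integration at $\phi_d^{\gamma}(x,y)$, sum the dyadic series as in \eqref{eq:int-sum-1}, and finish with $\int_0^r e^{-t}t\,{\rm d}t\asymp r^2\wedge 1$ as in \eqref{eq:scale-comp}. The bookkeeping points you flag (reconciling the threshold $\phi_i(d(x_i,y_i)/c_0)$ with $\phi_d(x,y)$ via \eqref{eq:sca-phi}, and the polynomial control of the product of two volume factors) are exactly the ones the paper disposes of by normalizing $c_0=1$ and invoking \eqref{eq:v-inverse} factorwise.
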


\begin{proof}
Without loss of generality,
we assume that the processes $X^1$ and $X^2$ satisfy {\rm (NDLHK)} with the constant $c_0=1$ involved in.
Then, by \eqref{eq:sub-lower} and \eqref{eq:res},
\begin{equation}\label{eq:d-res-lower}
\begin{split}
u_1^{\gamma}(x,y)
&\geq c_1\int_0^{\infty}
e^{-t}t\left(\int_{t^{1/\gamma}\vee \phi_d(x,y)}^{\infty}
\frac{1}{V(x_1,\phi_1^{-1}(s))V(x_2,\phi_2^{-1}(s))s^{1+\gamma}}\,{\rm d}s\right)\,{\rm d}t\\
&=c_1\left(\int_0^{\phi_d^{\gamma}(x,y)}e^{-t}t\,{\rm d}t\right)
\left(\int_{\phi_d(x,y)}^{\infty}\frac{1}{V(x_1,\phi_1^{-1}(s))V(x_2,\phi_2^{-1}(s))s^{1+\gamma}}\,{\rm d}s\right)\\
&\quad +c_1\int_{\phi_d^{\gamma}(x,y)}^{\infty}
e^{-t}t\left(\int_{t^{1/\gamma}}^{\infty}\frac{1}{V(x_1,\phi_1^{-1}(s))V(x_2,\phi_2^{-1}(s))s^{1+\gamma}}\,{\rm d}s\right)\,{\rm d}t
=:{\rm (I)}+{\rm (II)}.
\end{split}
\end{equation}
Following \eqref{eq:int-sum-1} and \eqref{eq:scale-comp}, we have
$$
{\rm (I)}\asymp
\frac{\phi_d^{2\gamma}(x,y)\wedge 1}
{V(x_1,\phi_1^{-1}(\phi_d(x,y)))V(x_2,\phi_2^{-1}(\phi_d(x,y)))\phi_d^{\gamma}(x,y)}
$$
and
$${\rm (II)}\asymp
\int_{\phi_d^{\gamma}(x,y)}^{\infty}
\frac{e^{-t}}{V(x_1,\phi_1^{-1}(t^{1/\gamma}))V(x_2,\phi_2^{-1}(t^{1/\gamma}))}\,{\rm d}t
=\int_{\phi_d^{\gamma}(x,y)}^{\infty}
\frac{e^{-t}}{V(x_1,(\phi_1^{\gamma})^{-1}(t))V(x_2,(\phi_2^{\gamma})^{-1}(t))}\,{\rm d}t.$$
Hence, the proof is complete.
\end{proof}

We next show the upper bound of the resolvent of $X^{\gamma}$.
\begin{lem}\label{lem:prod-res}
Suppose that the independent processes $X^1$ and $X^2$ satisfy Assumption {\bf (H)} and {\rm (WUHK)}.
For a fixed constant $\gamma\in (0,1]$,
if there exists a constant $c_1>0$ such that for $i=1,2$,
\begin{equation}\label{eq:prod-res-cond}
\int_0^T\frac{t^{\gamma}}{V(w,\phi_i^{-1}(t))}\,{\rm d}t
\leq \frac{c_1T^{1+\gamma}}{V(w,\phi_i^{-1}(T))}, \quad w\in M, \ T\in (0,1],
\end{equation}
then there exists a constant $c_2>0$ such that for any $x,y\in M_0^1\times M_0^2$,
\begin{equation}\label{eq:prod-res-upper-1}
u_1^{\gamma}(x,y)\leq	
c_2\int_{\phi_d^{\gamma}(x,y)}^2
\frac{1}{V(x_1,(\phi_1^{\gamma})^{-1}(t))V(x_2,(\phi_2^{\gamma})^{-1}(t))}\,{\rm d}t,
\quad \phi_d(x,y)\le 1.
\end{equation}
\end{lem}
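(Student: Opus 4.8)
\emph{Proof sketch.}
The plan is to exchange the order of integration in \eqref{eq:res}, so as to reduce $u_1^\gamma(x,y)$ to a single integral of the product heat kernel against a fixed weight. Since $p(s,x,y)=p^1(s,x_1,y_1)p^2(s,x_2,y_2)\ge0$, Tonelli's theorem gives
\[
u_1^\gamma(x,y)=\int_0^\infty p(s,x,y)\,w(s)\,\d s,\qquad w(s):=\int_0^\infty e^{-t}\pi_t(s)\,\d t,
\]
and \eqref{eq:sub-upper} together with $\int_0^\infty te^{-\lambda t}\,\d t=\lambda^{-2}$ yields $w(s)\preceq s^{\gamma-1}(1+s^{\gamma})^{-2}$, hence $w(s)\preceq s^{\gamma-1}$ and $w(s)\preceq s^{-1-\gamma}$ for every $s>0$. (For $\gamma=1$ the subordinator is trivial, $u_1^{\gamma}(x,y)=\int_0^\infty e^{-t}p(t,x,y)\,\d t$, and the argument below runs with $w(s)=e^{-s}$.) By symmetry we may assume $a:=\phi_d(x,y)=\phi_2(d(x_2,y_2))\ge\phi_1(d(x_1,y_1))$; by hypothesis $0<a\le1$ (the bound being trivial when $\phi_d(x,y)=0$).

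Next I would split $\int_0^\infty=\int_0^a+\int_a^1+\int_1^\infty$ and estimate $p(s,x,y)$ on each range via {\rm(WUHK)}. On $[a,1]$ one has $p(s,x,y)\preceq\big(V(x_1,\phi_1^{-1}(s))V(x_2,\phi_2^{-1}(s))\big)^{-1}$, so with $w(s)\preceq s^{\gamma-1}$ and the substitution $t=s^{\gamma}$,
\[
\int_a^1 p(s,x,y)w(s)\,\d s\preceq\int_{a^{\gamma}}^1\frac{\d t}{V(x_1,(\phi_1^{\gamma})^{-1}(t))V(x_2,(\phi_2^{\gamma})^{-1}(t))},
\]
which is part of the right-hand side of \eqref{eq:prod-res-upper-1}. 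On $[1,\infty)$ I would use the crude bound $p(s,x,y)\preceq\big(V(x_1,1)V(x_2,1)\big)^{-1}$ (valid since $\phi_i^{-1}(s)\ge1$ there) and $w(s)\preceq s^{-1-\gamma}$; the resulting constant multiple of $\big(V(x_1,1)V(x_2,1)\big)^{-1}$ is again $\preceq\int_1^2\big(V(x_1,(\phi_1^{\gamma})^{-1}(t))V(x_2,(\phi_2^{\gamma})^{-1}(t))\big)^{-1}\,\d t$, because $(\phi_i^{\gamma})^{-1}(t)\simeq1$ on $[1,2]$ by \eqref{eq:sca-phi-inv}, whence $V(x_i,(\phi_i^{\gamma})^{-1}(t))\simeq V(x_i,1)$ there by \eqref{eq:vol-v}.

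The crux is the range $[0,a]$, where the most off-diagonal contribution sits. There I would bound $p^1(s,x_1,y_1)\preceq V(x_1,\phi_1^{-1}(s))^{-1}$ by the first member of the minimum in {\rm(WUHK)} but $p^2(s,x_2,y_2)\preceq s\,\big(V(x_2,\phi_2^{-1}(a))\,a\big)^{-1}$ by the second member (recall $\phi_2(d(x_2,y_2))=a$), and with $w(s)\preceq s^{\gamma-1}$ reduce this piece to
\[
\frac{1}{V(x_2,\phi_2^{-1}(a))\,a}\int_0^a\frac{s^{\gamma}\,\d s}{V(x_1,\phi_1^{-1}(s))}.
\]
Now assumption \eqref{eq:prod-res-cond}, applied with $i=1$, $w=x_1$ and $T=a\in(0,1]$, bounds the integral by $c_1a^{1+\gamma}V(x_1,\phi_1^{-1}(a))^{-1}$, so the $\int_0^a$ piece is $\preceq a^{\gamma}\big(V(x_1,\phi_1^{-1}(a))V(x_2,\phi_2^{-1}(a))\big)^{-1}$. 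This last quantity is in turn $\preceq$ the right-hand side of \eqref{eq:prod-res-upper-1}: restricting that integral to $t\in[a^{\gamma},(2a)^{\gamma}]\subset[a^{\gamma},2]$ and using \eqref{eq:vol-v} and \eqref{eq:sca-phi-inv} across one dyadic scale makes it $\succeq a^{\gamma}\big(V(x_1,\phi_1^{-1}(a))V(x_2,\phi_2^{-1}(a))\big)^{-1}$. Adding the three pieces yields \eqref{eq:prod-res-upper-1}. The main obstacle is precisely this $[0,a]$ estimate: one must recognise that \eqref{eq:prod-res-cond} is the hypothesis tailored to absorb the off-diagonal contribution, and that it is cleanest to use the off-diagonal bound only in the coordinate realising $\phi_d(x,y)$ and the (always valid) near-diagonal bound in the other — using off-diagonal bounds in both coordinates would force an extra split at $\phi_1(d(x_1,y_1))$ and make the role of \eqref{eq:prod-res-cond} awkward. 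Since a priori either coordinate may realise $\phi_d(x,y)$, \eqref{eq:prod-res-cond} is needed for both $i=1,2$, exactly as in the statement.
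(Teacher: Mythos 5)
Your proposal is correct and follows essentially the same route as the paper: interchange the $t$- and $s$-integrations to reduce to $\int_0^\infty p(s,x,y)w(s)\,\d s$ with $w(s)\preceq s^{\gamma-1}(1+s^\gamma)^{-2}$, split at $s=\phi_d(x,y)$, use the on-diagonal part of {\rm(WUHK)} above the split and the off-diagonal bound only in the coordinate realising $\phi_d(x,y)$ below it, and invoke \eqref{eq:prod-res-cond} to absorb the resulting $\int_0^a s^\gamma V(x_1,\phi_1^{-1}(s))^{-1}\,\d s$. Your explicit comparison of the short-time piece and of the tail $[1,\infty)$ with the right-hand side of \eqref{eq:prod-res-upper-1} just makes precise steps the paper leaves implicit.
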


\begin{proof}
For any $x,y\in M_0^1\times M_0^2$, write
\begin{equation}\label{eq:j1-j3}
\begin{split}
 u_1^{\gamma}(x,y)
&=\int_0^{\infty}
e^{-t}\left(\int_0^{\infty}p(s,x,y)\pi_t(s)\,{\rm d}s\right)\,{\rm d}t\\
&=\int_0^{\infty}
e^{-t}\left(\int_{\phi_d(x,y)}^{\infty}p(s,x,y)\pi_t(s)\,{\rm d}s\right)\,{\rm d}t
+\int_0^{\infty}
e^{-t}\left(\int_0^{\phi_d(x,y)}p(s,x,y)\pi_t(s)\,{\rm d}s\right)\,{\rm d}t\\
&=:J_1+J_2.
\end{split}
\end{equation}
Then, by (WUHK) and \eqref{eq:sub-upper} with the Fubini theorem,
\begin{equation*}
\begin{split}
J_1
&\leq c_1\int_0^{\infty}e^{-t}t
\left(\int_{\phi_d(x,y)}^{\infty}\frac{e^{-t/s^{\gamma}}}{V(x_1,\phi_1^{-1}(s))V(x_2,\phi_2^{-1}(s))s^{1+\gamma}}\,{\rm d}s\right)\,{\rm d}t\\
&=c_1\int_{\phi_d(x,y)}^{\infty}\frac{1}{V(x_1,\phi_1^{-1}(s))V(x_2,\phi_2^{-1}(s))s^{1+\gamma}}
\left(\int_0^{\infty}e^{-t(1+1/s^{\gamma})}t\,{\rm d}t\right)\,{\rm d}s\\
&=c_1\int_{\phi_d(x,y)}^{\infty}
\frac{1}{V(x_1,\phi_1^{-1}(s))V(x_2,\phi_2^{-1}(s))}\frac{s^{\gamma-1}}{(1+s^{\gamma})^2}\,{\rm d}s.
\end{split}
\end{equation*}
Since $\gamma>0$, we have
$$\int_{2^{1/\gamma}}^{\infty}\frac{1}{V(x_1,\phi_1^{-1}(s))V(x_2,\phi_2^{-1}(s))}\frac{s^{\gamma-1}}{(1+s^{\gamma})^2}\,{\rm d}s
\leq \int_{2^{1/\gamma}}^{\infty}\frac{1}{V(x_1,\phi_1^{-1}(s))V(x_2,\phi_2^{-1}(s))s^{1+\gamma}}\,{\rm d}s<\infty.$$
If $\phi_d(x,y)\le 1$, then
\begin{equation*}
\begin{split}
&\int_{\phi_d(x,y)}^{2^{1/\gamma}} \frac{1}{V(x_1,\phi_1^{-1}(s))V(x_2,\phi_2^{-1}(s))}\frac{s^{\gamma-1}}{(1+s^{\gamma})^2}\,{\rm d}s
\asymp \int_{\phi_d(x,y)}^{2^{1/\gamma}} \frac{s^{\gamma-1}}{V(x_1,\phi_1^{-1}(s))V(x_2,\phi_2^{-1}(s))}\,{\rm d}s\\
&=\frac{1}{\gamma}\int_{\phi_d^{\gamma}(x,y)}^2 \frac{1}{V(x_1,(\phi_1^{\gamma})^{-1}(s))V(x_2,(\phi_2^{\gamma})^{-1}(s))}\,{\rm d}s.
\end{split}
\end{equation*}
Therefore, there exists a constant $c_2>0$ such that
\begin{equation}\label{eq:j1}
J_1\leq c_2\int_{\phi_d^{\gamma}(x,y)}^2
\frac{1}{V(x_1,(\phi_1^{\gamma})^{-1}(s))V(x_2,(\phi_2^{\gamma})^{-1}(s))}\,{\rm d}s, \quad \phi_d(x,y)\le 1.
\end{equation}

To prove the upper bound of $J_2$, we assume that $\phi_1(d(x_1,y_1))\leq \phi_2(d(x_2,y_2))\leq 1$.
By \eqref{eq:prod-res-cond},
$$
\int_0^{\phi_2(d(x_2,y_2))}\frac{s^{\gamma}}{V(x_1,\phi_1^{-1}(s))}\,{\rm d}s
\leq \frac{c_3\phi_2^{\gamma+1}(d(x_2,y_2))}{V(x_1,\phi^{-1}(\phi_2(d(x_2,y_2))))}.
$$
Then, by (WUHK) and \eqref{eq:sub-upper} with the Fubini theorem again,
\begin{equation*}
\begin{split}
J_2
&\leq \frac{c_4}{V(x_2,d(x_2,y_2))\phi_2(d(x_2,y_2))}
\int_0^{\infty}e^{-t}t
\left(\int_0^{\phi_2(d(x_2,y_2))}\frac{e^{-t/s^{\gamma}}}{V(x_1,\phi_1^{-1}(s))s^{\gamma}}\,{\rm d}s\right)
\, {\rm d}t\\
&=\frac{c_4}{V(x_2,d(x_2,y_2))\phi_2(d(x_2,y_2))}
\int_0^{\phi_2(d(x_2,y_2))}\frac{s^{\gamma}}{V(x_1,\phi_1^{-1}(s))(1+s^{\gamma})^2}\,{\rm d}s\\
&\leq \frac{c_5 \phi_2^{\gamma}(d(x_2,y_2))}
{V(x_1,\phi_1^{-1}(\phi_2(d(x_2,y_2))))V(x_2,d(x_2,y_2))}.
\end{split}
\end{equation*}
A similar bound as above is valid even for $\phi_2(d(x_2,y_2))\leq \phi_1(d(x_1,y_1))\leq 1$,
and thus
$$
J_2\leq \frac{c_6 \phi_d^{\gamma}(x,y)}
{V(x_1,\phi_1^{-1}(\phi_d(x,y)))V(x_2,\phi_2^{-1}(\phi_d(x,y)))},
\quad \phi_d(x,y)\leq 1.
$$
Combining this with \eqref{eq:j1}, we have \eqref{eq:prod-res-upper-1}.
\end{proof}

Before the proof of the Green function estimates of $X^{\gamma}$,
we give a criterion for recurrence or transience.
For $\gamma\in (0,1]$, let
$$
J^{\gamma}(x)=\int_1^{\infty}\frac{1}{V(x_1,(\phi_1^{\gamma})^{-1}(t))V(x_2,(\phi_2^{\gamma})^{-1}(t))}\,{\rm d}t,
 \quad x\in M\times M.
$$
Then, by the change of variables formula with $s=t^{1/\gamma}$, we have
$$J^{\gamma}(x)=\gamma\int_1^{\infty}\frac{t^{\gamma-1}}{V(x_1,\phi_1^{-1}(t))V(x_2,\phi_2^{-1}(t))}\,{\rm d}t.$$
\begin{lem}\label{lem:trans}
Suppose that the independent processes $X^1$ and $X^2$ satisfy Assumption {\bf (H)},
and let $\gamma\in (0,1]$.
\begin{enumerate}
\item[{\rm (1)}]
If $X^1$ and $X^2$ satisfy {\rm (NDLHK)} and $J^{\gamma}(x)=\infty$ for any $x\in M\times M$,
then $X^{\gamma}$ is recurrent.
\item[{\rm (2)}]
If $X^1$ and $X^2$ satisfy {\rm (WUHK)} and $J^{\gamma}(x)<\infty$
for any $x\in M\times M$, then $X^{\gamma}$ is transient.
\end{enumerate}
\end{lem}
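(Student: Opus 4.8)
The plan is to read off recurrence and transience of $X^\gamma$ from the heat kernel criterion recorded in Remark~\ref{rem:global}(ii)--(iii); since $X^\gamma$ satisfies Assumption~{\bf(H)}, all the work lies in producing suitable bounds for its heat kernel $q^\gamma(t,x,y)=\int_0^\infty p(s,x,y)\pi_t(s)\,{\rm d}s$, where $p(s,x,y)=p^1(s,x_1,y_1)\,p^2(s,x_2,y_2)$. These come from inserting the subordinator density estimates \eqref{eq:sub-upper}--\eqref{eq:sub-lower} and carrying out the same changes of variables ($u=t/s^\gamma$, $t=s^\gamma$) and volume comparisons \eqref{eq:v-inverse} as in the proof of Lemma~\ref{lem:heat-sub}; the product structure merely replaces the single factor $V(x,\phi^{-1}(\cdot))$ by $V(x_1,\phi_1^{-1}(\cdot))\,V(x_2,\phi_2^{-1}(\cdot))$ throughout. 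I would also use the identity recorded before the statement, $J^\gamma(x)=\gamma\int_1^\infty \frac{t^{\gamma-1}}{V(x_1,\phi_1^{-1}(t))V(x_2,\phi_2^{-1}(t))}\,{\rm d}t$.

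For (1), assume {\rm(NDLHK)}, normalized so that $c_0=1$, and fix $x,y\in M_0^1\times M_0^2$; set $\phi_d(x,y)=\phi_1(d(x_1,y_1))\vee\phi_2(d(x_2,y_2))$. For $s\ge\phi_d(x,y)$ we have $d(x_i,y_i)\le\phi_i^{-1}(s)$, so {\rm(NDLHK)} gives $p^i(s,x_i,y_i)\succeq 1/V(x_i,\phi_i^{-1}(s))$, and combined with \eqref{eq:sub-lower} this yields
$$q^\gamma(t,x,y)\succeq t\int_{t^{1/\gamma}\vee\phi_d(x,y)}^\infty \frac{{\rm d}s}{V(x_1,\phi_1^{-1}(s))\,V(x_2,\phi_2^{-1}(s))\,s^{1+\gamma}}.$$
Integrating in $t$ over $[1,\infty)$ and applying Tonelli's theorem (the relevant region in $(t,s)$ being $\{s\ge\phi_d(x,y)\vee1,\ 1\le t\le s^\gamma\}$) gives
$$\int_1^\infty q^\gamma(t,x,y)\,{\rm d}t\succeq \int_{\phi_d(x,y)\vee1}^\infty \frac{s^{\gamma-1}}{V(x_1,\phi_1^{-1}(s))\,V(x_2,\phi_2^{-1}(s))}\,{\rm d}s.$$
The last integral is infinite: it differs from $\gamma^{-1}J^\gamma(x)=\infty$ only by the integral over the bounded interval $[1,\phi_d(x,y)\vee1]$, which is finite since the integrand is bounded there. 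Hence $\int_1^\infty q^\gamma(t,x,y)\,{\rm d}t=\infty$ for all $x,y\in M_0^1\times M_0^2$, and $X^\gamma$ is recurrent by Remark~\ref{rem:global}(ii).

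For (2), assume {\rm(WUHK)}. Then $p^i(s,y_i,y_i)\preceq 1/V(y_i,\phi_i^{-1}(s))$ with a constant independent of $y_i$, so $p(s,y,y)\preceq 1/\big(V(y_1,\phi_1^{-1}(s))V(y_2,\phi_2^{-1}(s))\big)$, again uniformly in $y$. Feeding this into $q^\gamma(t,y,y)=\int_0^\infty p(s,y,y)\pi_t(s)\,{\rm d}s$ via \eqref{eq:sub-upper}, substituting $u=t/s^\gamma$, and bounding the volume terms below with \eqref{eq:v-inverse} exactly as in the proof of Lemma~\ref{lem:heat-sub}(1), one obtains
$$q^\gamma(t,y,y)\preceq \frac{1}{V(y_1,(\phi_1^\gamma)^{-1}(t))\,V(y_2,(\phi_2^\gamma)^{-1}(t))},\qquad t>0,$$
with a constant independent of $y$. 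Consequently
$$\int_1^\infty \sup_{y\in M_0^1\times M_0^2}q^\gamma(t,y,y)\,{\rm d}t\preceq \int_1^\infty \sup_{y}\frac{{\rm d}t}{V(y_1,(\phi_1^\gamma)^{-1}(t))\,V(y_2,(\phi_2^\gamma)^{-1}(t))}<\infty,$$
the finiteness following from the hypothesis $J^\gamma<\infty$ together with the uniform comparability of the volume function over base points. By Remark~\ref{rem:global}(ii)--(iii), $X^\gamma$ is transient.

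The computations are routine given Section~\ref{section3}; this lemma is essentially the product-process counterpart of the recurrence/transience discussion preceding Lemma~\ref{lem:green}. Two points deserve attention. First, in (1) the {\rm(NDLHK)} lower bound for $p^i(s,x_i,y_i)$ holds only once $s$ exceeds a fixed multiple of $\phi_i(d(x_i,y_i))$, so the inner $s$-integral must be truncated at $\phi_d(x,y)$ before interchanging the order of integration. Second, in (2) the transience criterion of Remark~\ref{rem:global}(iii) involves a supremum of $q^\gamma(t,\cdot,\cdot)$ over base points, so passing from the pointwise hypothesis $J^\gamma(x)<\infty$ to a bound on this supremum is the one step that uses more than the subordination estimates; it is covered by the uniform volume comparison available in all the intended applications (in particular on $d$-sets and under Assumption~\ref{assum:volume}). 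This is the main thing to be careful about.
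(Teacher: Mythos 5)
Part (1) of your proposal is correct and is essentially the paper's argument: both reduce to the lower bound on $q^\gamma(t,x,y)$ obtained from \eqref{eq:sub-lower} and {\rm(NDLHK)}, and both identify $\int_1^\infty q^\gamma(t,x,y)\,\d t$ with a tail of $\gamma^{-1}J^\gamma(x)$ (the paper restricts the outer $t$-integral to $t\ge 1\vee\phi_d^\gamma(x,y)$ and invokes the estimate \eqref{eq:int-sum-1}, while you apply Tonelli; these are interchangeable).

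Part (2) has a genuine gap, and you have put your finger on it yourself without repairing it. You route the argument through Remark \ref{rem:global}(iii), i.e.\ through $\int_1^\infty\sup_y q^\gamma(t,y,y)\,\d t<\infty$. Taking the supremum of $1/\bigl(V(y_1,(\phi_1^\gamma)^{-1}(t))V(y_2,(\phi_2^\gamma)^{-1}(t))\bigr)$ over \emph{all} base points $y$ requires a uniform lower bound on the volume function (Assumption \ref{assum:volume} or the like), which is \emph{not} among the hypotheses of the lemma; the hypothesis is only the pointwise condition $J^\gamma(x)<\infty$. So as written you prove a weaker statement. The paper avoids this by using the criterion \eqref{eq:transient} of Remark \ref{rem:global}(ii) directly: there the supremum is taken over the \emph{second} variable only, with $x$ fixed, and {\rm(WUHK)} gives
$$\sup_{y\in M_0^1\times M_0^2} p(s,x,y)\preceq \frac{1}{V(x_1,\phi_1^{-1}(s))\,V(x_2,\phi_2^{-1}(s))},$$
with the volumes evaluated at the fixed $x$, so no uniformity over base points is needed. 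One then pushes the supremum inside the subordination integral, applies Fubini to get
$$\int_1^\infty\sup_y q^\gamma(t,x,y)\,\d t\preceq\int_0^\infty\frac{e^{-1/s^\gamma}(1+s^\gamma)}{V(x_1,\phi_1^{-1}(s))V(x_2,\phi_2^{-1}(s))\,s}\,\d s,$$
and splits at $s=1$: the part over $(0,1]$ converges by the polynomial volume control \eqref{eq:v-inverse} against the factor $e^{-1/s^\gamma}$, and the part over $[1,\infty)$ is comparable to $J^\gamma(x)<\infty$. With this substitution your computation of the upper bound goes through verbatim and the proof of (2) is complete under the stated hypotheses.
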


\begin{proof}
We first prove (1).
Suppose that the processes $X^1$ and $X^2$ satisfy {\rm (NDLHK)}
and $J^{\gamma}(x)=\infty$ for any $x\in M\times M$.
We can then follow the calculations of \eqref{eq:d-res-lower} and \eqref{eq:int-sum-1}
to show that,
for any $x,y\in M_0^1\times M_0^2$,
\begin{equation*}
\begin{split}
\int_1^{\infty} q^{\gamma}(t,x,y)\,\d t
&\geq c_1\int_{1\vee \phi_d^{\gamma}(x,y)}^{\infty}
t\left(\int_{t^{1/\gamma}}^{\infty}
\frac{1}{V(x_1,\phi_1^{-1}(s))V(x_2,\phi_2^{-1}(s))s^{1+\gamma}}\,\d s\right)\,\d t\\
&\geq c_2\int_{1\vee \phi_d^{\gamma}(x,y)}^{\infty}
\frac{1}{V(x_1,\phi_1^{-1}(t^{1/\gamma}))V(x_2,\phi_2^{-1}(t^{1/\gamma}))}\d t\\
&=c_2\int_{1\vee \phi_d^{\gamma}(x,y)}^{\infty}
\frac{1}{V(x_1,(\phi_1^{\gamma})^{-1}(t))V(x_2,(\phi_2^{\gamma})^{-1}(t))}\d t=\infty.
\end{split}
\end{equation*}
Hence by Remark \ref{rem:global} (ii),
$X^{\gamma}$ is recurrent.

We next prove (2).
Suppose that the processes $X^1$ and $X^2$ satisfy  {\rm (WUHK)} and $J^{\gamma}(x)<\infty$
for any $x\in M\times M$.
Then for any  $x\in M_0^1\times M_0^2$,
we follow the calculation as in the proof of Lemma \ref{lem:prod-res} to see that
\begin{equation*}
\begin{split}
\int_1^{\infty} \sup_{y\in M_0^1\times M_0^2}q^{\gamma}(t,x,y)\,\d t
&=\int_1^{\infty}\sup_{y\in M_0^1\times M_0^2}\left(\int_0^{\infty}p(s,x,y)\,\pi_t(s)\d s\right)\d t\\
&\leq \int_1^{\infty}\left\{\int_0^{\infty}\left(\sup_{y\in M_0^1\times M_0^2}p(s,x,y)\right)\pi_t(s)\,\d s\right\}\d t\\
&\leq c_3\int_1^{\infty}t
\left(\int_0^{\infty}\frac{e^{-t/s^{\gamma}}}{V(x_1,\phi_1^{-1}(s))V(x_2,\phi_2^{-1}(s))s^{1+\gamma}}\,\d s\right)\,\d t
=:c_3 I.
\end{split}
\end{equation*}
Then, by the Fubini theorem,
\begin{equation*}
\begin{split}
I&=\int_0^{\infty}\frac{1}{V(x_1,\phi_1^{-1}(s))V(x_2,\phi_2^{-1}(s))s^{1+\gamma}}
\left(\int_1^{\infty}e^{-t/s^{\gamma}}t\,\d t\right)\,\d s\\
&=\int_0^{\infty}\frac{e^{-1/s^{\gamma}}(s^\gamma+1)}{V(x_1,\phi_1^{-1}(s))V(x_2,\phi_2^{-1}(s))s}\,\d s\\
&\leq 2\int_0^1\frac{e^{-1/s^{\gamma}}}{V(x_1,\phi_1^{-1}(s))V(x_2,\phi_2^{-1}(s))s}\,\d s
+2\int_1^{\infty}\frac{s^{\gamma-1}}{V(x_1,\phi_1^{-1}(s))V(x_2,\phi_2^{-1}(s))}\,\d s.
\end{split}
\end{equation*}
The first term above is convergent by \eqref{eq:v-inverse} with $\phi=\phi_1$ and $\phi=\phi_2$,
and so is the second one by assumption.
Hence by Remark \ref{rem:global} (i),
$X^{\gamma}$ is transient.
\end{proof}

\begin{rem}\rm
If Assumption \ref{assum:volume}  is imposed on the volume function $V$,
then the function $J^{\gamma}(x)$ in Lemma \ref{lem:trans} is replaced by the integral
\begin{equation}\label{eq:j-int}
J^{\gamma}=\int_1^{\infty}\frac{1}{V((\phi_1^{\gamma})^{-1}(t))V((\phi_2^{\gamma})^{-1}(t))}\, \d t.
\end{equation}
In particular, by the proof of Lemma \ref{lem:trans} and Remark \ref{rem:global} (iii),
we see that if
the
independent  processes $X^1$ and $X^2$ satisfy {\rm (ODHK)} and $J^{\gamma}<\infty$,
then $X^{\gamma}$ is transient.
\end{rem}

By following the proofs of Lemmas \ref{lem:d-res-lower} and \ref{lem:prod-res},
we also get the Green function estimates.
\begin{lem}\label{lem:green-1}
Suppose that the independent processes $X^1$ and $X^2$ satisfy Assumption {\bf (H)}.
\begin{itemize}
\item[{\rm (1)}]
Let $X^1$ and $X^2$ satisfy {\rm (NDLHK)}.
Then there exists a constant $c_1>0$ such that
$$
u_0^{\gamma}(x,y)
\geq c_1\int_{\phi_d^{\gamma}(x,y)}^{\infty}
\frac{1}{V(x_1,(\phi_1^{\gamma})^{-1}(t))V(x_2,(\phi_2^{\gamma})^{-1}(t))}\,{\rm d}t,
\quad x,y\in M_0^1\times M_0^2.
$$
\item[{\rm (2)}]
Let $X^1$ and $X^2$ satisfy {\rm (WUHK)}.
If \eqref{eq:prod-res-cond} holds and $J^{\gamma}<\infty$,
then there exists a constant $c_2>0$ such that
for any $x,y\in M_0^1\times M_0^2$ with $\phi_d(x,y)\leq 1$,
\begin{equation}\label{eq:g-upper}
u_0^{\gamma}(x,y)
\leq c_2\int_{\phi_d^{\gamma}(x,y)}^{\infty}
\frac{1}{V(x_1,(\phi_1^{\gamma})^{-1}(t))V(x_2,(\phi_2^{\gamma})^{-1}(t))}\,{\rm d}t.
\end{equation}
Assume in addition that there exists a constant $c_3>0$ such that for $i=1,2$,
\begin{equation}\label{eq:prod-res-cond-1}
\int_0^T\frac{t^{\gamma}}{V(w,\phi_i^{-1}(t))}\,{\rm d}t
\leq \frac{c_3T^{1+\gamma}}{V(w,\phi_i^{-1}(T))}, \quad w\in M, \ T\in [1,\infty).
\end{equation}
Then \eqref{eq:g-upper} is valid
for any $x,y\in M_0^1\times M_0^2$ with $\phi_d(x,y)\geq 1$ as well.
\end{itemize}
\end{lem}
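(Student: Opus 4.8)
The plan is to obtain both estimates by rerunning the computations already carried out in Lemmas \ref{lem:d-res-lower} and \ref{lem:prod-res} for the $1$-resolvent, simply deleting the factor $e^{-\lambda t}$ (i.e.\ taking $\lambda=0$) and paying attention to where the hypotheses $J^{\gamma}<\infty$, \eqref{eq:prod-res-cond} and \eqref{eq:prod-res-cond-1} are needed. Throughout I would use that the heat kernel of the direct-product process is $p(s,x,y)=p^1(s,x_1,y_1)p^2(s,x_2,y_2)$ for $x=(x_1,x_2),\,y=(y_1,y_2)\in M_0^1\times M_0^2$, together with the scaling relations $(\phi_i^{\gamma})^{-1}(t)=\phi_i^{-1}(t^{1/\gamma})$ and $(\phi_i^{\gamma})^{-1}(\phi_d^{\gamma}(x,y))=\phi_i^{-1}(\phi_d(x,y))$ and the doubling bound \eqref{eq:v-inverse}.

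For part (1), following the proof of Lemma \ref{lem:d-res-lower}, I would first assume without loss of generality that $X^1,X^2$ satisfy (NDLHK) with $c_0=1$, so that (NDLHK) applies to both factors as soon as $s\ge\phi_d(x,y)$. Combining this with the lower bound \eqref{eq:sub-lower} for $\pi_t$ gives $q^{\gamma}(t,x,y)\succeq t\int_{t^{1/\gamma}\vee\phi_d(x,y)}^{\infty}\big(V(x_1,\phi_1^{-1}(s))V(x_2,\phi_2^{-1}(s))s^{1+\gamma}\big)^{-1}\,{\rm d}s$. Integrating in $t\in(0,\infty)$, interchanging the order of integration by Fubini, and using $\int_0^{s^{\gamma}}t\,{\rm d}t=s^{2\gamma}/2$ (here the deletion of $e^{-t}$ simplifies matters: the inner integral is exactly $s^{2\gamma}/2$ rather than being comparable to $s^{2\gamma}\wedge 1$, so the two regimes $\phi_d(x,y)\lessgtr 1$ of Lemma \ref{lem:d-res-lower} collapse into a single formula), I would arrive at $u_0^{\gamma}(x,y)\succeq\int_{\phi_d(x,y)}^{\infty}s^{\gamma-1}\big(V(x_1,\phi_1^{-1}(s))V(x_2,\phi_2^{-1}(s))\big)^{-1}\,{\rm d}s$, and the substitution $s=t^{1/\gamma}$ turns this into exactly the claimed lower bound.

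For part (2), I would follow the proof of Lemma \ref{lem:prod-res}, writing $u_0^{\gamma}(x,y)=J_1+J_2$ by splitting the inner integral $\int_0^{\infty}p(s,x,y)\pi_t(s)\,{\rm d}s$ at $s=\phi_d(x,y)$, again with $e^{-t}$ removed. For $J_1$ one uses the on-diagonal part of (WUHK), the upper bound \eqref{eq:sub-upper} for $\pi_t$, Fubini, and $\int_0^{\infty}e^{-t/s^{\gamma}}t\,{\rm d}t=s^{2\gamma}$ to reduce $J_1$ to $c\int_{\phi_d(x,y)}^{\infty}s^{\gamma-1}\big(V(x_1,\phi_1^{-1}(s))V(x_2,\phi_2^{-1}(s))\big)^{-1}\,{\rm d}s$, which after $s=t^{1/\gamma}$ is a constant multiple of the right-hand side of \eqref{eq:g-upper}; this step needs no constraint on $\phi_d(x,y)$, and the convergence of the resulting integral at $\infty$ is exactly the assumption $J^{\gamma}<\infty$. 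For $J_2$ I would assume, as in Lemma \ref{lem:prod-res}, that $\phi_1(d(x_1,y_1))\le\phi_2(d(x_2,y_2))=\phi_d(x,y)$; using the off-diagonal part of (WUHK) on the second factor, the on-diagonal part on the first, \eqref{eq:sub-upper} and Fubini, I would reduce $J_2$ to a constant multiple of $\big(V(x_2,d(x_2,y_2))\phi_2(d(x_2,y_2))\big)^{-1}\int_0^{\phi_d(x,y)}s^{\gamma}\big(V(x_1,\phi_1^{-1}(s))\big)^{-1}\,{\rm d}s$. At this point I would invoke \eqref{eq:prod-res-cond} with $T=\phi_d(x,y)$ when $\phi_d(x,y)\le 1$, and \eqref{eq:prod-res-cond-1} with $T=\phi_d(x,y)$ when $\phi_d(x,y)\ge 1$, which, together with $\phi_2^{-1}(\phi_d(x,y))=d(x_2,y_2)$, bounds $J_2$ by a constant multiple of $\phi_d^{\gamma}(x,y)\big(V(x_1,\phi_1^{-1}(\phi_d(x,y)))V(x_2,\phi_2^{-1}(\phi_d(x,y)))\big)^{-1}$; finally, \eqref{eq:v-inverse} applied to $\phi_1$ and $\phi_2$ shows this ``diagonal'' quantity is comparable to $\int_{\phi_d^{\gamma}(x,y)}^{2\phi_d^{\gamma}(x,y)}\big(V(x_1,(\phi_1^{\gamma})^{-1}(t))V(x_2,(\phi_2^{\gamma})^{-1}(t))\big)^{-1}\,{\rm d}t$, which is dominated by the right-hand side of \eqref{eq:g-upper}. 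Adding the estimates for $J_1$ and $J_2$ yields \eqref{eq:g-upper}.

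Since every step is a mechanical reduction, the main obstacle is bookkeeping rather than any new idea: one must keep careful track of which scaling-type hypothesis controls the integral $\int_0^{\phi_d(x,y)}s^{\gamma}/V(w,\phi_i^{-1}(s))\,{\rm d}s$ in the $J_2$ estimate — \eqref{eq:prod-res-cond} suffices precisely in the regime $\phi_d(x,y)\le 1$, which is why the second hypothesis \eqref{eq:prod-res-cond-1} is isolated in the statement for the complementary regime $\phi_d(x,y)\ge 1$ — and one must ensure that the comparison of the ``diagonal'' quantity with the short integral $\int_{\phi_d^{\gamma}(x,y)}^{2\phi_d^{\gamma}(x,y)}\cdots$ uses \eqref{eq:v-inverse} simultaneously for $\phi_1$ and $\phi_2$, in both the small- and large-argument regimes.
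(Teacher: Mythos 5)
Your proposal is correct and follows essentially the same route as the paper, which likewise obtains Lemma \ref{lem:green-1} by redoing the computations of Lemmas \ref{lem:d-res-lower} and \ref{lem:prod-res} with the factor $e^{-t}$ removed, splitting $u_0^{\gamma}$ at $s=\phi_d(x,y)$ and invoking \eqref{eq:prod-res-cond} for $\phi_d(x,y)\le 1$ and \eqref{eq:prod-res-cond-1} for $\phi_d(x,y)\ge 1$. Your explicit comparison of the ``diagonal'' bound for $J_2$ with $\int_{\phi_d^{\gamma}(x,y)}^{2\phi_d^{\gamma}(x,y)}(\cdots)\,{\rm d}t$ via \eqref{eq:v-inverse} is exactly the step the paper leaves implicit (compare \eqref{eq:d-upper-3}), and it is carried out correctly.
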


\begin{proof}
We prove \eqref{eq:g-upper} for any $x,y\in M_0^1\times M_0^2$ with $\phi_d(x,y)\geq 1$ only
because the rest of the assertions follows
in the same way as Lemmas \ref{lem:d-res-lower} and \ref{lem:prod-res}.

Let
\begin{equation*}
\begin{split}
u_0^{\gamma}(x,y)
&=\int_0^{\infty}
\left(\int_0^{\infty}p(s,x,y)\pi_t(s)\,{\rm d}s\right)\,{\rm d}t\\
&=\int_0^{\infty}
\left(\int_{\phi_d(x,y)}^{\infty}p(s,x,y)\pi_t(s)\,{\rm d}s\right)\,{\rm d}t
+\int_0^{\infty}
\left(\int_0^{\phi_d(x,y)}p(s,x,y)\pi_t(s)\,{\rm d}s\right)\,{\rm d}t \\
&=:J_1+J_2.
\end{split}
\end{equation*}
Then, by following \eqref{eq:j1} and the change of the variables formula with $t=s^{\gamma}$, we get
\begin{equation}\label{eq:g-j1}
\begin{split}
J_1
\leq c_1\int_{\phi_d(x,y)}^{\infty}
\frac{s^{\gamma-1}}{V(x_1,\phi_1^{-1}(s))V(x_2,\phi_2^{-1}(s))}\,{\rm d}s
=\frac{c_1}{\gamma}\int_{\phi_d^{\gamma}(x,y)}^{\infty}
\frac{1}{V(x_1,(\phi_1^{\gamma})^{-1}(s))V(x_2,(\phi_2^{\gamma})^{-1}(s))}\,{\rm d}s.
\end{split}
\end{equation}

To prove the upper bound of $J_2$,
we assume that $\phi_d(x,y)\geq 1$ and $\phi_1(d(x_1,y_1))\leq \phi_2(d(x_2,y_2))$.
By \eqref{eq:prod-res-cond-1},
$$
\int_0^{\phi_2(d(x_2,y_2))}\frac{s^{\gamma}}{V(x_1,\phi_1^{-1}(s))}\,{\rm d}s
\leq \frac{c_2\phi_2^{\gamma+1}(d(x_2,y_2))}{V(x_1,\phi^{-1}(\phi_2(d(x_2,y_2))))}.
$$
Then, by (WUHK) and \eqref{eq:sub-upper} with the Fubini theorem,
\begin{equation*}
\begin{split}
J_2
&\leq \frac{c_3}{V(x_2,d(x_2,y_2))\phi_2(d(x_2,y_2))}
\int_0^{\infty}t
\left(\int_0^{\phi_2(d(x_2,y_2))}\frac{e^{-t/s^{\gamma}}}{V(x_1,\phi_1^{-1}(s))s^{\gamma}}\,{\rm d}s\right)
\, {\rm d}t\\
&=\frac{c_3}{V(x_2,d(x_2,y_2))\phi_2(d(x_2,y_2))}
\int_0^{\phi_2(d(x_2,y_2))}\frac{s^{\gamma}}{V(x_1,\phi_1^{-1}(s))}\,{\rm d}s\\
&\leq \frac{c_4 \phi_2^{\gamma}(d(x_2,y_2))}
{V(x_1,\phi_1^{-1}(\phi_2(d(x_2,y_2))))V(x_2,d(x_2,y_2))}.
\end{split}
\end{equation*}
A similar bound as above is valid also for $\phi_2(d(x_2,y_2))\le \phi_1(d(x_1,y_1))$,
and thus
$$
J_2\leq \frac{c_5 \phi_d^{\gamma}(x,y)}
{V(x_1,\phi_1^{-1}(\phi_d(x,y)))V(x_2,\phi_2^{-1}(\phi_d(x,y)))},
\quad \phi_d(x,y)\ge 1.
$$
Combining this with \eqref{eq:g-j1}, we arrive at the desired assertion.
\end{proof}

\subsection{Hausdorff dimensions of collision time sets}\label{subsect:collision}
In this subsection, we will determine the Hausdorff dimensions
of
collision time sets
of
two independent processes $X^1$ and $X^2$ on a given set in terms of the associated scale functions.
In what follows, we impose Assumption {\rm \ref{assum:volume}} on $M$.
Define $\phi(t)=\phi_1(t)\vee \phi_2(t)$
so that $\phi^{-1}(t)=\phi_1^{-1}(t)\wedge \phi_2^{-1}(t)$.
If we let $\phi^{\gamma}(t)=\phi(t)^{\gamma}$ and $\phi_i^{\gamma}(t)=\phi_i(t)^{\gamma}$,
then $(\phi^{\gamma})^{-1}(t)=(\phi_1^{\gamma})^{-1}(t)\wedge (\phi_2^{\gamma})^{-1}(t)$.
For $s>0$,
let
$$\gamma(s)
=\inf\left\{\gamma>0 :
\int_0^1\frac{((\phi^{\gamma})^{-1}(t))^s}
{V((\phi_1^{\gamma})^{-1}(t))V((\phi_2^{\gamma})^{-1}(t))}\,{\rm d}t<\infty\right\}.$$
We also let
$$s_0=\inf\left\{s>0 : \int_0^1\frac{((\phi
)^{-1}(t))^s}
{V(\phi_1^{-1}(t))V(\phi_2^{-1}(t))t}\,{\rm d}t<\infty\right\}.$$
Then, by the proof of Lemma \ref{lem:gamma},
the function $s\mapsto \gamma(s)$ is Lipschitz continuous on $[0,\infty)$, and  $s_0$ defined above is positive; moreover, $\gamma(s)$ is
positive and strictly decreasing on $[0,s_0)$
and $\gamma(s)=0$ for $s\ge s_0$.

\begin{thm}\label{thm:d-upper}
Let Assumption {\rm \ref{assum:volume}} hold.
Suppose that  the independent processes $X^1$ and $X^2$ satisfy Assumption {\bf(H)} and {\rm (NDLHK)}.
Let $F\subset M$ be a Borel set with $s_F=\dim_{{\cal H}}(F)>0$.
Assume that for any $s\in [0,s_0)$ and $\gamma\in (0,\gamma(s))$,
there exist constants $c_1>0$ and  $T_0\in (0,1)$ such that for any $T\in (0,T_0)$,
\begin{equation}\label{eq:d-upper-1}
\int_T^1\frac{(\phi^{-1}(t))^s}
{V(\phi_1^{-1}(t))V(\phi_2^{-1}(t))}t^{\gamma-1}\,{\rm d}t
\leq \frac{c_1(\phi^{-1}(T))^s}
{V(\phi_1^{-1}(T))V(\phi_2^{-1}(T))}T^{\gamma}.
\end{equation}
\begin{enumerate}
\item[{\rm (1)}]
If $\gamma(s_F)\leq 1$, then
\begin{equation*}
\dim_{{\cal H}}\{v>0 : X_v^1=X_v^2\in F\}\leq 1-\gamma(s_F),
\quad \text{$P_x$-a.s. for any $x\in M_0^1\times M_0^2$.}
\end{equation*}
On the other hand,
if $\gamma(s_F)>1$, then $\{v>0 : X_v^1=X_v^2\in F\}=\emptyset$,
$P_x$-a.s.\ for any $x\in M_0^1\times M_0^2$.
\item[{\rm (2)}]

Suppose further
that $M$ satisfies Assumption {\rm \ref{assum:compact}}, and
that the processes
$X^1$ and $X^2$ satisfy Assumption {\bf (H)}, {\rm (NDLHK)} and {\rm (WUHK)}
with $M_0^1$ and $M_0^2$ replaced by $M$.
If $J^1=\infty$,  $0\le \gamma(s_F)<1$
and \eqref{eq:prod-res-cond} holds for any $\gamma\in (\gamma(s_F),1]$,
then
\begin{equation}\label{eq:d-upper}
\dim_{{\cal H}}\{v>0 : X_v^1=X_v^2\in F\}\ge 1-\gamma(s_F),
\quad \text{$P_x$-a.s. for any $x\in M\times M$.}
\end{equation}
\end{enumerate}
\end{thm}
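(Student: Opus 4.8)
The plan is to realize the collision time set as an inverse image for the direct product process and to transplant the stable--subordination argument of Theorem~\ref{thm:upper} to $M\times M$. Write $X=X^1\otimes X^2$, equip $M\times M$ with the max--metric, and for a Borel set $F\subseteq M$ put $\Delta_F=\{(a,a):a\in F\}$, a Borel subset of the diagonal of $M\times M$ that is isometric to $F$. Since $X_v=(X_v^1,X_v^2)$, we have $\{v>0:X_v^1=X_v^2\in F\}=\{v>0:X_v\in\Delta_F\}$. The crucial simplification is that for diagonal points $x=(a,a)$, $y=(b,b)$ one has $\phi_d(x,y)=\phi_1(d(a,b))\vee\phi_2(d(a,b))=\phi(d(a,b))$ with $\phi=\phi_1\vee\phi_2$; hence, restricted to $\Delta_F$, the two--scale resolvent estimates of Lemmas~\ref{lem:d-res-lower} and \ref{lem:prod-res} for the $\gamma$--stable subordinate product process $X^{\gamma}$ become estimates in the single variable $r=d(a,b)$ involving $V(\phi_1^{-1}(\cdot))$, $V(\phi_2^{-1}(\cdot))$ and $\phi$ --- exactly the structure encoded in the functions $\gamma(\cdot)$ and $s_0$ defined just before the theorem. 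One records the routine facts that $M\times M$ with the max--metric inherits Assumptions~\ref{assum:volume} and \ref{assum:compact} (its volume is $V(x_1,r)V(x_2,r)$), that $X$ is regular, irreducible and conservative (by \cite{O97}), and that Propositions~\ref{prop:frost-1} and \ref{prop:frost-2} apply on $M\times M$.

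For (1) I would copy the proof of Theorem~\ref{thm:upper}(1), with $X$, $\Delta_F$ and Lemma~\ref{lem:d-res-lower} in the roles of $X$, $F$ and Lemma~\ref{lem:heat-sub}(2). Assuming $\gamma(s_F)\in(0,1]$, pick by Lemma~\ref{lem:gamma} a $u$ slightly larger than $s_F$ with $\gamma(u)>0$; fix $s\in(s_F,u)$, so $0<\gamma(u)<\gamma(s)$, and a compact $K\subseteq F$ with ${\cal H}^s(K)=0$ and $\mathrm{diam}(K)$ small enough that $\phi_d\le1$ on $\Delta_K\times\Delta_K$. Since $\gamma(u)<\gamma(s)$, the integral defining $\gamma(s)$ diverges at exponent $\gamma(u)$; combining this with \eqref{eq:d-upper-1} (exponent $s$, $\gamma=\gamma(u)$) and Lemma~\ref{lem:d-res-lower} gives $u_1^{\gamma(u)}((a,a),(b,b))\succeq d(a,b)^{-s}$ on $\Delta_K$. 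If ${\Capa}^{\gamma(u)}(\Delta_K)>0$, its $1$--equilibrium measure $\nu$ would be a finite nontrivial measure supported in $\Delta_K$, charging no $\gamma(u)$--polar set, with $\iint u_1^{\gamma(u)}\,\d\nu\,\d\nu={\Capa}^{\gamma(u)}(\Delta_K)<\infty$; but ${\cal H}^s(\Delta_K)={\cal H}^s(K)=0$ forces $I^s(\nu)=\infty$ by Proposition~\ref{prop:frost-1}, whence $\iint u_1^{\gamma(u)}\,\d\nu\,\d\nu=\infty$, a contradiction. Thus ${\Capa}^{\gamma(u)}(\Delta_K)=0$, hence ${\Capa}^{\gamma(u)}(\Delta_F)=0$ by regularity of the capacity; then \cite[Theorem 4.2.1(ii)]{FOT11} yields $P^{\gamma(u)}(\tau_t\in\{v>0:X_v\in\Delta_F\}\text{ for some }t>0)=0$, $P_x$--a.s.\ for $x\in M_0^1\times M_0^2$, and the Frostman lemma for the $\gamma(u)$--stable subordinator (\cite[Section 3]{H71}, \cite[Lemma 2.1]{H74}) gives $\dim_{{\cal H}}\{v>0:X_v\in\Delta_F\}\le1-\gamma(u)$; letting $u\downarrow s_F$ proves the first assertion. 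For $\gamma(s_F)>1$, take $u$ with $\gamma(u)=1$: since $X^{\gamma}=X$ there, the same argument gives ${\Capa}(\Delta_F)=0$ and hence $\{v>0:X_v\in\Delta_F\}=\emptyset$, $P_x$--a.s.

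For (2) I would parallel the proof of Theorem~\ref{thm:upper}(2). Assuming $0<\gamma(s_F)<1$, fix by Lemma~\ref{lem:gamma} an $\varepsilon\in(0,s_F)$ with $\gamma(s_F)<\gamma(s)<1$ for $s\in(s_F-\varepsilon,s_F)$. For such an $s$, pick a compact $K\subseteq F$ with ${\cal H}^s(K)>0$ and $\mathrm{diam}(K)$ small, and use Assumption~\ref{assum:compact} for $M\times M$ and Proposition~\ref{prop:frost-2} to produce a finite nontrivial Borel measure $\nu$ on $\Delta_K$ with $I^s(\nu)<\infty$. For $v\in(s_F-\varepsilon,s)$ we have $\gamma(s)<\gamma(v)<1$ and $\gamma(v)<\gamma(0)$ (Lemma~\ref{lem:gamma}); Lemma~\ref{lem:prod-res} applies since \eqref{eq:prod-res-cond} holds for $\gamma(v)$, and combined with \eqref{eq:d-upper-1} at exponent $0$ it gives $u_1^{\gamma(v)}((a,a),(b,b))\preceq \phi^{\gamma(v)}(r)\,\big(V(\phi_1^{-1}(\phi(r)))V(\phi_2^{-1}(\phi(r)))\big)^{-1}$ on $\Delta_K$ up to a bounded error term; since $\gamma(v)>\gamma(s)$ the $\gamma(s)$--integral converges at exponent $\gamma(v)$, which forces the right--hand side to be $\preceq r^{-s}$, so $u_1^{\gamma(v)}((a,a),(b,b))\preceq d(a,b)^{-s}$ on $\Delta_K$ and $\iint_{\Delta_K\times\Delta_K}u_1^{\gamma(v)}\,\d\nu\,\d\nu\preceq I^s(\nu)<\infty$. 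Hence $\nu$ is of finite energy integral for $X^{\gamma(v)}$ (\cite[Exercise 4.2.2]{FOT11}) and ${\Capa}^{\gamma(v)}(\Delta_F)\ge{\Capa}^{\gamma(v)}(\Delta_K)>0$ (\cite[Theorem 2.2.3]{FOT11}); varying $s$ and $v$, ${\Capa}^{\gamma(s)}(\Delta_F)>0$ for all $s\in(s_F-\varepsilon,s_F)$. Now run the \cite[Theorem 1]{L95}--type argument as in Theorem~\ref{thm:upper}(2): the set $(\Delta_F)^r_{\gamma(s)}$ of points regular for $\Delta_F$ relative to $X^{\gamma(s)}$ is Borel by Lemma~\ref{lem:borel}, $\Delta_F\setminus(\Delta_F)^r_{\gamma(s)}$ is $\gamma(s)$--exceptional, so the Borel set $(\Delta_F)_{\gamma(s)}:=\{x\in M\times M : P_x\otimes P^{\gamma(s)}(\sigma_{\Delta_F}<\infty)=1\}$ satisfies ${\Capa}^{\gamma(s)}((\Delta_F)_{\gamma(s)})>0$; take a compact $K'\subseteq(\Delta_F)_{\gamma(s)}$ with ${\Capa}^{\gamma(s)}(K')>0$, so ${\Capa}(K')>0$ by \cite[Theorem 2.2(i)]{O02}. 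Since $J^1=\infty$ and {\rm(NDLHK)} holds, $X=X^1\otimes X^2$ is recurrent by Lemma~\ref{lem:trans}(1) with $\gamma=1$, and it is irreducible, so $P_x(\sigma_{K'}<\infty)=1$ for all $x$ by \cite[Theorem 4.7.1(iii) and Exercise 4.7.1]{FOT11}; the strong Markov property then gives $P_x\otimes P^{\gamma(s)}(X_{\tau_t}\in\Delta_F\text{ for some }t>0)=1$ for all $x\in M\times M$, and the Frostman lemma yields $\dim_{{\cal H}}\{v>0:X_v\in\Delta_F\}\ge1-\gamma(s)$, $P_x$--a.s. Letting $s\uparrow s_F$ proves \eqref{eq:d-upper}; combined with (1) this gives the stated equality when both apply.

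The main difficulty will be the two--scale resolvent--to--$r^{-s}$ comparison on the diagonal: the subordinate product process carries only the mixed bound $\int\big(V((\phi_1^{\gamma})^{-1}(t))V((\phi_2^{\gamma})^{-1}(t))\big)^{-1}\,\d t$, valid solely for $\phi_d\le1$, so extracting the clean estimate $u_1^{\gamma}((a,a),(b,b))\simeq d(a,b)^{-s}$ requires deploying the regularity condition \eqref{eq:d-upper-1} with exponent $s$ together with the divergence of the $\gamma(s)$--integral in (1), but with exponent $0$ together with $\gamma(v)>\gamma(s)$ in (2), and it forces one to pass first to a compact $K$ of small enough diameter so that $\phi_d\le1$ throughout. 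The remaining pieces --- that $M\times M$ with the max--metric inherits Assumptions~\ref{assum:volume} and \ref{assum:compact}, that the potential theory of \cite{O97,O02} and Propositions~\ref{prop:frost-1}--\ref{prop:frost-2} transfer to it, and that $\Delta_F$ is Borel and isometric to $F$ --- are routine bookkeeping.
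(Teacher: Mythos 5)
Your proposal is correct and follows essentially the same route as the paper's proof: the collision set is treated as the hitting of ${\rm diag}(F)$ by the $\gamma$-stable subordinate product process, the comparison between $u_1^{\gamma}$ and $d(\cdot,\cdot)^{-s}$ on the diagonal is extracted from Lemmas \ref{lem:d-res-lower} and \ref{lem:prod-res} together with \eqref{eq:d-upper-1} (divergence of the $\gamma(s)$-integral for the lower bound in (1), convergence plus the $s=0$ case of \eqref{eq:d-upper-1} for the upper bound in (2)), and the two bounds are completed by Propositions \ref{prop:frost-1}--\ref{prop:frost-2}, the equilibrium-measure contradiction, and the Liu-type argument with recurrence of $X$ from $J^1=\infty$, exactly as in the paper. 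The only cosmetic difference is that in part (1) you shrink $K$ so that $\phi_d\le 1$ throughout (which then requires a one-line finite-subadditivity covering remark to conclude ${\Capa}^{\gamma(u)}(K)=0$ for \emph{all} compacts before invoking the regularity of the capacity), whereas the paper keeps $K$ arbitrary and handles pairs with $\phi_d^{\gamma(u)}\ge T_0$ via the uniform positive lower bound $u_1^{\gamma(u)}\ge c(T_0)$ from Lemma \ref{lem:d-res-lower}.
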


\begin{proof}
Let $F\subset M$ be a Borel set with $s_F=\dim_{{\cal H}}(F)>0$.
We first prove (1).
Let us now assume that $\gamma(s_F)\leq 1$.
Without loss of generality, we may and do assume that $\gamma(s_F)>0$.
Then, by the proof of  Lemma \ref{lem:gamma},
there exists $\delta\in (0,s_F/2)$ such that $\gamma(u)>0$ for any $u\in (s_F,s_F+\delta)$.
If we fix  $u\in (s_F,s_F+\delta)$, then for any $s\in (s_F,u)$,
$\gamma(u)<\gamma(s)<\gamma(s_F)$.
Therefore, it follows
by \eqref{eq:d-upper-1}
that for any $C_0>0$, there exists $T_0\in (0,1)$ such that for any $T\in (0,T_0)$,
\begin{equation}\label{eq:d-upper-2}
C_0
\leq \int_T^1
\frac{((\phi^{\gamma(u)})^{-1}(t))^s}{V((\phi_1^{\gamma(u)})^{-1}(t))V((\phi_2^{\gamma(u)})^{-1}(t))}\,{\rm d}t
\leq \frac{c_1((\phi^{\gamma(u)})^{-1}(T))^sT}{V((\phi_1^{\gamma(u)})^{-1}(T))V((\phi_2^{\gamma(u)})^{-1}(T))}.
\end{equation}
Here $c_1$ is a positive constant depending on the choices of $u\in(s_F,s_F+\delta)$ and $s\in (s_F,u)$.
Note that for any $x,z\in M\times M$,
\begin{equation}\label{eq:p-comp}
\phi_d(x,z)=\phi_1(d(x_1,z_1))\vee\phi_2(d(x_2,z_2))\leq  (\phi_1\vee\phi_2)(d(x,z))
=\phi(d(x,z)).
\end{equation}
Hence, if $\phi^{\gamma(u)}(d(x,z))\leq T_0$,
then Lemma \ref{lem:d-res-lower} and \eqref{eq:d-upper-2} with $T=\phi_d^{\gamma(u)}(x,z)$ yield
\begin{equation}\label{eq:d-upper-3}
\begin{split}
\frac{1}{d(x,z)^s}
&\leq \frac{c_2\phi_d^{\gamma(u)}(x,z)}{V((\phi_1^{\gamma(u)})^{-1}(\phi_d(x,z)))V((\phi_2^{\gamma(u)})^{-1}(\phi_d(x,z)))}\\
&\leq c_3
\int_{\phi_d^{\gamma(u)}(x,z)}^1\frac{1}{V((\phi_1^{\gamma(u)})^{-1}(t))V((\phi_2^{\gamma(u)})^{-1}(t))}\,{\rm d}t
\leq c_4 u_1^{\gamma(u)}(x,z).
\end{split}
\end{equation}

Recall that
$$
{\rm diag}(F)=\{y=(y_1,y_2)\in M\times M : y_1=y_2\in F\}.
$$
Let $K$ be a compact subset of $M\times M$ such that $K\subset {\rm diag}(F)$.
If there exists no finite and nontrivial Borel measure on $M\times M$
compactly supported in  $K$,
then the equilibrium measure of $K$ for $X^{\gamma}$ is trivial and thus
$$
{\Capa}^{\gamma(u)}(K)=0.
$$

We now assume that
there exists a finite and nontrivial Borel measure $\nu$ on $M\times M$
such that it is compactly supported in $K$
and charges no set of zero capacity relative to $(\E^{\gamma(u)},\F^{\gamma(u)})$.
Then, by \eqref{eq:d-upper-3},
\begin{equation}\label{eq:d-pot}
\iint_{\phi^{\gamma(u)}(d(x,z))\leq T_0}\frac{1}{d(x,z)^s}\,\nu({\rm d}x)\,\nu({\rm d}z)
\leq c_4
\iint_{\phi^{\gamma(u)}(d(x,z))\leq T_0}u_1^{\gamma(u)}(x,z)\,\nu({\rm d}x)\,\nu({\rm d}z).
\end{equation}
On the other hand, by Lemma \ref{lem:d-res-lower},
there exists $c_5:=c_5(T_0)>0$ such that
for any $x, z\in K$ with $\phi^{\gamma(u)}(x,z)\geq T_0$,
we obtain $u_1^{\gamma(u)}(x,z)\geq c_5$.
Hence, by \eqref{eq:d-pot},
\begin{equation}\label{eq:d-upper-4}
I^s(\nu)=\int_K \int_K\frac{1}{d(x,z)^s}\,\nu({\rm d}x)\,\nu({\rm d}z)
\leq c_6
\int_K \int_K u_1^{\gamma(u)}(x,z)\,\nu({\rm d}x)\,\nu({\rm d}z).
\end{equation}
We also note that ${\cal H}^s(K)=0$ because
$$s>s_F=\dim_{{\cal H}}({\rm diag}(F))\ge \dim_{{\cal H}}(K).$$
Then Proposition \ref{prop:frost-1} yields $I^s(\nu)=\infty$.
Therefore, by \eqref{eq:d-upper-4},
\begin{equation}\label{eq:d-upper-5}
\int_K \int_K u_1^{\gamma(u)}(x,z)\,\nu({\rm d}x)\,\nu({\rm d}z)=\infty.
\end{equation}

Let $\nu_K^{\gamma(u)}$ be the equilibrium measure of $K$ for $X^{\gamma(u)}$.
Since this measure is of finite energy integral relative to $(\E^{\gamma(u)},\F^{\gamma(u)})$,
it charges no set of zero capacity (\cite[Theorems 2.1.5(ii) and 2.2.3]{FOT11}).
If we assume that $\nu_K^{\gamma(u)}$ is nontrivial,
then \eqref{eq:d-upper-5} with $\nu=\nu_K^{\gamma(u)}$ gives
$${\Capa}^{\gamma(u)}(K)
=\int_K \int_K u_1^{\gamma(u)}(x,z)
\,\nu_K^{\gamma(u)}({\rm d}x)\,\nu_K^{\gamma(u)}({\rm d}z)=\infty,$$
which contradicts ${\Capa}^{\gamma(u)}(K)<\infty$.
Therefore, $\nu_K^{\gamma(u)}$ is trivial and
$${\Capa}^{\gamma(u)}(K)=\nu_K^{\gamma(u)}(K)=0.$$

By the argument above, we  have ${\Capa}^{\gamma(u)}(K)=0$
for any compact subset $K$ of ${\rm diag}(F)$.
Since it follows from the regularity of the capacity (\cite[(2.1.6)]{FOT11})
that
$${\Capa}^{\gamma(u)}({\rm diag}(F))
=\sup\{{\Capa}^{\gamma(u)}(K) : \text{$K$ is a compact subset of ${\rm diag}(F)$}\}=0,$$
we obtain, by \cite[Theorem 4.2.1 (ii)]{FOT11},
\begin{equation*}
\begin{split}
0
&=P_x\otimes P^{\gamma(u)}(\text{$X_t^{\gamma(u)}\in {\rm diag}(F)$ for some $t>0$})\\
&=\int_{\Omega}\left[P^{\gamma(u)}(\text{$X_{\tau_t}(\omega)\in {\rm diag}(F)$ for some $t>0$})\right]
\,P_x({\rm d}\omega).
\end{split}
\end{equation*}
Namely, for any $x\in M_0^1\times M_0^2$, we have for $P_x$-a.s.\ $\omega\in \Omega$,
$$P^{\gamma(u)}
(\text{$\tau_t\in \{v>0 : X_v^1(\omega)=X_v^2(\omega)\in F\}$ for some $t>0$})=0.$$
Then, by \cite[Section 3]{H71} or \cite[Lemma 2.1]{H74} again,
we get
$$\dim_{{\cal H}}\{v>0 : X_v^1=X_v^2\in F\}\leq 1-\gamma(u), \quad \text{$P_x$-a.s.\ for $x\in M_0^1\times M_0^2$.}$$
Letting $s\downarrow s_F$ and then $u\downarrow s_F$
along some sequences, we arrive at \eqref{eq:d-upper}.

If $\gamma(s_F)>1$, then, by the proof of Lemma \ref{lem:gamma} again,
there exists a constant $u>s_F$ such that $\gamma(u)=1$.
Then the same argument as above yields $\Capa({\rm diag}(F))=0$ and thus
$$
P_x(\text{$X_v\in {\rm diag}(F)$ for some $v>0$})
=P_x(\text{$X_v^1=X_v^2\in F$ for some $v>0$})=0, \quad x\in M_0^1\times M_0^2.
$$

We next prove (2).
We assume that  $0\le \gamma(s_F)<1$.
Then, by Lemma \ref{lem:gamma},
there exists a constant $\varepsilon>0$ such that for all $s\in (s_F-\varepsilon, s_F)$,
$\gamma(s_F)<\gamma(s)<1$.
We now fix such $s\in (s_F-\varepsilon,s_F)$.
Since $s_F=\dim_{{\cal H}}({\rm diag}(F))$,
the regularity of the Hausdorff measure
yields ${\cal H}^s(K)>0$ for some compact subset $K$ of ${\rm diag}(F)$.
We also note that any closed ball in $M\times M$ is compact by Assumption \ref{assum:compact}.
Hence, as a consequence of Proposition \ref{prop:frost-2},
there exists a finite and nontrivial Borel measure $\nu_K^s$ on $M\times M$ such that
$\supp[\nu_K^s]\subset K$ and $I^s(\nu_K^s)<\infty$.

On the other hand,
by the proof of Lemma \ref{lem:gamma} again, we have $\gamma(s)<\gamma(v)<1$
for any $v\in (s_F-\varepsilon, s)$.
Then, for any $v\in (s_F-\varepsilon, s)$ and  $T\in (0,1/2)$,
\begin{equation*}
\begin{split}
\infty>\int_0^1\frac{(\phi^{-1}(t))^s}{V(\phi_1^{-1}(t))V(\phi_2^{-1}(t))}t^{\gamma(v)-1}\,{\rm d}t
&\ge \int_T^{2T}\frac{(\phi^{-1}(t))^s}{V(\phi_1^{-1}(t))V(\phi_2^{-1}(t))}t^{\gamma(v)-1}\,{\rm d}t\\
&\ge \frac{c_1 (\phi^{-1}(T))^s T^{\gamma(v)}}{V(\phi_1^{-1}(T))V(\phi_2^{-1}(T))},
\end{split}
\end{equation*}
which implies that for some $c_2>0$,
\begin{equation}\label{eq:upper-1}
\frac{\phi^{\gamma(v)}(x,y)}{V(\phi_1^{-1}(\phi(x,y)))V(\phi_2^{-1}(\phi(x,y)))}
\leq\frac{c_2}{d(x,y)^s}, \quad x,y\in K.
\end{equation}

Let $X^{\gamma(v)}$ be the $\gamma(v)$-stable subordinate process of the process $X$.
Since $\gamma(v)<\gamma(0)$, it follows by
Lemma \ref{lem:prod-res} and \eqref{eq:d-upper-1} with \eqref{eq:prod-res-cond} that
under {\rm(WUHK)} pointwisely,
there exists a constant $c_3>0$ such that for all $x,y\in K$,
\begin{equation}\label{eq:r-upper}
\frac{\phi^{\gamma(v)}(x,y)}{V(\phi_1^{-1}(\phi(x,y)))V(\phi_2^{-1}(\phi(x,y)))}
\geq  c_3u_1^{\gamma(v)}(x,y).
\end{equation}
Here we note that $x_1=x_2$ and $y_1=y_2$
and thus
$$\phi_d(x,y)=\phi(d(x_1,y_1))=\phi(d(x_2,y_2))\simeq \phi(d(x,y)).$$
Combining \eqref{eq:r-upper}
with \eqref{eq:upper-1}, we have for some $c_4>0$,
\begin{equation}\label{eq:res-dist}
\frac{1}{d(x,y)^s}\geq c_4 u_1^{\gamma(v)}(x,y),\quad  x,y\in K.
\end{equation}
Therefore,
$$\infty>I^s(\nu_K^s)
=\iint_{K\times K}\frac{1}{d(x,y)^s}\,\nu_K^s({\rm d}x)\nu_K^s({\rm d}y)
\geq c_4
\iint_{K\times K}u_1^{\gamma(v)}(x,y)
\,\nu_K^s({\rm d}x)\nu_K^s({\rm d}y).$$
Note that the last integral above is well-defined
because $u_1^{\gamma}(x,y)$ is defined for any $x,y\in M$ by assumption.
Then, by \cite[Exercise 4.2.2]{FOT11}, the measure $\nu_K^s$ is of finite energy integral
relative to $X^{\gamma(v)}$.
Moreover, since $\nu_K^{s}$ is nontrivial,
${\Capa}^{\gamma(v)}({\rm diag}(F))\geq \Capa^{\gamma(v)}(K)>0$ thanks to \cite[Theorem 2.2.3]{FOT11}.
In particular, for all $s\in (s_F-\varepsilon, s_F)$, ${\Capa}^{\gamma(s)}({\rm diag}(F))>0$.

We now follow the argument of \cite[Theorem 1]{L95} again.
Let $\sigma_{{\rm diag}(F)}$ be the hitting time of $X^{\gamma(s)}$ to ${\rm diag}(F)$, i.e.,
$\sigma_{{\rm diag}(F)}=\inf\{t>0 :X_t^{\gamma(s)}\in {\rm diag}(F)\}$.
Define
$$({\rm diag}(F))_{\gamma(s)}=\left\{x\in M\times M : P_x\otimes P^{\gamma(s)}(\sigma_{{\rm diag}(F)}<\infty)=1\right\}$$
and
$$({\rm diag}(F))_{\gamma(s)}^r=\left\{x\in M\times M: P_x\otimes P^{\gamma(s)}(\sigma_{{\rm diag}(F)}=0)=1\right\}.$$
Then, by following the proof of Theorem \ref{thm:upper}(2),
we get
$${\Capa}^{\gamma(s)}(({\rm diag}(F))_{\gamma(s)})
\geq {\Capa}^{\gamma(s)}(({\rm diag}(F))_{\gamma(s)}^r)>0.$$
In particular, since $J^1=\infty$ and {\rm (NDLHK)} hold,
$X$ is recurrent.
We can then have an inequality corresponding to \eqref{eq:cpt}
with some compact set $K\subset ({\rm diag}(F))_{\gamma(s)}$.
We further follow the proof of Theorem \ref{thm:upper}(2) to obtain
$$\dim\left\{t>0: X_t\in F\right\}\geq 1-\gamma(s_F),
\quad \text{$P_x$-a.s.\  for any $x\in M\times M$.}$$
The proof is complete.
\end{proof}

\begin{cor}
Keep the same condition in Theorem {\rm \ref{thm:d-upper}}.
Suppose that the independent processes $X^1$ and $X^2$ satisfy {\rm (WUHK)} and {\rm (HR)}.
 If $J^1=\infty$, $\gamma(s_F)\in [0,1]$
and \eqref{eq:prod-res-cond} hold, then
$$\dim_{{\cal H}}\{v>0 : X_v^1=X_v^2\in F\}=1-\gamma(s_F),
\quad \text{$P_x$-a.s.\ for any $x\in M\times M$.}$$
\end{cor}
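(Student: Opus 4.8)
The plan is to derive the equality by combining the two bounds in Theorem~\ref{thm:d-upper}: the upper bound from part~(1) and the lower bound from part~(2). The only real work is to check that imposing (WUHK) and (HR) on the heat kernels of $X^1$ and $X^2$ puts us precisely in the situation required by part~(2).

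First I would use Remark~\ref{rem:heat-kernel}. Part~(iii) of that remark shows that (WUHK) together with (HR) implies (NDLHK) for each of $X^1$ and $X^2$; part~(ii) shows that, after replacing each $X^i$ by the regularised version, one may take the exceptional set $\N$ (for $X^i$) to be empty, so that $M_0^i=M$ and hence $M_0^1\times M_0^2=M\times M$ for the product process. Consequently every hypothesis of Theorem~\ref{thm:d-upper} that is phrased ``with $M_0^1$ and $M_0^2$ replaced by $M$'' is now in force, and (NDLHK) holds outright.

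Next I would apply Theorem~\ref{thm:d-upper}(1): its hypotheses---Assumption~\ref{assum:volume}, (NDLHK), the scaling bound \eqref{eq:d-upper-1}, and $\gamma(s_F)\le 1$---are all available (the first three are inherited from ``the same condition in Theorem~\ref{thm:d-upper}'' together with the previous step, and $\gamma(s_F)\le 1$ is part of the hypothesis $\gamma(s_F)\in[0,1]$), so that
$$
\dim_{{\cal H}}\{v>0 : X_v^1=X_v^2\in F\}\le 1-\gamma(s_F),
\quad \text{$P_x$-a.s.\ for any $x\in M\times M$.}
$$
For the reverse inequality I would distinguish two cases. If $\gamma(s_F)=1$ there is nothing to prove, since $1-\gamma(s_F)=0$ while the Hausdorff dimension is nonnegative (with the usual convention $\dim_{{\cal H}}\emptyset=0$). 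If $0\le\gamma(s_F)<1$, then all the hypotheses of Theorem~\ref{thm:d-upper}(2) are met---Assumption~\ref{assum:compact}, (NDLHK) and (WUHK) on all of $M$, $J^1=\infty$, and \eqref{eq:prod-res-cond} for every $\gamma\in(\gamma(s_F),1]$---and hence
$$
\dim_{{\cal H}}\{v>0 : X_v^1=X_v^2\in F\}\ge 1-\gamma(s_F),
\quad \text{$P_x$-a.s.\ for any $x\in M\times M$.}
$$
Combining the two displays gives the assertion.

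I expect the only delicate point to be the use of Remark~\ref{rem:heat-kernel}(ii) to remove the exceptional sets uniformly, so that the conclusion genuinely holds for \emph{all} starting points $x\in M\times M$ rather than merely quasi-everywhere, together with the minor bookkeeping in the boundary case $\gamma(s_F)=1$ (where part~(2) is not directly applicable and one instead appeals to nonnegativity of the dimension). Everything else is a direct citation of Theorem~\ref{thm:d-upper}.
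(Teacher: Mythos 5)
Your proposal is correct and matches the paper's intent exactly: the corollary is stated as an immediate consequence of Theorem \ref{thm:d-upper}, obtained by combining the upper bound of part (1) with the lower bound of part (2), using Remark \ref{rem:heat-kernel}(iii) to get (NDLHK) from (WUHK) and (HR) and Remark \ref{rem:heat-kernel}(ii) to dispense with the exceptional sets so the conclusion holds for every starting point. Your separate treatment of the boundary case $\gamma(s_F)=1$ via nonnegativity of the dimension is the right bookkeeping.
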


The assumption in Theorem \ref{thm:d-upper} (2) implies that the process $X$ is recurrent.
For instance, the assumption is fulfilled by a class of
$\alpha$-stable-like symmetric jump processes
on
ultra-metric spaces with any $\alpha>0$ (see, e.g., \cite{BGHH21, G22} for details).
On the other hand, it is natural to allow $X$ to be transient.
For instance, if $X^1$ and $X^2$ are independent symmetric stable process on ${\mathbb R}$
with index $\alpha\in (1,2)$,
then their direct product process is transient.
Here we utilize two types of the Wiener tests in Propositions \ref{prop:rec-set} and \ref{prop:reg-set}.
The price is to assume that the collision place $F$ is closed,
and  to make the next assumption on $M$ in addition to Assumption \ref{assum:compact}.
\begin{assum}\label{assum:comp}
$M$ is connected.
\end{assum}

Note that under Assumption \ref{assum:comp}, $M\times M$ is also connected.

\begin{thm}\label{thm:d-lower}
Suppose that Assumptions {\rm \ref{assum:volume}}, {\rm \ref{assum:compact}}
and {\rm \ref{assum:comp}} hold.
Let the processes $X^1$ and $X^2$ satisfy Assumption  {\bf(H)}, {\rm (WUHK)} and {\rm(HR)}, so that $X^1$ and $X^2$ are independent.
Let $F\subset M$ be an $(s_F,t_F)$-set for some $s_F\in (0,s_0)$ and $t_F>0$
with $\gamma(s_F)<1$.
Assume the following conditions on $X^1$ and $X^2${\rm :}
\begin{itemize}
\item For any $\gamma\in (\gamma(s_F),1]$,
\eqref{eq:prod-res-cond} holds, and there exists a constant $c_1>0$ such that
for any
$T\in (0,1/2)$,
\begin{equation}\label{eq:reg-set-0}
\int_0^T\frac{((\phi^{\gamma})^{-1}(t))^{s_F}}{V((\phi_1^{\gamma})^{-1}(t))V((\phi_2^{\gamma})^{-1}(t))}\,{\rm d}t
\leq c_1((\phi^{\gamma})^{-1}(T))^{s_F}
\int_T^1\frac{1}{V((\phi_1^{\gamma})^{-1}(t))V((\phi_2^{\gamma})^{-1}(t))}\,{\rm d}t.
\end{equation}
\item $J^1<\infty$, and \eqref{eq:prod-res-cond-1} holds with $\gamma=1$.
Furthermore, there exists a constant $c_2>0$ such that for any
$T\geq 1$,
\begin{equation}\label{eq:rec-set-0}
\int_1^T\frac{(\phi^{-1}(t))^{t_F}}{V(\phi_1^{-1}(t))V(\phi_2^{-1}(t))}\,{\rm d}t
\leq c_2(\phi^{-1}(T))^{t_F}
\int_T^{\infty}\frac{1}{V(\phi_1^{-1}(t))V(\phi_2^{-1}(t))}\,{\rm d}t.
\end{equation}
\end{itemize}
Then
\begin{equation}\label{eq:d-lower}
\dim_{{\cal H}}\{v>0 : X_v^1=X_v^2\}\geq 1-\gamma(s_F), \quad \text{$P_x$-a.s. for any $x\in M\times M$.}
\end{equation}
\end{thm}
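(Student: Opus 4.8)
Since $\{v>0:X_v^1=X_v^2\in F\}\subset\{v>0:X_v^1=X_v^2\}$, it suffices to prove the stronger bound
$$\dim_{\cal H}\{v>0:X_v^1=X_v^2\in F\}\ge 1-\gamma(s_F),\qquad \text{$P_x$-a.s.\ for every $x\in M\times M$.}$$
Under Assumptions \ref{assum:compact} and \ref{assum:comp}, the product space $M\times M$ with the maximum metric is connected, locally compact and has compact closed balls, so all the auxiliary results of this section apply to $M\times M$ and to the direct product process $X=X^1\otimes X^2$; since $F$ is closed, ${\rm diag}(F)=\{(y,y):y\in F\}$ is a closed subset of $M\times M$, and the $(s_F,t_F)$-set structure of $F$ passes to ${\rm diag}(F)$, that is, there is a Radon measure supported on ${\rm diag}(F)$ whose ball of radius $r$ has mass $\simeq r^{s_F}$ for $r\le1$ and $\simeq r^{t_F}$ for $r\ge1$. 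As in the proofs of Theorems \ref{thm:upper} and \ref{thm:d-upper}, the plan is to pass to the $\gamma$-stable subordinate of $X$ and reduce the dimension lower bound to hitting statements through the Frostman lemma for subordinators (\cite[Section 3]{H71}, \cite[Lemma 2.1]{H74}).

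Fix $\gamma\in(\gamma(s_F),1)$ (a nonempty interval since $\gamma(s_F)<1$) and let $X^\gamma$ be the $\gamma$-stable subordinate of $X$, with regular Dirichlet form $(\E^\gamma,\F^\gamma)$; since $X^1,X^2$ satisfy {\rm (WUHK)} and {\rm (HR)}, they also satisfy {\rm (NDLHK)} (Remark \ref{rem:heat-kernel}(iii)) and all assertions hold with $M_0^i$ replaced by $M$. The first step is to apply the Wiener test for regularity (Proposition \ref{prop:reg-set}) to $X^\gamma$ and ${\rm diag}(F)$: using the resolvent bounds of Lemmas \ref{lem:d-res-lower} and \ref{lem:prod-res} (available under {\rm (NDLHK)}, {\rm (WUHK)} and \eqref{eq:prod-res-cond}) together with the local $s_F$-regularity of ${\rm diag}(F)$, one estimates the $\gamma$-capacities of the dyadic annuli ${\rm diag}(F)\cap\{2^{-n-1}\le d(\cdot,z)<2^{-n}\}$ around a point $z\in{\rm diag}(F)$, and hypothesis \eqref{eq:reg-set-0} is tailored so that the resulting Wiener series diverges. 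This yields ${\Capa}^\gamma({\rm diag}(F))>0$ and that quasi every point of ${\rm diag}(F)$ is regular for ${\rm diag}(F)$ with respect to $X^\gamma$. Writing $({\rm diag}(F))^r_\gamma$ for this regular set and $({\rm diag}(F))_\gamma=\{x:P_x\otimes P^\gamma(\sigma^\gamma_{{\rm diag}(F)}<\infty)=1\}$, the set ${\rm diag}(F)\setminus({\rm diag}(F))^r_\gamma$ is $\gamma$-exceptional, hence (by \cite[Theorem 2.2]{O02}) also of zero capacity relative to $X$, and ${\Capa}^\gamma(({\rm diag}(F))_\gamma)\ge{\Capa}^\gamma(({\rm diag}(F))^r_\gamma)>0$.

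The second step, which is the crux of the proof, controls the behaviour at infinity. As $J^1<\infty$, Lemma \ref{lem:trans}(2) shows that $X$ is transient, so the recurrence of the lower-dimensional set ${\rm diag}(F)$ is not automatic and must be extracted from the Wiener test for recurrence of sets (Proposition \ref{prop:rec-set}), applied to $X$ and $({\rm diag}(F))^r_\gamma$. Here one uses the Green function estimates of Lemma \ref{lem:green-1} — the upper bound under {\rm (WUHK)}, \eqref{eq:prod-res-cond-1} with $\gamma=1$ and $J^1<\infty$, the lower bound under {\rm (NDLHK)} — together with the global $t_F$-regularity of ${\rm diag}(F)$ to estimate the ordinary ($X$-)capacities of the far-away dyadic shells $\{2^n\le d(\cdot,o)<2^{n+1}\}$; since $({\rm diag}(F))^r_\gamma$ differs from ${\rm diag}(F)$ only by a capacity-null set, its shell capacities coincide with those of ${\rm diag}(F)$, and hypothesis \eqref{eq:rec-set-0} is exactly what makes the corresponding recurrence Wiener series diverge. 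Combined with the tail zero-one law of Proposition \ref{prop:tail}, this gives $P_x\big(\sigma_{({\rm diag}(F))^r_\gamma}<\infty\big)=1$ for every $x\in M\times M$. The hard part here is that the transience of $X$ has to coexist with the recurrence of the diagonal, which forces a tight interplay between the shell-capacity estimates, the volume doubling \eqref{eq:vol-v} and the scaling hypothesis \eqref{eq:rec-set-0}, and one must also check that ${\rm diag}(F)$ fits the structural assumptions under which Proposition \ref{prop:rec-set} is formulated.

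Finally, fix $x\in M\times M$ and set $\sigma=\sigma_{({\rm diag}(F))^r_\gamma}$; by the second step, $P_x(\sigma<\infty)=1$ and $X_\sigma\in({\rm diag}(F))^r_\gamma\subset({\rm diag}(F))_\gamma$. Following the argument of \cite[Theorem 1]{L95} and the computation \eqref{eq:cpt} in the proof of Theorem \ref{thm:upper}(2) — using the strong Markov property of $X$, the definition of $({\rm diag}(F))_\gamma$, and the independence of the $\gamma$-stable subordinator from $X$ — one obtains
$$P^\gamma\bigl(\text{$\tau_t\in\{v>0:X_v^1(\omega)=X_v^2(\omega)\in F\}$ for some $t>0$}\bigr)=1,\qquad \text{$P_x$-a.s.\ $\omega$.}$$
The Frostman lemma for the $\gamma$-stable subordinator (\cite[Section 3]{H71}, \cite[Lemma 2.1]{H74}) then gives $\dim_{\cal H}\{v>0:X_v^1=X_v^2\in F\}\ge1-\gamma$, $P_x$-a.s. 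Letting $\gamma\downarrow\gamma(s_F)$ along a sequence and invoking the continuity of $\gamma(\cdot)$ from Lemma \ref{lem:gamma}, we arrive at \eqref{eq:d-lower}, which in particular yields the claimed lower bound for $\dim_{\cal H}\{v>0:X_v^1=X_v^2\}$.
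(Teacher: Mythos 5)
Your architecture is the same as the paper's (regularity Wiener test for $X^{\gamma}$ on ${\rm diag}(F)$, recurrence Wiener test for the unsubordinated product process, strong Markov property, then the Frostman lemma for the subordinator), but your second step contains a genuine gap. You weaken the conclusion of Proposition \ref{prop:reg-set} to ``quasi every point of ${\rm diag}(F)$ is regular,'' which forces you to replace ${\rm diag}(F)$ by the regular set $({\rm diag}(F))^r_{\gamma}$ in the recurrence step, and the justification you give for that replacement does not work. The difference ${\rm diag}(F)\setminus({\rm diag}(F))^r_{\gamma}$ is exceptional, hence of zero capacity, only \emph{relative to $X^{\gamma}$}; the comparison in \cite[Theorem 2.2]{O02} reads ${\rm Cap}(K)\geq c\,{\rm Cap}^{\gamma}(K)$, so zero $\gamma$-capacity does \emph{not} imply zero capacity for $X$, and the shell capacities of $({\rm diag}(F))^r_{\gamma}$ relative to $X$ need not coincide with those of ${\rm diag}(F)$. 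Moreover $({\rm diag}(F))^r_{\gamma}$ need not be closed (so $X_{\sigma}\in({\rm diag}(F))^r_{\gamma}$ on $\{\sigma<\infty\}$ is not automatic) and need not carry an $(s_F,t_F)$-measure, so Proposition \ref{prop:rec-set} does not apply to it as formulated.

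The detour is unnecessary: Proposition \ref{prop:reg-set} is a pointwise Wiener test and, under \eqref{eq:prod-res-cond}, \eqref{eq:reg-set-0}, $J^{\gamma}<\infty$ and \eqref{eq:div-polar}, it yields $({\rm diag}(F))^r_{\gamma}={\rm diag}(F)$, i.e.\ \emph{every} point of the (closed) diagonal is regular. With this, apply Proposition \ref{prop:rec-set} with $\gamma=1$ directly to ${\rm diag}(F)$ (this is where $J^1<\infty$, \eqref{eq:prod-res-cond-1} and \eqref{eq:rec-set-0} enter) to get $P_x(\sigma_{{\rm diag}(F)}<\infty)=1$; since ${\rm diag}(F)$ is closed, $X_{\sigma_{{\rm diag}(F)}}\in{\rm diag}(F)$ is a regular point, and the strong Markov computation as in \eqref{eq:cpt} finishes the argument. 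One more point you should address: for an arbitrary $\gamma\in(\gamma(s_F),1)$ the hypothesis \eqref{eq:div-polar} of Proposition \ref{prop:reg-set} (which guarantees that singletons are polar for $X^{\gamma}$, a standing assumption of the regularity Wiener test) may fail if $\gamma\geq\gamma(0)$; you must take $\gamma=\gamma(s)$ with $s\uparrow s_F$ (or at least $\gamma<\gamma(0)$, which is possible since $s_F>0$ forces $\gamma(s_F)<\gamma(0)$), as the paper does.
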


\begin{proof}
For $\gamma\in (0,1]$, we use the same notations $\sigma_{{\rm diag}(F)}$,
$({\rm diag}(F))_{\gamma(s)}$ and
$({\rm diag}(F))_{\gamma(s)}^r$ as in the proof of Theorem \ref{thm:d-upper} (2).
For any $s\in (0,s_F)$ with $\gamma(s_F)<\gamma(s)<1$,
$$\int_0^1\frac{1}{V((\phi_1^{\gamma(s)})^{-1}(t))V((\phi_2^{\gamma(s)})^{-1}(t))}\,\d t=\infty,$$
and $J^{\gamma(s)}\leq J^1<\infty$ by assumption.
Since \eqref{eq:prod-res-cond} and \eqref{eq:reg-set-0} are also valid by assumption,
we apply Proposition \ref{prop:reg-set} for $X^{\gamma(s)}$ and thus
$${\rm diag}(F)=({\rm diag}(F))_{\gamma(s)}^r\subset ({\rm diag}(F))_{\gamma(s)}.$$
Then, for any $y\in {\rm diag}(F)$,
\begin{equation}\label{eq:reg}
1=P_y^{\gamma(s)}(\sigma_{{\rm diag}(F)}<\infty)
=E_y\left[P^{{\gamma(s)}}
\left(\text{$\tau_t\in \left\{v>0 : X_v^1=X_v^2\in F\right\}$ for some $t>0$}\right)\right].
\end{equation}

Note that $\eqref{eq:prod-res-cond}$ with $\gamma=1$ is valid
by assumption.
Since \eqref{eq:prod-res-cond-1} and \eqref{eq:rec-set-0} are 
also valid by assumption,
we apply Proposition \ref{prop:rec-set} with $\gamma=1$
to show that $P_x(\sigma_{{\rm diag}(F)}<\infty)=1$ for any $x\in M\times M$.
We also see that $X_{{\rm diag}(F)}\in {\rm diag}(F)$ because
${\rm diag}(F)$ is closed.
Therefore, by \eqref{eq:reg} and the strong Markov property of the process $X$,
we obtain for any $x\in M\times M$,
\begin{equation*}
\begin{split}
1&=P_x(\sigma_{{\rm diag}(F)}<\infty)\\
&=E_x\left[E_{X_{\sigma_{{\rm diag}(F)}}}\left[P^{{\gamma(s)}}
\left(\text{$\tau_t\in \left\{v>0 : X_v^1=X_v^2\in F \right\}$ for some $t>0$}\right)
\right];\sigma_{{\rm diag}(F)}<\infty\right]\\
&=E_x\left[P^{{\gamma(s)}}
\left(\text{$\tau_t\in
\left\{v>0 : X_v^1\circ\theta_{\sigma_{{\rm diag}(F)}}=X_v^2\circ\theta_{\sigma_{{\rm diag}(F)}}\in F \right\}$
for some $t>0$}\right)\right]\\
&\leq E_x\left[P^{{\gamma(s)}}
\left(\text{$\tau_t\in \left\{v>0 : X_v^1=X_v^2\in F \right\}$ for some $t>0$}\right)\right].
\end{split}
\end{equation*}
Then, by \cite[Section 3]{H71}  and \cite[Lemma 2.1]{H74},
$${\rm dim}_{{\cal H}}\{v>0: X_v^1=X_v^2\in F\}\geq 1-\gamma(s), \quad \text{$P_x$-a.s.\ for any $x\in M\times M$.}$$
Letting $s\uparrow s_F$
along a sequence, we get
$${\rm dim}_{{\cal H}}\{v>0: X_v^1=X_v^2\in F\}\geq 1-\gamma(s_F), \quad \text{$P_x$-a.s.\ for any $x\in M\times M$.}$$
The proof is complete.
\end{proof}

Recall that {\rm (WUHK)} and {\rm (HR)} implies {\rm (NDLHK)} (Remark \ref{rem:heat-kernel}).
Then, by Theorems \ref{thm:d-upper} (1) and \ref{thm:d-lower}, we have
\begin{cor}
Under the full conditions of Theorems {\rm \ref{thm:d-upper}} {\rm (1)} and {\rm \ref{thm:d-lower}},
$$
{\rm dim}_{{\cal H}}\{v>0: X_v^1=X_v^2\in F\}=1-\gamma(s_F), \quad \text{$P_x$-a.s. for any $x\in M\times M$.}
$$
\end{cor}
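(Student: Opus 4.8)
The plan is to derive the identity by pairing the upper estimate of Theorem~\ref{thm:d-upper}(1) with the lower estimate that is actually produced inside the proof of Theorem~\ref{thm:d-lower}. First I would verify that all hypotheses of Theorem~\ref{thm:d-upper}(1) are in force under the standing assumptions: by Remark~\ref{rem:heat-kernel}(iii), {\rm (WUHK)} together with {\rm (HR)} implies {\rm (NDLHK)}; the set $F$ is an $(s_F,t_F)$-set, hence closed (so that ${\rm diag}(F)$ is closed, as used in the proof of Theorem~\ref{thm:d-lower}) and in particular Borel, with $\dim_{{\cal H}}(F)=s_F>0$; since $s_F\in(0,s_0)$ the function $\gamma$ is strictly positive on $[0,s_0)$, so $\gamma(s_F)>0$, while the standing hypothesis $\gamma(s_F)<1$ gives $\gamma(s_F)\le 1$; and Assumption~\ref{assum:volume} together with the regularity bound \eqref{eq:d-upper-1} is assumed. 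Consequently Theorem~\ref{thm:d-upper}(1) applies and yields
$$
\dim_{{\cal H}}\{v>0 : X_v^1=X_v^2\in F\}\le 1-\gamma(s_F),\qquad \text{$P_x$-a.s.\ for any $x\in M\times M$,}
$$
where we have used Remark~\ref{rem:heat-kernel}(ii) to take the versions of $X^1$ and $X^2$ with empty exceptional sets, so that $M_0^1=M_0^2=M$.

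For the matching lower bound I would not quote the printed conclusion of Theorem~\ref{thm:d-lower} but rather the sharper inequality established in its proof, namely the display obtained immediately before the phrase ``The proof is complete'' there:
$$
\dim_{{\cal H}}\{v>0 : X_v^1=X_v^2\in F\}\ge 1-\gamma(s_F),\qquad \text{$P_x$-a.s.\ for any $x\in M\times M$.}
$$
This is exactly the collision time set restricted to $F$, because the target of the subordinate process $X^{\gamma(s)}$ in that argument is ${\rm diag}(F)$. All the ingredients of that proof are among the hypotheses we are assuming: Assumptions~\ref{assum:volume}, \ref{assum:compact} and \ref{assum:comp}; the Wiener test for regularity (Proposition~\ref{prop:reg-set}) applied to $X^{\gamma(s)}$ through \eqref{eq:prod-res-cond} and \eqref{eq:reg-set-0}; and the Wiener test for recurrence (Proposition~\ref{prop:rec-set}) applied to $X$ through $J^1<\infty$, \eqref{eq:prod-res-cond-1} with $\gamma=1$, and \eqref{eq:rec-set-0}. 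Combining the two displays gives the asserted equality.

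I do not expect a genuine obstacle here, since the corollary is a bookkeeping consequence of the two theorems. The only points demanding attention are that the lower estimate must be extracted from the body of the proof of Theorem~\ref{thm:d-lower} (which controls $\{v>0:X_v^1=X_v^2\in F\}$) rather than from its slightly weaker stated form (which drops the constraint $X_v^1=X_v^2\in F$), and that one should invoke the empty-exceptional-set form of Theorem~\ref{thm:d-upper}(1) so that both almost-sure statements hold for the same family of starting points $x\in M\times M$.
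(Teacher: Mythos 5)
Your proposal is correct and follows essentially the same route as the paper, which derives the corollary by noting that {\rm (WUHK)} and {\rm (HR)} imply {\rm (NDLHK)} and then combining Theorem \ref{thm:d-upper}(1) with Theorem \ref{thm:d-lower}. Your observation that the matching lower bound must be taken from the final display in the proof of Theorem \ref{thm:d-lower} (which bounds $\dim_{{\cal H}}\{v>0:X_v^1=X_v^2\in F\}$ itself, not merely the larger set $\{v>0:X_v^1=X_v^2\}$ appearing in \eqref{eq:d-lower}) is exactly the point the paper relies on implicitly.
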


\begin{exam}\label{exam3}
Let $M$ satisfy Assumptions {\rm \ref{assum:compact}} and {\rm \ref{assum:comp}}.
Suppose that the independent processes $X^1$ and $X^2$ satisfy Assumption {\bf(H)},  {\rm (WUHK)} and {\rm(HR)}.
We impose the following conditions on $V(x,r)$ and $\phi_j(r) \ (j=1,2)${\rm :}
\begin{itemize}
\item
There exist positive constants $d_1$, $d_2$ and $c_i \ (1\leq i\leq 4)$ such that
$$c_1r^{d_1}\leq V(x,r)\leq c_2r^{d_1}, \quad x\in M, \ r\in (0,1)$$
and
$$c_3r^{d_2}\leq V(x,r)\leq c_4r^{d_2}, \quad x\in M, \ r\in [1,\infty).$$
\item
There exist positive constants $\alpha_{11}$, $\alpha_{21}$, $\alpha_{12}$, $\alpha_{22}$ and  $c_i \ (5\leq i\leq 8)$ such that
$$c_5r^{\alpha_{j1}}\leq \phi_j(r)\leq c_6r^{\alpha_{j1}}, \quad r\in (0,1)$$
and
$$c_7r^{\alpha_{j2}}\leq \phi_j(r)\leq c_8r^{\alpha_{j2}}, \quad r\in [1,\infty).$$
\end{itemize}

For simplicity, we assume that $\alpha_{11}\leq \alpha_{21}$ and $\alpha_{12}\leq \alpha_{22}$.
Then, by calculations, we have
$$s_0=\alpha_{11}\left(\frac{d_1}{\alpha_{11}}+\frac{d_1}{\alpha_{21}}\right),
\quad \gamma(s)=\frac{d_1-s}{\alpha_{11}}+\frac{d_1}{\alpha_{21}}, \ 0\leq s\le s_0$$
so that
$$0<\gamma(s)<1 \iff s_0-\alpha_{11}<s<s_0.$$
In particular, \eqref{eq:d-upper-1} holds for any $s\in [0,s_0)$ and $\gamma\in (0,\gamma(s))$.
\begin{enumerate}
\item[{\rm(i)}]
Let $F\subset M$ be a Borel subset with $s_F=\dim_{{\cal H}}(F)>0$.
\begin{itemize}
\item
Assume that $d_1<\alpha_{21}$. Then $s_0-\alpha_{11}<d_1$,
and $0<\gamma(s_F)<1$ for any $s_F\in (s_0-\alpha_{11},d_1]$.
In particular, if $d_1<2\alpha_{11}$,
then $s_0-\alpha_{11}<\alpha_{11}(1+d_1/\alpha_{21})$,
and $\gamma(s_F)\ge (d_1/\alpha_{11})-1$  for any $s_F\in (s_0-\alpha_{11},\alpha_{11}(1+d_1/\alpha_{21})]$.
Hence we see that if
$d_1<(2\alpha_{11})\wedge \alpha_{21}$ and
\begin{equation}\label{eq:s-f}
s_0-\alpha_{11}<s_F
\leq d_1\wedge \left\{\alpha_{11}\left(1+\frac{d_1}{\alpha_{21}}\right)\right\},
\end{equation}
then \eqref{eq:prod-res-cond} holds
for any $\gamma\in (\gamma(s_F),1]$.
\item
$J^1=\infty$ if and only if
\begin{equation}\label{eq:t-f-1}
\frac{d_2}{\alpha_{12}}+\frac{d_2}{\alpha_{22}}\le 1.
\end{equation}
\end{itemize}
By the calculations above, we have the following{\rm :}
Suppose that $0<d_1<(2\alpha_{11})\wedge \alpha_{21}$,
and \eqref{eq:t-f-1} 
holds.
If $F$ satisfies  \eqref{eq:s-f}, then
\begin{equation}\label{eq:exam-1}
{\rm dim}_{\cal H}\{s>0 : X_s^1=X_s^2\in F\}
=1-\left(\frac{d_1-s_F}{\alpha_{11}}+\frac{d_1}{\alpha_{21}}\right),
\quad \text{$P_x$-a.s.\ for any $x\in M\times M$.}
\end{equation}
We now assume in addition that
$d_1=d_2=d$,
$\alpha_{11}=\alpha_{21}=\alpha$ and $\alpha_{12}=\alpha_{22}=\beta$.
Under this condition, if $d<\alpha$, $d\leq \beta/2$  and $2d-\alpha<s_F\leq d$ hold,
then
\begin{equation}\label{eq:exam-2}
{\rm dim}_{\cal H}\{s>0 : X_s^1=X_s^2\in F\}
=1-\frac{2d-s_F}{\alpha},
\quad \text{$P_x$-a.s.\ for any $x\in M\times M$.}
\end{equation}
In particular, since $s_M=d$, we have
$$
{\rm dim}_{\cal H}\{s>0 : X_s^1=X_s^2\}
=1-\frac{d}{\alpha},
\quad \text{$P_x$-a.s.\ for any $x\in M\times M$.}
$$

\item[{\rm(ii)}]
Let $F\subset M$ be an $(s_F,t_F)$-set with some constants $s_F\in (0,s_0)$ and $t_F>0$
so that $\dim_{\cal H}(F)=s_F$.
Then, by calculations, we can see that
\begin{itemize}
\item
If $d_1<(2\alpha_{11})\wedge \alpha_{21}$
and \eqref{eq:s-f} hold,
then \eqref{eq:prod-res-cond} is fulfilled
for any $\gamma\in (\gamma(s_F),1]$.
Under the current setting, \eqref{eq:reg-set-0} also holds for any $\gamma\in (\gamma(s_F),\gamma(0))$.
Therefore, if
\begin{equation}\label{eq:t-f-0}
\frac{d_1}{\alpha_{11}}+\frac{d_1}{\alpha_{21}}>1,
\end{equation}then \eqref{eq:reg-set-0} is true for any $\gamma\in (\gamma(s_F),1]$.
\item
$J^1<\infty$ if and only if
\begin{equation}\label{eq:t-f-a}
\frac{d_2}{\alpha_{12}}+\frac{d_2}{\alpha_{22}}>1.
\end{equation}
If we assume in addition that $d_2<\alpha_{12}$, then
$$d_2>\alpha_{22}\left(\frac{d_2}{\alpha_{12}}+\frac{d_2}{\alpha_{22}}-1\right).$$
Therefore, \eqref{eq:rec-set-0} with $\gamma=1$ holds if and only if
\begin{equation}\label{eq:t-f-b}
\alpha_{22}\left(\frac{d_2}{\alpha_{12}}+\frac{d_2}{\alpha_{22}}-1\right)<t_F\leq d_2.
\end{equation}
\end{itemize}

Hence we have the following{\rm :}
Suppose that $d_1<(2\alpha_{11})\wedge \alpha_{21}$, $d_2<\alpha_{12}$, \eqref{eq:t-f-0}
and \eqref{eq:t-f-a} hold.
If $F$ satisfies  \eqref{eq:s-f} and \eqref{eq:t-f-b},
then \eqref{eq:exam-1} holds.
We assume in addition that
$d_1=d_2=d$,
$\alpha_{11}=\alpha_{21}=\alpha$ and $\alpha_{12}=\alpha_{22}=\beta$.
Under this condition, if
$(\alpha\vee \beta)/2<d<\alpha\wedge \beta$,
$2d-\alpha<s_F\leq d$ and $2d-\beta<t_F\leq d$,
then \eqref{eq:exam-2} holds.
\end{enumerate}
\end{exam}

It immediately follows from Example \ref{exam3}
and Remark  \ref{rem2:heat-kernel}
that
the second assertion (2)
in Theorem \ref{Thm:main} holds.

\appendix
\section{Hausdorff measure and dimension}
\subsection{Frostman lemma}\label{subsect:frostman}
Here we follow the arguments in \cite{F14,H01,H95}
to give a proof of the Frostman lemma
on the complete separable metric space.

\begin{defn}\label{def:hausdorff}
\begin{enumerate}
\item[{\rm (1)}]
A function $\varphi: [0,\infty) \to {\mathbb R}$ is called Hausdorff, if
the following three conditions are satisfied.
\begin{description}
\item[{\rm (i)}] $\varphi(t)>0$ for any $t>0$.
\item[{\rm (ii)}] If $t\geq s>0$, then $\varphi(t)\ge \varphi(s)$.
\item[{\rm (iii)}] $\varphi$ is right continuous.
\end{description}
\item[{\rm (2)}] A Hausdorff function $\varphi$ is of finite order,
if there exists a constant $\eta>0$ such that
$$\limsup_{t\rightarrow 0}\frac{\varphi(3t)}{\varphi(t)}\leq \eta.$$
\end{enumerate}
\end{defn}

Let $(M,d)$ be a complete separable metric space.
Let $\varphi$ be a continuous Hausdorff function of finite order such that $\varphi(0)=0$.
For any subset $F$ of $M$ and $\delta>0$, define
$${\cal H}_{\delta}^{\varphi}(F)
=\inf\left\{\sum_{n=1}^{\infty}\varphi({\rm diam}(U_n))
:
F\subset \bigcup_{n=1}^{\infty}U_n, \, U_n\subset M \text{ and } {\rm diam}(U_n)\leq \delta \hbox{ for all }n\geq 1 \right\}$$
and
$${\cal H}^{\varphi}(F):=\lim_{\delta\rightarrow 0}{\cal H}_{\delta}^{\varphi}(F).$$
Here ${\rm diam} (A)=\sup\left\{d(x,y) : x,y\in A\right\}$ for $A\subset M$.
Then, by \cite[Notes 4--6, 9]{H95},
${\cal H}^{\varphi}$ is an outer measure on $M$,
such that any Borel subset $B\subset M$ is measurable with respect to ${\cal H}^{\varphi}$ and
\begin{equation}\label{eq:regular}
\begin{split}
{\cal H}^{\varphi}(B)
&=\inf\{{\cal H^{\varphi}}(G): \text{$G$ is an open subset of $M$ and $G\supset B$}\}\\
&=\sup\{{\cal H^{\varphi}}(K): \text{$K$ is a compact subset of $M$ and $K\subset B$}\}.
\end{split}
\end{equation}

In the rest of this part, we always assume that
$\varphi$ is a continuous Hausdorff function of finite order such that $\varphi(0)=0$.
For any Borel measure $\nu$ on $M$,
define the $\varphi$-energy of $\nu$ as
$$I^{\varphi}(\nu)=\iint_{M\times M}\frac{1}{\varphi(d(x,y))}\,\nu({\rm d}x)\,\nu({\rm d}y).$$
For $x\in M$ and $r>0$, let $B(x,r)$ denote the closed ball
with radius $r$ centered at $x$, i.e., $B(x,r)=\{y\in M : d(x,y)\leq r\}$.

We first present a condition for the Hausdorff measure of a Borel set
being infinite in terms of the $\varphi$-energy.
\begin{prop}\label{prop:frost-1}
Let $F$ be a Borel subset of $M$.
If there exists a finite and nontrivial Borel measure $\nu$ on $M$ such that
$\supp[\nu]\subset F$ and $I^{\varphi}(\nu)<\infty$, then ${\cal H}^{\varphi}(F)=\infty$.
\end{prop}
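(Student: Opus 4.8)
The plan is to run, uniformly over the covering scale, a Cauchy--Schwarz estimate for covering sums against the measure $\nu$. I would argue directly, without contradiction: from the measure $\nu$ with $\supp[\nu]\subseteq F$ and $I^{\varphi}(\nu)<\infty$ I will show that ${\cal H}^{\varphi}_{\delta}(F)\ge 1/E(\delta)$ for every $\delta>0$, where $E(\delta)$ is a truncated $\varphi$-energy that tends to $0$ as $\delta\downarrow 0$; letting $\delta\downarrow 0$ then forces ${\cal H}^{\varphi}(F)=\infty$.

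First I would normalise and truncate. Rescaling $\nu$, assume $\nu(M)=1$. Since $\varphi(0)=0$ and $\varphi>0$ on $(0,\infty)$, finiteness of $I^{\varphi}(\nu)$ forces $\nu$ to be atomless, so the diagonal of $M\times M$ is $\nu\otimes\nu$-null. Set
$$E(\delta)=\iint_{\{d(x,y)\le\delta\}}\frac{1}{\varphi(d(x,y))}\,\nu(\d x)\,\nu(\d y),\qquad \delta>0.$$
As $\delta\downarrow 0$ the functions ${\bf 1}_{\{d(x,y)\le\delta\}}$ decrease to the indicator of the ($\nu\otimes\nu$-null) diagonal, and the integrand is dominated by $1/\varphi(d(x,y))\in L^1(\nu\otimes\nu)$; hence $E(\delta)\downarrow 0$ by dominated convergence, while $E(\delta)\le I^{\varphi}(\nu)<\infty$ for every $\delta$.

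Next comes the scale-$\delta$ covering estimate. Fix $\delta>0$ and any countable family $\{U_n\}$ covering $F$ with ${\rm diam}(U_n)\le\delta$. Replacing each $U_n$ by its closure leaves every ${\rm diam}(U_n)$ unchanged, so we may assume the $U_n$ are closed; then $V_n:=U_n\setminus\bigcup_{m<n}U_m$ are Borel, pairwise disjoint, still cover $F\supseteq\supp[\nu]$, and satisfy ${\rm diam}(V_n)\le{\rm diam}(U_n)$. Consequently $\sum_n\nu(V_n)\ge\nu(F)=1$ and $\sum_n\varphi({\rm diam}(V_n))\le\sum_n\varphi({\rm diam}(U_n))$. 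On $V_n\times V_n$ one has $d(x,y)\le{\rm diam}(V_n)$, hence $\varphi(d(x,y))^{-1}\ge\varphi({\rm diam}(V_n))^{-1}$; since the sets $V_n\times V_n$ are pairwise disjoint and contained in $\{d(x,y)\le\delta\}$, summing $\nu(V_n)^2/\varphi({\rm diam}(V_n))\le\iint_{V_n\times V_n}\varphi(d(x,y))^{-1}\,\nu(\d x)\,\nu(\d y)$ over the $n$ with $\nu(V_n)>0$ (cells of diameter $0$ are singletons and carry no mass, as $\nu$ is atomless) gives $\sum_n\nu(V_n)^2/\varphi({\rm diam}(V_n))\le E(\delta)$. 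By Cauchy--Schwarz,
$$1\le\Bigl(\sum_n\nu(V_n)\Bigr)^{2}\le\Bigl(\sum_n\frac{\nu(V_n)^2}{\varphi({\rm diam}(V_n))}\Bigr)\Bigl(\sum_n\varphi({\rm diam}(V_n))\Bigr)\le E(\delta)\sum_n\varphi({\rm diam}(U_n)),$$
so $\sum_n\varphi({\rm diam}(U_n))\ge 1/E(\delta)$. Taking the infimum over all such covers yields ${\cal H}^{\varphi}_{\delta}(F)\ge 1/E(\delta)$, and since $E(\delta)\downarrow 0$, letting $\delta\downarrow 0$ gives ${\cal H}^{\varphi}(F)=\infty$.

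The only point that needs real care is the combinatorial bookkeeping around the cover: one must pass from an arbitrary cover to closed (hence Borel-disjointifiable) sets and then ensure the product cells $V_n\times V_n$ are genuinely pairwise disjoint, so that the per-cell energy lower bounds add up without overcounting $I^{\varphi}(\nu)$. I note that neither the finite-order property of $\varphi$ nor its continuity is used here beyond monotonicity, measurability, and $\varphi(0)=0$.
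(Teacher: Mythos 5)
Your argument is correct, and it is genuinely different from the one in the paper. The paper argues via upper densities: it splits $F$ into the set $F_1$ where $\limsup_{r\to 0}\nu(B(x,r))/\varphi(r)>0$ (shown to be $\nu$-null by extracting disjoint annuli, each contributing a fixed amount to the potential $\int\varphi(d(x,\cdot))^{-1}\,\d\nu$, so that finiteness of $I^{\varphi}(\nu)$ is violated) and its complement, where the mass distribution principle (Lemma \ref{lem:limsup}) gives ${\cal H}^{\varphi}(F)\geq \nu(F)/c$ for every $c>0$. You instead prove the quantitative covering bound ${\cal H}^{\varphi}_{\delta}(F)\geq \nu(M)^2/E(\delta)$ directly by Cauchy--Schwarz, after disjointifying the cover into Borel cells $V_n$ and using that the cells $V_n\times V_n$ sit disjointly inside $\{d\leq\delta\}$, and then send $\delta\downarrow 0$, using that $I^{\varphi}(\nu)<\infty$ forces $\nu$ to be atomless so that the truncated energy $E(\delta)$ tends to $0$ by dominated convergence. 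Your bookkeeping is sound: passing to closures preserves diameters, the zero-diameter cells carry no mass, and the case $\sum_n\varphi({\rm diam}(U_n))=\infty$ is vacuous for Cauchy--Schwarz. What your route buys is self-containedness (Lemma \ref{lem:limsup} is no longer needed for this proposition), an explicit lower bound on ${\cal H}^{\varphi}_{\delta}(F)$ at every scale, and transparency about hypotheses (only monotonicity, measurability and $\varphi(0)=0$ enter, exactly as you note); what the paper's route buys is alignment with the standard potential-theoretic treatment (Falconer's Proposition 4.9) and reuse of the density machinery that the appendix develops anyway for Lemmas \ref{lem:limsup-1} and \ref{lem:bound}.
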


To obtain Proposition \ref{prop:frost-1},
we follow the proof of \cite[Proposition 4.9]{F14} to show

\begin{lem}\label{lem:limsup}
Let $\nu$ be a Borel measure on $M$.
Suppose that for some $F\in {\cal B}(M)$ and $c>0$,
\begin{equation}\label{eq:limsup}
\limsup_{r\rightarrow 0}\frac{\nu(B(x,r))}{\varphi(r)}\leq c, \quad x\in F.
\end{equation}
Then $\nu(F)\leq c{\cal H}^{\varphi}(F)$.
\end{lem}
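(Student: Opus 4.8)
The plan is to prove this \emph{mass distribution principle} in the usual way: use the ball estimate \eqref{eq:limsup} to bound the $\nu$-mass of an arbitrary cover of a suitable exhausting piece of $F$, and then take the infimum over covers. There are essentially two points that require care: converting the hypothesis, which is phrased in terms of balls, into a bound valid for arbitrary covering sets; and carrying out the measure-theoretic bookkeeping without assuming $\nu$ is finite.

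First I would fix $\varepsilon>0$ and, for $n\ge 1$, set
$$F_n=\bigl\{x\in F : \nu(O(x,r))\le (c+\varepsilon)\varphi(r)\ \text{for all rational } r\in(0,1/n]\bigr\},$$
where $O(x,r)=\{y\in M:d(x,y)<r\}$ denotes the \emph{open} ball. Working with open balls is convenient for two reasons. First, $x\mapsto\nu(O(x,r))$ is lower semicontinuous: if $x_k\to x$ then $\mathbf 1_{O(x,r)}\le\liminf_k\mathbf 1_{O(x_k,r)}$ pointwise, so Fatou's lemma applies; hence each $F_n$ is a Borel set (it is the intersection of $F$ with countably many closed sets). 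Second, for $x\in F_n$ the defining inequality in fact holds for \emph{all} $r\in(0,1/n]$, not only the rational ones, because $O(x,r)=\bigcup_{r'<r}O(x,r')$ and $\varphi$ is continuous. Since \eqref{eq:limsup} provides, for each $x\in F$, a radius $\rho>0$ with $\nu(O(x,r))\le\nu(B(x,r))\le (c+\varepsilon)\varphi(r)$ for $0<r<\rho$, we get $x\in F_n$ as soon as $n>1/\rho$; thus $F_n\uparrow F$, and by continuity from below of $\nu$, $\nu(F)=\sup_n\nu(F_n)$.

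Next, fix $n$ and $\delta\in(0,1/n)$, and let $\{U_j\}_{j\ge 1}$ be any countable cover of $F_n$ with ${\rm diam}(U_j)\le\delta$ for all $j$; replacing each $U_j$ by its closure changes neither the diameters nor the covering property, so we may assume the $U_j$ are closed, hence Borel. Discard the $U_j$ that miss $F_n$, and for the remaining ones choose $x_j\in U_j\cap F_n$ and put $d_j={\rm diam}(U_j)\le\delta<1/n$. Then $U_j\subset B(x_j,d_j)\subset O(x_j,r')$ for every $r'\in(d_j,1/n]$, so that $\nu(U_j)\le\nu(O(x_j,r'))\le (c+\varepsilon)\varphi(r')$ by the membership $x_j\in F_n$; letting $r'\downarrow d_j$ and invoking continuity of $\varphi$ gives $\nu(U_j)\le (c+\varepsilon)\varphi({\rm diam}(U_j))$. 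Summing over $j$ and then taking the infimum over all admissible covers yields $\nu(F_n)\le\sum_j\nu(U_j)\le (c+\varepsilon)\sum_j\varphi({\rm diam}(U_j))$, hence $\nu(F_n)\le (c+\varepsilon){\cal H}^\varphi_\delta(F_n)\le (c+\varepsilon){\cal H}^\varphi_\delta(F)$.

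Finally I would let $\delta\downarrow0$, then $n\to\infty$, then $\varepsilon\downarrow0$, to conclude $\nu(F)\le c\,{\cal H}^\varphi(F)$. I expect the only obstacle to be technical rather than conceptual: ensuring the exhausting sets $F_n$ are genuinely measurable, and that no step secretly uses finiteness of $\nu$. This is exactly why I would define $F_n$ through open balls (so that lower semicontinuity, hence Borel measurability, is available) and why I would transfer the final estimate onto ${\rm diam}(U_j)$ by passing to the limit on the radius side $r'\downarrow d_j$, rather than by invoking continuity from above of $\nu$ on balls. Note also that the ``finite order'' property of $\varphi$ plays no role here; only monotonicity, continuity, and $\varphi(0)=0$ are used.
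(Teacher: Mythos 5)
Your proof is correct and follows essentially the same route as the paper: exhaust $F$ by Borel sets on which the ball estimate holds uniformly for all radii below a threshold, bound the $\nu$-mass of each member of an arbitrary $\delta$-cover by $(c+\varepsilon)\varphi(\mathrm{diam}(U_j))$, sum, and pass to the limits. Your technical bookkeeping (open balls and rational radii for Borel measurability of the exhausting sets, closing up the covering sets instead of invoking the outer measure $\nu^*$) differs only cosmetically from the paper's, and is if anything slightly more careful on the measurability point.
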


\begin{proof}
Let $\nu$ be a Borel measure on $M$, and let $\nu^*$ be the associated outer measure.
Then any Borel subset $B\subset M$ is $\nu^*$-measurable and $\nu^*(B)=\nu(B)$.
Suppose that  \eqref{eq:limsup} holds for some $F\in {\cal B}(M)$ and $c>0$.
For $n,m\in {\mathbb N}$, define a Borel subset
$$F_{n,m}=\left\{x\in
F: \text{$\nu(B(x,r))\leq \left(c+\frac{1}{n}\right)\varphi(r)$
for any $r\in \left(0,\frac{1}{m}\right]$}\right\},$$
so that $F=\cap_{n=1}^{\infty}\cup_{m=1}^{\infty}F_{n,m}$.
For $m\in {\mathbb N}$, let $\{U_k\}_{k=1}^{\infty}$ be a $(1/m)$-covering of $F$.
Namely, $\{U_k\}_{k=1}^{\infty}$  is a sequence of
subsets of $M$ such that
$$F\subset \bigcup_{k=1}^{\infty}U_k,
\quad {\rm diam}(U_k)\leq \frac{1}{m}, \quad  k=1,2,3,\dots$$
If $F_{n,m}\cap U_k\ne \emptyset$,
then, for any $x\in F_{n,m}\cap U_k$,
$$
\nu^*(F_{n,m}\cap U_k)\leq \nu^*(U_k)\leq \nu^*(B(x,{\rm diam}(U_k)))
=\nu(B(x,{\rm diam}(U_k)))
\leq \left(c+\frac{1}{n}\right)\varphi\left({\rm diam}(U_k)\right).
$$
Therefore,
$$\nu(F_{n,m})=\nu(F_{n,m}\cap F)
=\nu^*(F_{n,m}\cap F)
\leq \sum_{k=1}^{\infty}\nu^*(F_{n,m}\cap U_k)
\leq \left(c+\frac{1}{n}\right)\sum_{k=1}^{\infty}\varphi\left({\rm diam}(U_k)\right).$$
Since the covering $\{U_k\}_{k=1}^{\infty}$ is taken arbitrary,
we have for any $n,m\in {\mathbb N}$,
$$\nu(F_{n,m})\leq \left(c+\frac{1}{n}\right){\cal H}_{1/m}^{\varphi}(F).$$
Letting $m\rightarrow \infty$ and then $n\rightarrow\infty$, we obtain
$\nu(F)\leq c{\cal H}^{\varphi}(F)$.
\end{proof}

\begin{proof}[Proof of Proposition {\rm \ref{prop:frost-1}}]
Let $F$ be a Borel subset of $M$.
Suppose that there exists a finite and nontrivial Borel measure $\nu$ on $M$ such that
$\supp[\nu]\subset F$ and $I^{\varphi}(\nu)<\infty$.
Let
$$F_1=\left\{x\in F: \limsup_{r\rightarrow 0}\frac{\nu(B(x,r))}{\varphi(r)}>0\right\}.$$
Then, for any $x\in F_1$, there exist $\varepsilon>0$ and
a decreasing positive sequence $\{r_n\}_{n=1}^{\infty}$
such that $r_n\downarrow 0$ as $n\rightarrow\infty$ and
$$\frac{\nu(B(x,r_n))}{\varphi(r_n)}\geq \varepsilon, \quad n=1,2,3,\dots$$
We also have $\nu(\{a\})=0$ for any $a\in M$ because $I^{\varphi}(\nu)<\infty$.
Hence, for each $r_n$, there exists $q_n\in (0,r_n)$ such that
\begin{equation}\label{eq:annuli}
\nu(B(x,r_n)\setminus B(x,q_n))\geq \frac{1}{4}\varepsilon\varphi(r_n).
\end{equation}
Moreover, we may and do assume that $q_n>r_{n+1}$ for all $n\ge1$
by taking subsequences of $\{r_n\}_{n=1}^{\infty}$ and $\{q_n\}_{n=1}^{\infty}$ respectively,
if necessary.
Under this assumption,
the annuli $B(x,r_n)\setminus B(x,q_n)$, $n=1,2,3,\dots$, are disjoint.

For any $x\in F_1$, it follows by \eqref{eq:annuli} that
$$\int_{B(x,r_n)\setminus B(x,q_n)}\frac{1}{\varphi(d(x,y))}\,\nu({\rm d}y)
\geq \frac{1}{\varphi(r_n)}\nu(B(x,r_n)\setminus B(x,q_n))\geq \frac{\varepsilon}{4},$$
and thus
$$
\int_M\frac{1}{\varphi(d(x,y))}\,\nu({\rm d}y)
\geq \sum_{n=1}^{\infty}\int_{B(x,r_n)\setminus B(x,q_n)}\frac{1}{\varphi(d(x,y))}\,\nu({\rm d}y)
=\infty.
$$
Since $I^{\varphi}(\nu)<\infty$ by assumption, we get $\nu(F_1)=0$.

For any $x\in F\setminus F_1$,
$$\lim_{r\rightarrow 0}\frac{\nu(B(x,r))}{\varphi(r)}=0.$$
Since $\nu(F_1)=0$, Lemma \ref{lem:limsup} implies that for any $c>0$,
$${\cal H}^{\varphi}(F)\geq {\cal H}^{\varphi}(F\setminus F_1)
\geq \frac{1}{c}\nu(F\setminus F_1)=\frac{1}{c}\nu(F).$$
Letting $c\rightarrow 0$, we have ${\cal H}^{\varphi}(F)=\infty$.
\end{proof}

In the following, we present a criterion for a Borel set
to be of zero Hausdorff measure in terms of the potential.
\begin{prop}\label{prop:frost-2}
Let $M$ satisfy Assumption {\rm \ref{assum:compact}}, and let
$F$ be a Borel subset of $M$ such that ${\cal H}^{\varphi}(F)>0$.
Then for any $\varepsilon\in (0,1)$, there exists a finite and nontrivial
Borel measure $\nu$ on $M$ such that $\supp[\nu]\subset F$ and
$I^{\varphi^{\varepsilon}}(\nu)<\infty$.
\end{prop}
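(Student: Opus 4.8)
The plan is to reduce the statement to a compact carrier, then invoke the classical Frostman (mass--distribution) lemma to obtain a nontrivial measure whose ball masses are dominated by $\varphi$, and finally upgrade that domination to finiteness of the $\varphi^\varepsilon$--energy through a dyadic decomposition carried out at the level of the $\varphi$--scale rather than the metric scale.

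First I would use the inner regularity recorded in \eqref{eq:regular}: since ${\cal H}^\varphi(F)>0$, there is a compact $K\subset F$ with ${\cal H}^\varphi(K)>0$, and it suffices to build, for each $\varepsilon\in(0,1)$, a finite nontrivial Borel measure $\nu$ with $\supp[\nu]\subset K$ and $I^{\varphi^\varepsilon}(\nu)<\infty$. Under Assumption \ref{assum:compact} the space $(M,d)$ is proper, so $K$ is totally bounded and, at every dyadic scale $2^{-n}$, admits a finite covering by balls of radius $2^{-n}$ --- this is exactly the input the mass--distribution construction needs.

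The main step is the Frostman lemma itself: I would construct a finite nontrivial Borel measure $\nu$ supported in $K$, together with a constant $C>0$, so that $\nu(B(x,r))\le C\varphi(r)$ for all $x\in M$ and $r>0$. Following the arguments of \cite{F14,H01,H95} in this metric setting, one defines at each generation $n$ a premeasure distributing a total mass comparable to ${\cal H}_{2^{-n}}^{\varphi}(K)$ over a finite $2^{-n}$--ball cover of $K$, under the constraint that a generation-$m$ ball carry mass at most $\varphi(2^{-m})$; the finite--order hypothesis on $\varphi$ is what controls the passage between consecutive dyadic scales, while ${\cal H}^\varphi(K)>0$ keeps the total mass bounded below. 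A weak-$*$ limit along a subsequence --- legitimate because $K$ is compact, so bounded sets of measures on $K$ are weak-$*$ sequentially compact --- produces $\nu$ with $\nu(K)>0$ and the displayed ball bound (for radii bounded below the bound is automatic, since $\nu(M)<\infty$ and $\varphi$ is bounded below there). In particular $\nu(\{x\})=0$ for every $x$.

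It then remains to deduce $I^{\varphi^\varepsilon}(\nu)<\infty$. Fixing $x\in K$ and writing $R={\rm diam}(K)$, $a_k=2^{-k}\varphi(R)$, I would split $K\setminus\{x\}$ into the $\varphi$--annuli $A_k(x)=\{y\in K:\ a_{k+1}<\varphi(d(x,y))\le a_k\}$. Because $\varphi$ is continuous, nondecreasing and $\varphi(0)=0$, the set $\{y:\varphi(d(x,y))\le a_k\}$ is contained in a closed ball $B(x,\rho_k)$ with $\varphi(\rho_k)\le a_k$, so the ball bound gives $\nu(A_k(x))\le Ca_k$, whereas $\varphi(d(x,y))^{-\varepsilon}\le a_{k+1}^{-\varepsilon}$ on $A_k(x)$; summing the ensuing geometric series (convergent since $1-\varepsilon>0$) bounds $\int_K \varphi(d(x,y))^{-\varepsilon}\,\nu(\d y)$ by a constant independent of $x$, and integrating against $\nu$ gives $I^{\varphi^\varepsilon}(\nu)\le C'\nu(K)<\infty$. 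The one genuinely non-routine point is the third paragraph --- running the Frostman mass--distribution argument on a complete separable metric space with compact closed balls, with balls in place of dyadic cubes and a weak-$*$ compactness argument for the limit; the reduction in the second paragraph and the energy estimate in the last are then entirely routine.
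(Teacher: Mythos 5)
Your overall architecture (pass to a compact carrier, produce a measure with a Frostman-type ball bound, then estimate the $\varphi^{\varepsilon}$-energy by an annular/geometric-series argument) matches the paper's, and your reduction step and final energy estimate are both correct — the paper does the last step equivalently via the Stieltjes bound $\int_0^1\varphi(r)^{-\varepsilon}\,\d m(r)\le b^{\varepsilon}\int_0^1 m(r)^{-\varepsilon}\,\d m(r)\le b^{\varepsilon}m(1)^{1-\varepsilon}/(1-\varepsilon)$ with $m(r)=\nu(B(x,r))$. The problem is the step you yourself flag as ``the one genuinely non-routine point'': the construction of the Frostman measure. The generation-by-generation premeasure on finite ball covers with a weak-$*$ limit is the Euclidean proof from \cite{F14}, and it relies essentially on the dyadic \emph{tree} structure of cubes (each child contained in a unique parent, cubes of one generation partitioning the space). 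Balls in a general proper metric space have neither property, and the two key points of the construction — propagating the constraint ``mass at most $\varphi(2^{-m})$ per generation-$m$ ball'' downward, and keeping the total mass bounded below so the limit is nontrivial — do not go through as described. Making Frostman's lemma work on general complete separable metric spaces is precisely the content of Howroyd's work \cite{H95}, whose proof is a functional-analytic one via weighted Hausdorff measures, not a mass-distribution scheme; Assumption \ref{assum:compact} alone gives no dyadic decomposition (no geometric doubling is assumed in this appendix). So as written, the central step is asserted rather than proved, and the cited arguments do not supply it.

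The paper avoids this entirely: it invokes \cite[Corollary 7]{H95} only to extract a compact $E\subset F$ with $0<{\cal H}^{\varphi}(E)<\infty$ (Lemma \ref{lem:how}), takes $\nu={\cal H}^{\varphi}|_E$ as the candidate measure, proves an \emph{upper density} bound $\limsup_{r\to0}\nu(B(x,r))/\varphi(r)\le 2c_*$ off a set of at most half the measure via the $5r$-covering lemma (Lemma \ref{lem:limsup-1}), and then upgrades this almost-everywhere $\limsup$ bound to a \emph{uniform} ball bound on a further compact subset $K$ of positive measure by Egorov's theorem (Lemma \ref{lem:bound}) — note the bound is only needed for centers $x\in K$, which suffices since $\supp[\nu]\subset K$. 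If you want to salvage your route, either adopt this density-theorem-plus-Egorov mechanism, or cite the general-metric-space Frostman lemma itself from \cite{H95} as a black box rather than sketching the Euclidean construction; but the construction as you describe it is a genuine gap.
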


The proof of Proposition \ref{prop:frost-2} needs three  lemmas.
The first two lemmas concern the upper bound of the Hausdorff measure.

\begin{lem}\label{lem:limsup-1}
Let $M$ satisfy Assumption {\rm \ref{assum:compact}}, and let
$\nu$ be a finite and nontrivial Borel measure on $M$.
If $A$ is a Borel subset of $M$ with $A\subset \supp[\nu]$, and
if $c$ is a  positive constant such that
\begin{equation}\label{eq:limsup-1}
\limsup_{r\rightarrow 0}\frac{\nu(B(x,r))}{\varphi(r)}>c, \quad x\in A,
\end{equation}
then ${\cal H}^{\varphi}(A)\leq (c_*/c)\nu(M)$.
Here $c_*$ is a positive constant that is independent
of the choices of $A$ and $c$.
\end{lem}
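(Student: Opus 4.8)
The plan is to prove this by a Vitali-type covering argument that compares the Hausdorff content of $A$ with the total mass of $\nu$, absorbing the enlargement factor of the covering lemma into the finite order of $\varphi$. First I would record the elementary consequence of the finite-order hypothesis: since $\varphi$ is nondecreasing and $\limsup_{t\to0}\varphi(3t)/\varphi(t)\le\eta$, iterating this bound three times produces a threshold $\delta_0>0$ and a constant $c_*$ depending only on $\eta$, and hence only on $\varphi$, with $\varphi(27t)\le c_*\varphi(t)$ for all $t\in(0,\delta_0]$. This $c_*$ is the constant claimed in the statement, and it is by construction independent of $A$ and of $c$.

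Next, fixing $\delta\in(0,\delta_0/27)$, I would form the family $\mathcal B_\delta$ of all closed balls $B(x,r)$ with $x\in A$, $0<r<\delta$ and $\nu(B(x,r))>c\,\varphi(r)$; by \eqref{eq:limsup-1} every point of $A$ lies in such a ball for arbitrarily small $r$, so $\mathcal B_\delta$ is a pointwise cover of $A$ by balls of radius $<\delta$. I would then invoke the $5r$-covering lemma --- which holds in an arbitrary metric space, so Assumption \ref{assum:compact} is more than enough --- to extract a pairwise disjoint subfamily $\{B(x_i,r_i)\}_{i\ge1}$ of $\mathcal B_\delta$ with $A\subset\bigcup_{i\ge1}B(x_i,5r_i)$; this family is countable because $M$ is separable and the $B(x_i,r_i)$ are disjoint. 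Since $27r_i<\delta_0$ and ${\rm diam}\,B(x_i,5r_i)\le 10r_i\le 27r_i$, the choice of $c_*$ gives $\varphi\bigl({\rm diam}\,B(x_i,5r_i)\bigr)\le\varphi(27r_i)\le c_*\varphi(r_i)$ for every $i$; moreover ${\rm diam}\,B(x_i,5r_i)\le 10r_i<10\delta$, so $\{B(x_i,5r_i)\}_{i\ge1}$ is an admissible cover of $A$ at scale $10\delta$ and therefore
\begin{align*}
{\cal H}_{10\delta}^{\varphi}(A)
&\le\sum_{i\ge1}\varphi\bigl({\rm diam}\,B(x_i,5r_i)\bigr)
\le c_*\sum_{i\ge1}\varphi(r_i)\\
&\le\frac{c_*}{c}\sum_{i\ge1}\nu\bigl(B(x_i,r_i)\bigr)
=\frac{c_*}{c}\,\nu\Bigl(\bigcup_{i\ge1}B(x_i,r_i)\Bigr)
\le\frac{c_*}{c}\,\nu(M),
\end{align*}
where the middle equality uses disjointness of the $B(x_i,r_i)$ and the last inequality uses that $\nu$ is finite. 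Letting $\delta\downarrow0$ then yields ${\cal H}^{\varphi}(A)\le(c_*/c)\,\nu(M)$, which is the assertion.

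There is no substantial obstacle here --- this is essentially the classical density theorem relating a finite measure to a Hausdorff measure --- but the two places that need care are the invocation of the $5r$-covering lemma in the abstract metric setting (and keeping track of precisely which enlargement factor it produces) and the bookkeeping by which the finite order of $\varphi$ swallows the factor $27$ coming from the diameters of the enlarged balls; the latter is what fixes the value of $c_*$ and is the reason the statement requires $\varphi$ to be of finite order.
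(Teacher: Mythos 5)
Your proposal is correct and follows essentially the same route as the paper's proof: form the fine cover by balls satisfying $\nu(B(x,r))>c\varphi(r)$, extract a disjoint countable subfamily via the $5r$-covering lemma whose dilates cover $A$, absorb the dilation factor into $c_*$ using the finite order of $\varphi$, sum over the disjoint balls, and let $\delta\downarrow 0$. If anything, your bookkeeping is slightly more careful than the paper's, since you compare $\varphi(\mathrm{diam}\,B(x_i,5r_i))$ directly to $\varphi(r_i)$, which is exactly the quantity controlled by the defining inequality of the cover.
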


\begin{proof}
Suppose that \eqref{eq:limsup-1} holds for some $A\in {\cal B}(M)$ and $c>0$.
For $\delta>0$, let
$${\cal C}_{\delta}=\left\{B(x,r): x\in A, r\in (0,\delta], \nu(B(x,r))>c\varphi(r)\right\}.$$
Then for any $x\in A$, there exists $r_0\in (0,\delta]$ such that
$\nu(B(x,r_0))>c\varphi(r_0)$.
This yields $B(x,r_0)\in {\cal C}_{\delta}$ and thus
$A\subset \bigcup_{B\in {\cal C}_{\delta}}B$.
Moreover, since $\sup\{{\rm diam}(B) : B\in {\cal C}_{\delta}\}\leq 2\delta$
and $M$ satisfies Assumption \ref{assum:compact},
the covering lemma (see, e.g., \cite[Theorem 1.2]{H01})
implies that there exists a sequence of countable
disjoint sets
$\{B_n\}_{n=1}^{\infty}\subset {\cal C}_{\delta}$ such that
$\bigcup_{B\in {\cal C}_{\delta}}B\subset \bigcup_{n=1}^{\infty}5B_n$, where $5B(x,r)=B(x,5r)$.
Therefore,
\begin{equation}\label{eq:10-upper}
{\cal H}_{10\delta}^{\varphi}(A)\leq \sum_{n=1}^{\infty}\varphi({\rm diam}(5B_n))
=\sum_{n=1}^{\infty}\varphi(5\, {\rm diam}(B_n)).
\end{equation}

Since the Hausdorff function $\varphi$ is of finite order and $B_n\in {\cal C}_{\delta}$,
there exists $c_*>0$, which depends only on $\varphi$, such that for any $n\in {\mathbb N}$,
$$\varphi(5\, {\rm diam}(B_n))\leq c_*\varphi({\rm diam}(B_n)
)\leq \frac{c_*}{c}\nu(B_n).$$
Combining this with \eqref{eq:10-upper}
and noting that the sequence $\{B_n\}_{n=1}^{\infty}$ is disjoint,
we obtain
$${\cal H}_{10\delta}^{\varphi}(A)\leq
\frac{c_*}{c}\sum_{n=1}^{\infty}\nu(B_n)\leq \frac{c_*}{c}\nu(M).$$
Letting $\delta\downarrow 0$, we get ${\cal H}^{\varphi}(A)\leq (c_*/c)\nu(M)$.
\end{proof}

We refer to the next key lemma for the regularity of the Hausdorff measure.
\begin{lem}\label{lem:how}{\rm (\cite[Corollary 7]{H95})}
If $F$ is  Borel subset of $M$ such that ${\cal H}^{\varphi}(F)>0$,
then there exists a compact subset $K$ of $ F$ such that $0<{\cal H}^{\varphi}(K)<\infty$.
\end{lem}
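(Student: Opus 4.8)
The plan is to reduce the statement, via the inner regularity of ${\cal H}^{\varphi}$, to the case of a compact set of infinite $\varphi$-measure, and then to carve out a compact subset of finite positive measure by a Besicovitch--Davies type Cantor construction carried out at the level of the pre-measures ${\cal H}^{\varphi}_{\delta}$. First I would use \eqref{eq:regular}: since $F$ is Borel with ${\cal H}^{\varphi}(F)>0$, there is a compact $C\subseteq F$ with ${\cal H}^{\varphi}(C)>0$; if ${\cal H}^{\varphi}(C)<\infty$ we are done with $K=C$, so we may assume ${\cal H}^{\varphi}(C)=\infty$, and it suffices to exhibit a compact $K\subseteq C$ with $a\le {\cal H}^{\varphi}(K)<\infty$ for some (any) fixed $a>0$. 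Recall the elementary facts to be used freely: for a fixed $\delta>0$, ${\cal H}^{\varphi}_{\delta}$ is monotone and countably subadditive; it is a nonincreasing function of $\delta$, with ${\cal H}^{\varphi}_{\delta}(E)\uparrow {\cal H}^{\varphi}(E)$ as $\delta\downarrow0$; and ${\cal H}^{\varphi}_{\delta}(E)<\infty$ for every compact $E$ and every $\delta>0$ (finite cover by balls of radius $\delta/2$), while ${\cal H}^{\varphi}_{\delta}(E)\to\infty$ as $\delta\to0$ when ${\cal H}^{\varphi}(E)=\infty$. Since $\varphi$ is continuous with $\varphi(0)=0$, we also have $\varphi(t)\to0$ as $t\to0^{+}$.

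The construction proceeds inductively. Having built a nonempty compact $K_{j}\subseteq C$ and a scale $\delta_{j}>0$ with ${\cal H}^{\varphi}_{\delta_{j}}(K_{j})\in[a,2a)$, choose $\delta_{j+1}\le \delta_{j}/2$ small enough that $\varphi(\delta_{j+1})<a$; by compactness, cover $K_{j}$ by finitely many closed balls of radius $\delta_{j+1}/3$, obtaining compact pieces $F_{1},\dots,F_{N}$ with ${\rm diam}(F_{i})\le \delta_{j+1}$ and $K_{j}=\bigcup_{i}F_{i}$. Since ${\cal H}^{\varphi}_{\delta_{j+1}}$ is monotone, subadditive, and ${\cal H}^{\varphi}_{\delta_{j+1}}(F_{i})\le\varphi({\rm diam}(F_{i}))\le\varphi(\delta_{j+1})<a$, adding the pieces one at a time increases the value ${\cal H}^{\varphi}_{\delta_{j+1}}\big(\bigcup_{i\in S}F_{i}\big)$ by at most $a$ at each step, starting from $0$; and since $\delta_{j+1}\le\delta_{j}$ we have ${\cal H}^{\varphi}_{\delta_{j+1}}(K_{j})\ge {\cal H}^{\varphi}_{\delta_{j}}(K_{j})\ge a$, so the running total does reach $a$. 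Stopping at the first sub-union $K_{j+1}$ whose value is $\ge a$ gives a nonempty compact $K_{j+1}\subseteq K_{j}$ with ${\cal H}^{\varphi}_{\delta_{j+1}}(K_{j+1})\in[a,2a)$. The base case is obtained the same way by decomposing $C$ at a scale $\delta_{0}$ chosen so small that $\varphi(\delta_{0})<a$ and ${\cal H}^{\varphi}_{\delta_{0}}(C)\ge a$. Finally set $K=\bigcap_{j\ge0}K_{j}$, a nonempty compact subset of $C\subseteq F$.

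It remains to estimate ${\cal H}^{\varphi}(K)$. The upper bound is immediate: for each $j$, $K\subseteq K_{j}$ gives ${\cal H}^{\varphi}_{\delta_{j}}(K)\le {\cal H}^{\varphi}_{\delta_{j}}(K_{j})<2a$, and since $\delta_{j}\to0$ we get ${\cal H}^{\varphi}(K)\le 2a<\infty$. For the lower bound ${\cal H}^{\varphi}(K)\ge a$ one takes a countable cover of $K$ by sets of diameter $\le\delta$, inflates each to an open set of slightly larger diameter (using continuity of $\varphi$ to keep the total sum and the diameters under control), extracts a finite subcover by compactness of $K$, and notes that the resulting open set contains $\bigcap_{j}K_{j}$ and hence contains some $K_{j_{0}}$ (nested compacts); thus the finite subcover is a cover of $K_{j_{0}}$ and one is led back to ${\cal H}^{\varphi}_{\approx\delta}(K_{j_{0}})$. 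Making this comparison yield $\ge a$ requires coordinating the scale $\delta$ of the cover with the depth $\delta_{j_{0}}$ of the level that is captured, and this is exactly the delicate point: in a general metric measure space there is no dyadic or net structure to fall back on, and one has to invoke the finite-order property of $\varphi$ (as in the constant $c_{*}$ of Lemma~\ref{lem:limsup-1}) together with the increasing-sets lemma for Hausdorff measures on metric spaces, which is the technical contribution of \cite{H95}. Thus the main obstacle is this lower bound / increasing-sets lemma; the reduction to the compact case and the Cantor construction above are elementary, and conclude the proof once the increasing-sets lemma of \cite{H95} is granted.
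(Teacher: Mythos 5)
First, a point of comparison: the paper does not prove this lemma at all — it is stated verbatim as \cite[Corollary 7]{H95} — so you are not being measured against a written argument. On its own merits, your reduction to a compact $C\subseteq F$ via \eqref{eq:regular}, the dichotomy on whether ${\cal H}^{\varphi}(C)$ is finite, the stopping-time selection of $K_{j+1}$ with ${\cal H}^{\varphi}_{\delta_{j+1}}(K_{j+1})\in[a,2a)$, and the upper bound ${\cal H}^{\varphi}(K)\le 2a$ are all fine.

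The genuine gap is the lower bound, and it is not a technicality that can be outsourced to an ``increasing-sets lemma''. The invariant your construction maintains is a lower bound on ${\cal H}^{\varphi}_{\delta_j}(K_j)$ at the single, rapidly shrinking scale $\delta_j$. When you convert a cover of $K$ at a coarse scale $\delta$ into a cover of some $K_{j_0}$, what you need is ${\cal H}^{\varphi}_{\delta}(K_{j_0})\ge a$ with $\delta\gg\delta_{j_0}$, and the monotonicity ${\cal H}^{\varphi}_{\delta}\le{\cal H}^{\varphi}_{\delta_{j_0}}$ runs exactly the wrong way: the stopping rule gives no lower bound for the selected set at scales coarser than $\delta_{j+1}$ (the first sub-union to reach $a$ at scale $\delta_{j+1}$ may have pre-measure far below $a$ at scale $\delta_j$). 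Moreover the sequence $K_j$ is decreasing, so an increasing-sets lemma is not even the statement you would need. In ${\mathbb R}^n$ this defect is repaired by running the whole construction with dyadic net measures, for which efficient covers come from a fixed countable family and the required semicontinuity along the nested compacts holds; in a general complete separable metric space there is no such net, and this is precisely why Howroyd's proof of Corollary 7 takes a completely different route (weighted Hausdorff measures $\lambda^{\varphi}$, a Hahn--Banach/minimax argument producing a measure, and the comparison $\lambda^{\varphi}\simeq {\cal H}^{\varphi}$ for Hausdorff functions of finite order). As written, your argument reduces the lemma to an unproven assertion of essentially the same depth as the lemma itself; the honest options are to cite \cite[Corollary 7]{H95} outright, as the paper does, or to reproduce Howroyd's weighted-measure argument — not to graft the Euclidean Cantor construction onto the general metric setting.
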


\begin{lem}\label{lem:bound}
Let $M$ satisfy Assumption {\rm \ref{assum:compact}}, and let
$F$ be a Borel subset of $M$ such that ${\cal H}^{\varphi}(F)>0$.
Then there exist a constant $b>0$ and a compact subset $K$ of $F$ such that
${\cal H}^{\varphi}(K)>0$ and
$${\cal H}^{\varphi}(B(x,r)\cap K)\leq b\varphi(r), \quad x\in K, \ r>0.$$
\end{lem}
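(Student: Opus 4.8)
The plan is to extract $K$ as a compact subset of $F$ on which the normalized $\varphi$-density of a suitable finite Borel measure remains bounded. First I would invoke Lemma \ref{lem:how} to choose a compact set $K_0\subset F$ with $0<{\cal H}^{\varphi}(K_0)<\infty$, and let $\nu$ be the finite Borel measure given by $\nu(A)={\cal H}^{\varphi}(A\cap K_0)$ for $A\in{\cal B}(M)$; it is nontrivial and $\supp[\nu]\subset K_0$, and $\nu(B)={\cal H}^{\varphi}(B)$ for every Borel $B\subset K_0$.

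Second, for $c>0$ I would consider
$$A_c=\Big\{x\in K_0:\limsup_{r\to 0}\frac{\nu(B(x,r))}{\varphi(r)}>c\Big\}.$$
Every $x\in K_0\setminus\supp[\nu]$ has a neighbourhood of zero $\nu$-measure, so the $\limsup$ there is $0$ and hence $A_c\subset\supp[\nu]$; moreover $A_c$ is Borel, since $x\mapsto\nu(B(x,r))$ is upper semicontinuous (closed balls) and $r\mapsto\nu(B(x,r))/\varphi(r)$ is right continuous, which lets one reduce the relevant suprema to rational radii. Then Lemma \ref{lem:limsup-1} gives ${\cal H}^{\varphi}(A_c)\le(c_*/c){\cal H}^{\varphi}(K_0)$ with $c_*$ depending only on $\varphi$. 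Fixing $c=2c_*$ and using that $A_c$ and $K_0\setminus A_c$ are disjoint Borel subsets of the finite-measure set $K_0$, I obtain ${\cal H}^{\varphi}(K_0\setminus A_c)\ge\tfrac12{\cal H}^{\varphi}(K_0)>0$.

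Third, I would upgrade this pointwise bound to a uniform one. For $n\in{\mathbb N}$ put
$$B_n=\Big\{x\in K_0:\nu(B(x,r))\le(2c_*+1)\varphi(r)\text{ for every }r\in(0,1/n]\Big\},$$
which is Borel (again by reduction to rational radii together with continuity of $\varphi$ and of $\nu$ from above) and increasing in $n$, with $\bigcup_n B_n\supset K_0\setminus A_c$ because on the latter set the $\limsup$ is at most $2c_*<2c_*+1$. By continuity of $\nu$ from below, some $B_N$ satisfies ${\cal H}^{\varphi}(B_N)=\nu(B_N)>0$, and by the inner regularity \eqref{eq:regular} there is a compact $K\subset B_N\subset F$ with ${\cal H}^{\varphi}(K)>0$. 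For $x\in K$ and $0<r\le 1/N$ one has ${\cal H}^{\varphi}(B(x,r)\cap K)\le\nu(B(x,r))\le(2c_*+1)\varphi(r)$, while for $r>1/N$ the monotonicity and positivity of $\varphi$ give ${\cal H}^{\varphi}(B(x,r)\cap K)\le{\cal H}^{\varphi}(K_0)\le\big({\cal H}^{\varphi}(K_0)/\varphi(1/N)\big)\varphi(r)$; so $b=\max\{2c_*+1,\ {\cal H}^{\varphi}(K_0)/\varphi(1/N)\}$ works.

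The step I expect to be most delicate is the measurability bookkeeping for $A_c$ and the $B_n$, and the verification $A_c\subset\supp[\nu]$, since Lemma \ref{lem:limsup-1}, the regularity \eqref{eq:regular}, and continuity from below of $\nu$ can only be applied once these Borel membership statements are in place; all the genuinely geometric input, including the use of Assumption \ref{assum:compact} (via the covering lemma behind Lemma \ref{lem:limsup-1}), is already packaged into Lemmas \ref{lem:how} and \ref{lem:limsup-1}.
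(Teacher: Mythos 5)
Your proof is correct and follows the same overall strategy as the paper's: extract a compact $E$ (your $K_0$) with $0<{\cal H}^{\varphi}(E)<\infty$ via Lemma \ref{lem:how}, set $\nu(\cdot)={\cal H}^{\varphi}(\cdot\cap E)$, use Lemma \ref{lem:limsup-1} with $c=2c_*$ to discard the set where the upper $\varphi$-density exceeds $2c_*$ while keeping at least half the measure, and then pass from the pointwise $\limsup$ bound to a uniform bound on a compact subset, finishing with the trivial estimate ${\cal H}^{\varphi}(E)/\varphi(r_0)$ for large radii. The one genuine difference is in the uniformization step: the paper defines $h_n(x)=\sup_{0<r\le 1/n}\nu(B(x,r))/\varphi(r)$, notes $h_n\downarrow h$, and applies the Egorov theorem together with \eqref{eq:regular} to get uniform convergence on a compact $K$ of positive measure; you instead exhaust $K_0\setminus A_{2c_*}$ by the increasing Borel sets $B_n$ on which the bound $(2c_*+1)\varphi(r)$ holds for all $r\le 1/n$, and use continuity from below plus inner regularity. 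The two devices are interchangeable here, and yours is arguably the cleaner of the two: it only requires Borel measurability of the sets $B_n$ (which you justify by rational-radius reduction and right-continuity of $r\mapsto\nu(B(x,r))$ for closed balls), whereas Egorov additionally needs measurability of the functions $h_n$ and $h$, a point the paper passes over in silence. You are also more careful than the paper in verifying the hypothesis $A\subset\supp[\nu]$ of Lemma \ref{lem:limsup-1}: the paper simply asserts $\supp[\nu]=E$, while you observe that any point off the support has vanishing upper density and hence cannot lie in $A_c$, which is all that is needed. No gaps.
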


\begin{proof}
Let $F$ be a Borel subset of $M$ such that ${\cal H}^{\varphi}(F)>0$.
Then, by Lemma \ref{lem:how}, there exists a compact subset $E$ of $F$
such that  $0<{\cal H}^{\varphi}(E)<\infty$.
Hence, if we define  $\nu(A)={\cal H}^{\varphi}(A\cap E)$ for $A\in {\cal B}(M)$,
then $\nu$ is a finite and nontrivial Borel measure on $M$ such that $\supp[\nu]=E$.

Let $c_*>0$ be the same constant as in Lemma \ref{lem:limsup-1} and
$$E_1=\left\{x\in E : \limsup_{r\rightarrow 0}\frac{\nu(B(x,r))}{\varphi(r)}>2c_*\right\}.$$
Since Lemma \ref{lem:limsup-1} yields
$${\cal H}^{\varphi}(E_1)\leq \frac{1}{2}\nu(M)=\frac{1}{2}{\cal H}^{\varphi}(E),$$
we have
$${\cal H}^{\varphi}(E\setminus E_1)\geq {\cal H}^{\varphi}(E)-{\cal H}^{\varphi}(E_1)
\geq \frac{1}{2}{\cal H}^{\varphi}(E)>0$$
and thus $0<{\cal H}^{\varphi}(E\setminus E_1)<\infty$.

Define
$$h_n(x)=\sup_{0<r\leq 1/n}\frac{\nu(B(x,r))}{\varphi(r)},
\quad x\in E\setminus E_1, \ n\in {\mathbb N}$$
and
$$h(x)=\limsup_{r\rightarrow 0}\frac{\nu(B(x,r))}{\varphi(r)}, \quad x\in E\setminus E_1.$$
Then $h_n(x)\rightarrow h(x) $ as  $n\rightarrow\infty$ for any $x\in E\setminus E_1$.
Hence, by the Egorov theorem and \eqref{eq:regular},
there exists a compact subset $K$ of $E\setminus E_1$ such that
${\cal H}^{\varphi}(K)>0$ and
\begin{equation}\label{eq:uniform-conv}
\sup_{x\in K}|h_n(x)-h(x)|\rightarrow 0 ,\quad n\rightarrow\infty.
\end{equation}

Since $h(x)\leq 2c_*$ for any $x\in E\setminus E_1$,
\eqref{eq:uniform-conv} implies that for some $r_0>0$,
\begin{equation}\label{eq:bound-1}
\frac{\nu(B(x,r))}{\varphi(r)}\leq 4c_*, \quad x\in K, \ 0<r\leq r_0.
\end{equation}
As the function $\varphi$ is nondecreasing, we also have
\begin{equation}\label{eq:bound-2}
\frac{\nu(B(x,r))}{\varphi(r)}\leq \frac{{\cal H}^{\varphi}(E)}{\varphi(r_0)}=:c_1, \quad
x\in K,\ r\geq r_0.
\end{equation}
Hence if we let $b=(4c_*)\vee c_1$,
then \eqref{eq:bound-1} and \eqref{eq:bound-2} yield
$\nu(B(x,r))\leq b\varphi(r)$ for any $x\in K$ and $r>0$.
Moreover, by noting that $K\subset E$, we obtain
$${\cal H}^{\varphi}(B(x,r)\cap K)\leq {\cal H}^{\varphi}(B(x,r)\cap E)
=\nu(B(x,r))\leq b\varphi(r),
\quad x\in K, \ r>0.$$
The proof is complete.
\end{proof}

\begin{proof}[Proof of Proposition {\rm \ref{prop:frost-2}}]
Let $F$ be a Borel subset of $M$ such that ${\cal H}^{\varphi}(F)>0$.
Then, by Lemma \ref{lem:bound}, there exist a constant $b>0$ and
a compact subset $K$ of  $F$ such that $0<{\cal H}^{\varphi}(K)<\infty$ and
$$
{\cal H}^{\varphi}(B(x,r)\cap K)\leq b\varphi(r), \quad x\in K, \, r>0.
$$
Define $\nu(A)={\cal H}^{\varphi}(A\cap K)$ for $A\in {\cal B}(M)$. Then $\nu$ is a finite and nontrivial Borel measure on $M$ such that
$\supp[\nu]\subset K$,
and
\begin{equation}\label{eq:p:measure}
\nu(B(x,r))={\cal H}^{\varphi}(B(x,r)\cap K)\leq b\varphi(r), \quad x\in K, \ r>0.
\end{equation}

Fix a point $x$ in $K$ and let $m(r)=\nu(B(x,r))$ for $r>0$.
Then, for any $\varepsilon\in (0,1)$,
\begin{equation*}
\begin{split}
&\int_{M}\frac{1}{\varphi(d(x,y))^{\varepsilon}}\,\nu(\d y)
=\int_{d(x,y)\leq 1}\frac{1}{\varphi(d(x,y))^{\varepsilon}}\,\nu(\d y)
+\int_{d(x,y)>1}\frac{1}{\varphi(d(x,y))^{\varepsilon}}\,\nu(\d y)\\
&=\int_0^1\frac{1}{\varphi(r)^{\varepsilon}}\,\d m(r)
+\int_{d(x,y)>1}\frac{1}{\varphi(d(x,y))^{\varepsilon}}\,\nu(\d y)
\leq \int_0^1\frac{1}{\varphi(r)^{\varepsilon}}\,\d m(r)+\frac{{\cal H}^{\varphi}(K)}{\varphi(1)^{\varepsilon}}.
\end{split}
\end{equation*}
Moreover, we obtain by \eqref{eq:p:measure} that
\begin{equation*}
\begin{split}
\int_0^1\frac{1}{\varphi(r)^{\varepsilon}}\,{\rm d}m(r)
\leq b^{\varepsilon}\int_0^1\frac{1}{m(r)^{\varepsilon}}\,{\rm d}m(r)
\leq \frac{b^{\varepsilon}m(1)^{1-\varepsilon}}{1-\varepsilon}<\infty.
\end{split}
\end{equation*}
Therefore, there exists a constant $c_1=c_1(\varepsilon,K)>0$ such that
$$I^{{\varphi}^{\varepsilon}}(\nu)
=\int_M \int_M\frac{1}{\varphi(d(x,y))^{\varepsilon}}\,\nu(\d x)\nu(\d y)
\leq c_1\nu(K)<\infty.$$
The proof is complete.
\end{proof}

\subsection{Locally $s$-set and $s$-measure}
Let $(M,d)$ be a locally compact separable metric space.
In this subsection, for $x\in M$ and $r>0$, we still use the notation $B(x,r)$ for the closed ball
with radius $r$ centered at $x$, i.e., $B(x,r)=\{y\in M : d(x,y)\leq r\}$.
We recall the notions of locally $s$-sets and $s$-measures.
\begin{defn}\label{def:s-set}
Let $s$ and $t$ be positive constants.
\begin{itemize}
\item[{\rm (i)}]
A subset $F$ of $M$ is called a locally $s$-set,
if $F$ is a closed set and there exists a positive Borel measure $\eta$ on $M$
such that $\supp[\eta]\subset F$, and, for some positive constants $r_0$, $c_1(F)$ and $c_2(F)$,
$$c_1(F)r^s\leq \eta(B(x,r))\leq c_2(F)r^s, \quad x\in F, \ r\in (0,r_0).$$
The measure $\eta$ is called the locally $s$-measure of $F$.
\item[{\rm (ii)}]
A subset $F$ of $M$ is called a globally  $t$-set,
if $F$ is a closed set and there exists a positive Borel measure $\eta$ on $M$
such that $\supp[\eta]\subset F$, and, for some positive constants $r_0$, $c_3(F)$ and $c_4(F)$,
$$c_3(F)r^t\leq \eta(B(x,r))\leq c_4(F)r^t, \quad x\in F, \ r\geq r_0.$$
The measure $\eta$ is called the globally $t$-measure of $F$.
\item[{\rm (iii)}]
A subset $F$ of $M$ is called an $(s,t)$-set,
if $F$ is a locally $s$-set and globally $t$-set
such that the corresponding locally $s$-measure and globally $t$-measure are the same.
In particular, an $(s,s)$-set is called the $s$-set.
\end{itemize}
\end{defn}

Let $F\subset M$ be a locally $s$-set.
We then have $\dim_{{\cal H}}(F)=s$ by following the argument of \cite[8.7 (p.61--62)]{H01}.
For $x\in M$ and $r>0$,
if $B(x,r)\cap F\ne \emptyset$, then, for any $y\in B(x,r)\cap F$ and $z\in B(x,r)$,
$$d(y,z)\leq d(y,x)+d(x,z)\le
2r,$$
that is, $B(x,r)\subset B(y,2r)$.
This implies that
$$
\eta(B(x,r))\leq 2^s c_2(F) r^s, \quad x\in M, \ 0<r\leq r_0/2.
$$

Let $F$ be a globally $t$-set with $t$-measure $\eta$.
Then we also have
\begin{equation}\label{eq:s-1}
\eta(B(x,r))\leq 2^t c_4(F) r^t, \quad x\in M, \ r\geq r_0.
\end{equation}
This fact is already observed in \cite[Section 1]{MS09}.

Fix $x\in M$ and $x_0\in F$.
If $r\geq 2d(x,x_0)$, then, for any $y\in B(x_0,r/2)$,
$$d(x,y)\leq d(x,x_0)+d(x_0,y)\le
{r}/{2}+{r}/{2}=r,$$
that is, $B(x_0,r/2)\subset B(x,r)$.
Hence, if we define $d_F(x):=\inf_{y\in F}d(x,y) \ (x\in M)$, then
\begin{equation}\label{eq:s-2}
\eta(B(x,r))\geq \left({c_3(F)}/{2^t}\right) r^t, \quad x\in M, \ r\geq 2(d_F(x)\vee r_0).
\end{equation}

\section{Wiener tests}
In this appendix, we establish the Wiener tests
for recurrence and regularity of the stable subordinate process of
the direct product process.
Hereafter, $(M,d)$ is a locally compact separable metric space and
$\mu$ is a positive Radon measure on $M$ with full support.

\subsection{Transience and regularity}
Let $X:=(\Omega, {\cal F}, \{{\cal F}_t\}_{t\geq 0}, \{X_t\}_{t\geq 0},\{P_x\}_{x\in M}, \{\theta_t\}_{t\geq 0})$
be a $\mu$-symmetric Hunt process on $M$, where $\{{\cal F}_t\}_{t\geq 0}$ is a minimum completed admissible filtration,
and $\theta_t: \Omega \to \Omega$ is the shift of paths such that $X_t\circ\theta_s=X_{t+s}$
for $s,t\geq 0$. In this subsection, we will present equivalent conditions
for the transience and regularity of sets relative to the process $X$.

Let
$${\cal F}_{\infty}=\sigma\left(\bigcup_{t\geq 0}{\cal F}_t\right),$$ and
define the tail $\sigma$-field ${\cal T}$ by
$${\cal T}=\bigcap_{t>0}\sigma\left(\bigcup_{u\geq t}\sigma(X_s:s\in [t,u])\right).$$
We say that ${\cal T}$ is trivial, if for any $A \in {\cal T}$, $P_x(A)=1$ for any $x\in M$
or $P_x(A)=0$ for any $x\in M$.
For $B\in {\cal B}(M)$,
let $\sigma_B=\inf\{t>0 : X_t\in B\}$ be the first hitting time of $X$ to $B$,
and  let $ L_B=\sup\{t>0:X_t\in B\}$ be the last exit time of $X$ from $B$.
Then, $\{L_B<\infty\}\in {\cal T}$.

Below, for $x\in M$, $n\ge1$ and $B\in {\cal B}(M)$, define
$$B_n^{x,\lambda}=\left\{y\in B : \lambda^n\le d(x,y)\le \lambda^{n+1}\right\}$$
when $\lambda>1$, and
$$B_n^{x,\lambda}=\left\{y\in B : \lambda^{n+1}\le d(x,y)\le \lambda^n\right\}$$ when $0<\lambda\le 1$.

We first give equivalent conditions for the transience.
\begin{lem}\label{lem:0-1}
Assume that the process $X$ is conservative and transient, and that ${\cal T}$ is trivial.
Then, for any $x\in M$, $\lambda>1$ and $B\in {\cal B}(M)$,
the following assertions are equivalent to each other.
\begin{enumerate}
\item[{\rm (i)}] $P_x(L_B<\infty)=1$.
\item[{\rm (ii)}] $P_x(L_B<\infty)>0$.
\item[{\rm (iii)}] $P_x\left(\liminf_{n\rightarrow\infty}\{\sigma_{B_n^{x,\lambda}}=\infty\}\right)=1$.
\end{enumerate}
\end{lem}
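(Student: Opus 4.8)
The plan is to prove Lemma~\ref{lem:0-1} by establishing the cycle of implications $\text{(i)}\Rightarrow\text{(ii)}\Rightarrow\text{(iii)}\Rightarrow\text{(i)}$, exploiting that $\{L_B<\infty\}$ and $\liminf_{n\to\infty}\{\sigma_{B_n^{x,\lambda}}=\infty\}$ are both tail events. The implication $\text{(i)}\Rightarrow\text{(ii)}$ is trivial. For $\text{(ii)}\Rightarrow\text{(iii)}$, first observe that since $X$ is transient and conservative, $P_x(L_B<\infty)>0$ together with triviality of $\mathcal T$ forces $P_x(L_B<\infty)=1$; indeed $\{L_B<\infty\}\in\mathcal T$, so its probability is $0$ or $1$ for every starting point, and it is positive by hypothesis. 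On the event $\{L_B<\infty\}$ the path visits $B$ only up to a finite time, hence $d(x,X_t)$ restricted to times in $B$ stays in a bounded set beyond $L_B$; more precisely, for all but finitely many shells $B_n^{x,\lambda}$ (those with $\lambda^n$ exceeding $\sup\{d(x,X_t):t\le L_B,\ X_t\in B\}$) one has $\sigma_{B_n^{x,\lambda}}=\infty$. Thus $\{L_B<\infty\}\subset\liminf_n\{\sigma_{B_n^{x,\lambda}}=\infty\}$, which gives (iii).

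For $\text{(iii)}\Rightarrow\text{(i)}$, I would argue contrapositively or directly: the event $\liminf_n\{\sigma_{B_n^{x,\lambda}}=\infty\}$ says that eventually the process never enters the shell $B_n^{x,\lambda}$ for large $n$; combined with transience (so $d(x,X_t)\to\infty$ $P_x$-a.s., or at least the process is not recurrent so it does not return to bounded sets infinitely often) this should force the process to visit $B$ only finitely often. The subtlety is that a transient process need not have $d(x,X_t)\to\infty$ — it could oscillate — so I cannot simply say ``the path escapes through the shells.'' Instead I would use that $B=\bigcup_n B_n^{x,\lambda}$ up to a point-negligible set, and $\{L_B=\infty\}$ means $X_t\in B$ for arbitrarily large $t$; for such $t$ one has $X_t\in B_{n(t)}^{x,\lambda}$ for some $n(t)$, and if only finitely many shells are ever hit then $n(t)$ ranges over a finite set, so the process returns infinitely often to a fixed bounded region $\bigcup_{n\le N}B_n^{x,\lambda}$ — contradicting transience (a transient Hunt process spends only finite total time in, and makes only finitely many excursions to, any set of finite measure, which one can formalize via $G\mathbf 1_B<\infty$ and a Borel--Cantelli / last-exit decomposition argument). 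Hence $\{L_B=\infty\}\subset\bigcup_N\{\text{infinitely many }t\text{ with }X_t\in\bigcup_{n\le N}B_n^{x,\lambda}\}$, a $P_x$-null set, giving $P_x(L_B<\infty)=1$.

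The main obstacle I anticipate is making the transience step of $\text{(iii)}\Rightarrow\text{(i)}$ rigorous without extra regularity: one wants ``transient $\Rightarrow$ the process makes only finitely many visits (in a suitable discrete sense) to any relatively bounded Borel set.'' The cleanest route is via the $0$-order resolvent: transience gives $U_0\mathbf 1_{B'}(x)<\infty$ for $B'=\bigcup_{n\le N}B_n^{x,\lambda}$ whenever $\mu(B')<\infty$ (using local compactness to ensure bounded sets have finite measure, or restricting attention to such $B$), hence the occupation time of $B'$ is a.s.\ finite, and then a standard argument with the strong Markov property and the fact that each excursion into $B'$ carries a uniformly positive expected occupation time on a subset — or alternatively the last-exit time $L_{B'}<\infty$ a.s.\ by the already-used tail-triviality plus $U_0\mathbf 1_{B'}<\infty$ — shows $B'$ is hit only finitely often. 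Some care is needed because the lemma is stated for general $B\in\mathcal B(M)$ with no finiteness assumption on $\mu(B)$; here the shell decomposition is exactly what saves us, since each individual shell $B_n^{x,\lambda}$ is contained in a closed ball, which need not have finite measure in general — so I would additionally invoke that under the ambient assumptions (the heat kernel setup, or at worst the hypotheses under which this lemma is applied in Section~\ref{section4}) balls have finite measure, or else note that transience already yields $U_0\mathbf 1_{B_n^{x,\lambda}}(x)<\infty$ directly from $\int_1^\infty\sup_y p(t,x,y)\,dt<\infty$ regardless of the measure of the shell. I would flag this and use whichever of these is consistent with the precise standing assumptions at the point of application.
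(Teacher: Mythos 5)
Your treatment of (i)$\Leftrightarrow$(ii) and of (i)$\Rightarrow$(iii) matches the paper's. One small omission in the latter: the finiteness of $\sup_{s\le L_B}d(x,X_s)$ on $\{L_B<\infty\}$ is not automatic for a Hunt process --- the paper derives it from conservativeness (via the fact that $X_t$ and $X_{t-}$ stay in $M$ for all $t$, so the c\`adl\`ag path has bounded range on $[0,L_B]$); you should say where this comes from, since it is exactly the point at which the hypothesis of conservativeness enters. Also, your parenthetical that $B=\bigcup_n B_n^{x,\lambda}$ ``up to a point-negligible set'' is false: the difference is $B\cap\{y:d(x,y)<\lambda\}$, a whole ball's worth of $B$. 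This is harmless in the paper's argument but matters for yours.

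The genuine gap is in (iii)$\Rightarrow$(i). The paper's proof consists of a single step: transience of the process (under the standing assumptions) gives $P_x(\lim_{t\to\infty}d(x,X_t)=\infty)=1$, and intersecting this event with $\liminf_n\{\sigma_{B_n^{x,\lambda}}=\infty\}$ immediately forces $L_B<\infty$, since at large times $X_t$ lies outside $B(x,\lambda^{N+1})$ and so could only be in $B$ by entering some shell $B_n$ with $n\ge N$. You explicitly reject this route on the grounds that a transient process ``could oscillate,'' but for the symmetric Hunt processes considered here this pathwise escape to infinity is the standard consequence of transience that the lemma is designed to exploit, and discarding it leaves you without a working substitute. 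Your replacement --- that a transient process makes only finitely many excursions into a bounded set --- is asserted, not proved, and the two routes you sketch for it do not close: finiteness of $U_0\mathbf 1_{B'}(x)$ controls the expected occupation time, but finite occupation time does not bound the number of visits (there is no uniform positive lower bound on the occupation time per excursion for an arbitrary Borel set $B'$), and tail-triviality of $\{L_{B'}<\infty\}$ only tells you the probability is $0$ or $1$, not which; identifying it as $1$ is precisely the statement you are trying to prove. So as written, (iii)$\Rightarrow$(i) is not established. The fix is simply to use (or prove, citing the transience theory for Dirichlet forms) the fact $d(x,X_t)\to\infty$ $P_x$-a.s., as the paper does.
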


\begin{proof}
Since ${\cal T}$ is trivial and $\{L_B<\infty\}\in {\cal T}$,
we obtain the equivalence between  (i) and (ii).

We now prove the equivalence between  (i) and (iii).
We simply write $B_n$ for $B_n^{x,\lambda}$.
Suppose first that (i) holds.
Then for $P_x$-a.s.\ $\omega\in \Omega$,
we have $X_t(\omega)\notin B$ for all $t> L_B(\omega)$.
Since $X$ is conservative, it follows by \cite[p.95, Corollary]{CW05} that
$$P_x\left(\text{$X_{t-}\in M$ and $X_t\in M$ for any $t>0$}\right)=1.$$
Then, by (i),
$$P_x\left(\sup_{s\in [0,L_B]}d(x,X_s)<\infty\right)=1,$$
which implies that
$$P_x\left(\text{$\sigma_{B_n}=\infty$ for all $n> \frac{\log \sup_{s\in [0,L_B]}d(x ,X_s)}{\log\lambda}$}\right)=1.$$
Therefore, (iii) follows.

Suppose next that (iii) holds.
Since $X$ is transient, we see that
$$P_x\left(\lim_{t\rightarrow\infty}d(x,X_t)=\infty\right)=1.$$
Then
$$1=P_x\left(\left\{\lim_{t\rightarrow\infty}d(x,X_t)=\infty\right\}
\cap \liminf_{n\rightarrow\infty}\left\{\sigma_{B_n}=\infty\right\}\right)
\leq P_x(L_B<\infty),$$
which yields (i).
\end{proof}

We next show the equivalent conditions for the regularity of points.

\begin{lem}\label{lem:0-1-reg}
Assume that the process $X$ is transient and that the single point set $\{x\}$ is polar relative to the process $X$.
If the Blumenthal
zero-one law holds for  the process $X$,
then, for any $\lambda\in (0,1)$ and $B\in {\cal B}(M)$,
the following three conditions are equivalent to each other.
\begin{enumerate}
\item[{\rm (i)}] $x$ is regular for $B$, that is, $P_x(\sigma_B=0)=1$.
\item[{\rm (ii)}] $P_x(\sigma_B=0)>0$.
\item[{\rm (iii)}] $P_x\left(\limsup_{n\rightarrow\infty}\{\sigma_{B_n^{x,\lambda}}<\infty\}\right)=1$.
\end{enumerate}
\end{lem}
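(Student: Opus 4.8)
The plan is to recognise the event in (iii) as the germ event $\{\sigma_B=0\}$ modulo a $P_x$-null set, and then let the Blumenthal zero-one law finish the job. Since $B$ is Borel, $\sigma_B$ is an $\{\mathcal{F}_t\}$-stopping time, so $\{\sigma_B=0\}=\bigcap_{m\ge1}\{\sigma_B<1/m\}$ belongs to $\bigcap_{t>0}\mathcal{F}_t$, i.e.\ it is a germ event; by the Blumenthal zero-one law $P_x(\sigma_B=0)\in\{0,1\}$, which is precisely (i)$\Leftrightarrow$(ii). It remains to prove (i)$\Leftrightarrow$(iii).

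First I would replace the dyadic annuli by shrinking balls. For $N\ge1$ set $C_N=(B\cap \bar B(x,\lambda^N))\setminus\{x\}$; since $\lambda\in(0,1)$ we have $\bigcup_{n\ge N}[\lambda^{n+1},\lambda^n]=(0,\lambda^N]$, hence $C_N=\bigcup_{n\ge N}B_n^{x,\lambda}$ and therefore $\bigcup_{n\ge N}\{\sigma_{B_n^{x,\lambda}}<\infty\}=\{\sigma_{C_N}<\infty\}$. As these sets decrease in $N$,
\[
\limsup_{n\to\infty}\{\sigma_{B_n^{x,\lambda}}<\infty\}=\bigcap_{N\ge1}\{\sigma_{C_N}<\infty\}.
\]
Now transience enters: $R:=\sup\{t\ge0:d(x,X_t)\le\lambda\}<\infty$ $P_x$-a.s., and since on $\{\sigma_{C_N}<\infty\}$ the process lies in $C_N\subset\bar B(x,\lambda)$ at time $\sigma_{C_N}$, we have $\sigma_{C_N}\le R$ there. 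Putting $\tau:=\sup_N\sigma_{C_N}$ we thus get $\limsup_n\{\sigma_{B_n^{x,\lambda}}<\infty\}=\{\tau<\infty\}$ $P_x$-a.s. Moreover, polarity of $\{x\}$ gives $\sigma_{B\cap\{x\}}=\infty$ $P_x$-a.s., so $\sigma_{B\cap\bar B(x,\lambda^N)}=\sigma_{C_N}$ $P_x$-a.s.; and right-continuity of $X$ at $0$ (with $X_0=x$) shows that on $\{\sigma_B=0\}$ the visits of $X$ to $B$ that accumulate at $0$ have states converging to $x$, hence eventually lying in $\bar B(x,\lambda^N)$. Thus $\{\sigma_B=0\}=\{\sigma_{C_N}=0\}$ $P_x$-a.s.\ for every $N$, and so $\{\sigma_B=0\}=\{\tau=0\}$ $P_x$-a.s.

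The crux is to show $\{\tau<\infty\}=\{\tau=0\}$ $P_x$-a.s. On $\{\tau<\infty\}$ every $\sigma_{C_N}$ is finite and $X_{\sigma_{C_N}}\in C_N\subset\bar B(x,\lambda^N)$, so $X_{\sigma_{C_N}}\to x$ as $N\to\infty$; since $\{\sigma_{C_N}\}$ is an increasing sequence of stopping times converging to $\tau$ along which $X$ stays in $M$, quasi-left-continuity of the Hunt process forces $X_\tau=\lim_N X_{\sigma_{C_N}}=x$. But $\{x\}$ polar means $X_t\ne x$ for every $t>0$, $P_x$-a.s., so $X_\tau=x$ with $\tau<\infty$ is possible only if $\tau=0$; then $\sigma_{C_1}=0$ and a fortiori $\sigma_B=0$. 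Combining with the previous paragraph, $\{\tau<\infty\}=\{\tau=0\}=\{\sigma_B=0\}$ $P_x$-a.s., hence $P_x\!\left(\limsup_n\{\sigma_{B_n^{x,\lambda}}<\infty\}\right)=P_x(\tau<\infty)=P_x(\sigma_B=0)$, which gives (iii)$\Leftrightarrow$(i). Together with (i)$\Leftrightarrow$(ii), all three conditions are equivalent.

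I expect the delicate point to be the interplay in the last two steps: transience is what lets all the hitting times $\sigma_{C_N}$ of the shrinking sets $C_N$ be dominated by a single a.s.-finite time $R$ (and what keeps the process alive while it approaches $x$, so that quasi-left-continuity genuinely applies), while the polarity of $\{x\}$ is what pins the limiting state at $x$ and forces $\tau=0$. One should also note that $\lambda\in(0,1)$ is essential here, since it is exactly what makes the annuli $B_n^{x,\lambda}$ shrink to $x$, so that the tail event in (iii) encodes the local behaviour of $X$ at $x$ and nothing else.
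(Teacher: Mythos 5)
Your argument is correct, and it reaches the equivalence by a route that is organized differently from the paper's. The paper proves (iii)$\Rightarrow$(i) by extracting visit times $t_k$ to the annuli, using transience to confine them to a bounded interval, passing to a monotone convergent subsequence $t_{k_l}\to t_0$, and then invoking the Chung--Walsh theorem that for a polar point $\{x\}$ one has $P_x(d(x,X_{t-})>0 \text{ and } d(x,X_t)>0 \text{ for all }t>0)=1$ to rule out $t_0>0$ (the left-limit part of that theorem is needed precisely because the monotone subsequence may be increasing). You instead rewrite $\limsup_n\{\sigma_{B_n^{x,\lambda}}<\infty\}$ as $\bigcap_N\{\sigma_{C_N}<\infty\}$ for the nested punctured balls $C_N$, introduce $\tau=\sup_N\sigma_{C_N}$, and use quasi-left-continuity of the Hunt process along the increasing stopping times $\sigma_{C_N}$ to identify the limit state as $X_\tau$, so that polarity of $\{x\}$ applied only to the values $X_t$ (not to left limits) forces $\tau=0$; this also packages (i)$\Rightarrow$(iii) into the single identity $\{\sigma_B=0\}=\{\tau=0\}$. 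What your version buys is that quasi-left-continuity replaces the separate treatment of left limits; what the paper's version buys is that it avoids the début theorem and quasi-left-continuity machinery, working only with deterministic subsequence extraction along a single path. One small imprecision to fix: $X_{\sigma_{C_N}}\in C_N$ is not automatic for a Borel (non-closed) set; what right-continuity gives, by approximating $\sigma_{C_N}$ from the right by times at which the path is in $C_N$, is $X_{\sigma_{C_N}}\in\overline{C_N}\subset \overline{B}(x,\lambda^N)$, which is all your argument actually needs.
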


\begin{proof}
The equivalence between (i) and (ii) follows by the Blumenthal zero-one law.

We now prove the equivalence between (i) and (iii).
Assume first that  (i) is valid.
Then for $P_x$-a.s.\ $\omega\in \Omega$,
there exists a sequence $\{t_n(\omega)\}$ such that $t_n(\omega)\downarrow 0$ as $n\rightarrow\infty$
and $X_{t_n(\omega)}(\omega)\in B$ for all $n\geq 1$.
Since $d(x,X_{t_n(\omega)}(\omega))\rightarrow 0$ as $n\rightarrow\infty$,
we have (iii).

Assume next that (iii) is valid.
We simply write $B_n$ for $B_n^{x,\lambda}$.
Then, for $P_x$-a.s.\ $\omega\in \Omega$,
there exist some sequences $\{n_k(\omega)\}_{k=1}^{\infty}\subset {\mathbb N}$
and $\{t_k(\omega)\}_{k=1}^{\infty}\subset [0,\infty)$ such that
$X_{t_k(\omega)}(\omega)\in B_{n_k(\omega)}$ for all $k\geq 1$.
This yields
$$\lambda^{n_k(\omega)+1}\leq d(x,X_{t_k(\omega)}(\omega))\leq \lambda^{n_k(\omega)}, \quad k\geq 1,$$
and thus
$$
d(x,X_{t_k(\omega)}(\omega))\rightarrow 0, \quad k\rightarrow\infty.
$$

On the other hand, since $X$ is transient, there exists $T(\omega)\in (0,\infty)$ such that
$d(x,X_t(\omega))>1$ for all $t>T(\omega)$,
which implies that $t_k(\omega)\in [0,T(\omega)]$ for all $k\geq 1$.
Therefore,  there exists a subsequence
$\{t_{k_l}(\omega)\}$ of $\{t_k(\omega)\}$ 
such that 
the monotone limit $t_0(\omega):=\lim_{l\rightarrow\infty}t_{k_l}(\omega)$ exists
in $[0,T(\omega)]$ and
\begin{equation}\label{eq:r-1}
\lim_{l\rightarrow
\infty}d(x,X_{t_{k_l}(\omega)}(\omega))=0.
\end{equation}
Moreover, as $\{x\}$ is polar relative to $X$,
it follows by \cite[p.95, Theorem 9]{CW05} that
$$P_x\left(\text{$d(x,X_{t-})>0$ and $d(x,X_t)>0$ for all $t>0$}\right)=1.$$
Taking \eqref{eq:r-1} into account, we have $t_0(\omega)=0$ and thus $P_x(\sigma_B=0)=1$.
\end{proof}

\subsection{Zero-one law for the tail event}\label{subsect:tail}
Let $X^1$ and $X^2$ be
the independent
$\mu$-symmetric Hunt processes on $M$, and let $X$ be
the direct product of $X^1$ and $X^2$ on $M\times M$.
For $\gamma\in (0,1]$, let $X^{\gamma}$ be the $\gamma$-subordinate process of $X$.
We will present the zero-one law for the tail event of $X^{\gamma}$.
Let $\{{\cal F}_t^{\gamma}\}_{t\geq 0}$ denote
the minimum completed admissible filtration of the process $X^{\gamma}$, and set
$${\cal F}_{\infty}^{\gamma}=\sigma\left(\bigcup_{t\geq 0}{\cal F}_t^{\gamma}\right).$$
Let ${\cal T}^{\gamma}$ be the tail $\sigma$-field of $X^{\gamma}$, i.e.,
$${\cal T}^{\gamma}
=\bigcap_{t>0}\sigma\left(\bigcup_{u\geq t}\sigma(X_s^{\gamma}:s\in [t,u])\right).$$
We then have

\begin{prop}\label{prop:tail}
Let $M$ satisfy Assumption {\rm \ref{assum:comp}}.
If the independent processes $X^1$ and $X^2$ satisfy Assumption {\bf (H)}, {\rm (WUHK)} and {\rm (HR)},
then, for any $A\in {\cal T}^{\gamma}$, $P_x^{\gamma}(A)=1$ for all $x\in M\times M$,
or $P_x^{\gamma}(A)=0$ for all $x\in M\times M$. In particular, ${\cal T}^{\gamma}$ is trivial.
Moreover, under the assumptions above,  the Blumenthal
zero-one law also holds true.
\end{prop}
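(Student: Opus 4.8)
The plan is to establish Proposition~\ref{prop:tail} in two stages: first the triviality of the tail $\sigma$-field ${\cal T}^\gamma$, and then the Blumenthal zero-one law, deducing the latter partly from the former together with standard Markovian arguments. For the tail triviality, I would follow the classical route (as in the theory of L\'evy processes, cf.~\cite{PS78}): one shows that every tail event $A\in{\cal T}^\gamma$ has a $P_x^\gamma$-probability which is \emph{independent of $x$} and takes only the values $0$ and $1$. The natural mechanism is a zero-one law via the Markov property combined with an \emph{ergodicity / irreducibility} input. Concretely, the process $X=X^1\otimes X^2$ is conservative and irreducible (this is recorded in Subsection~\ref{subsect:collision}, using \cite{O97}), and its subordinate $X^\gamma$ inherits regularity, irreducibility and conservativeness by \cite{O02}. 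The key analytic ingredient is that under (WUHK) and (HR) the heat kernel $q^\gamma(t,x,y)$ (equivalently the resolvent density $u_\lambda^\gamma$) is \emph{strictly positive and jointly continuous} on $(M\times M)^2$, and that $M\times M$ is connected by Assumption~\ref{assum:comp}. First I would record these regularity facts (joint continuity and strict positivity of $q^\gamma$, which follow from Lemma~\ref{lem:heat-sub}, Remark~\ref{rem:heat-kernel}(ii), and the positivity of $\pi_t$), so that for every bounded measurable $f$ the map $x\mapsto E_x^\gamma[f(X_t)]$ is continuous, and all points communicate.

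The core argument then runs as follows. Fix $A\in{\cal T}^\gamma$ and set $h(x)=P_x^\gamma(A)$. Because $A$ is a tail event, for each $t>0$ we have $A\in\sigma(X_s^\gamma:s\geq t)$, so by the Markov property $h(X_t^\gamma)=E^\gamma_\cdot[\,{\bf 1}_A\mid{\cal F}^\gamma_t\,]$, whence $h(X_t^\gamma)$ is a bounded martingale converging $P_x^\gamma$-a.s.\ and in $L^1$ to ${\bf 1}_A$. In particular $h(X_t^\gamma)\to{\bf1}_A\in\{0,1\}$ a.s. Next, using that $x\mapsto p_t^\gamma h(x)=E_x^\gamma[h(X_t^\gamma)]=h(x)$ (the martingale property gives $p_t^\gamma h=h$, i.e.\ $h$ is harmonic/invariant), together with the joint continuity of $q^\gamma$, one gets that $h$ has a continuous version on $M\times M$. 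An invariant bounded harmonic function on an irreducible conservative process is constant $\mu\otimes\mu$-a.e.; since the strictly positive transition density lets us upgrade this to \emph{everywhere} on the connected space $M\times M$, we conclude $h\equiv c$ for a constant $c$. But then ${\bf1}_A=\lim_t h(X_t^\gamma)=c$ a.s., forcing $c\in\{0,1\}$. This proves $P_x^\gamma(A)\in\{0,1\}$ uniformly in $x$, i.e.\ ${\cal T}^\gamma$ is trivial.

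For the Blumenthal zero-one law, I would argue analogously with the germ $\sigma$-field ${\cal F}_{0+}^\gamma=\bigcap_{t>0}{\cal F}_t^\gamma$ in place of ${\cal T}^\gamma$. Here the standard proof for a right process works verbatim once one knows that the process is \emph{normal} and has a nice strong Markov structure, which holds since $X^\gamma$ is a Hunt process. The only extra point to check is that one may indeed realize $X^\gamma$ as a Hunt process \emph{started from every point} $x\in M\times M$ (not merely off an exceptional set): this is guaranteed by Remark~\ref{rem:heat-kernel}(ii), which under (WUHK)+(HR) provides a version with $M\setminus{\cal N}$ replaced by $M$, applied to each factor $X^i$ and hence to $X$ and to $X^\gamma$. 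With that in hand, for $A\in{\cal F}_{0+}^\gamma$ one uses the Markov property at times $t\downarrow 0$ and right-continuity of paths to get ${\bf1}_A=\lim_{t\downarrow0}E_x^\gamma[{\bf1}_A\mid{\cal F}_t^\gamma]=P_x^\gamma(A)$ $P_x^\gamma$-a.s., so $P_x^\gamma(A)\in\{0,1\}$.

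The main obstacle I anticipate is the passage from ``$h$ is constant $\mu\otimes\mu$-a.e.'' (which is what irreducibility of the Dirichlet form directly yields) to ``$h$ is constant everywhere on $M\times M$,'' which is exactly what is needed for the $x$-uniform statement in Proposition~\ref{prop:tail}. This is where connectedness of $M$ (Assumption~\ref{assum:comp}), the strict positivity of $q^\gamma$ on all of $(M\times M)^2$, and the joint continuity coming from (HR) must be combined carefully: the a.e.-constant invariant function $h$ satisfies $h(x)=\int q^\gamma(1,x,y)h(y)\,\mu(dy)$, and the right-hand side is a genuinely continuous function of $x$ that equals the a.e.-constant $c$ at $\mu\otimes\mu$-a.e.\ $x$, hence everywhere. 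A secondary technical nuisance is bookkeeping the exceptional sets ${\cal N}^1,{\cal N}^2$ and $M\times M\setminus(M_0^1\times M_0^2)$; invoking Remark~\ref{rem:heat-kernel}(ii) at the outset to work with the ``good'' versions of $X^1,X^2$ removes this entirely, and I would state that reduction explicitly before beginning.
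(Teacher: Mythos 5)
Your plan for the tail $\sigma$-field does not go through as written, for two reasons. First, the identity $h(X_t^{\gamma})=E^{\gamma}_{\cdot}[\mathbf{1}_A\mid \mathcal{F}_t^{\gamma}]$ with $h(x)=P_x^{\gamma}(A)$ is only valid for \emph{shift-invariant} events. For a general tail event one has $A=\theta_t^{-1}C_t$ with $C_t$ depending on $t$, so $E_x^{\gamma}[\mathbf{1}_A\mid \mathcal{F}_t^{\gamma}]=g_t(X_t^{\gamma})$ with $g_t(y)=P_y^{\gamma}(C_t)$; the family $(g_t)$ is space-time harmonic, $P_s^{\gamma}g_{t+s}=g_t$, but the single function $h$ need not satisfy $P_t^{\gamma}h=h$. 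Your argument therefore proves triviality of the invariant field, not of $\mathcal{T}^{\gamma}$. Second, even for invariant events, the Liouville-type claim ``irreducible and conservative implies bounded harmonic functions are constant (a.e.)'' is false for transient processes in general, and transience is exactly the regime in which the proposition is applied (Lemma \ref{lem:0-1}, Propositions \ref{prop:rec-set} and \ref{prop:reg-set} all assume $X^{\gamma}$ transient). Irreducibility of $(\mathcal{E}^{\gamma},\mathcal{F}^{\gamma})$ only trivializes invariant \emph{sets}; it does not by itself exclude non-constant bounded $P_t^{\gamma}$-harmonic functions.

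What actually closes both gaps — and is the heart of the paper's proof — is a quantitative oscillation estimate for the semigroup at large times: for every $\varepsilon>0$ and $t_0>0$ one can choose $t_1$ so that $|P_{t_1}^{\gamma}f(x)-P_{t_1}^{\gamma}f(z)|\leq \varepsilon\|f\|_{\infty}$ uniformly in $f\in\mathcal{B}_b(M\times M)$ whenever $d(x_1,z_1)+d(x_2,z_2)\leq c_{\star}((\phi_1^{\gamma})^{-1}(t_0)\vee(\phi_2^{\gamma})^{-1}(t_0))$; this is extracted from (WUHK) (to cut off the far range, \eqref{eq:b-1}--\eqref{eq:b-3}) and (HR) (to control the near range via \eqref{eq:h-sub}), using the scaling relations \eqref{eq:sca-phi} and \eqref{eq:v-inverse}. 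Combined with the exit-time bound of Lemma \ref{lem:exit}, the representation $A=C\circ\theta_{t_0+t_1}$ and martingale approximation of $\mathbf{1}_A$ by $Y=E_x^{\gamma}[\mathbf{1}_A\mid\mathcal{F}_{t_0}^{\gamma}]$, this yields $|P_x^{\gamma}(A)-P_x^{\gamma}(A)^2|<8\varepsilon$, hence $P_x^{\gamma}(A)\in\{0,1\}$, and the same estimate gives continuity of $x\mapsto P_x^{\gamma}(A)$ so that connectedness of $M\times M$ forces the value to be independent of $x$. Your closing observation about upgrading an a.e.\ statement to an everywhere statement via continuity and connectedness is correct and is indeed used, but it is the final, easy step; the quantitative heat-kernel input that makes the tail field (not merely the invariant field) trivial for a possibly transient process is missing from your proposal. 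The Blumenthal part of your sketch is essentially fine; the paper obtains it by noting that the same estimates give the Feller property and then invoking \cite{Bl57}.
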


Let us prove Proposition \ref{prop:tail}
by following the proof of \cite[Theorem 2.10]{KKW17}
(see also the references therein for the original proofs).
For $i=1,2$, let $p^i(s,x_i,y_i)$ be the heat kernel of the process $X^i$, and $q^{\gamma}(t,x,y)$ the heat kernel of $X^{\gamma}$, i.e.,
$$q^{\gamma}(t,x,y)=\int_0^{\infty}p^1(s,x_1,y_1)p^2(s,x_2,y_2)\pi_t(s)\,{\rm d}s.$$
We also let $q^{1,\gamma}(t,u,w)$ and $q^{2,\gamma}(t,u,w)$ be the heat kernels of
the subordinate processes of $X^1$ and $X^2$, respectively, i.e.,
$$q^{1,\gamma}(t,u,v)=\int_0^{\infty}p^1(s,u,v)\pi_t(s)\,{\rm d}s, \quad
q^{2,\gamma}(t,u,v)=\int_0^{\infty}p^2(s,u,v)\pi_t(s)\,{\rm d}s.$$
Since for $i=1,2$, any $t>0$ and $u\in M$,
\begin{equation}\label{eq:markov}
\int_M p^i(t,u,w)\, \mu({\rm d}w)\leq 1,
\end{equation}
we have  for any $A,B\in {\cal B}(M)$,
\begin{equation}\label{eq:sub1}
\begin{split}
&\iint_{A\times B}q^{\gamma}(t,x,y)\,\mu({\rm d}y)
=\int_0^{\infty}\left(\int_A p^1(s,x_1,y_1)\,\mu({\rm d}y_1)\right)\left(\int_B p^2(s,x_2,y_2)\,\mu({\rm d}y_2)\right)\pi_t(s)\,{\rm d}s\\
&\leq \left\{\int_0^{\infty}\left(\int_A p^1(s,x_1,y_1)\,\mu({\rm d}y_1)\right)\pi_t(s)\,{\rm d}s\right\}
\wedge \left\{\int_0^{\infty}\left(\int_B p^2(s,x_2,y_2)\,\mu({\rm d}y_2)\right)\pi_t(s)\,{\rm d}s\right\}\\
&=\left(\int_A q^{1,\gamma}(t,x_1,y_1)\,\mu({\rm d}y_1)\right)\wedge \left(\int_B q^{2,\gamma}(t,x_2,y_2)\,\mu({\rm d}y_2)\right).
\end{split}
\end{equation}

For $x=(x_1,x_2)\in M\times M$ and $r>0$,
let $B(x,r)$ be an open ball with radius $r$
centered at $x$ with respect to the product metric, i.e.,
$$B(x,r)=\left\{y=(y_1,y_2)\in M\times M : d(x_1,y_1)+d(x_2,y_2)<r \right\}.$$
Let $\tau_{B(x,r)}=\inf\{t>0 : X_t^{\gamma}\not\in B(x,r)\}$ be the exit time from $B(x,r)$ of the process $X^{\gamma}$.
\begin{lem}\label{lem:exit}
If the independent processes $X^1$ and $X^2$ satisfy {\rm (WUHK)},
then there exists a constant $c_1>0$ such that for any $x\in M_0^1\times M_0^2$, $t\geq 0$ and $r>0$,
$$P_x^{\gamma}(\tau_{B(x,r)}\leq t)\leq c_1t\left(\frac{1}{\phi_1^{\gamma}(r)}+\frac{1}{\phi_2^{\gamma}(r)}\right).$$
\end{lem}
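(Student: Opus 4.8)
The plan is to bound the exit probability by a hitting-time estimate for the complement of the ball, and to obtain that estimate from an upper bound on the heat kernel of $X^{\gamma}$. First I would recall that for a Hunt process the tail probability of the exit time admits the classical bound
$$
P_x^{\gamma}(\tau_{B(x,r)}\le t)\le \frac{P_x^{\gamma}(d(X_{2t}^{\gamma,1},x_1)+d(X_{2t}^{\gamma,2},x_2)\ge r/2)}{\inf_{z\in B(x,r/2)}P_z^{\gamma}\bigl(X_{t}^{\gamma}\in B(x,r)\bigr)},
$$
or more directly the one-sided version: on $\{\tau_{B(x,r)}\le t\}$ the process has left $B(x,r)$ by time $t$, so by the strong Markov property at $\tau_{B(x,r)}$ and the fact that from the exit point it has probability at least $\tfrac12$ (say) to stay within distance $r/2$ of that point for an additional time $t$ — which follows from (WUHK) for each coordinate subordinate process together with a Chebyshev/heat-kernel estimate — one gets
$$
\tfrac12\,P_x^{\gamma}(\tau_{B(x,r)}\le t)\le P_x^{\gamma}\bigl(d(X_{2t}^{\gamma,1},x_1)+d(X_{2t}^{\gamma,2},x_2)\ge r/2\bigr).
$$

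Next I would estimate the right-hand side. Writing $X_t^{\gamma}=(X_{\tau_t}^1,X_{\tau_t}^2)$ and using independence, the event on the right is contained in $\{d(X_{2t}^{\gamma,1},x_1)\ge r/4\}\cup\{d(X_{2t}^{\gamma,2},x_2)\ge r/4\}$, so by a union bound it suffices to control $P_{x_i}\bigl(d(X_{2t}^{\gamma,i},x_i)\ge r/4\bigr)$ for $i=1,2$. For this I would integrate the heat kernel bound from Lemma~\ref{lem:heat-sub}(3) applied to $X^i$: under (WUHK) the subordinate heat kernel $q^{i,\gamma}(s,u,v)$ satisfies \eqref{eq:wuhk-sub} with $\phi_i^{\gamma}$ in place of $\phi^{\gamma}$, so
$$
P_{x_i}\bigl(d(X_{2t}^{\gamma,i},x_i)\ge r/4\bigr)
=\int_{d(x_i,v)\ge r/4}q^{i,\gamma}(2t,x_i,v)\,\mu(\d v)
\preceq \int_{d(x_i,v)\ge r/4}\frac{2t}{V(x_i,d(x_i,v))\,\phi_i^{\gamma}(d(x_i,v))}\,\mu(\d v).
$$
A dyadic decomposition of the annulus $\{d(x_i,v)\ge r/4\}$, combined with the volume doubling \eqref{eq:vol-v} and the scaling \eqref{eq:sca-phi}, collapses this integral (a geometric series because $\phi_i^{\gamma}$ grows faster than $V$ cancels) to a bound of the order $t/\phi_i^{\gamma}(r)$; this is exactly the standard Lévy-type tail estimate and is where the precise exponent structure of (WUHK) is used.

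Finally I would assemble the pieces: the two coordinate bounds give $P_x^{\gamma}(\tau_{B(x,r)}\le t)\preceq t\bigl(\phi_1^{\gamma}(r)^{-1}+\phi_2^{\gamma}(r)^{-1}\bigr)$ with an absolute constant, yielding the claim, after observing that the denominator in the strong-Markov bound is bounded below by a fixed positive constant: indeed by the same heat-kernel tail estimate one can choose the comparison radius $r/2$ large relative to $(\phi_i^{\gamma})^{-1}(t)$ on the relevant range of $(t,r)$ — and for the complementary range where $t$ is not small compared to $\phi_i^{\gamma}(r)$ the asserted inequality is trivial since its right-hand side is then bounded below by a constant while the left-hand side is at most $1$. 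I expect the main obstacle to be the justification of the uniform lower bound $\inf_{z\in B(x,r/2)}P_z^{\gamma}(X_t^{\gamma}\in B(x,r))\ge c>0$ (equivalently, a Lévy-type maximal inequality reducing $\tau_{B(x,r)}$ to a one-time-slice estimate); this requires care because the process has jumps and can exit the ball by a large jump, so one must split according to whether the exit is "local'' or by a jump and handle the jump contribution separately via the off-diagonal part of (WUHK), but since \eqref{eq:wuhk-sub} already encodes the correct jump tail this is routine rather than conceptually hard.
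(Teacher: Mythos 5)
Your proposal is correct and follows essentially the same route as the paper: both arguments combine the strong Markov property at the exit time with the single-time-slice tail bound $P_z(d(X^{\gamma,i}_u,z)\ge r/4)\preceq u/\phi_i^{\gamma}(r)$ obtained by integrating the off-diagonal part of \eqref{eq:wuhk-sub} over dyadic annuli. The only (cosmetic) difference is that the paper uses the additive decomposition $P_x^{\gamma}(\tau\le t)\le P_x^{\gamma}(\tau\le t,\ d(x,X_{2t}^{\gamma})\le r/2)+P_x^{\gamma}(d(x,X_{2t}^{\gamma})\ge r/2)$ and bounds both terms directly by $c\,t(\phi_1^{\gamma}(r)^{-1}+\phi_2^{\gamma}(r)^{-1})$, thereby avoiding your lower bound $\inf_{s\le t,\,z}P_z^{\gamma}(d(X_{2t-s}^{\gamma},z)\le r/2)\ge 1/2$ and the accompanying trivial-case split; note also that your phrase ``stay within distance $r/2$ for an additional time $t$'' must be read as a statement about the single time $2t-\tau$ (as in your displayed inequality), since the all-times version is itself an exit-time estimate and would be circular.
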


\begin{proof}
Suppose that the processes $X^1$ and $X^2$ satisfy {\rm (WUHK)}.
For $x=(x_1,x_2)\in M$ and $r>0$, we write  $\tau=\tau_{B(x,r)}$ for simplicity.
Then
\begin{equation*}
\begin{split}
P_x^{\gamma}(\tau\leq t)
&=P_x^{\gamma}(\tau\leq t, X_{2t}^{\gamma}\in B(x,r/2))+P_x^{\gamma}(\tau\leq t, X_{2t}^{\gamma}\notin B(x,r/2))\\
&\leq P_x^{\gamma}(\tau\leq t, d(x,X_{2t}^{\gamma})\leq r/2)+P_x^{\gamma}(d(x,X_{2t}^{\gamma})\geq r/2).
\end{split}
\end{equation*}

Since $X^1$ and $X^2$ satisfy {\rm (WUHK)}, it follows by \eqref{eq:wuhk-sub} that
there exist positive constants $c_1$ and $c_2$ such that for each $i=1,2$,
and for any $t> 0$ and $r>0$,
$$\int_{d(x_i,w)\geq r/4}q^{i,\gamma}(2t,x_i,w)\,\mu({\rm d}w)
\leq c_1\int_{d(x_i,w)\geq r/4}\frac{t}{V(x_i,d(x_i,w))\phi_i^{\gamma}(d(x_i,w))}\,\mu({\rm d}w)
\leq \frac{c_2 t}{\phi_i^{\gamma}(r)}.
$$
Note that if $d(x,y)\geq r/2$, then $d(x_1,y_1)\geq r/4$ or $d(x_2,y_2)\geq r/4$.
Therefore, by \eqref{eq:sub1}, there exists a constant $c_3>0$ such that
for any $x\in M_0^1\times M_0^2$, $t>0$ and $r>0$,
\begin{equation}\label{e:ine4}
\begin{split}
&P_x^{\gamma}(d(x,X_{2t}^{\gamma})\geq r/2)
=\int_{d(x,y)\geq r/2}q^{\gamma}(2t,x,y)\,\mu({\rm d}y)\\
&\leq \int_{d(x_1,y_1)\geq r/4}q^{1,\gamma}(2t,x_1,y_1)\,\mu({\rm d}y_1)
+\int_{d(x_2,y_2)\geq r/4}q^{2,\gamma}(2t,x_2,y_2)\,\mu({\rm d}y_2)\\
&\leq c_3t\left(\frac{1}{\phi_1^{\gamma}(r)}+\frac{1}{\phi_2^{\gamma}(r)}\right).
\end{split}
\end{equation}
Then,  by the triangle inequality and the strong Markov property,
we get
\begin{equation*}
\begin{split}
P_x^{\gamma}(\tau\leq t, d(x,X_{2t}^{\gamma})\leq r/2)
&\leq E_x^{\gamma}\left[P_{X_{\tau}}^{\gamma}\left(d(X_{2t-s}^{\gamma},X_0^{\gamma})\geq r/2\right)\mid_{s=\tau};\tau\leq t \right]\\
&\leq \sup_{s\in [0,t], d(x,z)\geq r}P_z^{\gamma}\left(d(X_{2t-s}^{\gamma},z)\geq r/2\right)
\leq c_3t\left(\frac{1}{\phi_1^{\gamma}(r)}+\frac{1}{\phi_2^{\gamma}(r)}\right),
\end{split}
\end{equation*}
where the last inequality follows from the argument of \eqref{e:ine4}.
We thus complete the proof.
\end{proof}

\begin{proof}[Proof of Proposition {\rm \ref{prop:tail}}]
We split the proof into three parts.

(i) Throughout the proof,
we will fix $\varepsilon>0$ small enough.
 By Lemma \ref{lem:exit} and \eqref{eq:sca-phi},
there exist positive constants $c_1$ and $c_2$ so that
for all $x\in M\times M$,
$t_0>0$ and $c_{\star}\ge1$,
\begin{equation}\label{eq:exit-bdd}
\begin{split}
&P_x^{\gamma}\left(
\sup_{s\leq t_0}d(X_s^{\gamma},x)>c_{\star}((\phi_1^{\gamma})^{-1}(t_0)\vee(\phi_2^{\gamma})^{-1}(t_0))\right)\\
&\leq c_1t_0\left(\frac{1}{\phi_1^{\gamma}(c_{\star}((\phi_1^{\gamma})^{-1}(t_0)\vee (\phi_2^{\gamma})^{-1}(t_0)))}
+\frac{1}{\phi_2^{\gamma}(c_{\star}((\phi_1^{\gamma})^{-1}(t_0)\vee (\phi_2^{\gamma})^{-1}(t_0)))}\right)\\
&\leq \frac{c_2}{c_{\star}^{\gamma \alpha_1}}.
\end{split}
\end{equation}
Take $c_{\star}$ large enough so that ${c_2}/{c_{\star}^{\gamma \alpha_1}}<\varepsilon.$

Let $c_*>1$ and $t_1>0$ be constants which will be fixed later in this order.
We first fix $c_*>1$.
Then, by (WUHK), \eqref{eq:wuhk-sub} and \eqref{eq:sca-phi},
\begin{equation}\label{eq:markov0}
\begin{split}
&\int_{d(x_1,y_1)\geq c_*(\phi_1^{\gamma})^{-1}(t_1)}
q^{1,\gamma}(t_1,x_1,y_1)\,\mu({\rm d}y_1)\\
&\leq c_3\int_{d(x_1,y_1)\geq c_* (\phi_1^{\gamma})^{-1}(t_1)}
\frac{t_1}{V(x_1,d(x_1,y_1))\phi_1^{\gamma}(d(x_1,y_1))}\,\mu({\rm d}y_1)\\
&\leq \frac{c_4t_1}{\phi_1^{\gamma}(c_*(\phi_1^{\gamma})^{-1}(t_1))}
\leq \frac{c_5}{c_*^{\gamma \alpha_1}}.
\end{split}
\end{equation}
Here the positive constants $c_3, c_4, c_5$ above
are independent of the choices of $c_*$, $t_1$, $x_1$ and $y_1$.
Similarly, we have
\begin{equation}\label{eq:markov1}
\int_{d(x_2,y_2)\geq c_*(\phi_2^{\gamma})^{-1}(t_1)}q^{2,\gamma}(t_1,x_2,y_2)\,\mu({\rm d}y_2)
\leq \frac{c_6}{c_*^{\gamma\alpha_1}}.
\end{equation}
Note that, by \eqref{eq:sub1},
\begin{equation*}
\begin{split}
&\int_{d(x_1,y_1)\geq c_*(\phi_1^{\gamma})^{-1}(t_1)}
q^{\gamma}(t_1,x,y)\,\mu({\rm d}y)
\leq \int_{d(x_1,y_1)\geq c_*(\phi_1^{\gamma})^{-1}(t_1)}
q^{1,\gamma}(t_1,x_1,y_1)\,\mu({\rm d}y_1)
\end{split}
\end{equation*}
and
\begin{equation*}
\begin{split}
&\int_{d(x_2,y_2)\geq c_*(\phi_2^{\gamma})^{-1}(t_1)}
q^{\gamma}(t_1,x,y)\,\mu({\rm d}y)
\leq \int_{d(x_2,y_2)\geq c_*(\phi_2^{\gamma})^{-1}(t_1)}
q^{2,\gamma}(t_1,x_2,y_2)\,\mu({\rm d}y_2).
\end{split}
\end{equation*}
Hence it follows  by \eqref{eq:markov0} and \eqref{eq:markov1}
that,
if we take  $c_*>1$ so large that
$(c_5+c_6)/c_*^{\gamma \alpha_1}<\varepsilon/4$, then  for any $x\in M\times M$ and $t_1>0$,
\begin{equation}\label{eq:b-1}
\begin{split}
&\int_{\{d(x_1,y_1)\geq c_*(\phi_1^{\gamma})^{-1}(t_1)\}\cup\{d(x_2,y_2)\geq c_*(\phi_2^{\gamma})^{-1}(t_1)\}}
q^{\gamma}(t_1,x,y)\,\mu({\rm d}y)\\
&\leq \int_{d(x_1,y_1)\geq c_*(\phi_1^{\gamma})^{-1}(t_1)}
q^{\gamma}(t_1,x,y)\,\mu({\rm d}y)
+\int_{d(x_2,y_2)\geq c_*(\phi_2^{\gamma})^{-1}(t_1)}
q^{\gamma}(t_1,x,y)\,\mu({\rm d}y)\\
&\leq \int_{d(x_1,y_1)\geq c_*(\phi_1^{\gamma})^{-1}(t_1)}
q^{1,\gamma}(t_1,x_1,y_1)\,\mu({\rm d}y_1)
+\int_{d(x_2,y_2)\geq c_*(\phi_2^{\gamma})^{-1}(t_1)}
q^{2,\gamma}(t_1,x_2,y_2)\,\mu({\rm d}y_2)\\
&\leq \frac{c_5+c_6}{c_*^{\gamma \alpha_1}}<\frac{\varepsilon}{4}.
\end{split}
\end{equation}
In the same way, we can take and fix $c_*>1$ so large that  for any $z\in M\times M$ and $t_1>0$,
\eqref{eq:b-1} holds and
\begin{equation}\label{eq:b-2}
\int_{\{d(z_1,y_1)\geq c_*(\phi_1^{\gamma})^{-1}(t_1)/2\}\cup\{d(z_2,y_2)\geq c_*(\phi_2^{\gamma})^{-1}(t_1)/2\}}
q^{\gamma}(t_1,z,y)\,\mu({\rm d}y)
<\frac{\varepsilon}{4}.
\end{equation}

Next we assume that
$d(x_1,z_1)+d(x_2,z_2)
\leq c_{\star}((\phi_1^{\gamma})^{-1}(t_0)\vee (\phi_2^{\gamma})^{-1}(t_0))$.
We now take  $t_1>0$ so large that
\begin{equation}\label{eq:t-1}
c_{\star}((\phi_1^{\gamma})^{-1}(t_0)\vee (\phi_2^{\gamma})^{-1}(t_0))
\leq \frac{c_*}{2}((\phi_1^{\gamma})^{-1}(t_1)\wedge (\phi_2^{\gamma})^{-1}(t_1)).
\end{equation}
For each $i=1,2$,
if $d(x_i,y_i)\geq c_*(\phi_i^{\gamma})^{-1}(t_1)$, then, by the triangle inequality
and \eqref{eq:t-1},
$$d(z_i,y_i)\geq d(x_i,y_i)-d(x_i,z_i)
\geq c_*(\phi_i^{\gamma})^{-1}(t_1)-c_{\star}((\phi_i^{\gamma})^{-1}(t_0)\vee \phi_i^{-1}(t_0))
\geq \frac{c_*}{2}(\phi_i^{\gamma})^{-1}(t_1).$$
Therefore, it follows by \eqref{eq:b-2} that
\begin{equation*}
\begin{split}
&\int_{\{d(x_1,y_1)\geq c_*\phi_1^{-1}(t_1)\}\cup\{d(x_2,y_2)\geq c_*\phi_2^{-1}(t_1)\}}
q^{\gamma}(t_1,z,y)\,\mu({\rm d}y)\\
&\leq \int_{\{d(z_1,y_1)\geq c_*\phi_1^{-1}(t_1)/2\}\cup\{d(z_2,y_2)\geq c_*\phi_2^{-1}(t_1)/2\}}
q^{\gamma}(t_1,z,y)\,\mu({\rm d}y)
<\frac{\varepsilon}{4}.
\end{split}
\end{equation*}
Combining this with \eqref{eq:b-1},
we obtain
\begin{equation}\label{eq:b-3}
\begin{split}
&\left|\int_{\{d(x_1,y_1)\geq c_*\phi_1^{-1}(t_1)\}\cup\{d(x_2,y_2)\geq c_*\phi_2^{-1}(t_1)\}}
\left(q^{\gamma}(t_1,x,y)-q^{\gamma}(t_1,z,y)\right)\mu({\rm d}y)\right|\\
&\leq \int_{\{d(x_1,y_1)\geq c_*\phi_1^{-1}(t_1)\}\cup\{d(x_2,y_2)\geq c_*\phi_2^{-1}(t_1)\}}
\left(q^{\gamma}(t_1,x,y)+q^{\gamma}(t_1,z,y)\right)\mu({\rm d}y)
<\frac{\varepsilon}{2}.
\end{split}
\end{equation}

Since the processes $X^1$ and $X^2$ satisfy (HR) by assumption, for each $i=1,2$, there exist constants
$\theta_i\in (0,1]$ and $C_i>0$ such that for any $t>0$ and $u,v,w\in M$,
\begin{equation}\label{eq:heat-conti}
|p^i(t,u,w)-p^i(t,v,w)|\leq \frac{C_i}{V(w,\phi_i^{-1}(t))}\left(\frac{d(u,v)}{\phi_i^{-1}(t)}\right)^{\theta_i}.
\end{equation}
Therefore, as in the proof of Lemma \ref{lem:heat-sub} (1),
we can show that
\begin{equation}\label{eq:h-sub}
\begin{split}
\int_0^{\infty}|p^i(s,u,w)-p^i(s,v,w)|\pi_t(s)\,{\rm d}s
&\leq \int_0^{\infty}\frac{C_i}{V(w,\phi_i^{-1}(s))}\left(\frac{d(u,v)}{\phi_i^{-1}(s)}\right)^{\theta_i}\pi_t(s)\,{\rm d}s\\
&\leq \frac{C_i'}{V(w,(\phi_i^{\gamma})^{-1}(t))}\left(\frac{d(u,v)}{(\phi_i^{\gamma})^{-1}(t)}\right)^{\theta_i}.
\end{split}
\end{equation}
Hence, if $d(u,v)\leq c_{\star}((\phi_1^{\gamma})^{-1}(t_0)\vee (\phi_2^{\gamma})^{-1}(t_0))$,
then there exist positive constants $c_7$ and $\eta$ such that
\begin{equation*}
\begin{split}
&\int_{d(u,w)\leq c_*(\phi_i^{\gamma})^{-1}(t_1)}\left(\int_0^{\infty}|p^i(s,u,w)-p^i(t,v,w)|\pi_t(s)\,{\rm d}s\right)\,\mu({\rm d}w)\\
&\leq C_i''
\left(\frac{c_{\star}((\phi_1^{\gamma})^{-1}(t_0)\vee (\phi_2^{\gamma})^{-1}(t_0))}{(\phi_i^{\gamma})^{-1}(t_1)}\right)^{\theta_i}
\frac{V(u,c_*(\phi_i^{\gamma})^{-1}(t_1))}{V(u,(\phi_i^{\gamma})^{-1}(t_1))}\\
&\leq c_7c_*^{\eta}\left(\frac{c_{\star}((\phi_1^{\gamma})^{-1}(t_0)\vee (\phi_2^{\gamma})^{-1}(t_0))}{(\phi_i^{\gamma})^{-1}(t_1)}\right)^{\theta_i},
\end{split}
\end{equation*}
where in the second
inequality we used \eqref{eq:v-inverse}.
In particular, if we take $t_1>0$ so large that
\begin{equation}\label{eq:t-2}
c_{\star}((\phi_1^{\gamma})^{-1}(t_0)\vee (\phi_2^{\gamma})^{-1}(t_0))
\leq \left\{\left(\frac{\varepsilon}{4c_7c_*^{\eta}}\right)^{1/\theta_1}\wedge
\left(\frac{\varepsilon}{4c_7c_*^{\eta}}\right)^{1/\theta_2}\right\}
((\phi_1^{\gamma})^{-1}(t_1)\wedge (\phi_2^{\gamma})^{-1}(t_1)),
\end{equation}
then
$$
\int_{d(u,w)\leq c_*(\phi_i^{\gamma})^{-1}(t_1)}
\left(\int_0^{\infty}|p^i(s,u,w)-p^i(s,v,w)|\pi_{t_1}(s)\,{\rm d}s\right)\,\mu({\rm d}w)<\frac{\varepsilon}{4}.
$$
Set
$$I_1(t,x,y,z)=\int_0^{\infty}p^1(s,x_1,y_1)(p^2(s,x_2,y_2)-p^2(s,z_2,y_2))\,\pi_t(s)\,{\rm d}s,$$
and
$$I_2(t,x,y,z)=\int_0^{\infty}p^2(s,z_2,y_2)(p^1(s,x_1,y_1)-p^1(s,z_1,y_1))\,\pi_t(s)\,{\rm d}s.$$
Then, for any $f\in {\cal B}_b(M\times M)$,  by \eqref{eq:markov} and the Fubini theorem,
\begin{equation}\label{eq:f-1}
\begin{split}
&\left|\int_{d(x_1,y_1)\leq c_*(\phi_1^{\gamma})^{-1}(t_1), d(x_2,y_2)\leq c_*(\phi_2^{\gamma})^{-1}(t_1)}
I_1(t_1,x,y,z)
f(y)\,\mu({\rm d}y)\right|\\
&\leq \|f\|_{\infty}
\int_{d(x_2,y_2)\leq c_*(\phi_2^{\gamma})^{-1}(t_1)}
\left(\int_0^{\infty}p^1(s,x_1,y_1)|p^2(s,x_2,y_2)-p^2(s,z_2,y_2)|\,\pi_{t_1}(s)\,{\rm d}s\right)
\,\mu({\rm d}y)\\
&\le \|f\|_{\infty}
\int_{d(x_2,y_2)\leq c_*(\phi_2^{\gamma})^{-1}(t_1)}
\left(\int_0^{\infty}|p^2(s,x_2,y_2)-p^2(s,z_2,y_2)|\,\pi_{t_1}(s)\,{\rm d}s\right)
\,\mu({\rm d}y_2)\\
&\leq \frac{\varepsilon}{4} \|f\|_{\infty}
\end{split}
\end{equation}
and
\begin{equation}\label{eq:f-2}
\left|\int_{d(x_1,y_1)\leq c_*(\phi_i^{\gamma})^{-1}(t_1), d(x_2,y_2)\leq c_*(\phi_2^{\gamma})^{-1}(t_1)}
I_2(t_1,x,y,z)
f(y)\,\mu({\rm d}y)\right|
\leq \frac{\varepsilon}{4}\|f\|_{\infty}.
\end{equation}
Note that
\begin{equation*}
\begin{split}
&p^1(s,x_1,y_1)p^2(s,x_2,y_2)-p^1(s,z_1,y_1)p^2(s,z_2,y_2)\\
&=p^1(s,x_1,y_1)(p^2(s,x_2,y_2)-p^2(s, z_2,y_2))
+p^2(s,z_2,y_2)(p^1(s,x_1,y_1)-p^2(s,z_1,y_1)).
\end{split}
\end{equation*}
Hence, by \eqref{eq:f-1} and \eqref{eq:f-2},
\begin{equation*}
\begin{split}
&\left|\int_{d(x_1,y_1)\leq c_*(\phi_1^{\gamma})^{-1}(t_1), d(x_2,y_2)\leq c_*(\phi_2^{\gamma})^{-1}(t_1)}
(q^{\gamma}(t_1,x,y)-q^{\gamma}(t_1,z,y))
f(y)\,\mu({\rm d}y)\right|\\
&=\left|\int_{d(x_1,y_1)\leq c_*(\phi_1^{\gamma})^{-1}(t_1), d(x_2,y_2)\leq c_*(\phi_2^{\gamma})^{-1}(t_1)}
(I_1(t_1,x,y,x)+I_2(t_1,x,y,z))
f(y)\,\mu({\rm d}y)\right|\\
&\leq \left|\int_{d(x_1,y_1)\leq c_*(\phi_1^{\gamma})^{-1}(t_1), d(x_2,y_2)\leq c_*(\phi_2^{\gamma})^{-1}(t_1)}
I_1(t_1,x,y,z)
f(y)\,\mu({\rm d}y)\right|\\
&\quad +\left|\int_{d(x_1,y_1)\leq c_*(\phi_1^{\gamma})^{-1}(t_1), d(x_2,y_2)\leq c_*(\phi_2^{\gamma})^{-1}(t_1)}
I_2(t_1,x,y,z)
f(y)\,\mu({\rm d}y)\right|\\
&\leq \frac{\varepsilon}{2}\|f\|_{\infty}.
\end{split}
\end{equation*}
Therefore, if we fix  $t_1>0$ so that \eqref{eq:t-1} and \eqref{eq:t-2} hold,
then, by \eqref{eq:b-3} and the inequality above,
\begin{equation}\label{eq:s-bdd}
\left|P_{t_1}^{\gamma}f(x)-P_{t_1}^{\gamma}f(z)\right|
=\left|\int_{M\times M}
\left(q^{\gamma}(t_1,x,y)-q^{\gamma}(t_1,z,y)\right)
f(y)\,\mu({\rm d}y)\right|
\leq \varepsilon\|f\|_{\infty}.
\end{equation}

(ii) Fix $x\in M\times M$ and $A\in {\cal T}^{\gamma}$.
Then, by the martingale convergence theorem,
we have as $t\rightarrow\infty$,
$$E_x^{\gamma}[{\bf 1}_A \mid {\cal F}_t^{\gamma}]
\rightarrow E_x^{\gamma}[{\bf 1}_A \mid{\cal F}_{\infty}^{\gamma}]={\bf 1}_A
\quad  \text{$P_x^{\gamma}$-a.s.\ and in $L^1(P_x^{\gamma})$}.$$
Namely, for any fixed $\varepsilon>0$, there exists $t_0>0$ such that
$$E_x^{\gamma}\left[\left|E_x^{\gamma}[{\bf 1}_A\mid {\cal F}_{t_0}^{\gamma}]-{\bf 1}_A\right|\right]<\varepsilon.$$
Hence, letting $Y=E_x^{\gamma}[{\bf 1}_A\mid {\cal F}_{t_0}^{\gamma}]$,
we obtain
\begin{equation}\label{eq:t-prob}
|P_x^{\gamma}(A)-E_x^{\gamma}[Y;A]|=|E_x^{\gamma}[({\bf 1}_A-Y);A]|
\leq E_x^{\gamma}[|{\bf 1}_A-Y|]=E_x^{\gamma}[|{\bf 1}_A-E_x^{\gamma}[{\bf 1}_A\mid {\cal F}_{t_0}^{\gamma}]|]
<\varepsilon
\end{equation}
and
\begin{equation}\label{eq:t-prob-1}
|P_x^{\gamma}(A)-E_x^{\gamma}[Y]|\leq E_x^{\gamma}[|{\bf 1}_A-Y|]
<\varepsilon.
\end{equation}

Let $t_0$ and $t_1$ be the positive constants
which are fixed in the argument in part (i).
Then, for $A\in {\cal T}^{\gamma}$, there exists an event $C\in {\cal F}_{\infty}^{\gamma}$
such that $A=C\circ\theta_{t_0+t_1}$.
Let $g(x)=P_x^{\gamma}(C)$ for $x\in M\times M$.
Since $Y$ is ${\cal F}_{t_0}^{\gamma}$-measurable and the Markov property yields
$$E_x^{\gamma}\left[{\bf 1}_C\circ\theta_{t_1}\right]
=E_x^{\gamma}\left[P_{X_{t_1}}^{\gamma}(C)\right]=P_{t_1}^{\gamma}g(x),$$
we have
\begin{equation}\label{eq:t-3}
E_x^{\gamma}\left[Y;A\right]=E_x^{\gamma}\left[Y;C\circ\theta_{t_0+t_1}\right]
=E_x^{\gamma}\left[YE_{X_{t_0}}^{\gamma}\left[{\bf 1}_C\circ\theta_{t_1}\right]\right]
=E_x^{\gamma}\left[Y P_{t_1}^{\gamma}g(X^\gamma_{t_0})\right]
\end{equation}
and
\begin{equation}\label{eq:t-4}
P_x^{\gamma}(A)=E_x^{\gamma}\left[P_{t_1}g(X^\gamma_{t_0})\right].
\end{equation}

Let
$$A_{t_0}=\left\{\omega\in \Omega:
d(X_{t_0}^{\gamma}(\omega),x)\leq c_{\star}((\phi_1^{\gamma})^{-1}(t_0)\vee (\phi_2^{\gamma})^{-1}(t_0))\right\}.$$
Since $\|g\|_{\infty}\leq 1$, we get, by \eqref{eq:s-bdd},
\begin{equation*}
\begin{split}
\left|E_x^{\gamma}[Y P_{t_1}^{\gamma}g(X_{t_0}^{\gamma});A_{t_0}]-P_{t_1}^{\gamma}g(x)E_x^{\gamma}[Y;A_{t_0}]\right|
\leq E_x^{\gamma}[Y|P_{t_1}^{\gamma}g(X_{t_0}^{\gamma})-P_{t_1}^{\gamma}g(x)|;A_{t_0}]\leq \varepsilon.
\end{split}
\end{equation*}
We also see, by \eqref{eq:exit-bdd}, that
$$\left|E_x^{\gamma}[Y P_{t_1}^{\gamma}g(X_{t_0}^{\gamma});A_{t_0}^c]-P_{t_1}^{\gamma}g(x)E_x^{\gamma}[Y;A_{t_0}^c]\right|
\leq 2P_x^{\gamma}(A_{t_0}^c)<2\varepsilon.$$
Therefore, it follows by \eqref{eq:t-3} that
\begin{equation*}
\begin{split}
&|E_x^{\gamma}[Y;A]-P_{t_1}^{\gamma}g(x)E_x^{\gamma}[Y]|
=|E_x^{\gamma}[Y P_{t_1}^{\gamma}g(X_{t_0}^{\gamma})]-P_{t_1}^{\gamma}g(x)E_x^{\gamma}[Y]|\\
&\leq \left|E_x^{\gamma}[Y P_{t_1}g(X_{t_0}^{\gamma});A_{t_0}]-P_{t_1}^{\gamma}g(x)E_x^{\gamma}[Y;A_{t_0}]\right|
+\left|E_x^{\gamma}[Y P_{t_1}^{\gamma}g(X_{t_0}^{\gamma});A_{t_0}^c]
-P_{t_1}^{\gamma}g(x)E_x^{\gamma}[Y;A_{t_0}^c]\right|\\
&<3\varepsilon.
\end{split}
\end{equation*}
Similarly, we have, by \eqref{eq:t-4},
$$|P_x^{\gamma}(A)-P_{t_1}^{\gamma}g(x)|
=|E_x^{\gamma}[P_{t_1}^{\gamma}g(X_{t_0}^{\gamma})]-P_{t_1}^{\gamma}g(x)|<3\varepsilon.$$
Combining two inequalities above with \eqref{eq:t-prob}, we obtain
\begin{equation*}
\begin{split}
&\left|P_x^{\gamma}(A)-P_x^{\gamma}(A)E_x^{\gamma}[Y]\right|\\
&\leq \left|P_x^{\gamma}(A)-E_x^{\gamma}[Y;A]\right|
+\left|E_x^{\gamma}[Y;A]-P_{t_1}^{\gamma}g(x)E_x^{\gamma}[Y]\right|
+\left|P_{t_1}^{\gamma}g(x)-P_x^{\gamma}(A)\right|E_x^{\gamma}[Y]\\
&\leq \varepsilon+3\varepsilon+3\varepsilon=7\varepsilon.
\end{split}
\end{equation*}
Then, by \eqref{eq:t-prob-1}, we further have
$$|P_x^{\gamma}(A)-P_x^{\gamma}(A)^2|
\leq |P_x^{\gamma}(A)-P_x^{\gamma}(A)E_x^{\gamma}[Y]|
+P_x^{\gamma}(A)|E_x^{\gamma}[Y]-P_x^{\gamma}(A)|
<7\varepsilon+\varepsilon=8\varepsilon.$$
Since $\varepsilon>0$ is arbitrary, we get $P_x^{\gamma}(A)=P_x^{\gamma}(A)^2$
for any $x\in M\times M$.

Fix $t>0$ and $h\in {\cal B}_b(M\times M)$.
Then, the same argument as for \eqref{eq:s-bdd} implies that
for any $\varepsilon>0$, there exists $\delta>0$ such that
if $x,z\in M\times M$ satisfy $d(x_1,z_1)+d(x_2,z_2)<\delta$, then
$$|P_t^{\gamma} h(x)-P_t^{\gamma} h(z)|<\varepsilon.$$
Namely, the function $P_t^{\gamma} h$ is uniformly continuous in $M\times M$.
Moreover, since \eqref{eq:t-4} yields
$$P_x^{\gamma}(A)=P_{t_0}^{\gamma}P_{t_1}^{\gamma}g(x),$$
the function $P_x^{\gamma}(A)$ is continuous in $x\in M\times M$.
We also note that $M\times M$ is connected because so is $M$ by assumption.
As $P_x^{\gamma}(A)=P_x^{\gamma}(A)^2$ for any $x\in M\times M$,
we get $P_x^{\gamma}(A)=1$ for all $x\in M\times M$, or $P_x^{\gamma}(A)=0$ for all $x\in M\times M$.

(iii) Since \eqref{eq:exit-bdd} holds for all $t_0>0$,  one can see from the arguments in part (i) (in particular \eqref{eq:s-bdd}) that the semigroup of the process $X^\gamma$ satisfies the Feller property,
i.e., the associated semigroup maps the set of bounded continuous functions into itself.
Then, according to \cite[p.57]{Bl57}, the Blumenthal
zero-one law holds as well.
The proof is complete.
\end{proof}

\subsection{Wiener test for recurrence}
In this subsection, we establish the Wiener test for the recurrence
relative to stable-subordinate direct-product processes
by using Proposition \ref{prop:tail}.
Let $X^1$ and $X^2$ be
two independent
$\mu$-symmetric Hunt processes on $M$ satisfying Assumption {\bf(H)},
{\rm (WUHK)} and {\rm (HR)},
and let $X$ be the direct product of $X^1$ and $X^2$ on $M\times M$.
For $\gamma\in (0,1]$, $X^{\gamma}$ denotes the $\gamma$-subordinate process of $X$.

Fix $x\in M\times M$ and $\lambda>1$.
For $B\in {\cal B}(M\times M)$,  define
\begin{equation}\label{eq:b-p}
B_n^{x,\lambda,\phi}
=\left\{y\in B: \lambda^n\leq \phi_d(x,y)\leq \lambda^{n+1}\right\},\quad n\ge1.
\end{equation}
Suppose that Assumptions \ref{assum:comp} is satisfied.
Then, by Proposition \ref{prop:tail},
one can apply the argument of Lemma \ref{lem:0-1} to the process $X^{\gamma}$ and obtain that,
if the process $X^{\gamma}$ is transient, then
\begin{equation}\label{eq:p-equ}
P_x^{\gamma}(L_B<\infty)=1\iff P_x^{\gamma}(L_B<\infty)>0
\iff P_x^{\gamma}\left(\liminf_{n\rightarrow\infty}\{\sigma_{B_n^{x,\lambda,\phi}}=\infty 
\}\right)=1.
\end{equation}
Furthermore, using this equivalence, we can show the Wiener test,
which is well known for the transient Brownian motion (see, e.g., \cite[p.67, Theorem 3.3]{PS78}),
for the stable-subordinate of direct product process on the metric measure space.

\begin{prop}\label{thm:wiener}
Let $M$ satisfy Assumptions {\rm \ref{assum:volume}}, {\rm \ref{assum:compact}} and {\rm \ref{assum:comp}}.
Suppose that the two independent
 processes $X^1$ and $X^2$ satisfy Assumption  {\bf (H)}, {\rm (WUHK)} and {\rm(HR)}.
Fix a constant $\lambda>1$ so that $\phi_i^{-1}(\lambda t)\geq 2\phi_i^{-1}(t) $ for $i=1,2$ and all $t>0$.
Assume that for some $\gamma\in (0,1]$,
$J^{\gamma}<\infty$, \eqref{eq:prod-res-cond} and \eqref{eq:prod-res-cond-1} hold.
Then, for any $x\in M\times M$ and $B\in {\cal B}(M\times M)$,
$$
P_x^{\gamma}(L_B=\infty)=1\iff\sum_{n=1}^{\infty}P_x^{\gamma}(\sigma_{B_n^{x,\lambda,\phi}}<\infty)=\infty.
$$
\end{prop}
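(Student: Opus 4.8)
The plan is to derive the Wiener test from the zero-one structure already recorded for $X^{\gamma}$ together with a Borel--Cantelli argument. Write $A_n:=\{\sigma_{B_n^{x,\lambda,\phi}}<\infty\}$. Under the standing hypotheses $X^{\gamma}$ is transient by Lemma \ref{lem:trans}(2) and the Remark following it (recall $J^{\gamma}<\infty$ and {\rm (WUHK)}; also {\rm (WUHK)} and {\rm (HR)} give {\rm (NDLHK)} by Remark \ref{rem:heat-kernel}(iii)), so \eqref{eq:p-equ} applies and in particular $P_x^{\gamma}(L_B<\infty)\in\{0,1\}$. Taking complements in the last equivalence of \eqref{eq:p-equ} (note $\{\sigma_{B_n^{x,\lambda,\phi}}=\infty\}=A_n^{c}$ and $\liminf_n A_n^{c}=(\limsup_n A_n)^{c}$),
$$P_x^{\gamma}(L_B=\infty)=1\iff P_x^{\gamma}(L_B<\infty)=0\iff P_x^{\gamma}\Bigl(\limsup_{n\to\infty}A_n\Bigr)>0.$$
Hence it remains to show $P_x^{\gamma}(\limsup_n A_n)>0\iff\sum_n P_x^{\gamma}(A_n)=\infty$. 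The implication ``$\Rightarrow$'' is the contrapositive of the first Borel--Cantelli lemma. For ``$\Leftarrow$'' I would invoke the divergence half of the Borel--Cantelli lemma for quasi-independent events (Kochen--Stone): $P_x^{\gamma}(\limsup_n A_n)\ge 1/C$ whenever
$$\liminf_{N\to\infty}\frac{\sum_{m,n\le N}P_x^{\gamma}(A_m\cap A_n)}{\bigl(\sum_{n\le N}P_x^{\gamma}(A_n)\bigr)^{2}}\le C<\infty,$$
so everything reduces to a quasi-independence estimate.

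The quasi-independence estimate I aim to prove is that there exist $k_0\in{\mathbb N}$ and $C>0$ with $P_x^{\gamma}(A_m\cap A_n)\le C\,P_x^{\gamma}(A_m)\,P_x^{\gamma}(A_n)$ whenever $|m-n|\ge k_0$. Granting it, in the double sum the band $|m-n|<k_0$ contributes at most $2k_0\sum_{n\le N}P_x^{\gamma}(A_n)$, which is $o\bigl((\sum_{n\le N}P_x^{\gamma}(A_n))^{2}\bigr)$ since $\sum_n P_x^{\gamma}(A_n)=\infty$, while the remaining terms are dominated by $C(\sum_{n\le N}P_x^{\gamma}(A_n))^{2}$; thus the Kochen--Stone ratio is $\le C$, giving $P_x^{\gamma}(\limsup_n A_n)>0$ and finishing the proof.

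To establish quasi-independence, fix $m<n$ with $n-m\ge k_0$. Since the shells $B_m^{x,\lambda,\phi}$ and $B_n^{x,\lambda,\phi}$ are disjoint, on $A_m\cap A_n$ exactly one of them is hit first; applying the strong Markov property of $X^{\gamma}$ at that hitting time and using that the hitting point of $B_k^{x,\lambda,\phi}$ lies in the closed annulus $S_k:=\{y:\lambda^{k}\le\phi_d(x,y)\le\lambda^{k+1}\}$, one obtains
$$P_x^{\gamma}(A_m\cap A_n)\le P_x^{\gamma}(A_m)\sup_{z\in S_m}P_z^{\gamma}(\sigma_{B_n^{x,\lambda,\phi}}<\infty)+P_x^{\gamma}(A_n)\sup_{z\in S_n}P_z^{\gamma}(\sigma_{B_m^{x,\lambda,\phi}}<\infty).$$
It then suffices to prove $P_z^{\gamma}(\sigma_{B_k^{x,\lambda,\phi}}<\infty)\le C\,P_x^{\gamma}(\sigma_{B_k^{x,\lambda,\phi}}<\infty)$ for $z$ in a shell at least $k_0$ levels away from the $k$-th one. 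For this I would use the balayage representation $P_w^{\gamma}(\sigma_B<\infty)=\int_B u_0^{\gamma}(w,y)\,\nu_B^{(0)}(\d y)$ (valid for the transient $X^{\gamma}$, first for compact $B$ and then for $B=B_k^{x,\lambda,\phi}$ by regularity of the $0$-order capacity) together with the two-sided Green estimate $u_0^{\gamma}(w,y)\simeq\int_{\phi_d^{\gamma}(w,y)}^{\infty}\bigl(V((\phi_1^{\gamma})^{-1}(t))V((\phi_2^{\gamma})^{-1}(t))\bigr)^{-1}\,\d t$ from Lemma \ref{lem:green-1} (both bounds available under {\rm (WUHK)}, {\rm (NDLHK)}, $J^{\gamma}<\infty$, \eqref{eq:prod-res-cond} and \eqref{eq:prod-res-cond-1}). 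The choice of $\lambda$ with $\phi_i^{-1}(\lambda t)\ge 2\phi_i^{-1}(t)$ makes consecutive shells geometrically separated in the metric $d$, so a coordinatewise triangle inequality together with the doubling \eqref{eq:sca-phi} of $\phi_1,\phi_2$ yields $\phi_d(z,y)\simeq\phi_d(x,y)$ whenever $z\in S_j$, $y\in S_k$ with $|j-k|\ge k_0$; combining this with Assumption \ref{assum:volume} and a reverse-doubling bound for the tail integral $\int_a^{\infty}(V_1V_2)^{-1}$ (which follows from \eqref{eq:vol-v} and \eqref{eq:sca-phi}), one gets $u_0^{\gamma}(z,y)\simeq u_0^{\gamma}(x,y)$ for all such $y$, and integrating against $\nu_{B_k^{x,\lambda,\phi}}^{(0)}$ gives the required comparison of hitting probabilities.

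The step I expect to be the main obstacle is exactly this comparison $P_z^{\gamma}(\sigma_{B_k^{x,\lambda,\phi}}<\infty)\simeq P_x^{\gamma}(\sigma_{B_k^{x,\lambda,\phi}}<\infty)$: it needs the full two-sided strength of Lemma \ref{lem:green-1} (hence all of $J^{\gamma}<\infty$, \eqref{eq:prod-res-cond}, \eqref{eq:prod-res-cond-1}), a clean quasi-triangle inequality for $\phi_d$ extracted from the doubling of $\phi_1,\phi_2$, enough regularity to absorb the $O(1)$ change of the lower integration limit in the Green integral, and some care with the potential-theoretic bookkeeping of the equilibrium measure $\nu_B^{(0)}$ on a merely Borel set $B$, which I would handle by reduction to compact subsets via regularity of the $0$-order capacity.
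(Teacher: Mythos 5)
Your proposal is correct and follows essentially the same route as the paper's proof: reduce via the zero--one equivalence \eqref{eq:p-equ} to a divergence Borel--Cantelli statement, establish quasi-independence of the events $\{\sigma_{B_n^{x,\lambda,\phi}}<\infty\}$ by the strong Markov property together with the equilibrium-measure representation of hitting probabilities and the two-sided Green function estimates of Lemma \ref{lem:green-1} on geometrically separated shells, and conclude. The only cosmetic differences are that the paper packages the second-moment step as the cited generalized Borel--Cantelli lemma (Lemma \ref{lem:b-c}, with separation $|m-n|>1$) where you invoke Kochen--Stone directly with a band $|m-n|\ge k_0$, and that the paper only needs the one-sided bound $u_0^{\gamma}(z,y)\preceq u_0^{\gamma}(x,y)\wedge u_0^{\gamma}(x,z)$ rather than the full two-sided comparison you aim for.
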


\begin{proof}
We first note that, by Remark \ref{rem:heat-kernel} (iii),
under the assumption of this
proposition,
the processes $X^1$ and $X^2$ satisfy {\rm (NDLHK)}.
We also note that $X^{\gamma}$ is transient by Lemma \ref{lem:trans}.
Then, we take an approach similar to the proof of \cite[p.67, Theorem 3.3]{PS78}.
In what follows, we simply write $B_n$ for $B_n^{x,\lambda,\phi}$.

Let $m$, $n$ be positive integers such that $|m-n|>1$.
Without loss of generality, we suppose that $m>n+1$.
For any   $z\in B_m$ and $y\in B_n$,
if $\phi_1(d(x_1,z_1))\geq \phi_2(d(x_2,z_2))$, then
\begin{equation}\label{eq:p}
\phi_1(d(x_1,z_1))=\phi_d(x,z)\geq \lambda^m\geq \lambda^{n+2}
\geq \lambda \phi_d(x,y).
\end{equation}
Noting that
$\phi_1^{-1}(\lambda \phi_d(x,y))\geq 2\phi_1^{-1}(\phi_d(x,y))$
by assumption,
we have, by the triangle inequality,
\begin{equation*}
\begin{split}
d(z_1, y_1)\geq d(x_1,z_1)-d(y_1,x_1)
&\geq \phi_1^{-1} \left(\lambda \phi_d(x,y)\right)
-\phi_1^{-1}(\phi_d(x,y))
\geq \phi_1^{-1}(\phi_d(x,y)).
\end{split}
\end{equation*}
Therefore,
\begin{equation}\label{eq:p-0}
\phi_d(z,y)\geq  \phi_d(x,y).
\end{equation}
Since \eqref{eq:p} also implies that
$$d(x_1,z_1)\ge \phi_1^{-1}(\lambda \phi_1(d(x_1,y_1)))\ge 2\phi_1^{-1}(\phi_1(d(x_1,y_1)))=2d(x_1,y_1),$$
we have, by the triangle inequality,
$$d(z_1,y_1)\ge d(z_1,x_1)-d(x_1,y_1)\ge \frac{1}{2}d(x_1,z_1).$$
Hence, by \eqref{eq:sca-phi}, there exists a constant $c_1\in (0,1)$ such that
$$
\phi_d(z,y)\geq \phi_1(d(z_1,y_1))\geq \phi_1(d(x_1,z_1)/2)
\geq c_1\phi_1(d(x_1,z_1))=c_1\phi_d(x,z).
$$
Combining this with \eqref{eq:p-0}, we get
\begin{equation}\label{eq:p-1}
\phi_d(z,y)\geq   c_1(\phi_d(x,y)\vee \phi_d(x,z)).
\end{equation}
In the same way, one can see that the inequality above is valid also
when $\phi_1(d(x_1,z_1))\leq \phi_2(d(x_2,z_2))$.

Since \eqref{eq:prod-res-cond} and \eqref{eq:prod-res-cond-1} hold by assumption,
Lemma \ref{lem:green-1}
with \eqref{eq:v-inverse} and \eqref{eq:p-1} implies that
for any positive integers $m,n$ with $|m-n|>1$,
and for any  $z\in B_m$ and $y\in B_n$,
\begin{equation}\label{eq:green-comp}
\begin{split}
u_0^{\gamma}(z,y)
&\leq c_2\int_{\phi_d^{\gamma}(z,y)}^{\infty}\frac{1}{V((\phi_1^{\gamma})^{-1}(t))V((\phi_2^{\gamma})^{-1}(t))}\,{\rm d}t\\
&\leq c_2\int_{c_1^{\gamma}(\phi_d^{\gamma}(x,y)\vee \phi_d^{\gamma}(x,z))}^{\infty}
\frac{1}{V((\phi_1^{\gamma})^{-1}(t))V((\phi_2^{\gamma})^{-1}(t))}\,{\rm d}t\\
&\leq c_3 (u_0^{\gamma}(x,y)\wedge u_0^{\gamma}(x,z)).
\end{split}
\end{equation}
On the other hand,
since $B_n$ is compact by Assumption \ref{assum:compact},
there exists a positive Radon measure $\nu_n$ on $M$ such that
$\supp[\nu_n]\subset B_n$, and, for any $z\in B_m$,
$$
P_z^{\gamma}(\sigma_{B_n}<\infty)
=\int_{B_n}u_0^{\gamma}(z,y)\,\nu_n({\rm d}y)
\leq c_3\int_{B_n}u_0^{\gamma}(x,y)\,\nu_n({\rm d}y)
=c_3P_x^{\gamma}(\sigma_{B_n}<\infty).
$$
Note that if $\sigma_{B_m}<\infty$, then  $X_{\sigma_{B_m}}^{\gamma}\in B_m$
because $B_m$ is closed.
Therefore,  by the strong Markov property of the process $X^{\gamma}$,
\begin{equation}\label{eq:hit-1}
\begin{split}
P_x^{\gamma}(\sigma_{B_m}<\infty, \sigma_{B_n}\circ\theta_{\sigma_{B_m}}<\infty)
&=E_x^{\gamma}\left[P_{X_{\sigma_{B_m}}^{\gamma}}^{\gamma}(\sigma_{B_n}<\infty);\sigma_{B_m}<\infty\right]\\
&\leq c_3 P_x^{\gamma}(\sigma_{B_m}<\infty)P_x^{\gamma}(\sigma_{B_n}<\infty).
\end{split}
\end{equation}
By the same argument as before, we also see that for any $y\in B_n$,
$$
P_y^{\gamma}(\sigma_{B_m}<\infty)
\leq c_4 P_x^{\gamma}(\sigma_{B_m}<\infty)$$
and thus
\begin{equation}\label{eq:hit-2}
P_x^{\gamma}(\sigma_{B_n}<\infty, \sigma_{B_m}\circ\theta_{\sigma_{B_n}}<\infty)
\le c_4 P_x^{\gamma}(\sigma_{B_m}<\infty)P_x^{\gamma}(\sigma_{B_n}<\infty).
\end{equation}
Noting that
$$
\{\sigma_{B_m}<\infty, \sigma_{B_n}<\infty\}
=\{\sigma_{B_m}<\infty, \sigma_{B_n}\circ\theta_{\sigma_{B_m}}<\infty\}
\cup \{\sigma_{B_n}<\infty, \sigma_{B_m}\circ\theta_{\sigma_{B_n}}<\infty\},
$$
we  obtain, by \eqref{eq:hit-1} and \eqref{eq:hit-2},
$$
P_x^{\gamma}(\sigma_{B_m}<\infty, \sigma_{B_n}<\infty)
\leq
c_5 P_x^{\gamma}(\sigma_{B_m}<\infty)P_x^{\gamma}(\sigma_{B_n}<\infty).$$
Hence, by combining \eqref{eq:p-equ} with Lemma \ref{lem:b-c}
below,
we get the following equivalence:
$$P_x^{\gamma}(L_B=\infty)=1
\iff P_x^{\gamma}\left(\limsup_{n\rightarrow\infty}{\left\{\sigma_{B_n}<\infty\right\}}\right)>0
\iff \sum_{n=1}^{\infty}P_x^{\gamma}(\sigma_{B_n}<\infty)=\infty.$$
The proof is complete.
\end{proof}

Next we will apply Proposition \ref{thm:wiener} to derive a sufficient condition
for the recurrence of the subset of the diagonal set relative to the process $X^{\gamma}$.
Let
$${\rm diag}(M)=\{y=(y_1,y_2)\in M\times M : y_1=y_2\}$$
be the diagonal set in $M\times M$
with the relative topology.
Then
$$
{\rm diag}(M)\cap {\cal B}(M\times M)={\cal B}({\rm diag}(M)),
$$
where
$$
{\rm diag}(M)\cap {\cal B}(M\times M)
=\left\{{\rm diag}(M)\cap B : B\in {\cal B}(M\times M)\right\}.
$$
We also note that for any $B\in {\rm diag}(M)\cap {\cal B}(M\times M)$,
the set $A_B=\{w\in M : (w,w)\in B\}$ is a Borel subset of $M$
and $B=\{(w,w)\in M\times M : w\in A_B\}$.
On the contrary, for any $A\in {\cal B}(M)$,
$A=\{w\in M : (w,w)\in {\rm diag}(A)\}$, where ${\rm diag}(A)=\left\{(w,w)\in M\times M : w\in A\right\}$.
Hence, we have a one to one correspondence
between ${\rm diag}(M)\cap {\cal B}(M\times M)$ and ${\cal B}(M)$.
Moreover, if $\eta$ is a measure on ${\cal B}(M)$,
then we can associate a unique measure $\nu_{\eta}$ on ${\rm diag}(M)\cap {\cal B}(M\times M)$
such that $\nu_{\eta}({\rm diag}(M)\cap (C_1\times C_2))=\eta(C_1\cap C_2)$ for any $C_1,C_2\in {\cal B}(M)$.

In what follows, fix $x\in M\times M$ and $\lambda\geq 2\vee\phi_1(2d(x_1,x_2))$,
and let $F\subset M$ be an $(s_F,t_F)$-set with $(s_F,t_F)$-measure $\eta$ (see Definition \ref{def:s-set}(ii)).
For simplicity, we assume that $r_0=1$ in  Definition \ref{def:s-set}(ii).
Let $B_n=B_n^{x,\lambda,\phi}$ be as in \eqref{eq:b-p} with $B={\rm diag}(F)$.
Define $\phi(t):=\phi_1(t)\vee \phi_2(t)$. Then $\phi^{-1}(t)=\phi_1^{-1}(t)\wedge \phi_2^{-1}(t)$.
Hence, by \eqref{eq:s-1}, there exists a constant $c_1>0$ such that
for any $n\geq 1$,
\begin{equation}\label{eq:b-upper}
\nu_{\eta}(B_n)\leq c_1 (\phi^{-1}(\lambda^n))^{t_F}.
\end{equation}
We now discuss the lower bound of $\nu_{\eta}(B_n)$.
By definition,
\begin{equation*}
\begin{split}
\nu_{\eta}(B_n)
&=\eta\left(\left\{w\in M: \lambda^n\leq \phi_1(d(x_1,w))\vee\phi_2(d(x_2,w))\leq \lambda^{n+1}\right\}\right)\\
&=\eta\left(\left\{w\in M: \phi_1(d(x_1,w))\vee\phi_2(d(x_2,w))\leq \lambda^{n+1}\right\}\right)\\
&\quad -\eta\left(\left\{w\in M:\phi_1(d(x_1,w))\vee\phi_2(d(x_2,w))<\lambda^n\right\}\right)\\
&={\rm (I)}_n-{\rm (II)}_n.
\end{split}
\end{equation*}
By \eqref{eq:sca-phi-inv}, there exists  $\varepsilon\in (0,1)$ so small that for $i=1,2$ and any $r>0$,
$\phi_i^{-1}(\varepsilon r)/\phi_i^{-1}(r)\leq 1/2$.
Since $\lambda\geq \phi_1(2d(x_1,x_2))$,
we have
$$
{\rm (I)}_n
\ge \eta\left(\left\{w\in M:
(\phi_1\vee \phi_2)(d(x_2,w))\leq \varepsilon\lambda^{n+1}\right\}\right)
=\eta\left(\left\{w\in M:
\phi(d(x_2,w))\leq \varepsilon\lambda^{n+1}\right\}\right),
$$
where we used the fact that if $\phi_1(d(x_2,w))\le \varepsilon \lambda^{n+1}$, then
\begin{align*}
\phi_1(d(x_1,w))\le
&\phi_1(d(x_2,w)+d(x_1,x_2))\le \phi_1(\phi_1^{-1}(\varepsilon \lambda^{n+1})+\phi_1^{-1}(\lambda)/2)\\
\le& \phi_1(\phi_1^{-1}(\lambda^{n+1})/2+\phi_1^{-1}(\lambda)/2)\le \lambda^{n+1}.
\end{align*}
On the other hand, one can see that
there exists a constant $c_2>0$, which is independent of the choices of
$\lambda\geq 2\vee\phi_1(2d(x_1,x_2))$ and $\varepsilon>0$,
such that
$${\rm (II)}_n\le \eta\left(\left\{w\in M:
\phi(d(x_2,w))\leq c_2\lambda^n\right\}\right).$$
Since $\eta$ is an $(s_F,t_F)$-measure,
by
\eqref{eq:s-1} and \eqref{eq:s-2},
we can further take $\lambda\geq 2\vee\phi_1(2d(x_1,x_2))$ so large that
\begin{equation}\label{eq:sup-1}
\limsup_{n\rightarrow\infty}\frac{{\rm (II)}_n}{{\rm (I)}_n}<\frac{1}{2}.
\end{equation}
In particular, there exist $c_3>0$ and $n_0\geq 1$ such that
for all $n\geq n_0$,
\begin{equation}\label{eq:b-lower}
\nu_{\eta}(B_n)={\rm (I)}_n-{\rm (II)}_n
\geq {{\rm (I)}_n}/{2}
\geq c_3(\phi^{-1}(\lambda^n))^{t_F}.
\end{equation}

\begin{prop}\label{prop:rec-set}
Let $M$ satisfy Assumptions  {\rm \ref{assum:volume}}, {\rm \ref{assum:compact}} and {\rm \ref{assum:comp}}.
Suppose that the independent processes $X^1$ and $X^2$ satisfy Assumption  {\bf (H)},  {\rm (WUHK)} and {\rm(HR)}.
Let $F\subset M$ be an $(s_F,t_F)$-set for some positive constants $s_F$ and $t_F$
such that $\gamma(s_F)<1$.
Assume that the following conditions hold for some $\gamma\in (\gamma(s_F),1]${\rm :}
\begin{itemize}
\item
$J^{\gamma}<\infty$, \eqref{eq:prod-res-cond} and \eqref{eq:prod-res-cond-1} hold.
\item
There exists a constant $c_1>0$ such that for any  $T\geq 1$,
\begin{equation}\label{eq:rec-set-1}
\int_1^T\frac{((\phi^{\gamma})^{-1}(t))^{t_F}}{V((\phi_1^{\gamma})^{-1}(t))V((\phi_2^{\gamma})^{-1}(t))}\,{\rm d}t
\leq c_1((\phi^{\gamma})^{-1}(T))^{t_F}
\int_T^{\infty}\frac{1}{V((\phi_1^{\gamma})^{-1}(t))V((\phi_2^{\gamma})^{-1}(t))}\,{\rm d}t.
\end{equation}
\end{itemize}
Then for any  $x\in M\times M$,
$P_x^{\gamma}(L_{{\rm diag}(F)}=\infty)=1$.
\end{prop}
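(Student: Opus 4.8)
The plan is to deduce the statement from the Wiener test for recurrence, Proposition~\ref{thm:wiener}, applied to the Borel set $B=\mathrm{diag}(F)$ (which is closed, hence in $\mathcal B(M\times M)$, since $F$ is closed), and to verify the divergence condition on its right-hand side. Fix $x\in M\times M$ and choose $\lambda>1$ large enough to satisfy simultaneously the constraint $\phi_i^{-1}(\lambda t)\ge 2\phi_i^{-1}(t)$ required in Proposition~\ref{thm:wiener}, the constraint $\lambda\ge 2\vee\phi_1(2d(x_1,x_2))$ used in \eqref{eq:b-upper}--\eqref{eq:b-lower}, and the constraint that $\phi^{-1}(\lambda^{n+1})/\phi^{-1}(\lambda^n)$ stay bounded below by a constant strictly greater than $1$ (possible by \eqref{eq:sca-phi-inv}). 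Since $J^\gamma<\infty$ and (WUHK) holds, $X^\gamma$ is transient by Lemma~\ref{lem:trans}(2); moreover (WUHK) and (HR) let us replace $M_0^1,M_0^2$ by $M$ (Remark~\ref{rem:heat-kernel}(ii),(iii)), so that (NDLHK) holds and all the pointwise Green-function estimates of Lemma~\ref{lem:green-1} are available on $M\times M$. With $B_n=B_n^{x,\lambda,\phi}$ and $\nu_\eta$ as set up before the statement (each $B_n$ being compact by Assumption~\ref{assum:compact}, and $\nu_\eta(B_n)\simeq(\phi^{-1}(\lambda^n))^{t_F}$ for $n$ large by \eqref{eq:b-upper}--\eqref{eq:b-lower}), Proposition~\ref{thm:wiener} reduces the claim to showing that $\sum_{n\ge 1}P^\gamma_x(\sigma_{B_n}<\infty)=\infty$.

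To bound $P^\gamma_x(\sigma_{B_n}<\infty)$ from below I would use the $0$-order equilibrium measure $\mu^\gamma_{B_n}$ of the compact set $B_n$ relative to $X^\gamma$ (equivalently, a second-moment estimate for the additive functional of $\nu_\eta|_{B_n}$). Put $g(n)=\int_{\lambda^{n\gamma}}^{\infty}\big(V((\phi_1^\gamma)^{-1}(t))\,V((\phi_2^\gamma)^{-1}(t))\big)^{-1}\,\d t$. By Lemma~\ref{lem:green-1}, Assumption~\ref{assum:volume} and \eqref{eq:v-inverse} one has $u_0^\gamma(x,y)\simeq g(n)$ for all $y\in B_n$, hence
\[
P^\gamma_x(\sigma_{B_n}<\infty)=\int_{B_n}u_0^\gamma(x,y)\,\mu^\gamma_{B_n}(\d y)\succeq g(n)\,{\rm Cap}^\gamma_{(0)}(B_n)\succeq\frac{g(n)\,\nu_\eta(B_n)^2}{E_n},\qquad E_n:=\iint_{B_n\times B_n}u_0^\gamma(y,z)\,\nu_\eta(\d y)\,\nu_\eta(\d z),
\]
the last step being the standard variational lower bound for capacity (valid once $E_n<\infty$, which the estimate below also yields). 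The crux is the bound on $E_n$: for $y\in B_n$ I would split $\int_{B_n}u_0^\gamma(y,z)\,\nu_\eta(\d z)$ according to $d(y_1,z_1)\le 1$ and $d(y_1,z_1)>1$, using that on $\mathrm{diag}(M)$ one has $\phi_d(y,z)=\phi(d(y_1,z_1))$ and, by Lemma~\ref{lem:green-1}(2), $u_0^\gamma(y,z)\preceq\int_{\phi^\gamma(d(y_1,z_1))}^{\infty}\big(V((\phi_1^\gamma)^{-1}(t))V((\phi_2^\gamma)^{-1}(t))\big)^{-1}\,\d t$. By Tonelli's theorem and the local $s_F$-regularity of $\eta$, the near piece is $\preceq\int_0^1\frac{((\phi^\gamma)^{-1}(t))^{s_F}}{V((\phi_1^\gamma)^{-1}(t))V((\phi_2^\gamma)^{-1}(t))}\,\d t+J^\gamma=:C_0<\infty$, which is finite precisely because $\gamma>\gamma(s_F)$ and is independent of $n$ and $y$. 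By Tonelli again, the global $t_F$-regularity of $\eta$, the fact that $\sup\{d(y_1,z_1):y,z\in B_n\}\simeq\phi^{-1}(\lambda^n)$ for $n$ large (a consequence of the definition of $B_n$ and \eqref{eq:b-lower}), and hypothesis \eqref{eq:rec-set-1} applied with $T\simeq\lambda^{n\gamma}$, the far piece is $\preceq g(n)\,\nu_\eta(B_n)$. Hence $E_n\preceq\big(C_0+g(n)\nu_\eta(B_n)\big)\nu_\eta(B_n)<\infty$, and therefore $P^\gamma_x(\sigma_{B_n}<\infty)\succeq g(n)\nu_\eta(B_n)\big/\big(C_0+g(n)\nu_\eta(B_n)\big)$.

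Since $a/(C_0+a)\simeq\min(1,a)$ for $a\ge 0$, the series $\sum_n P^\gamma_x(\sigma_{B_n}<\infty)$ diverges as soon as $\sum_n g(n)\nu_\eta(B_n)=\infty$. Using $\nu_\eta(B_n)\simeq(\phi^{-1}(\lambda^n))^{t_F}=((\phi^\gamma)^{-1}(\lambda^{n\gamma}))^{t_F}$ for $n$ large, a Tonelli (Abel-summation) computation, in which the geometric growth and doubling of $t\mapsto((\phi^\gamma)^{-1}(t))^{t_F}$ along $\{\lambda^{n\gamma}\}$ are used, gives
\[
\sum_{n}g(n)\,\nu_\eta(B_n)\;\simeq\;\int_1^{\infty}\frac{((\phi^\gamma)^{-1}(t))^{t_F}}{V((\phi_1^\gamma)^{-1}(t))\,V((\phi_2^\gamma)^{-1}(t))}\,\d t .
\]
Finally this integral is infinite: if it equalled a finite $C$, then, $t\mapsto((\phi^\gamma)^{-1}(t))^{t_F}$ being nondecreasing, we would get $((\phi^\gamma)^{-1}(T))^{t_F}\int_T^{\infty}\big(V((\phi_1^\gamma)^{-1}(t))V((\phi_2^\gamma)^{-1}(t))\big)^{-1}\,\d t\le\int_T^{\infty}\frac{((\phi^\gamma)^{-1}(t))^{t_F}}{V((\phi_1^\gamma)^{-1}(t))V((\phi_2^\gamma)^{-1}(t))}\,\d t\to 0$ as $T\to\infty$, so \eqref{eq:rec-set-1} would force $\int_1^{T}\frac{((\phi^\gamma)^{-1}(t))^{t_F}}{V((\phi_1^\gamma)^{-1}(t))V((\phi_2^\gamma)^{-1}(t))}\,\d t\to 0$, contradicting that it increases to $C>0$. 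Thus $\sum_n P^\gamma_x(\sigma_{B_n}<\infty)=\infty$, and Proposition~\ref{thm:wiener} yields $P^\gamma_x(L_{\mathrm{diag}(F)}=\infty)=1$. The main obstacle I anticipate is the uniform-in-$n$ energy bound $E_n\preceq(C_0+g(n)\nu_\eta(B_n))\nu_\eta(B_n)$: the singular near-diagonal contribution must be absorbed into the $n$-free constant $C_0$ via the local regularity and the condition $\gamma>\gamma(s_F)$, while simultaneously the far contribution must be squeezed down to order $g(n)\nu_\eta(B_n)$ via the global regularity together with \eqref{eq:rec-set-1}.
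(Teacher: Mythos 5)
Your proposal is correct and runs along the same backbone as the paper's proof: reduce to the Wiener test of Proposition \ref{thm:wiener}, bound the $0$-order potential of $\nu_\eta|_{B_n}$ by splitting into a near-diagonal piece (controlled by the local $s_F$-regularity and $\gamma>\gamma(s_F)$) and a far piece (controlled by the global $t_F$-regularity and \eqref{eq:rec-set-1}), convert this into a capacity lower bound for $B_n$, and feed it back into the hitting probability via the equilibrium measure and Lemma \ref{lem:green-1}. The two places where you genuinely deviate are the packaging of the capacity bound and the endgame. The paper establishes the potential bound $\int_{B_n}u_0^{\gamma}(z,y)\,\nu_\eta({\rm d}y)\le\Gamma_n(\lambda)\simeq g(n)\nu_\eta(B_n)$ \emph{uniformly over all} $z\in M\times M$, normalizes $\nu_\eta|_{B_n}$ by $\Gamma_n(\lambda)$ and invokes the $0$-order version of \eqref{eq:cap-sup} to get ${\rm Cap}_{(0)}^{\gamma}(B_n)\ge\nu_\eta(B_n)/\Gamma_n(\lambda)$; this immediately yields $P_x^{\gamma}(\sigma_{B_n}<\infty)\ge c>0$ uniformly in $n$, so the series diverges term by term with no further work. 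You instead use the second-moment bound ${\rm Cap}_{(0)}^{\gamma}(B_n)\succeq\nu_\eta(B_n)^2/E_n$, which only requires the energy over $B_n\times B_n$ (so your near/far split need only be carried out for $y\in B_n$), but at the price of the weaker conclusion $P_x^{\gamma}(\sigma_{B_n}<\infty)\succeq\min(1,g(n)\nu_\eta(B_n))$ and hence an extra step: the Abel-summation identification of $\sum_n g(n)\nu_\eta(B_n)$ with $\int_1^{\infty}((\phi^{\gamma})^{-1}(t))^{t_F}\bigl(V((\phi_1^{\gamma})^{-1}(t))V((\phi_2^{\gamma})^{-1}(t))\bigr)^{-1}\,{\rm d}t$ together with your contradiction argument that \eqref{eq:rec-set-1} forces this integral to diverge. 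That last argument is a nice observation which the paper uses only implicitly (it is what allows the constant near-diagonal contribution to be absorbed into $\Gamma_n(\lambda)$). One small technical point to tighten: to apply the Cauchy--Schwarz capacity bound you should record, as the paper does via the $0$-order version of \cite[Exercise 4.2.2]{FOT11}, that $\nu_\eta|_{B_n}$ is of finite $0$-order energy integral; your uniform bound on $\int_{B_n}u_0^{\gamma}(y,\cdot)\,{\rm d}\nu_\eta$ for $y\in B_n$ plus $E_n<\infty$ is the right input for this, so the gap is only expository.
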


\begin{rem}\rm
We use Proposition \ref{prop:rec-set} with $\gamma=1$ only
for the proof of Theorem \ref{thm:d-lower}.
\end{rem}

\begin{proof}[Proof of  Proposition {\rm \ref{prop:rec-set}}]
Let $F\subset M$ be an $(s_F,t_F)$-set, and $\eta$ the corresponding $(s_F,t_F)$-measure.
We simply write $\nu$ for $\nu_{\eta}$.
Fix $x\in M\times M$, and $\lambda\geq 2\vee \phi_1(2d(x_1,x_2))$ so large that \eqref{eq:b-lower} holds.
Let $B={\rm diag}(F)$ and $B_n=B_n^{x,\lambda,\phi}$  as in \eqref{eq:b-p}.

Take $\varepsilon_0\in (0,1)$ so small  that for $i=1,2$ and for any $r>0$,
$\phi_i^{-1}(\varepsilon_0 r)/ \phi_i^{-1}(r)\leq 1/2$.
Let $z\in M\times M$.
We first assume that $\phi_d(z,x)\geq \lambda^{n+1}/\varepsilon_0$.
Then, for any $y\in B_n$,  it follows from the triangle inequality that
$\phi_d(z,y)\geq \lambda^{n+1}$ by taking $\varepsilon_0$ small enough if necessary.
This and Lemma \ref{lem:green-1} imply that
$$
u_0^{\gamma}(z,y)
\leq c_1\int_{\phi_d^{\gamma}(z,y)}^{\infty}
\frac{1}{V((\phi_1^{\gamma})^{-1}(t))V((\phi_2^{\gamma})^{-1}(t))}\,{\rm d}t
\leq c_1\int_{(\lambda^{n+1})^{\gamma}}^{\infty}
\frac{1}{V((\phi_1^{\gamma})^{-1}(t))V((\phi_2^{\gamma})^{-1}(t))}\,{\rm d}t.$$
Then, by \eqref{eq:b-upper},
\begin{equation}\label{eq:b-int-1}
\begin{split}
\int_{B_n}u_0^{\gamma}(z,y)\,\nu({\rm d}y)
&\leq c_1\nu(B_n)\int_{(\lambda^{n+1})^{\gamma}}^{\infty}
\frac{1}{V((\phi_1^{\gamma})^{-1}(t))V((\phi_2^{\gamma})^{-1}(t))}\,{\rm d}t\\
&\leq c_2(\phi^{-1}(\lambda^n))^{t_F}\int_{(\lambda^{n+1})^{\gamma}}^{\infty}
\frac{1}{V((\phi_1^{\gamma})^{-1}(t))V((\phi_2^{\gamma})^{-1}(t))}\,{\rm d}t.
\end{split}
\end{equation}

We next assume that $\phi_d(z,x)<\lambda^{n+1}/\varepsilon_0$.
Then, there exists a constant $c_3>0$ such that
$\phi_d(z,y)\leq c_3\lambda^{n+1}$ for any $y\in B_n$.
Hence, by Lemma \ref{lem:green-1} and the Fubini theorem,
\begin{equation}\label{eq:b-int-2}
\begin{split}
&\int_{B_n}u_0^{\gamma}(z,y)\,\nu({\rm d}y)
\leq \int_{\phi_d(z,y)\leq c_3\lambda^{n+1}}u_0^{\gamma}(z,y)\,\nu({\rm d}y)\\
&\leq c_4\int_{\phi_d(z,y)\leq c_3\lambda^{n+1}}
\left(\int_{\phi_d^{\gamma}(z,y)}^{\infty}
\frac{1}{V((\phi_1^{\gamma})^{-1}(t))V((\phi_2^{\gamma})^{-1}(t))}\,{\rm d}t\right)\,\nu({\rm d}y)\\
&=c_4\Biggl\{\int_0^{(c_3\lambda^{n+1})^{\gamma}}\frac{1}{V((\phi_1^{\gamma})^{-1}(t))V((\phi_2^{\gamma})^{-1}(t))}
\nu(\{y\in M\times M:\phi_d^{\gamma}(z,y)\leq t\})\,{\rm d}t\\
&\quad +\left(\int_{(c_3\lambda^{n+1})^{\gamma}}^{\infty}
\frac{1}{V((\phi_1^{\gamma})^{-1}(t))V((\phi_2^{\gamma})^{-1}(t))}\,{\rm d}t\right)
\nu(\{y\in M\times M:\phi_d(z,y)\leq c_3 \lambda^{n+1}\})
\Biggr\}\\
&=:c_4({\rm (I)}+{\rm (II)}).
\end{split}
\end{equation}
Since $\eta$ is an $(s_F,t_F)$-measure,
there exists a constant $c_5>0$ such that for any $r\geq 0$,
$$\nu(\{y\in M\times M: \phi_d(z,y)\leq r\})
\leq c_5\left((\phi^{-1}(r))^{s_F}{\bf 1}_{\{0<r<1\}}+(\phi^{-1}(r))^{t_F}{\bf 1}_{\{r\geq 1\}}\right).$$
Then, by this inequality and \eqref{eq:rec-set-1}
with \eqref{eq:sca-phi} and \eqref{eq:v-inverse}, we obtain
\begin{equation*}
\begin{split}
{\rm (I)}
&\leq
c_5\left(\int_0^1\frac{((\phi^{\gamma})^{-1}(t))^{s_F}}
{V((\phi_1^{\gamma})^{-1}(t))V((\phi_2^{\gamma})^{-1}(t))}\,{\rm d}t
+\int_1^{(c_3\lambda^{n+1})^{\gamma}}
\frac{((\phi^{\gamma})^{-1}(t))^{t_F}}{V((\phi_1^{\gamma})^{-1}(t))V((\phi_2^{\gamma})^{-1}(t))}\,{\rm d}t\right)\\
&\leq c_6 (\phi^{-1}(c_3\lambda^{n+1}))^{t_F} \int_{(c_3\lambda^{n+1})^{\gamma}}^{\infty}
\frac{1}{V((\phi_1^{\gamma})^{-1}(t))V((\phi_2^{\gamma})^{-1}(t))}\,{\rm d}t\\
&\leq c_7 (\phi^{-1}(\lambda^{n+1}))^{t_F} \int_{(\lambda^{n+1})^{\gamma}}^{\infty}
\frac{1}{V((\phi_1^{\gamma})^{-1}(t))V((\phi_2^{\gamma})^{-1}(t))}\,{\rm d}t
\end{split}
\end{equation*}
and
$$
{\rm (II)}\leq
c_8(\phi^{-1}(\lambda^{n+1}))^{t_F}
\int_{(\lambda^{n+1})^{\gamma}}^{\infty}
\frac{1}{V((\phi_1^{\gamma})^{-1}(t))V((\phi_2^{\gamma})^{-1}(t))}\,{\rm d}t.
$$
According to \eqref{eq:b-int-1} and \eqref{eq:b-int-2},
there exists a constant  $c_*>0$ so that for any $n\geq 1$,
\begin{equation}\label{eq:gamma}
\int_{B_n}u_0^{\gamma}(z,y)\,\nu({\rm d}y) \leq \Gamma_n(\lambda),\quad z\in M\times M,
\end{equation}
where
$$\Gamma_n(\lambda)
=c_*(\phi^{-1}(\lambda^{n+1}))^{t_F}
\int_{(\lambda^{n+1})^{\gamma}}^{\infty}
\frac{1}{V((\phi_1^{\gamma})^{-1}(t))V((\phi_2^{\gamma})^{-1}(t))}\,{\rm d}t.$$

For any $n\ge1$, define
$$\nu_n=\frac{1}{\Gamma_n(\lambda)}\nu|_{B_n}.$$
Then by \eqref{eq:gamma},
$$
\int_{M\times M}u_0^{\gamma}(z,y)\,\nu_n({\rm d}y)\leq 1, \quad  z\in M\times M
$$
and
$$
\int_{M\times M}\int_{M\times M}u_0^{\gamma}(z,y)\,\nu_n({\rm d}z)\,\nu_n({\rm d}y)\leq \nu_n(B_n)<\infty.
$$
Hence, by the $0$-order version of \cite[Exercise 4.2.2]{FOT11},
$\nu_n$ is of finite $0$-order energy integral relative to the process $X^{\gamma}$,
and the function $g(z):=\int_{B_n}u_0^{\gamma}(z,y)\,\nu_n({\rm d}y)$ is a quasi-continuous and
excessive $\mu$-version
of the $0$-potential of $\nu_n$.
Since $B_n$ is compact, it follows by \eqref{eq:cap-sup} and \eqref{eq:b-lower} that
for all sufficiently large $n\geq 1$,
$$
{\Capa}_{(0)}^{\gamma}(B_n)
\geq \nu_n(B_n)=\frac{\nu(B_n)}{\Gamma_n(\lambda)}
\geq \frac{c_9(\phi^{-1}(\lambda^n))^{t_F}}{\Gamma_n(\lambda)},
$$
where ${\Capa}_{(0)}^{\gamma}$ is the $0$-order capacity relative to $(\E^{\gamma},\F^{\gamma})$.
 Furthermore, if $\nu_n^{\gamma}$ denotes the equilibrium measure of $B_n$,
then, by Lemma \ref{lem:green-1},
\begin{equation*}
\begin{split}
P_x^{\gamma}(\sigma_{B_n}<\infty)
&=\int_{B_n}u_0^{\gamma}(x,y)\,\nu_n^{\gamma}({\rm d}y)
\geq c_{10}\nu_n^{\gamma}(B_n)
\int_{(\lambda^{n+1})^{\gamma}}^{\infty}\frac{1}{V((\phi_1^{\gamma})^{-1}(t))V((\phi_2^{\gamma})^{-1}(t))}\,{\rm d}t\\
&=c_{10}{\Capa}_{(0)}^{\gamma}(B_n)
\int_{(\lambda^{n+1})^{\gamma}}^{\infty}\frac{1}{V((\phi_1^{\gamma})^{-1}(t))V((\phi_2^{\gamma})^{-1}(t))}\,{\rm d}t\\
& \geq  \frac{c_{11}(\phi^{-1}(\lambda^n))^{t_F}}{\Gamma_n(\lambda)}
\int_{(\lambda^{n+1})^{\gamma}}^{\infty}\frac{1}{V((\phi_1^{\gamma})^{-1}(t))V((\phi_2^{\gamma})^{-1}(t))}\,{\rm d}t
\geq \frac{c_{12}}{c_*}.
\end{split}
\end{equation*}
Therefore, Proposition \ref{thm:wiener} yields the desired assertion.
\end{proof}

\subsection{Wiener test for regularity}
In this subsection, we show the Wiener test for the regularity of points
relative to the process $X^{\gamma}$.
Let $X^1$ and $X^2$ be two independent
 $\mu$-symmetric Hunt processes on $M$ satisfying
 Assumption {\bf(H)},
{\rm (WUHK)} and {\rm (HR)},
and let $X$ be the direct product of $X^1$ and $X^2$ on $M\times M$.
For $\gamma\in (0,1]$, $X^{\gamma}$ denotes the $\gamma$-subordinate process of $X$.

For any $x\in M\times M$, $\lambda\in (0,1)$ and $B\in {\cal B}(M\times M)$, define
\begin{equation}\label{set-add}
B_n^{x,\lambda,\phi}
=\left\{y\in B : \lambda^{n+1}\leq \phi_d(x,y)\leq \lambda^n\right\},\quad n\ge 1.
\end{equation}
It follows from the proof of Lemma \ref{lem:0-1-reg} that
the following equivalence holds:
if the process $X^{\gamma}$ is transient and $\{x\}$ is polar relative to  $X^{\gamma}$,
and the Blumenthal zero-one law holds for the process $X^\gamma$,
 then
\begin{equation}\label{lem:0-1-reg--}
P_x^{\gamma}(\sigma_B=0)=1\iff P_x^{\gamma}(\sigma_B=0)>0
\iff P_x^{\gamma}\left(\limsup_{n\to\infty}\left\{\sigma_{B_n^{x,\lambda,\phi}}<\infty\right\}\right)=1.
\end{equation}
Using this equivalence, we can prove

\begin{prop}\label{thm:wiener-1}
Let $M$ satisfy Assumptions {\rm \ref{assum:volume}}, {\rm \ref{assum:compact}} and {\rm \ref{assum:comp}}.
Suppose that the independent processes $X^1$ and $X^2$ satisfy Assumption {\bf(H)}, {\rm (WUHK)} and {\rm(HR)}.
Take $\lambda\in (0,1)$ so that  $\phi_i^{-1}(t/\lambda)\geq 2\phi_i^{-1}(t)$ for $i=1,2$ and any $t>0$.
If for some $\gamma\in (0,1]$,
the process $X^{\gamma}$ is transient and $\{x\}$ is polar relative to $X^{\gamma}$,
then, for any $B\in {\cal B}(M\times M)$,
$$P_x^{\gamma}(\sigma_B=0)=1\iff\sum_{n=1}^{\infty}P_x^{\gamma}(\sigma_{B_n^{x,\lambda,\phi}}<\infty)=\infty.$$
\end{prop}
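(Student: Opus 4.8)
The plan is to run the argument of Proposition \ref{thm:wiener} essentially verbatim, making only two changes: the ``escape to infinity'' geometry of the annuli is replaced by the ``accumulation at $x$'' geometry, and the zero-one dichotomy \eqref{eq:p-equ} is replaced by its regularity counterpart \eqref{lem:0-1-reg--}. First I would record the reduction. Since $X^{\gamma}$ is transient and $\{x\}$ is polar relative to $X^{\gamma}$ by hypothesis, and since the Blumenthal zero-one law for $X^{\gamma}$ is available through Proposition \ref{prop:tail} (the factors $X^{1},X^{2}$ satisfy {\rm (WUHK)} and {\rm (HR)}, hence also {\rm (NDLHK)} by Remark \ref{rem:heat-kernel} (iii)), the equivalence \eqref{lem:0-1-reg--} applies with $B_{n}^{x,\lambda,\phi}$ as in \eqref{set-add}, giving
$$P_{x}^{\gamma}(\sigma_{B}=0)=1\iff P_{x}^{\gamma}\Bigl(\limsup_{n\to\infty}\{\sigma_{B_{n}^{x,\lambda,\phi}}<\infty\}\Bigr)=1.$$
It then remains to show that $P_{x}^{\gamma}(\limsup_{n}\{\sigma_{B_{n}}<\infty\})>0$ exactly when $\sum_{n}P_{x}^{\gamma}(\sigma_{B_{n}}<\infty)=\infty$, writing $B_{n}=B_{n}^{x,\lambda,\phi}$; the ``only if'' direction is the trivial half of the Borel--Cantelli lemma, and for the converse I would invoke Lemma \ref{lem:b-c}, whose hypothesis is the quasi-independence of the events $\{\sigma_{B_{n}}<\infty\}$.

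The core of the work is the geometric and potential-theoretic estimate behind that quasi-independence. For positive integers $m>n+1$ and points $y\in B_{n}$, $z\in B_{m}$, one has $\phi_{d}(x,z)\le\lambda^{m}\le\lambda^{n+2}\le\lambda\,\phi_{d}(x,y)$ since $\lambda\in(0,1)$. Picking the coordinate $i\in\{1,2\}$ realizing $\phi_{d}(x,y)$ and using the standing hypothesis $\phi_{i}^{-1}(t/\lambda)\ge 2\phi_{i}^{-1}(t)$ (equivalently $\phi_{i}^{-1}(\lambda t)\le\tfrac12\phi_{i}^{-1}(t)$), I get $d(x_{i},z_{i})\le\phi_{i}^{-1}(\phi_{d}(x,z))\le\phi_{i}^{-1}(\lambda\phi_{d}(x,y))\le\tfrac12\phi_{i}^{-1}(\phi_{d}(x,y))=\tfrac12 d(x_{i},y_{i})$, so $d(z_{i},y_{i})\ge\tfrac12 d(x_{i},y_{i})$ by the triangle inequality, and then \eqref{eq:sca-phi} upgrades this to $\phi_{d}(z,y)\ge c_{1}\bigl(\phi_{d}(x,y)\vee\phi_{d}(x,z)\bigr)$ with a constant $c_{1}\in(0,1)$ independent of $m,n,y,z$ --- the exact analogue of \eqref{eq:p-1}. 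I would then feed this into the Green-function estimates of Lemma \ref{lem:green-1} and use \eqref{eq:v-inverse} to compare the truncated integrals, obtaining $u_{0}^{\gamma}(z,y)\le c_{3}\bigl(u_{0}^{\gamma}(x,y)\wedge u_{0}^{\gamma}(x,z)\bigr)$. Because each $B_{n}$ is compact (Assumption \ref{assum:compact}), the hitting probabilities can be written through the $0$-order equilibrium measures of the $B_{n}$, and the strong Markov property of $X^{\gamma}$ at $\sigma_{B_{m}}$, respectively $\sigma_{B_{n}}$, together with the preceding Green-function bound yields, exactly as in \eqref{eq:hit-1}--\eqref{eq:hit-2},
$$P_{x}^{\gamma}(\sigma_{B_{m}}<\infty,\sigma_{B_{n}}<\infty)\le c_{5}\,P_{x}^{\gamma}(\sigma_{B_{m}}<\infty)\,P_{x}^{\gamma}(\sigma_{B_{n}}<\infty),\qquad|m-n|>1,$$
after splitting the event according to which of $B_{m}$, $B_{n}$ is hit first. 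Lemma \ref{lem:b-c} then converts divergence of $\sum_{n}P_{x}^{\gamma}(\sigma_{B_{n}}<\infty)$ into $P_{x}^{\gamma}(\limsup_{n}\{\sigma_{B_{n}}<\infty\})>0$, and combining this with the displayed equivalence finishes the argument.

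The hard part will be the geometric estimate near the accumulation point $x$: unlike the recurrence test, where the annuli march off to infinity and the scales separate cleanly, here the annuli contract toward $x$, so I must be careful that the scaling hypothesis $\phi_{i}^{-1}(t/\lambda)\ge 2\phi_{i}^{-1}(t)$ is invoked only for arbitrarily small $t$ and that the comparison $\int_{c_{1}a}^{\infty}\cdots\asymp\int_{a}^{\infty}\cdots$ of the Green-function integrals stays uniform as $a\downarrow 0$ --- this is where \eqref{eq:v-inverse} and the integrability encoded in the transience of $X^{\gamma}$ (finiteness of the relevant Green potentials, hence of $J^{\gamma}$) enter. A secondary technical point is to check that polarity of $\{x\}$ for $X^{\gamma}$ is precisely what licenses the zero-one dichotomy \eqref{lem:0-1-reg--} without any regularity assumption on the individual factors $X^{1}$ and $X^{2}$.
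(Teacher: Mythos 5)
Your proposal is correct and follows essentially the same route as the paper: reduce via the Blumenthal zero--one law (from Proposition \ref{prop:tail}) and the equivalence \eqref{lem:0-1-reg--} to a quasi-independence estimate for the events $\{\sigma_{B_n}<\infty\}$, prove that estimate by the annulus-separation inequality $\phi_d(z,y)\succeq \phi_d(x,y)\vee\phi_d(x,z)$ combined with the Green-function bounds of Lemma \ref{lem:green-1} and the strong Markov property, and conclude with Lemma \ref{lem:b-c}. The only cosmetic difference is that you take $m>n+1$ (so $z\in B_m$ is the inner point) whereas the paper takes $n>m+1$; the geometry and constants work out identically.
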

\begin{proof}
It follows from Proposition \ref{prop:tail} that,
under the assumptions of this
proposition, the Blumenthal zero-one law holds for the process $X^\gamma$.
We then take an approach similar to the proof of \cite[p.67, Theorem 3.3]{PS78}.
To simplify the notation, we write $B_n$ for $B_n^{x,\lambda,\phi}$.
Let $m$ and $n$ be positive integers such that $|m-n|>1$.
Without loss of generality, we assume that $n>m+1$.
For any  $z\in B_m$ and $y\in B_n$, if $\phi_1(d(x_1,z_1))\geq \phi_2(d(x_2,z_2))$, then
$$\phi_1(d(x_1,z_1))\geq \lambda^{m+1}\geq \lambda^{n-1}
\geq \phi_d(x,y)/\lambda.$$
Hence, by the triangle inequality
and $\phi_1^{-1}(\phi_d(x,y)/\lambda)\geq 2\phi_1^{-1}(\phi_d(x,y))$,
$$
d(y_1,z_1)
\geq d(x_1,z_1)-d(x_1,y_1)
\geq \phi_1^{-1}\left(\phi_d(x,y)/\lambda\right)
-\phi_1^{-1}\left(\phi_d(x,y)\right)\geq \phi_1^{-1}(\phi_d(x,y)),
$$
which yields
$$\phi_d(y,z)\geq \phi_d(x,y).$$
This argument is valid also
when $\phi_1(d(x_1,z_1))\leq \phi_2(d(x_2,z_2))$ holds.
Hence, by following the proof of Proposition \ref{thm:wiener},
there exists a constant $c_1>0$ such that for any $m,n\in {\mathbb N}$ with $|m-n|>1$,
$$P_x^{\gamma}(\sigma_{B_m}<\infty, \sigma_{B_n}<\infty)
\leq c_1P_x^{\gamma}(\sigma_{B_m}<\infty)P_x^{\gamma}(\sigma_{B_n}<\infty).$$
By combining \eqref{lem:0-1-reg--} with Lemma \ref{lem:b-c} below,
the proof is complete.
\end{proof}

Let $F\subset M$ be a locally $s_F$-set, and $\eta$ the corresponding $s_F$-measure.
Fix $x\in {\rm diag}(F)$ and  $\lambda\in (0,1)$.
Let $B={\rm diag}(F)$ and $B_n^{x,\lambda,\phi}$ as in \eqref{set-add}.
Then, there exists a constant $c_1>0$ such that for all $n\ge1$,
\begin{equation}\label{eq:r-b-upper}
\nu_{\eta}(B_n)\leq c_1(\phi^{-1}(\lambda^n))^{s_F}.
\end{equation}
Furthermore,
we can also follow the argument of \eqref{eq:sup-1} to show that
there exist $c_2>0$ and $n_0\geq 1$ such that for all $n\ge n_0$,
\begin{equation}\label{eq:r-b-lower}
\nu_{\eta}(B_n)\geq c_2(\phi^{-1}(\lambda^n))^{s_F}.
\end{equation}

For $B\in {\cal B}(M\times M)$, let $B_{\gamma}^r$ be the totality of regular points for $B$
relative to the process $X^{\gamma}$, i.e.,
$$B_{\gamma}^r=\left\{y\in M\times M : P_y^{\gamma}(\sigma_B=0)=1\right\}.$$
If $B$ is closed, then $B_{\gamma}^r\subset B$ by the right continuity of sample paths of $X^{\gamma}$.

\begin{prop}\label{prop:reg-set}
Let $M$ satisfy Assumptions {\rm \ref{assum:volume}}, {\rm \ref{assum:compact}} and {\rm \ref{assum:comp}}.
Suppose that  the independent processes $X^1$ and $X^2$ satisfy Assumption {\bf(H)}, {\rm (WUHK)} and {\rm(HR)}.
Let $F\subset M$ be a locally $s_F$-set for some $s_F>0$ with $\gamma(s_F)<1$.
Assume that the following conditions hold for some
$\gamma\in (\gamma(s_F),1]${\rm :}
\begin{itemize}
\item
$J^{\gamma}<\infty$ and
\begin{equation}\label{eq:div-polar}
\int_0^1\frac{1}{V((\phi_1^{\gamma})^{-1}(t))V((\phi_2^{\gamma})^{-1}(t))}\,\d t=\infty.
\end{equation}
\item \eqref{eq:prod-res-cond} holds.

\item There exists a constant $c_1>0$ such that
for any $T\in (0,1/2)$,
\begin{equation}\label{eq:reg-set-1}
\int_0^T\frac{((\phi^{\gamma})^{-1}(t))^{s_F}}{V((\phi_1^{\gamma})^{-1}(t))V((\phi_2^{\gamma})^{-1}(t))}\,{\rm d}t
\leq c_1((\phi^{\gamma})^{-1}(T))^{s_F}
\int_T^1\frac{1}{V((\phi_1^{\gamma})^{-1}(t))V((\phi_2^{\gamma})^{-1}(t))}\,{\rm d}t.
\end{equation}
\end{itemize}
Then, for any $x\in {\rm diag}(F)$,
$P_x^{\gamma}(\sigma_{{\rm diag}(F)}=0)=1$, that is, $({\rm diag}(F))_{\gamma}^r={\rm diag}(F)$.
\end{prop}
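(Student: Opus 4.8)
The plan is to mimic the proof of Proposition~\ref{prop:rec-set} almost verbatim, replacing the Wiener test for recurrence (Proposition~\ref{thm:wiener}) by the Wiener test for regularity (Proposition~\ref{thm:wiener-1}), and passing from the ``large-scale'' regime $\lambda>1$ to the ``small-scale'' regime $\lambda\in(0,1)$; the roles played there by \eqref{eq:prod-res-cond-1} and \eqref{eq:rec-set-1} are now played by \eqref{eq:prod-res-cond} and \eqref{eq:reg-set-1}.

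First I would check that Proposition~\ref{thm:wiener-1} applies. By Remark~\ref{rem:heat-kernel}(iii), (WUHK) and (HR) imply (NDLHK), so by Lemma~\ref{lem:trans}(2) the assumption $J^\gamma<\infty$ forces $X^\gamma$ to be transient; and by Proposition~\ref{prop:tail} the Blumenthal zero-one law holds for $X^\gamma$. To see that any singleton $\{x\}\subset M\times M$ is polar relative to $X^\gamma$, note that by Lemmas~\ref{lem:d-res-lower} and~\ref{lem:prod-res} together with Assumption~\ref{assum:volume} one has
\[
u_1^\gamma(x,x)\simeq\int_0^2\frac{\d t}{V((\phi_1^\gamma)^{-1}(t))V((\phi_2^\gamma)^{-1}(t))},
\]
which is infinite by \eqref{eq:div-polar}; arguing then as in Lemma~\ref{lem:cap-heat} applied to $X^\gamma$, we get $\Capa^\gamma(\{x\})=0$. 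Now fix $x\in{\rm diag}(F)$ and $\lambda\in(0,1)$ with $\phi_i^{-1}(t/\lambda)\ge 2\phi_i^{-1}(t)$ for $i=1,2$ and all $t>0$, taken (as in the paragraph preceding the proposition) small enough that \eqref{eq:r-b-upper} and \eqref{eq:r-b-lower} hold for $B={\rm diag}(F)$ and $B_n=B_n^{x,\lambda,\phi}$ as in \eqref{set-add}. Then Proposition~\ref{thm:wiener-1} reduces the claim $P_x^\gamma(\sigma_{{\rm diag}(F)}=0)=1$ to proving $\sum_{n\ge1}P_x^\gamma(\sigma_{B_n}<\infty)=\infty$.

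Next I would run the equilibrium-measure argument from the last part of the proof of Proposition~\ref{prop:rec-set}. Using the Green function upper bound of Lemma~\ref{lem:green-1}(2) for pairs $z,y$ with $\phi_d(z,y)\le 1$ (and, for those $z,y$ with $\phi_d(z,y)\ge1$, which occur only when $z$ is far from $x$, the crude bound $u_0^\gamma(z,y)\preceq 1$ obtained from $J^\gamma<\infty$ and (WUHK), which is absorbed into $\Gamma_n(\lambda)$ below for $n$ large since $\int_{(\lambda^{n+1})^\gamma}^{1}\frac{\d t}{VV}\to\infty$), splitting according to whether $z$ is far from or near $B_n$, and invoking the locally $s_F$-set bound \eqref{eq:r-b-upper} with the regularity inequality \eqref{eq:reg-set-1}, one obtains a uniform bound
\[
\int_{B_n}u_0^\gamma(z,y)\,\nu_\eta(\d y)\le\Gamma_n(\lambda),\qquad z\in M\times M,
\]
with $\Gamma_n(\lambda)\simeq(\phi^{-1}(\lambda^n))^{s_F}\int_{(\lambda^{n+1})^\gamma}^{1}\frac{\d t}{V((\phi_1^\gamma)^{-1}(t))V((\phi_2^\gamma)^{-1}(t))}$. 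Setting $\nu_n=\Gamma_n(\lambda)^{-1}\nu_\eta|_{B_n}$, the $0$-order version of \cite[Exercise 4.2.2]{FOT11} shows $\nu_n$ has finite $0$-order energy, so by \eqref{eq:cap-sup} (in its $0$-order form) and the lower mass bound \eqref{eq:r-b-lower},
\[
\Capa_{(0)}^\gamma(B_n)\ge\nu_n(B_n)=\frac{\nu_\eta(B_n)}{\Gamma_n(\lambda)}\ge\frac{c\,(\phi^{-1}(\lambda^n))^{s_F}}{\Gamma_n(\lambda)}
\]
for all large $n$. Finally, writing $P_x^\gamma(\sigma_{B_n}<\infty)=\int_{B_n}u_0^\gamma(x,y)\,\nu_n^\gamma(\d y)$ for the equilibrium measure $\nu_n^\gamma$ of $B_n$ and using the Green function lower bound of Lemma~\ref{lem:green-1}(1) for $y\in B_n$, where $\phi_d(x,y)\simeq\lambda^n$ so that $u_0^\gamma(x,y)\succeq\int_{(\lambda^{n+1})^\gamma}^{1}\frac{\d t}{V((\phi_1^\gamma)^{-1}(t))V((\phi_2^\gamma)^{-1}(t))}$ for $n$ large, yields $P_x^\gamma(\sigma_{B_n}<\infty)\ge c_0>0$ for all large $n$. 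Hence the series diverges and Proposition~\ref{thm:wiener-1} gives the assertion.

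The main obstacle is the uniform-in-$z$ two-sided control of $\Gamma_n(\lambda)$. One must verify that the ``$z$ near $B_n$'' contribution, handled by Fubini through the distribution function $r\mapsto\nu_\eta(\{y:\phi_d(z,y)\le r\})$ — which behaves like $(\phi^{-1}(r))^{s_F}$ only for small $r$, since $F$ is merely a \emph{locally} $s_F$-set — is absorbed, with constants independent of $z$ and $n$, into the same quantity that controls the ``$z$ far'' contribution. This is exactly where Assumption~\ref{assum:volume}, the scaling estimates \eqref{eq:sca-phi}--\eqref{eq:v-inverse}, and the hypotheses \eqref{eq:prod-res-cond} and \eqref{eq:reg-set-1} are used together; once this matching is established, the remaining steps are essentially transcriptions of the corresponding steps in the proof of Proposition~\ref{prop:rec-set}, with $\lambda^n\to0$ in place of $\lambda^n\to\infty$.
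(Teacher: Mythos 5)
Your proposal follows essentially the same route as the paper's own proof: verify the hypotheses of Proposition \ref{thm:wiener-1} (transience from $J^\gamma<\infty$, polarity of points from \eqref{eq:div-polar} via Lemma \ref{lem:cap-heat}, the zero-one law from Proposition \ref{prop:tail}), then obtain the uniform potential bound $\int_{B_n}u_0^\gamma(z,\cdot)\,\d\nu_\eta\le\Gamma_n(\lambda)$ by the same far/near splitting with \eqref{eq:r-b-upper} and \eqref{eq:reg-set-1}, and conclude $P_x^\gamma(\sigma_{B_n}<\infty)\ge c_0$ via the equilibrium measure and the Green function lower bound of Lemma \ref{lem:green-1}(1). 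The argument is correct (your explicit treatment of the regime $\phi_d(z,y)\ge1$, where \eqref{eq:prod-res-cond-1} is not assumed, is in fact slightly more careful than the paper's), so nothing further is needed.
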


\begin{proof}
We prove this proposition by applying Proposition \ref{thm:wiener-1} to the process $X^{\gamma}$.
To do so, we first verify that $X^{\gamma}$ is transient and any one point set is polar relative to $X^{\gamma}$.
Since $J^{\gamma}<\infty$ by assumption, $X^{\gamma}$ is transient by Lemma \ref{lem:trans} (2).
By Lemma \ref{lem:d-res-lower} and \eqref{eq:div-polar} with Remark \ref{rem:heat-kernel}
(iii),
there exists a constant
$c_0>0$ such that  for any $x\in M$,
$$u_1^{\gamma}(x,x)\ge c_0\int_0^1\frac{1}{V((\phi_1^{\gamma})^{-1}(t))V((\phi_2^{\gamma})^{-1}(t))}\,\d t=\infty.$$
Hence, by Lemma \ref{lem:cap-heat}, any one point set is polar relative to $X^{\gamma}$.

Let $F$ be a locally $s_F$-set, and $\eta$ the corresponding $s_F$-measure.
We simply write $\nu$ for $\nu_{\eta}$.
We take  $\varepsilon_0\in (0,1)$ so small that
$\phi^{-1}(\varepsilon_0 r)/ \phi^{-1}(r)\leq 1/2$ for any $r>0$.
For fixed $x\in {\rm diag}(F)$ and $\lambda\in (0,1)$,
let $B={\rm diag}(F)$ and $B_n=B_n^{x,\lambda,\phi}$.

Let $z\in M\times M$.
We first assume that  $\phi_d(z,x)\geq \lambda^n/\varepsilon_0$.
Then, by the triangle inequality,
we have for any $y\in B_n$,
$\phi_d(z,y)\geq \lambda^n.$
Combining this with Lemma \ref{lem:green-1} and \eqref{eq:r-b-upper}, we get
\begin{equation}\label{eq:r-b-int-1}
\begin{split}
\int_{B_n}u_0^{\gamma}(z,y)\,\nu({\rm d}y)
&\leq c_1\int_{B_n}\left(\int_{\phi_d^{\gamma}(z,y)}^{\infty}
\frac{1}{V((\phi_1^{\gamma})^{-1}(t))V((\phi_2^{\gamma})^{-1}(t))}\,\d t\right)\,\nu(\d y)\\
&\leq  c_1\nu(B_n)\int_{(\lambda^n)^{\gamma}}^{\infty}
\frac{1}{V((\phi_1^{\gamma})^{-1}(t))V((\phi_2^{\gamma})^{-1}(t))}\,\d t\\
&\leq c_2(\phi^{-1}(\lambda^n))^{s_F}
\int_{(\lambda^n)^{\gamma}}^{\infty}
\frac{1}{V((\phi_1^{\gamma})^{-1}(t))V((\phi_2^{\gamma})^{-1}(t))}\,\d t.
\end{split}
\end{equation}

We next assume that $\phi_d(z,x)<\lambda^n/\varepsilon_0$.
Since $\phi_d(z,y)\leq c_3\lambda^n$ for any $y\in B_n$ and \eqref{eq:reg-set-1} holds,
we can follow
the calculation in \eqref{eq:b-int-2} and its subsequent argument
to prove that
\begin{equation}\label{eq:r-b-int-2}
\begin{split}
\int_{B_n}u_0^{\gamma}(z,y)\,\nu({\rm d}y)
&\leq \int_{\phi_d(z,y)\leq c_3\lambda^n}u_0^{\gamma}(z,y)\,\nu({\rm d}y)\\
&\leq c_4 \int_{\phi_d(z,y)\leq c_3\lambda^n}
\left(\int_{\phi_d^{\gamma}(z,y)}^{\infty}
\frac{1}{V((\phi_1^{\gamma})^{-1}(t))V((\phi_2^{\gamma})^{-1}(t))}\,\d t\right)\,\nu(\d y)\\
&\leq c_5(\phi^{-1}(\lambda^n))^{s_F}
\int_{(\lambda^n)^{\gamma}}^{\infty}
\frac{1}{V((\phi_1^{\gamma})^{-1}(t))V((\phi_2^{\gamma})^{-1}(t))}\,\d t.
\end{split}
\end{equation}

According to \eqref{eq:r-b-int-1} and \eqref{eq:r-b-int-2},
we have for any $n\geq 1$,
$$
\int_{B_n}u_0^{\gamma}(z,y)\,\nu({\rm d}y)
\leq \Gamma_n(\lambda), \quad  z\in M\times M,
$$
where
$c_*=c_2\vee c_5$ and
$$\Gamma_n(\lambda)
=c_*(\phi^{-1}(\lambda^n))^{s_F}
\int_{(\lambda^n)^{\gamma}}^{\infty}
\frac{1}{V((\phi_1^{\gamma})^{-1}(t))V((\phi_2^{\gamma})^{-1}(t))}\,\d t.$$
For any $n\ge 1$, define
$$
\nu_n=\frac{1}{\Gamma_n(\lambda)}\nu|_{B_n},
$$
so that
$$
\int_{M\times M}u_0^{\gamma}(z,y)\,\nu_n({\rm d}y)\leq 1, \quad  z\in M\times M,
$$
and
$$
\int_{M\times M}\int_{M\times M}u_0^{\gamma}(z,y)\,\nu_n({\rm d}z)\,\nu_n({\rm d}y)\leq \nu_n(B_n)<\infty.
$$
Hence, by following the proof of Proposition \ref{prop:rec-set},
there exists a constant $c_6\in (0,1]$ such that
$P_x^{\gamma}(\sigma_{B_n}<\infty)\geq c_6$ for any $n\geq 1$.
Then, by Proposition \ref{thm:wiener-1}, the proof is complete.
\end{proof}

\subsection{Generalized Borel-Cantelli lemma}
We state the following generalized Borel-Cantelli lemma for the readers' convenience.
\begin{lem}{\rm (\cite[p.65, Proposition 3.1]{PS78})}\label{lem:b-c}
Let $(\Omega,{\cal F},P)$ be a probability space,
and $\{A_n\}_{n=1}^{\infty}$ a sequence of events.
Assume that there exists a constant $c_1>0$ such that
for any $m,n\geq 1$ with $|m-n|>1$,
$$P(A_m\cap A_n)\leq c_1P(A_m)P(A_n).$$
Then,
$$P\left(\limsup_{n\rightarrow\infty}A_n\right)>0$$
if and only if
$$\sum_{n=1}^{\infty}P(A_n)=\infty.$$
\end{lem}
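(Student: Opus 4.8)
The plan is to prove the two implications separately, with the nontrivial content in the ``$\Leftarrow$'' direction carried by a second-moment argument applied to truncated counting functions.

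For ``$\Rightarrow$'' I would observe that this is the elementary half of Borel--Cantelli and uses no hypothesis on the dependence structure: if $\sum_n P(A_n)<\infty$, then $P(\limsup_n A_n)\le \sum_{n\ge k}P(A_n)\to 0$ as $k\to\infty$, hence $P(\limsup_n A_n)=0$.

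For ``$\Leftarrow$'', fix $k\ge 1$, and for $N>k$ put $S=S_{k,N}=\sum_{n=k}^{N}\mathbf{1}_{A_n}$ and $\mu_{k,N}=E[S]=\sum_{n=k}^{N}P(A_n)$. Expanding $S^2=\sum_{m,n}\mathbf{1}_{A_m\cap A_n}$ and splitting the index pairs into the diagonal $m=n$, the pairs with $|m-n|=1$, and the pairs with $|m-n|>1$, I would bound the three contributions respectively by $\mu_{k,N}$, by $2\mu_{k,N}$ (each index has at most two neighbours and $P(A_m\cap A_n)\le P(A_m)$), and by $c_1\bigl(\sum_n P(A_n)\bigr)^2=c_1\mu_{k,N}^2$ using the hypothesis, which gives $E[S^2]\le 3\mu_{k,N}+c_1\mu_{k,N}^2$. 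Then I would invoke the Paley--Zygmund-type inequality $P(S>0)\ge (E[S])^2/E[S^2]$ — a one-line consequence of Cauchy--Schwarz applied to $E[S]=E[S\mathbf{1}_{\{S>0\}}]$ — to obtain
\[
P\Bigl(\bigcup_{n=k}^{N}A_n\Bigr)=P(S>0)\ge \frac{\mu_{k,N}^2}{3\mu_{k,N}+c_1\mu_{k,N}^2}=\frac{1}{3/\mu_{k,N}+c_1}.
\]
Since $\sum_n P(A_n)=\infty$, for each fixed $k$ one has $\mu_{k,N}\to\infty$ as $N\to\infty$; letting $N\to\infty$ yields $P\bigl(\bigcup_{n\ge k}A_n\bigr)\ge 1/c_1$ for every $k$. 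Finally, $\{\bigcup_{n\ge k}A_n\}_{k}$ decreases to $\limsup_n A_n$, so continuity of $P$ from above gives $P(\limsup_n A_n)=\lim_{k\to\infty}P\bigl(\bigcup_{n\ge k}A_n\bigr)\ge 1/c_1>0$.

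I do not expect a genuine obstacle here; the only two points requiring care are that the near-diagonal pairs $|m-n|\le 1$ must be estimated separately — this is precisely why the hypothesis is imposed only for $|m-n|>1$ — and that the second-moment bound must be run on each tail $\{A_n\}_{n\ge k}$ rather than on $\{A_n\}_{n\ge 1}$ alone, in order to upgrade the conclusion from $P(\bigcup_n A_n)>0$ to $P(\limsup_n A_n)>0$.
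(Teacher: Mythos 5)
Your argument is correct: the first-Borel--Cantelli direction, the second-moment bound $E[S_{k,N}^2]\le 3\mu_{k,N}+c_1\mu_{k,N}^2$ obtained by splitting the index pairs, the Cauchy--Schwarz (Paley--Zygmund) lower bound on $P(S_{k,N}>0)$, and the passage $N\to\infty$ then $k\to\infty$ all check out. The paper itself gives no proof and simply cites \cite[p.~65, Proposition 3.1]{PS78}, and your second-moment argument is exactly the standard (Kochen--Stone-type) proof underlying that reference, so there is nothing to compare beyond noting agreement.
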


\ \

\noindent \textbf{Acknowledgements.} 
The research of Jian Wang is supported by the National Key R\&D Program of China (2022YFA1000033) and  NSF of China (Nos.\ 11831014, 12071076 and 12225104).

\end{document}